\DeclareMathAlphabet      {\mathbf}{OT1}{cmr}{bx}{n}
\DeclareFontFamily{OT1}{pzc}{}
\DeclareFontShape{OT1}{pzc}{m}{it}%
{<-> s * [1.15] pzcmi7t}{}
\DeclareMathAlphabet{\mathpzc}{OT1}{pzc}{m}{it}
\newtheorem{thm}{Theorem}
\newtheorem{theorem}{Theorem}[section]
\newtheorem{lemma}[theorem]{Lemma}
\newtheorem{corollary}[theorem]{Corollary}
\newtheorem{definition}[theorem]{Definition}
\newtheorem{prop}[theorem]{Proposition}
\newtheorem{remark}{Remark}
\newcommand\mathitem{\item\leavevmode\vspace*{-\dimexpr\baselineskip+\abovedisplayskip\relax}}
\def\ack{\section*{Acknowledgements}%
  \addtocontents{toc}{\protect\vspace{6pt}}%
  \addcontentsline{toc}{section}{Acknowledgements}%
}
\begin{document}\small
 \pagenumbering{arabic}
\title{On cobordism maps on periodic Floer homology}
\author{{\normalfont Guanheng Chen}}
\date{}

\maketitle
\begin{abstract}    % type your abstract below
In this article, we  investigate the cobordism maps on periodic Floer homology (PFH).   In the first part of the paper, we define  the cobordism maps on  PFH   via Seiberg Witten theory as well as  the isomorphism  between PFH and Seiberg Witten cohomology. Furthermore, we show that the maps satisfy the holomorphic curve axiom.   In the second part of the paper, we give an alternative  definition  of these maps by using holomorphic curve  method, provided that the symplectic cobordisms  are  Lefschetz fibration satisfying certain nice  conditions.  Under additionally certain monotonicity  assumptions, we show that  these two definitions   are equivalent.
\end{abstract}

\maketitle

%%%%%%%%%%%%%%%%%%%%   Start of main body of article

\section{Introduction }
Let $(\Sigma, \omega_{\Sigma})$ be a connected closed $2$-dimensional symplectic manifold   and $\phi: \Sigma \to \Sigma $ be a symplectomorphism. Let $Y$ be a mapping torus of $(\Sigma, \phi)$, i.e.,
\begin{eqnarray*}
 Y= \mathbb{R}\times \Sigma /(t+ 2 \pi, x) \sim (t, \phi(x)).
\end{eqnarray*}
The coordinate vector field $\partial_t$ and the volume form $\omega_{\Sigma}$    descend respectively to a vector field  $\tilde{\partial_t}$ and a   closed $2$-form $\omega_{\phi}$ on $Y$ under the natural projection $\mathbb{R} \times \Sigma \to Y$. The closed integral curves of $\tilde{\partial_t}$ are called periodic orbits.

In \cite{H1}, M. Hutchings proposes an  invariant $HP_*(Y, \omega_{\phi},  \Gamma, J, \Lambda_P)$ for the pair $(Y,  \omega_{\phi})$  which is so-called periodic Floer homology, abbreviated as PFH. Here $J$ is a generic almost complex structure, $\Gamma \in H_1(Y, \mathbb{Z})$, and $\Lambda_P$ is a local coefficient. Roughly speaking, the chain complex of  $HP(Y, \omega_{\phi}, \Gamma, J,  \Lambda_P)$ is generated by periodic orbits, and its differential is defined by counting  holomorphic currents with ECH index one.  PFH is   shown to be well defined by  Hutchings  and  Taubes in  \cite{HT1} and \cite{HT2}. PFH is a sister version of a more well-known invariant called embedded contact homology, abbreviated as ECH. In Tuabes' series of papers  \cite{Te1}, \cite{Te2}, \cite{Te3},   \cite{Te4} and \cite{Te5}, he shows that ECH is isomorphic to Seiberg Witten cohomology.
Likewise, periodic Floer homology is also isomorphic to a version of Seiberg Witten cohomology.

\begin{theorem}[Theorem 6.2 of \cite{LT}] \label{Thm0}
Given a fixed homology class $\Gamma \in H_1(Y, \mathbb{Z})$,  let $\Lambda_P$ be a $(c_{\Gamma}, [\omega])$-complete local coefficient for the periodic Floer homology. Let  $\Lambda_S$ denote a corresponding $(c_1(\mathfrak{s}_{\Gamma}), 2\pi r[\omega])$-complete local coefficient for Seiberg Witten cohomology in the sense of \cite{LT}. There  exists  an isomorphism between periodic Floer homology and a perturbed version of Seiberg Witten cohomology
\begin{equation*}
\mathcal{T}_{r*}: HP_*(Y, \omega_{\phi}, \Gamma, J, \Lambda_P) \to HM^{-*}(Y,  \mathfrak{s}_{\Gamma}, -\pi \varpi_r, J, \Lambda_S),
\end{equation*}
which reverses the relative grading.
\end{theorem}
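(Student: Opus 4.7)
The plan is to adapt the Taubes strategy that established $\mathrm{ECH}\cong HM$ for contact $3$-manifolds to the mapping torus setting, following \cite{LT}. I would first fix a Riemannian metric $g$ on $Y$ compatible with $(\omega_\phi,J,\tilde{\partial}_t)$, chosen so that $\tilde{\partial}_t$ has unit norm orthogonal to the fibres of $Y\to S^1$ and $J$ is $\omega_\phi$-compatible on those fibres. Together with $\Gamma\in H_1(Y,\mathbb{Z})$, this metric determines the spin-c structure $\mathfrak{s}_\Gamma$, whose spinor bundle decomposes as $\mathbb{S}=E\oplus (E\otimes K^{-1})$ with $c_1(E)$ prescribed by $\Gamma$.

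The next step is to analyze the $r$-perturbed Seiberg--Witten equations on $Y$ with perturbation $2$-form $-\pi\varpi_r$, whose cohomology class is a controlled deformation of $2\pi r[\omega_\phi]$. Following Taubes, I would establish pointwise estimates on the spinor $\Psi=(\alpha,\beta)$ giving $|\alpha|\le 1+O(r^{-1})$, $|\beta|=O(r^{-1/2})$, and concentration of $\alpha^{-1}(0)$ along closed orbits of $\tilde{\partial}_t$. A counting argument should then match each gauge equivalence class of SW solution, for $r$ sufficiently large, to an orbit set $\Theta=\sum_i m_i\gamma_i$ with $\sum_i m_i[\gamma_i]=\Gamma$, i.e.\ to a generator of the PFH chain complex. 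The reverse direction is provided by gluing: each orbit set $\Theta$ yields a unique SW solution concentrated along it.

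To define $\mathcal{T}_{r*}$ and verify that it is a chain map, I would analyze SW instantons on $\mathbb{R}\times Y$. As $r\to\infty$ these should concentrate onto broken $J$-holomorphic currents of ECH index one, and the corresponding moduli spaces ought to be identified with those counted by the PFH differential, with compatible signs and multiplicities. Matching the local coefficients requires the $(c_\Gamma,[\omega])$-complete condition on $\Lambda_P$ to correspond under $\mathcal{T}_{r}$ to the $(c_1(\mathfrak{s}_\Gamma),2\pi r[\omega])$-complete condition on $\Lambda_S$; this should be immediate from how $\mathcal{T}_r$ identifies the relevant relative periods. The relative grading reverses because SW is taken here as a cohomology theory while PFH is a homology theory built from the same Morse-theoretic data.

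The main obstacle will be the analytic core: the Taubes-type concentration estimates, the compactness-and-gluing apparatus producing bijections both on generators and on differentials, and the careful matching of signs and weights under the local-coefficient correspondence. These technical ingredients occupy the bulk of \cite{LT} and would be invoked rather than reproved here. A subsidiary difficulty, relative to the standard contact $\mathrm{ECH}$-to-$HM$ picture, is that $\omega_\phi$ is closed but not exact, so several estimates must exploit the mapping-torus geometry directly rather than rely on contact-form primitives.
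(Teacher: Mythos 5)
The paper does not prove Theorem~\ref{Thm0}; it is quoted verbatim from Lee--Taubes (Theorem~6.2 of \cite{LT}) and used as a black box throughout. So there is no ``paper's own proof'' to compare against. That said, your sketch is a fair high-level description of the Lee--Taubes strategy: their chain-level map $T_r$ is indeed built from Taubes-type estimates that localize $\alpha^{-1}(0)$ near orbit sets, a gluing construction producing an SW solution from each admissible orbit set, and a matching of instantons with ECH-index-one holomorphic currents as $r\to\infty$. One refinement worth flagging, which the present paper does engage with in Section~4.3: the chain-level bijection in \cite{LT} only holds up to a fixed energy cutoff $L$ for $r\ge r_L$, and $r_L$ may blow up as $L\to\infty$, so $\mathcal{T}_{r*}$ is not simply the map induced by $T_r$. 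One must take a limit over increasing $(L_n,r_n)$ using the $r$-independence of $HM^{-*}(Y,\mathfrak{s}_\Gamma,-\pi\varpi_r,\Lambda_S)$, and verify that the result is independent of these choices at the homology level. Your proposal does not address this subtlety, but since it is a quoted theorem rather than one you were expected to re-derive, the omission is minor.
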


In \cite{U}, M. Usher defines an invariant $HF_*(Y,  [\omega], \Gamma, \Lambda)$ for a tuple   $(Y, \pi, [\omega], \Gamma)$, where $\pi: Y \to S^1$ is a surface fibration over   circle, $[\omega] \in H^2(Y, \mathbb{R})$,  $\Gamma \in H_1(Y, \mathbb{Z})$ and $\Lambda$ is  a local coefficient. Additionally, he shows that  $HF$ is  a covariant functor  from  the fibered cobordism category (FCOB) to the category of modules over universal Novikov ring.  Roughly speaking, the object in FCOB is surface fibration over the circle and the morphism is Lefschetz fibration. %In our paper,  these   Lefschetz fibration are also called symplectic fibered cobordism.
The invariants  $HF_*(Y,  [\omega], \Gamma, \Lambda)$ and $HP_*(Y, \omega_{\phi}, \Gamma,  \Lambda_P)$ are respectively  three dimensional analogy of  Donaldson-Smith's invariant \cite{DS} and Taubes' Gromov invariant \cite{T2}, thus one might expect that these two homologies are isomorphism. Motivated by  \cite{U}, it is also expected to construct a  TQFT structure on  PFH %$HP_*(Y, \omega_\phi, \Gamma,  \Lambda_P)$
which is an analogy  to Usher's on $HF$. %$HF_*(Y,  [\omega], \Gamma, \Lambda)$.

To this end, we need to define the cobordism maps on PFH induced by Lefschetz fibration. The natural way to   define the cobordism maps is by counting  holomorphic curves with zero ECH index. However, as explained in Section 5.5 of \cite{H3}, the ECH index can be negative in general. Consequently,  we need to count broken holomorphic curves with zero ECH index.  The  moduli space of broken holomorphic curves has many components of various dimensions, so it is difficult to handle it.  To overcome the technical difficulties, Hutchings and Taubes  define the cobordism maps  on embedded contact homology by using the isomorphism ``HM=ECH''\cite{Te1} and Seiberg Witten theory \cite{HT}.

% like the case in embedded contact homology, there are several technical issues for defining the cobordism maps  by using  holomorphic curve method.  The technical issues for defining the cobordism maps on  ECH are explained in section 5.5 of \cite{H3}.
 Since PFH is a cousin  of embedded contact homology, it is natural to expect that the parallel  results hold for PFH. In some special cases, the ECH index is still possible to be nonnegative. In these cases, can we define the cobordism maps on PFH without Seiberg Witten theory? If we can, it is natural to ask weather  we can generalize  the isomorphism in  Theorem \ref{Thm0} to cobordism level.
 % it is equivalent to the cobordism maps on Seiberg Witten cohomology under the isomorphism  in Theorem \ref{Thm0}.

The purpose of this article is to explore the cobordism maps on PFH   and we answer  the   questions above for certain Lefschetz fibrations.    Firstly, we  follow the idea in \cite{HT}  to define the cobordism maps ${HP}_{sw}(X, \Omega_X, J, \Lambda_X)$ on PFH  via Seiberg Witten theory and Theorem \ref{Thm0}.  Since the Seiberg Witten theory works for any 4-manifold,  the maps ${HP}_{sw}(X, \Omega_X, J, \Lambda_X)$ are well-defined for general symplectic cobordisms, not limited to Lefschetz fibration.
These  maps  ${HP}_{sw}(X, \Omega_X, J, \Lambda_X)$ inherit the natural properties of  Seiberg Witten cobordism maps. But beyond that, the maps satisfy an additional  property called ``holomorphic curve  axiom", which relates the Seiberg Witten equations and holomorphic curves.
Secondly, we focus on the case that the symplectic cobordisms admit  Lefschetz fibration structures. Under certain technical assumptions $(\spadesuit)$, we give an alternative definition of the cobordism maps, denoted by ${HP}(X, \Omega_X, J, \Lambda_X)$. % we define these maps by counting of embedded holomorphic curves in a completion of the Lefschetz fibration.
These maps are defined by  counting embedded holomorphic curves with zero ECH index, by using the same  techniques in  \cite{HT1} and \cite{HT2}, and without using Seiberg Witten theory. Furthermore, in most cases,
%except for some exceptions,
we can deform the  symplectic  structure  on  the   Lefschetz fibration   such that it meets these  technical assumptions $(\spadesuit)$.
 Finally, under certain   monotonicity assumptions,  we show that    these two definitions   are equivalent. In other words, Theorem \ref{Thm0} holds in cobordism level  under the assumptions $(\spadesuit)$.  The proof of this part relies heavily on  Tuabes'  series of papers  \cite{Te1}, \cite{Te2}, \cite{Te3},  and \cite{Te4}.

Finally, we mention  that some  relevant  results on ECH side  have been achieved   recently.  In order to generalize Taubes' Gromov invariant to nearly symplectic manifolds, C.Gerig defines the cobordism maps on ECH for certain symplectic cobordisms  \cite{CG}. In addition, he compares the cobordism maps on ECH with the cobordism maps on Seiberg Witten cohomology \cite{CG2}. There are various similarities between the results here and C.Gerig's.  We will compare our results with Gerig's  in Remark \ref{r9}.    Besides,  J. Rooney defines the  cobordism maps on ECH by counting holomorphic curves for   general settings \cite{JR}.
In his definition, the cobordism maps are defined by counting holomorphic building and the holomorphic curves are allowed to have negative ECH index.
 %The holomorphic curves in his definition are allowed to have negative ECH index.
%\subsection{Relevant results}

%in the special case that the cobordism $(X, \pi_X)$ is a Lefschetz fibration over higher genus base,

\section{Preliminaries }
Before we state the main results, let us introduce  some essential definitions in this section.
\subsection{Surface fibration over $S^1$ and periodic orbits} \label{section11}
Let $\pi: Y \to S^1$ be a $3$-dimensional fibration over $S^1$.  Let  $\omega$ be a fiberwise non-degenerate closed $2$-form on $Y$, then there is a decomposition $TY=TY^{hor} \oplus TY^{vert}$ with respect to $\omega$, where $TY^{vert}=\ker \pi_*$ and $T_xY^{hor}= \{v \in T_xY: \omega(v,w)=0,\forall w \in T_xY^{vert}\}$.  Let $\partial_t$ be the coordinate vector field of $S^1$.  The  horizontal lift of $\partial_t$  is the unique vector field $R$ on $Y$ such that $R \in TY^{hor}$ and $\pi_*(R)=\partial_t$. Such a  vector field $R$ is called Reeb vector field.

Fix a base point $0 \in S^1$, let $\phi_t$ be the flow generated by $R$ starting at $\pi^{-1}(0)$ and $\phi_{\omega}=\phi_1$, then $Y$ can be identified with a mapping torus $\Sigma_{\phi_{\omega}}$ via the following diffeomorphism
\begin{eqnarray} \label{e47}
F: x \in \pi^{-1}(t) \to (t, \phi_t^{-1}(x)) \in \mathbb{R} \times \Sigma,
\end{eqnarray}
where $\Sigma=\pi^{-1}(0)$ and the mapping torus $\Sigma_{\phi_{\omega}}$  is defined by
\begin{eqnarray*}
 \Sigma_{\phi_{\omega}}= \mathbb{R}\times \Sigma /(t+ 2\pi, x) \sim (t, \phi_{\omega}(x)).
\end{eqnarray*}
Moreover, it is easy to check  that  $F_*(R) = \tilde{\partial_t}$ and $F^*\omega_{\phi_{\omega}}=\omega$.
The closed integral curves of $R$ are called periodic orbit. Let $\gamma$ be a periodic orbit of $R$, then the intersection pairing of $\gamma$ with the fiber, namely $d= [\gamma] \cdot [\Sigma]$, is called  degree or period of $\gamma$.  The linearization of the  flow  along $\gamma$ defines a symplectic linear map:
$$P_{\gamma}: ({\ker \pi_{*}}\vert_{\gamma(0)}, \omega \vert_{\gamma(0)}) \to ({\ker \pi_{*}}\vert_{\gamma(0)}, \omega \vert_{\gamma(0)}). $$
A periodic orbit $\gamma$ is  called non-degenerate if $1$ is not eigenvalue  of $P_{\gamma}$. For any  non-degenerate periodic orbit $\gamma$,  $P_{\gamma}$  has eigenvalues $\lambda$ and $\lambda^{-1}$  which are either real and positive, in which case $\gamma $ is called positive hyperbolic, or real and negative, in which case $\gamma$ is called negative hyperbolic,  or on the unit circle, in which case $\gamma$ is called elliptic.

\begin{definition} \label{def6}
Let $\pi: Y \to S^1$ be a surface  fibration over $S^1$. A $2$-form $\omega \in \Omega^2(Y)$ is called admissible if $d\omega=0$ and $\omega$ is fiberwise nondegenerate.  Given  $Q>0$,  an admissible $2$-form $\omega$ is called $Q$-admissible if all the  periodic orbits of $\phi_{\omega}$ with degree less than $Q$ are nondegenerate.
\end{definition}

\begin{definition}
Fix $\Gamma \in H_1(Y, \mathbb{Z})$, an orbit set with homology class $\Gamma$ is a finite set of pairs $\alpha =\{(\alpha_i ,m_i)\}$,  where $\{\alpha_i\}_i$ are distinct  irreducible simple periodic orbits and $\{m_i\}_i$ are positive integers. In addition, we require that $\sum\limits_i m_i[\alpha_i] = \Gamma$. An orbit set is called admissible if $m_i=1$ whenever $\alpha_i$ is hyperbolic. The set of admissible orbit sets with homology class $\Gamma$ is denoted by $\mathcal{ P}( Y, \omega, \Gamma)$.
\end{definition}

We will need the following definition when we state the main results and define the ``self-intersection number" (Definition \ref{def1}) of holomorphic curves later.     %It help us to rule out the holomorphic curve with negative ECH index.
\begin{definition}(Cf. Definition 4.1 of \cite{H4})
Let $Q>0$ and $\gamma$ be a simple elliptic orbit with degree $d \le Q$.
\begin{itemize}
\item
$\gamma$ is called $Q$-positive elliptic if the rotation number $\theta \in (0, \frac{d}{Q}) \mod 1$.
\item
$\gamma$ is called $Q$-negative elliptic if the rotation number $\theta \in ( -\frac{d}{Q},0) \mod 1$.
\end{itemize}
\end{definition}
%Keep in mind that $\mu_{\tau}(\gamma^q)=1$ whenever $\gamma$ is called $Q$-positive elliptic  and $q \le Q$, and  $\mu_{\tau}(\gamma^q)=-1$ whenever $\gamma$ is called $Q$-positive elliptic  and $q \le Q $, where $\mu_{\tau}(\gamma^q)$ is the Conley Zehnder index of $\gamma^q$.

\subsection{Symplectic cobordism and Lefschetz fibration  } \label{section10}
%In this subsection, let use introduce some basic definition about Lefschetz fibration which  we will be using.
In this subsection, we clarify the meaning of symplectic cobordism  between fibered 3-manifolds.
\begin{definition} \label{def9}
Let $(Y_{\pm}, \pi_{\pm}, \omega_{\pm})$ be a surface fibration over circle together with admissible 2-form. A symplectic   cobordism from $(Y_+, \pi_+, \omega_+)$ to $(Y_-, \pi_-, \omega_-)$ is a  symplectic manifold $(X, \Omega_X)$ such that  $\partial X =Y_+ \sqcup (-Y_-)$  and  $\Omega_X \vert_{Y_{\pm}}=\omega_{\pm}$. %satisfying the following conditions:
%\begin{enumerate}
%\item
%$\Omega_X$ is a symplectic  form over $X$ such that $\Omega_X \vert_{Y_{\pm}} =\omega_{\pm}$.
%\item
%There exist  vector fields $V_+$ and $V_-$ near $Y_+$ and $Y_-$ respectively, such that   $V_+$ points transversely outward at $Y_+$,    $V_-$ points transversely inward at $Y_-$,  and $V_{\pm} \lrcorner \Omega_X =\pi^*_{\pm} dt $.
%\end{enumerate}
\end{definition}

\begin{definition} \label{def11}
%An admissible $2$-form $\omega_X$ is called monotone if $[\omega_X]= \tau c_1(T\overline{X}) $ for some $\tau  \ne 0 \in \mathbb{R}$.
Let $(X, \Omega_X) $ be a symplectic cobordism between fibered 3-manifolds. Given $\Gamma_X \in H_2(X, \partial X, \mathbb{Z})$,  the pair $(\Gamma_X, \Omega_X)$ is called monotone if $$c_1(T{X}) + 2PD(\Gamma_X)=\tau[\Omega_X]$$  for some $\tau  \ne 0 \in \mathbb{R}$. If $[\Omega_X]= \tau c_1(T{X}) $ for some $\tau  \ne 0 \in \mathbb{R}$, we just say that $(X, \Omega_X)$ is monotone.
\end{definition}

When we define the cobordism maps on PFH by using holomorphic curve method, we  focus on the symplectic cobordisms  with fibration structures. (See Definition \ref{def10}.)
\begin{definition} (Cf. \cite{REG}) \label{def7}
Let $X$ be a compact, connected, oriented,  smooth 4-manifold and $B$ be a compact, connected oriented surface   possibly with boundary. A Lefschetz fibration is a map $\pi_X: X \to B$ with the following properties:
\begin{enumerate}
\item
$\pi_X^{-1}(\partial B) = \partial X$.

\item
Each critical point of $\pi_X$ lies in the interior of $X$.

\item
For each critical point of $\pi_X$, we can find a pair of orientation preserving complex coordinate
charts, one on $X$, centered at the critical point, and one on $B$, such that $\pi_X(z_1, z_2)=z_1^2 + z_2^2$ on these charts.
\end{enumerate}
\end{definition}
Throughout   this article,  we assume that the regular fiber of $\pi_X: X \to B$ is an oriented connected closed  surface and  $B$ has two boundary components $\partial B=S_+^1 \bigsqcup (-S_-^1)$ so that $\partial X= Y_+ \bigsqcup (-Y_-)$. Note that $Y_{+}$ and $Y_-$ are surface fibrations over circles. We allow the boundary component to be empty.
In addition,  we assume that $\pi_X$ is injective on the set of critical points.

The singular fiber of $\pi_X: X \to B $ can be classified as separating and non-separating, depending on whether the corresponding vanishing cycle is   separating or  non-separating.
Let $z$ be a critical value of $\pi_X$ and $g$ is genus of regular fiber. In  the case  that $\pi_X^{-1}(z)=\Sigma$ is non-separating, then $\Sigma$ is an immersed surface of $g-1$ with a double point.   If the singular fiber $\pi_X^{-1}(z)=\Sigma$ is separating, then  $\Sigma$ is a transversely intersecting pair of embedded surfaces with square $-1$ and genus adding to $g$. For more details, please refer to \cite{REG}.

 \begin{definition} (Cf. \cite{REG})
A  Lefschetz fibration   is called relatively minimal if  there is no fiber   containing an embedded sphere with $-1$ self-intersection number. Such sphere is called exceptional sphere.
\end{definition}

The following definition is  an analogy to  Definition \ref{def6}.
\begin{definition}
Let $\pi_X: X \to B$ be a Lefschetz fibration. A $2$-form  $\omega_X \in \Omega^2(X)$ is called admissible if $d\omega_X=0$  and $\omega_X$ is fiberwise nondegenerate. Near critical points of $\pi_X$, $\omega_X$ is $K\ddot{a}hler$ with respective to the coordinate charts $(z_1, z_2)$ in Definition \ref{def7}. Given  $Q>0$,  we say that  an admissible $2$-form $\omega_X$ is $Q$-admissible if  $\omega_X \vert_{Y_{+}}$ and $\omega_X \vert_{Y_{-}}$ are $Q$-admissible.
\end{definition}

\begin{definition} \label{def10}
Let $\pi_{\pm} :Y_{\pm} \to S^1$ be a $3$-dimensional fibration over $S^1$  together with a   $Q$-admissible $2$-form $\omega_{\pm}$. A fiberwise  symplectic   cobordism from $(Y_+, \pi_+, \omega_+)$ to $(Y_-, \pi_-, \omega_-)$ is a $4$-dimensional Lefschetz fibration $\pi_X: X \to B$ together with an admissible $2$-form $\omega_X$ such that $\partial X = Y_+\bigsqcup (-Y_-)$, $\pi_X \vert_{Y_{\pm}}= \pi_{\pm}$ and $\omega_X \vert_{Y_{\pm}}=\omega_{\pm}$.
\end{definition}
Given a  fiberwise  symplectic   cobordism $(X, \pi_X, \omega_X)$, we can  construct a symplectic cobordism $(X, \Omega_X)$   as follows:
Firstly, we need the following lemma.
\begin{lemma}
Let $(X, \pi_X) $ be a Lefschetz fibration,  $Y= \partial X$ and $U$ be a collar neighborhood  of $Y$ in $X$ such that $U\simeq (-\varepsilon, 0 ] \times Y$ for some $\varepsilon>0$. Also, the projection $\pi_X \simeq id \times \pi$ under above identification. Let $\eta$  be a closed $2$-form of $X$, then $\eta \vert_U = \eta\vert_{\{0\} \times Y} + d \mathfrak{a}$  for some $\mathfrak{a} \in \Omega^1(U)$. Moreover, $\mathfrak{a} \vert_{\{0\} \times Y} =0$.  \label{C33}
\end{lemma}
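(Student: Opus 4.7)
The plan is to prove this via the standard relative Poincar\'e-lemma argument, using the obvious deformation retraction of the collar onto its boundary slice.

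First, I would introduce the smooth map $F: U \times [0,1] \to U$ defined in the product coordinates by $F_s(t, y) = (st, y)$, so that $F_1 = \mathrm{id}_U$ and $F_0 = i \circ p$, where $p: U \to \{0\} \times Y$ is the projection $(t,y) \mapsto (0,y)$ and $i$ is the inclusion. Let $X_s = \partial F_s / \partial s$ denote the generating vector field of this isotopy; in coordinates $X_s(t,y) = t \, \partial_t$ (which is tangent to the collar and smoothly defined on $U$).

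Next, I would invoke the Cartan homotopy formula in its integrated form. Define the chain-homotopy operator
\begin{equation*}
K\eta \;=\; \int_0^1 F_s^*\bigl(\iota_{X_s} \eta\bigr)\, ds \;\in\; \Omega^1(U).
\end{equation*}
Standard calculation (differentiating $F_s^*\eta$ in $s$ via Lie derivatives and using Cartan's magic formula) gives
\begin{equation*}
F_1^*\eta - F_0^*\eta \;=\; d(K\eta) + K(d\eta).
\end{equation*}
Since $\eta$ is closed, $K(d\eta) = 0$, and therefore $\eta|_U = F_0^*\eta + d(K\eta) = p^*(\eta|_{\{0\}\times Y}) + d\mathfrak{a}$ with $\mathfrak{a} := K\eta$. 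Under the identification of $\eta|_{\{0\}\times Y}$ with its pullback to $U$ via $p$, this is exactly the claimed decomposition $\eta|_U = \eta|_{\{0\}\times Y} + d\mathfrak{a}$.

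Finally, to verify $\mathfrak{a}|_{\{0\}\times Y} = 0$, I would note that $X_s(0,y) = 0 \cdot \partial_t = 0$ at every point of the boundary slice and for every $s \in [0,1]$. Hence the $1$-form $\iota_{X_s}\eta$ vanishes at points of $\{0\}\times Y$, and in particular its pullback $F_s^*(\iota_{X_s}\eta)$ vanishes there (both because the contraction itself vanishes and because $F_s$ fixes these points). Integrating over $s \in [0,1]$ shows $\mathfrak{a}|_{\{0\}\times Y} = 0$. There is no real obstacle here; the only point requiring mild care is matching the notational convention in which $\eta|_{\{0\}\times Y}$ is read as a form on $U$ via the projection $p$, and checking that the retracting vector field genuinely vanishes on the boundary so that the integrand of $K\eta$ does too.
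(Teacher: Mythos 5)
Your proof is correct and is essentially the same argument the paper uses: the paper simply carries out the relative Poincar\'e lemma by hand, writing $\eta = \eta_s + ds\wedge\gamma_s$ and setting $\mathfrak{a} = \int_0^s \gamma_\tau\,d\tau$, which is exactly the primitive your homotopy operator $K\eta$ produces after the change of variables $u = st$. The verification that $\mathfrak{a}|_{\{0\}\times Y}=0$ is likewise the same fact in both versions (the integral from $0$ to $0$ vanishes, equivalently the retracting vector field vanishes along the slice).
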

\begin{proof}
Under the identification $U\simeq (-\varepsilon, 0 ] \times Y$, we can write $\eta= \eta_s + ds \wedge \gamma_s$, where $\eta_s \in \Omega^2(Y)$ and $\gamma_s \in \Omega^1(Y)$ for each $s \in (-\varepsilon, 0 ] $. Since $d\eta=0$,  we have
\begin{eqnarray*}
d_3 \eta_s + ds \wedge  \frac{\partial \eta_s}{\partial s} - ds \wedge d_3 \gamma_s=0,
\end{eqnarray*}
where $d_3$ is the exterior derivative of $Y$. Therefore, $d_3 \eta_s=0 $ and $ \frac{\partial \eta_s}{\partial s}= d_3 \gamma_s$  for each $s \in (-\varepsilon, 0 ] $. Integrate both side of the second identity,  we have
\begin{eqnarray*}
\eta_s = \eta \vert_{s=0} + \int_0^s  \frac{\partial \eta_{\tau}}{\partial {\tau}} =  \eta \vert_{\{0\} \times Y} + d_3 \left( \int_0^{s} \gamma_{\tau} \right).
\end{eqnarray*}
Let $\mathfrak{a}= \int_0^s \gamma_{\tau}$, then $\eta= \eta \vert_{\{0\} \times Y}  + d_3 \mathfrak{a} + ds \wedge \frac{\partial \mathfrak{a}}{ \partial s} = \eta \vert_{\{0\} \times Y} + d \mathfrak{a}$.
\end{proof}

 According to Lemma \ref{C33},  we can identify respectively neighborhoods of $Y_{+}$ and $Y_-$ in $(X, \pi_X, \omega_X)$  with collars of the form
\begin{eqnarray*}
&&( (-2\varepsilon, 0 ]\times Y_+ , \omega_+  + d\mu_+ ) ,\\
&& ( (0, 2\varepsilon]\times Y_- , \omega_-  + d\mu_-  ).
\end{eqnarray*}
Moreover, $\pi_X=id \times \pi_{\pm}$ under the identification. Fix  a cut off functions $\phi$ so that $\phi(s)=0$ when $s \le \frac{1}{2}$ and $\phi(s)=1$ when $s \ge \frac{3}{4}$.   Let $\phi_{\pm}(s)=\phi(\frac{s}{{\mp} 2 \varepsilon })$. We define a  new admissible $2$-form on $X$ by
\begin{eqnarray} \label{e46}
\omega_{X \phi}=
\begin{cases}
\omega_X , &  \mbox{on $X \setminus ((-2\varepsilon, 0 ]\times Y_+ \bigcup  (0, 2\varepsilon]\times Y_- )$}
\cr   \omega_+ + d(\phi_+(s_+) \mu _+) & \mbox{ on $(-2\varepsilon, 0 ]\times Y_+ $}
\cr \omega_- + d(\phi_-(s_-)\mu_-) & \mbox { on $(0, 2\varepsilon]\times Y_- $},
\end{cases}
\end{eqnarray}
where $s_{+}$ and $s_-$ are coordinates  of  $(-2\varepsilon, 0 ]$ and $(0, 2\varepsilon]$ respevtively. It is worth noting that  $d \omega_{X\phi} =0$ and  $\omega_{X\phi}$ is monotone whenever $\omega_X$ is monotone.

% Given an admissible $2$-form $\omega_X$ such that $\omega_X \vert_{Y_{\pm}}= \omega_{\pm}$.  From now on, we
Fix a volume form  $\omega_B$ on $B$.  We can define a symplectic form  $\Omega_X=\omega_{X \phi}+ K\pi_X^* \omega_B$ on $X$, where $K$ is a large positive number so that $\Omega_X \wedge \Omega_X >0$ everywhere. Fix a large $K$,  we find collar neighborhoods $(-2\varepsilon, 0 ] \times S_+^1$ and $(0, 2\varepsilon] \times S_-^1$ of $\partial B$ such that $K\omega_B=ds \wedge dt$ in these neighborhoods.  Then $(X, \Omega_X)$ is a symplectic cobordism from    $(Y_+, \pi_+, \omega_+)$ to $(Y_-, \pi_-, \omega_-)$. %The vector fields $V_{\pm}$ in Definition \ref{def9} is given by $\partial_s$.

\begin{remark}
 One can construct the symplectic cobordism  from $(X, \pi_X,\omega_X)$  directly by taking $\Omega_X= \omega_X + K\pi_X^*\omega_B$. The advantage of the above construction  is that the symplectic completion of $(X, \Omega_X)$ is still a Lefschetz fibration,  see the next section.
\end{remark}
\textbf{Notation.} To  simplify the notation,  from now on, we suppress the subscript $\phi$  from the notation  and  assume that $K=1$ all time. Also, given a fiberwise symplectic cobordism $(X, \pi_X, \omega_X)$, the symplectic form $\Omega_X$ over $X$  always refers to the symplectic form constructed above.

\subsection{Symplectic completion  } \label{section14}
In order to introduce holomorphic curves with asymptotic ends, we  define the completion of the symplectic cobordism  or fiberwise symplectic cobordism by adding cylindrical ends.  %To prepare for this, the following lemma  describes the behavior of a closed $2$-form near the $\partial X$.

Suppose that $(X, \Omega_X)$  is a symplectic   cobordism from $(Y_+, \pi_+, \omega_+)$ to $(Y_-, \pi_-, \omega_-)$. By   Moser's trick (Cf. Proposition 6.4 of \cite{Wen2}),  we can identify the neighborhoods of $Y_{+}$ and $Y_-$ in $(X, \Omega_X)$ symplectically with collars of the form
\begin{eqnarray*}
&&( (-\varepsilon, 0 ]\times Y_+ , \omega_+  + ds \wedge \pi^*_+dt ) ,\\
&& ( (0, \varepsilon]\times Y_- , \omega_-   + ds \wedge \pi^*_-dt ).
\end{eqnarray*}
Granted this identification,   define  the  completion of $(X, \Omega_X)$ by adding cylindrical ends
\begin{align*}
\overline{X}= ((-\infty, 0] \times Y_-) \cup_{Y_-}  X \cup_{Y_+} ([0, +\infty) \times Y_+).
\end{align*}
We call $ [0, +\infty) \times Y_+$ and  $(-\infty, 0] \times Y_- $ the ends of $\overline{X}$. The symplectic form $\Omega_X$ extends  to be $\omega_{\pm} + ds \wedge \pi^*_{\pm} dt $ over the ends.

%Given  a  fiberwise  symplectic   cobordism $(X, \pi_X, \omega_X)$, we can   construct a symplectic cobordism $(X, \Omega_X)$ as follows:

   It is worth noting that when $(X, \Omega_X)$ is constructed from a fiberwise symplectic cobordism $(X, \pi_X, \omega_X)$, then the fibration structure  automatically extends  to $\pi_X: \overline{X} \to \overline{B}$, where $\overline{B}= ((-\infty, 0] \times S^1_-) \cup_{S_-^1}  B \cup_{S^1_+} ([0, +\infty) \times S^1_+).$ % We extend the  admissible $2$-form $\omega_X$ and symplectic form $\Omega_X$  to be $\omega_{\pm}$  and $\omega_{\pm}+ ds \wedge \pi_{\pm}^*dt$ on the ends respectively.

Recall that $\omega_B = ds \wedge dt $ near $\partial B$. In order to define the almost complex structure adapted to fibration latter, we  fix a $\omega_B$-compatible complex structure $j_B$ over $\overline{B}$
such that  $j_B(\partial_s) =\partial_t$ over the ends of $\overline{B}$ throughout.

\subsection{Almost complex structures }
In order to define the periodic Floer homology and its cobordism maps in different settings, we need the following several types of almost complex structures.
\subsubsection{Almost complex structures on $\mathbb{R} \times Y$ }
\begin{definition}
Let $\pi:Y \to S^1$ be a surface fibration as before and $\omega$ be  an admissible $2$-form on $Y$. An almost complex structure $J$ on $\mathbb{R} \times Y$ is called symplectization admissible if $J$ satisfies the following properties:
\begin{enumerate}
\item
$J$ is compatible with  $\Omega=\omega + ds \wedge \pi^*dt$.
\item
$J$ is $\mathbb{R}$-invariant and $J(\partial_s)= R$.
\item
$J$ maps $\ker \pi_*$ to $ \ker \pi_*$ .
\end{enumerate}
\end{definition}

Let $\mathcal{J}_{comp}(Y, \pi, \omega)$   denote the space of all symplectization admissible almost complex structures. It is contractible and carries a natural $C^{\infty}$-topology. Let $\mathcal{J}_{comp}(Y, \pi, \omega)^{reg}$   denote the Baire subset of admissible almost complex structures that satisfy the criteria [J1], [J2a] and [J2b] in \cite{LT}.

\subsubsection{Almost complex structures on $\overline{X}$ }  \label{section12}

\begin{definition}\label{def8}
An almost complex structure $J$ on $\overline{X}$ is called  cobordism admissible if $J$ satisfies the following properties:
\begin{enumerate}
\item
There exists $J_{\pm} \in \mathcal{J}_{comp}(Y_{\pm}, \pi_{\pm}, \omega_{\pm})$ such that  $J$ agrees with $J_{+}$ and $J_-$  respectively on $[-\epsilon, +\infty)\times Y_+$ and $(-\infty, \epsilon] \times Y_-$ for some $\epsilon>0$.

\item
$J \vert_X $ is compatible with $\Omega_X$.
\end{enumerate}
We  use  $\mathcal{J}_{comp}(X, \Omega_X)$ to denote the space of   cobordism admissible  almost complex structures. It is contractible and carries a natural $C^{\infty}$-topology. Fix symplectization admissible almost complex structures $J_{+}$ and $J_-$, %$J_{\pm} \in \mathcal{J}_{comp}(Y_{\pm}, \pi_{\pm}, \omega_{\pm})$,
let $ \mathcal{J}_{comp}(X,  \Omega_X,  J_{\pm})$    denote a subset of $ \mathcal{J}_{comp}(X, \Omega_X)$  such that $J \vert_{\mathbb{R}_{\pm} \times Y_{\pm}}$ agrees with $J_{\pm}$ along $\mathbb{R}_{\pm} \times \gamma_{\pm}$, where $\gamma$ runs over all periodic orbits with degree less than $Q$, and $\mathbb{R}_+=[0, \infty)$ and  $\mathbb{R}_-=(-\infty, 0 ]$.
%Let  $ \mathcal{J}_{comp}(X, \Omega_X,  J_{\pm})$% similarly to the one in Definition \ref{def2}.
%to denote a subset of $ \mathcal{J}_{comp}(X, \pi_X, \omega_X)$  such that $J \vert_{\mathbb{R}_{\pm} \times Y_{\pm}}$ agrees with $J_{\pm}$ in a $\mathbb{R}$ invariant neighborhood of $\mathbb{R}_{\pm} \times \gamma $(including $J=J_{\pm}$ along $\mathbb{R}_{\pm} \times \gamma_{\pm}$ ), here $\gamma$ runs over all periodic orbits with degree less than $Q$.
\end{definition}

The following definition only for the case that $(X, \pi_X, \omega_X)$ is a fiberwise symplectic cobordism.
\begin{definition} \label{def2}
An almost complex structure $J$ on $\overline{X}$  is called adapted to the fibration if $J$ satisfies the following properties:
\begin{enumerate}
\item
There exists $J_{\pm} \in \mathcal{J}_{comp}(Y_{\pm}, \pi_{\pm}, \omega_{\pm})$ such that  $J$ agrees with the $J_{+}$ and $J_-$  respectively on $[-\epsilon, +\infty)\times Y_+$ and $(-\infty, \epsilon] \times Y_-$   for some $\epsilon>0$.
\item
$\pi_X: \overline{X} \to \overline{B}$ is complex linear with respect to $(J, j_B)$, i.e., $j_B \circ d\pi_X =  d\pi_X \circ J $.
\item
Away from the critical points of $\pi_X$, $J \vert_{\ker d\pi_{X}}$ is compatible with $\omega_X$.
 \item
 Near the critical points of $\pi_X$, under the coordinate charts  $(z_1, z_2)$ in Definition \ref{def7}, $J$ agrees with the  standard  complex structure.
\end{enumerate}
We use  $\mathcal{J}_{tame}(X, \pi_X, {\omega_X})$ to denote  the space of   almost complex structures adapted to the fibration, this space is contractible and carries a natural $C^{\infty}$-topology. Fix $J_{\pm} \in \mathcal{J}_{comp}(Y_{\pm}, \pi_{\pm}, \omega_{\pm})$,  define $ \mathcal{J}_{tame}(X, \pi_X, \omega_X,  J_{\pm})$ similarly as in Definition \ref{def8}.

%let $ \mathcal{J}_{tame}(X, \pi_X, \omega_X,  J_{\pm})$ to  denote a subset of $ \mathcal{J}_{tame}(X, \pi_X, \omega_X)$  such that $J \vert_{\mathbb{R}_{\pm} \times Y_{\pm}}$ agrees with $J_{\pm}$ along $\mathbb{R}_{\pm} \times \gamma_{\pm}$, here $\gamma$ runs over all periodic orbits with degree less than $Q$, and $\mathbb{R}_+=[0, \infty)$ and  $\mathbb{R}_-=(-\infty, 0 ]$.

%agrees with $J_{\pm}$ in a $\mathbb{R}$ invariant neighborhood of $\mathbb{R}_{\pm} \times \gamma $(including the case $J=J_{\pm}$ along $\mathbb{R}_{\pm} \times \gamma_{\pm}$ ), here $\gamma$ runs over all periodic orbits with degree less than $Q$, and $\mathbb{R}_+=[0, \infty)$ and  $\mathbb{R}_-=(-\infty, 0 ]$.
\end{definition}

\begin{remark}
The symplectic form  $\Omega_X$ gives a decomposition   $TX = TX^{hor} \oplus  TX^{vert}$ of the  tangent bundle of $X$, where $TX^{vert}= \ker \pi_{X*}$ and $TX_{x}^{hor}=\{v \in TX_x \vert   \Omega_X(v, w)=0  \mbox{ } \forall  w \in  TX_x^{vert} \}$. With respect to this  splitting, $J$ can be written as $ J=\left[
 \begin{matrix}
   J^{hh} & J^{vh}  \\
   J^{hv} & J^{vv}
  \end{matrix}
  \right]  $.
Note that the complex linear condition implies that $J^{vh}=0$.   In addition, it is easy to check that $J$  is ${\Omega}_X$-tame, however,  $J$ is not compatible with ${\Omega}_X$ unless $J^{hv}=0$.
\end{remark}

In order to meet the  transversality condition, we often require an almost complex structure $J$ satisfies  certain  criteria.  In the rest of the paper, we  refer to such an almost complex structure $J$ as a generic almost complex structure. The precise meaning will be explained in Section \ref{section15}.

Let us explain why we need two different kinds of almost complex structures here.
Let $(X, \pi_X, \omega_X)$ be a fiberwise symplectic cobordism. To define the cobordism maps $HP (X, \Omega_X, J, \Lambda_X)$  by  using holomorphic curve method,  a genus bound  on holomorphic curves (Lemma \ref{C43}) plays a key role in our proof.  To obtain this bound, we require that $\pi_X$ is complex linear. This is  the reason why we need Definition \ref{def2}.

To define the cobordism maps $HP_{sw}(X, \Omega_X, \Lambda_X)$, in order to apply the techniques in \cite{HT}, we want to perturb the Seribrg Witten equations  by symplectic form $\Omega_X$.  To this end, we need to find a metric such that $\Omega_X$ is self-dual. The natural way is to define  the metric by $g(\cdot, \cdot)=\Omega_X( \cdot, J \cdot)$, where $J $ is an almost complex structure compatible with $\Omega_X$. So one may consider the subset in  $\mathcal{J}_{tame}(X, \pi_X, \omega_X)$   consisting  of almost complex structures  that  are compatible with $\Omega_X$. We denote this subset by  $\mathcal{J}$. Of course, one also can use the more general almost complex structures in  $\mathcal{J}_{comp}(X, \Omega_X)$.

When we show that these two definitions of cobordism maps agree, we need the almost complex structures to be nice enough  so that   both of $HP_{sw}(X, \Omega_X, J,\Lambda_X)$ and  $HP (X, \Omega_X, J, \Lambda_X)$  are well defined  simultaneously. Unfortunately, the potential candidate  $\mathcal{J}$ doesn't work.   Therefore, we need to  search the almost complex structure in a larger  space    $\mathcal{J}_{comp}(X, \Omega_X)$.

\subsection{Composition of symplectic cobordism} \label{section22}
Let $(X_+,   \Omega_{X_+})$ and  $(X_-,  \Omega_{X_-})$ be  respectively symplectic  cobordisms from $(Y_+, \pi_+, \omega_+)$ to $(Y_0, \pi_0, \omega_0)$ and  from $(Y_0, \pi_0, \omega_0)$ to  $(Y_-, \pi_-, \omega_-)$. There is a natural operation on these two cobordisms  by gluing them along   their common boundaries. Here we only focus on the fiberwise symplectic cobordism case, the general case can be formulated similarly.

Define $X_R= X_+ \cup_{\{R\} \times Y_0} [-R,R]_s\times Y_0 \cup_{\{-R\} \times Y_0} X_-$.
Note that in a collar neighborhood of $Y_0$, both of $\omega_{X+}$ and $\omega_{X_-}$   are equal to $\omega_0$ by our construction in Section \ref{section10}, so we can glue them together smoothly.  The admissible $2$-form on $X_R$ is defined by
\[\omega_{X_R}=
\left\{
\begin{array}
    {r@{\quad:\quad}l}
    \omega_{X+} & \mbox{on $X_+$} \\
    \omega_{X-}  & \mbox{ on $X_-$} \\
%    \omega_+ + ds\wedge \pi_+^*dt& \mbox{ on $Y_+\times [R,\infty)$}\\
    \omega_0 & \mbox { on $Y_0\times [-R,R]$}\\
 %   \omega_- + ds\wedge  \pi_-^*dt& \mbox{ on $Y_-\times [-R,-\infty)$}
\end{array}
\right.
\]
Observe  that if $(X_{+}, \omega_{X_{+}})$ and  $(X_{-}, \omega_{X_{-}})$ are monotone with  the same coefficient $\tau$, then $(X_R, \omega_{X_R})$ is also monotone. % When $R=0$,
The result $(X_R, \pi_{X_R}, \omega_{X_R})$ is called the composition of  $(X_+, \pi_{X_+}, \omega_{X_+})$ and $(X_-, \pi_{X_-}, \omega_{X_-})$. % denoted by $(X_+ \circ X_-, \pi_{X_+\circ X_-}, \omega_{X_+ \circ X_{-}})$.

 Let   $\overline{X}_R$  be the usual symplectic completion by adding cylindrical ends.
 Let $J_{X_{{\pm}}} \in \mathcal{J}_{tame}(X_{\pm}, \pi_{X_{\pm}}, \omega_{X_{\pm}})$ be a generic almost complex structure on $X_{\pm}$ so that $J_{X_+}=J_+ $ on $\mathbb{R}_{+} \times Y_{+}$, $J_{X{\pm}}=J_0$ on $\mathbb{R}_{\mp} \times Y_0$ and $J_{X_-} =J_-$ on $\mathbb{R}_- \times Y_-$. These almost complex structures $J_{X_+} $ and $J_{X_-}$ induces a natural   almost complex structure $J_R$ over  $\overline{X_{R}}$. It is given by
%The  almost complex structure on $\overline{X_{R}}$ is given by
\[J_R=
\left\{
\begin{array}
    {r@{\quad:\quad}l}
   J_{X_+}{} & \mbox{on $X_+$} \\
    J_{X_-}   & \mbox{ on $X_-$} \\
    J_+& \mbox{ on $Y_+\times [R,\infty)$}\\
   J_0& \mbox { on $Y_0\times [-R,R]$}\\
     J_-& \mbox{ on $Y_-\times [-R,-\infty)$}.
\end{array}
\right.
\]
 Note that $J_R \in \mathcal{J}_{tame}(X_{R}, \pi_{X_{R}}, \omega_{X_{R}}) $. % By Definition  \ref{def2}, there exists $\epsilon >0$ such that $J_R =J_0 $ over $[-\epsilon -R, R+ \epsilon ] \times Y_0$.  Let $F: \overline{X_0} \to \overline{X_R}$ be the diffeomorphism such that $F(s, y) =(f(s), y)$ for $(s,y)  \in [-\epsilon,   \epsilon ] \times Y_0$ and $F=id$ outside this region, where $f:  [-\epsilon,   \epsilon ] \to  [R-\epsilon,   R+\epsilon ]$ be an increasing function.

 %$J_R \vert_{R=0}$ is called the composition of almost complex structures $J_{X_+} $ and $J_{X_-}$, denoted by $J_{X_+} \circ J_{X_-}$.

\subsection{ Local coefficient.} \label{section9}
The  periodic Floer homology cannot be defined over $\mathbb{Z}$ coefficient in general. We need to introduce a  concept called local coefficient in this subsection.
\begin{definition}
Given orbit  sets $\alpha_+=\{(\alpha_{+, i}, m_i)\}$ and $\alpha_-=\{(\alpha_{-,j}, n_j)\}$, define $H_2(X, \alpha_+, \alpha_-)$ to be the set of relative homology classes of   2-chains in $X$ such that $\partial Z = \sum_i m_i \alpha_{+,i} - \sum_j n_j \alpha_{-,j} $.  Here two 2-chains  are equivalent if and only if their difference is  boundary of a 3-chains. If $X = [0,1] \times Y$, we just denote the set of relative homology classes  by $H_2(Y, \alpha_+, \alpha_-)$.
\end{definition}
A local coefficient, $\Lambda_P$, of $R$-module for periodic Floer homology is defined as follows. For each orbit set $\alpha$, we have a $R$-module $\Lambda_{\alpha}$, and for  each relative homology class $Z \in H_2(Y, \alpha, \beta)$, we assign  an isomorphism $\Lambda_Z: \Lambda_{\alpha} \to \Lambda_{\beta}$ satisfying the composition law $\Lambda_{Z_1+Z_2}= \Lambda_{Z_2} \circ \Lambda_{Z_1}$, where $Z_1 \in H_2(Y, \alpha, \beta)$ and $Z_2 \in H_2(Y,  \beta, \gamma)$.

Let $\omega$ be an admissible $2$-form on $(Y, \pi)$.  Let $S \subset H_2(Y, \alpha, \beta)$ be a $[\omega]$-finite set, that is  $S \bigcap \{Z \in H_2(Y, \alpha, \beta): \int_Z \omega \le C \}$ is finite for any $C \in \mathbb{R}$.    To ensure that a series of isomorphisms $\sum\limits_{Z \in S} n_Z \Lambda_Z$ is convergent and the terms of this series can be rearranged,
%$\left(\sum\limits_{Z\in S_1} n_{Z} \Lambda_{Z}\right) \circ \left(\sum\limits_{W \in S_2} n_{W} \Lambda_{W}\right)= \sum\limits_{Z+W} n_{Z}n_{W}\Lambda_{W} \circ \Lambda_{Z} $,
we required that the local coefficient $\Lambda_P$ is $[\omega]$-complete in the the sense of \cite{KM}.
%\begin{enumerate}
%\item
%$\Lambda_{\alpha}$ is a topological $R$-module and $0 \in \Lambda_{\alpha}$ has a neighborhood consisting of sub-modules.
%\item
%For any $[\omega]$-finite set $S \subset H_2(Y, \alpha, \beta)$, the set of isomorphism $\{\Lambda_Z \in Hom( \Lambda_{\alpha}, \Lambda_{\beta}) \vert Z \in S\}$ is equicontinuous. In addition, $\{\Lambda_Z\}_{Z \in S}$ converges to zero with respect to compact-open topology.
%\end{enumerate}

Let $(X, \Omega_X)$ be a symplectic  cobordism from $(Y_+, \pi_+, \omega_+)$ to $(Y_-, \pi_-, \omega_-)$ and $\Lambda^{\pm}_{P}$ be $[\omega_{\pm}]$-complete local coefficient. A $X$-morphism $\Lambda_X$ between $\Lambda^+_{P}$ and $\Lambda^-_{P}$ means that for each relative homology class $Z \in H_2(X, \alpha_+, \alpha_-)$, we have an isomorphism $\Lambda_X(Z): \Lambda_{\alpha_+} \to \Lambda_{\alpha_-}$  satisfying the  composition law $\Lambda_X(Z_+ +Z +Z_-) = \Lambda^-_{Z_-} \circ \Lambda_X (Z) \circ \Lambda^+_{Z_+}$, where $Z_{\pm} \in H_2(Y_{\pm}, \alpha_{\pm}, \beta_{\pm})$ and $Z \in H_2(X, \beta_+, \alpha_-)$. For the same reasons mentioned in last paragraph, we require that $\Lambda_X$ satisfies the following condition:
\begin{enumerate}
\item[]
For any $\Omega_X$-finite set $S \subset H_2(X, \alpha_+, \alpha_-)$, the set of isomorphism $\{\Lambda_X(Z) \in Hom( \Lambda_{\alpha_+}, \Lambda_{\alpha_-}) \vert Z \in S\}$ is equicontinuous. In addition, $\{\Lambda_X(Z)\}_{Z \in S}$ converges to zero as  $Z$ runs through $S$.
\end{enumerate}
It is an analogy of the $[\omega]$-completeness  in \cite{KM}.

The local coefficient, $\Lambda_S$, of $R$-module for Seiberg Witten cohomology can be defined similarly.  The orbit sets and  relative homology class  are to be replaced by configurations and by relative homotopy class respectively. For the definition of  configurations and relative homotopy class,  please refer  to Section \ref{section3}.

\begin{remark}
We  assume that $R= \mathbb{Z}_2$  unless otherwise stated.
\end{remark}

\subsubsection{  Local coefficient and $2$-form.} \label{section23}
Typical examples of $\Lambda_P$ and $\Lambda_X$ are  given as follows.
\begin{definition}
Let $R$ be a ring,  the universal Novikov ring $\mathcal{R}$ over $R$ is defined by $\mathcal{R}= \{ \sum\limits_i a_i t^{\lambda_i} \vert  i \in \mathbb{R}, a_i \in R, \forall C>0, \# \{i \vert \lambda_i <C , a_i \ne 0\} < \infty\}$.
\end{definition}
Fix an  admissible $2$-form  $\omega$ on $Y$. We introduce a local coefficient system $\mathcal{R}_{\omega}$  which is associated to $\omega$.  For the fiber of $\mathcal{R}_{\omega}$ we take everywhere the universal Novikov ring $\mathcal{R}$. If $Z$ is a relative homology class  from $\alpha$ to $\beta$, then $\mathcal{R}_{\omega}(Z): \mathcal{R}_{\alpha} \to \mathcal{R}_{\beta}$ is defined to be multiplication by $t^{\int_Z \omega}$. One can check that it is $ [\omega]$-complete in the sense of \cite{KM}. %where  $c_{\Gamma}=2PD(\Gamma) + c_1(\ker \pi_*)$.

Let $(X, \pi_X, \omega_X)$ be a fiberwise symplectic   cobordism from $(Y_+, \pi_+, \omega_+)$ to $(Y_-, \pi_-, \omega_-)$. Then the $X$-morphism $\mathcal{R}_{\omega_X}: \mathcal{R}_{\omega_+} \to \mathcal{R}_{\omega_-}$ is defined as follows.  If  $Z \in H_2(X, \alpha_+, \alpha_-)$ is a relative homology class  from $\alpha_+$ to $\alpha_-$, then $\mathcal{R}_{\omega_X}(Z): \mathcal{R}_{\alpha_+} \to \mathcal{R}_{\alpha_-}$ is defined to be multiplication by $t^{\int_Z \omega_X}$.

\section{Statement of the main results}
Let us clarify the notation and conventions we will be using. We fix  a number $Q> g(\Sigma)$  and   a homology class $\Gamma_{\pm} \in H_1(Y_{\pm}, \mathbb{Z})$ satisfies $\Gamma_{\pm} \cdot [\Sigma] \le Q$ throughout. We assume that the manifolds $Y_{+}$, $Y_-$ and $X$ are connected.  In addition, the 2-forms $\omega_{+}$, $\omega_-$ and $\omega_X$ are supposed to be $Q$-admissible.    % and the genera of the regular fibers will be implicitly assumed to be at least two, unless indicated otherwise.    Through out the note, we assume that the manifolds $Y_{\pm}$ and $X$ are connected. \item
%If both of $Y_+$ and $Y_-$ are non-empty, by homological reasons,   we assume that $\Gamma_+ \cdot [\Sigma] = \Gamma_- \cdot [\Sigma]$, otherwise, the cobordism maps are zero.
%If both of $Y_+$ and $Y_-$ are non-empty, by Poincar\'e duality,  there is  no relative class $\Gamma_X \in H_2(X, \partial X, \mathbb{Z})$ such that $\partial_{Y_{\pm}} \Gamma_X =\Gamma_{\pm}$ whenever $\Gamma_+ \cdot [\Sigma] \ne \Gamma_- \cdot [\Sigma]$. Hence, we assume that $\Gamma_+ \cdot [\Sigma] = \Gamma_- \cdot [\Sigma]$ when $Y_+$ and $Y_-$ are nonempty throughout.

%The periodic Floer homology cannot be defined over $\mathbb{Z}$ coefficient in general, we need to introduce the concept called local coefficient.  Similarly, to define the cobordism map,

The following theorems are the main results of this paper.
In the first theorem, we define the cobordism maps on PFH for general cases, the maps are denoted by $HP_{sw}$.  The subscript ``sw"  is used to emphasize that they are defined by Seiberg Witten theory.
\begin{thm}\label{Thm3}
Let $(Y_{\pm}, \pi_{\pm})$ be a  $3$-dimensional fibration over $S^1$  with fiber $\Sigma$, together with a  $Q$-admissible $2$-form $\omega_{\pm}$.  Fix  $\Gamma_{\pm} \in H_1(Y_{\pm}, \mathbb{Z})$ satisfying $g(\Sigma)<\Gamma_{\pm} \cdot [\Sigma] $. Let  $(X, \Omega_X) $ be a symplectic   cobordism  from $(Y_+, \pi_+, \omega_+)$ to   $(Y_-, \pi_-, \omega_-)$.   Let $\Lambda_P^{\pm}$ be a $ [\omega_{\pm}]$-complete local coefficient for the periodic Floer homology and $\Lambda_X$ be a  $X$-morphism between $\Lambda_P^{+}$ and $\Lambda_P^-$.   Then   $(X,   \Omega_X) $  induces a module homomorphism
\begin{eqnarray*}
 {HP}_{sw}(X, \Omega_X, \Lambda_X ): HP_*(Y_+, \omega_+,\Gamma_+, \Lambda^+_P) \to HP_*(Y_-, \omega_-, \Gamma_-,  \Lambda^-_P)
\end{eqnarray*}
with natural decomposition
\begin{equation} \label{e27}
HP_{sw}(X, \Omega_X, \Lambda_X )=\sum\limits_{\Gamma_X \in H_2(X, \partial X, \mathbb{Z}), \partial_{Y_{\pm}}\Gamma_X= \Gamma_{\pm } }HP_{sw}(X, \Omega_X, \Gamma_X,  \Lambda_X )
\end{equation}
and  satisfying the following properties:
\begin{enumerate}
\item (Composition rule)
Let $(X_+,  \Omega_{X+}) $ and $(X_-, \Omega_{X-}) $  be   symplectic  cobordisms from $(Y_+, \pi_+, \omega_+)$ to $(Y_0, \pi_0, \omega_0)$  and  from $(Y_0, \pi_0, \omega_0)$  to $(Y_-, \pi_-, \omega_-)$ respectively.     Then  we have
\begin{eqnarray*}
&& \sum_{\Gamma_X \vert_{X_{\pm}}= \Gamma_{X_{\pm}}, \Gamma_X \in H_2(X, \partial X, \mathbb{Z})} HP_{sw}(X,\Omega_X,  \Gamma_X, \Lambda_X)\\
&=& HP_{sw}(X_-,\Omega_{X_-},  \Gamma_{X_-}, \Lambda_{X_-}) \circ HP_{sw}(X_+,\Omega_{X_+},   \Gamma_{X_+}, \Lambda_{X_+}),
\end{eqnarray*}
where $(X,  \Omega_X)$ is the composition of $(X_+, \, \Omega_{X_+})$ and $(X_-, \Omega_{X_-})$ defined in Section \ref{section22},  and $\Lambda_{X}= \Lambda_{X+} \circ \Lambda_{X-}$.

\item (Invariance)
Suppose that $\Omega_{X_1}$ and $\Omega_{X_2}$ are   symplectic forms on $X$ such that $\Omega_{X_1} =\Omega_{X_2} $ in  collar neighborhoods of $Y_{+}$ and $Y_-$,   then $$HP_{sw}(X, \Omega_{X_1}, \Lambda_X )=HP_{sw}(X, \Omega_{X_2}, \Lambda_X ).$$

\item (Commute with U-map)
\begin{equation*}
 U_+\circ HP_{sw}(X, \Omega_X,   \Lambda_X)=HP_{sw} (X, \Omega_X, \Lambda_X)\circ U_-.
\end{equation*}

\item (Holomorphic curve  axiom)
Given  a cobordism admissible almost complex structure $J \in \mathcal{J}_{comp}(X,  \Omega_X)$ such that $J_{\pm} =J \vert_{\mathbb{R}_{\pm}} \times Y_{\pm}$ is generic, then there is a chain map
$$CP_{sw}(X, \Omega_X, J, \Lambda_X): CP_*(Y_+, \omega_+, \Gamma_+, J_+, \Lambda_P^+) \to CP_*(Y_-, \omega_-, \Gamma_-,  J_-, \Lambda^-_P) $$ inducing $HP_{sw}(X, \Omega_X, J, \Lambda_X)$ with the following properties:
If there is no $J$ holomorphic current from  $\alpha_+$ to  $\alpha_-$ with zero ECH index, then $$<CP_{sw}(X, \Omega_X, J, \Lambda_X) \alpha_+, \alpha_-> = 0.$$
\end{enumerate}
\end{thm}

Before we state the second result, let us introduce the assumptions $(\spadesuit)$ as follows. % under certain assumptions, we give an alternative definition of the cobordism map by using holomorphic curve method.
%\textbf{Assumption $(\spadesuit)$ :}
\begin{definition}
Let $(X, \pi_X, \omega_X)$ be a fiberwise symplectic  cobordism   from $(Y_+, \pi_+,\omega_+)$ to $(Y_-, \pi_-,\omega_-)$. Let $B$ be the base manifold of $(X, \pi_X)$.  We say that the fiberwise symplectic  cobordism satisfies assumptions  $(\spadesuit)$  if any of the following  hold: \label{asump}
\begin{enumerate} [label=\textbf{$\spadesuit$.\arabic*}]
\item  \label{assumption1}
 $g(B) \ge 2$,
\item \label{assumption2}  %[ $(\spadesuit$\Romannum{2}).]
When  $g(B)=1$,  each periodic orbit of $(Y_+, \pi_+,\omega_+)$ with degree $1$ is either $Q$-negative elliptic or hyperbolic, and  each periodic orbit of $(Y_-, \pi_-,\omega_-)$ with degree $1$ is either $Q$-positive elliptic or hyperbolic.
%, where $D \ge 1$.
\item  \label{assumption3} % [ $(\spadesuit$\Romannum{3}).]
When $g(B)=0$, then we have the following properties:
\begin{enumerate}
\item
$Y_{+} \ne \emptyset$ and $Y_- \ne \emptyset$.
\item
Each  periodic orbit of $(Y_+, \pi_+,\omega_+)$ with degree less than $Q$ is either $Q$-negative elliptic or hyperbolic.
%, where $D \ge Q$.
\item
 Each periodic orbit of $(Y_-, \pi_-,\omega_-)$ with degree less than $Q$ is either $Q$-positive elliptic or hyperbolic.
 %, where $D\ge Q$.
\end{enumerate}
\end{enumerate}
\end{definition}
The assumptions on $Q$-positive (negative) elliptic orbits are inspired by \cite{H4}, the existence of these orbits are used to ensure that the ECH indexes  of holomorphic curves  are nonnegative.  Under the assumptions $(\spadesuit)$, we give a Seiberg Witten free definition of the cobordism maps in the following theorem.

\begin{thm}\label{Thm1}
Let $(Y_{\pm}, \pi_{\pm})$ be a $3$-dimensional fibration over $S^1$ with fiber $\Sigma$, together with a $Q$-admissible $2$-form $\omega_{\pm}$ on $Y_{\pm}$.  Fix $\Gamma_{\pm} \in H_1(Y_{\pm}, \mathbb{Z})$ satisfying $g(\Sigma)<\Gamma_{\pm} \cdot [\Sigma]  $. Let  $( X, \pi_X, \omega_X) $ be a fiberwise symplectic   cobordism  from $(Y_+, \pi_+, \omega_+)$ to   $(Y_-, \pi_-, \omega_-)$ satisfying  $(\spadesuit)$.
%condition  \Romannum{1} or \Romannum{2} or \Romannum{3}.
Let $\Lambda_P^{\pm}$ be a $ [\omega_{\pm}]$-complete local coefficient for the periodic Floer homology and $\Lambda_X$ be a $X$-morphism between $\Lambda_P^{+}$ and $\Lambda_P^-$.  Suppose that  $(X, \pi_X)$ contains no separating singular fiber, then  for generic $J \in \mathcal{J}_{tame}(X, \pi_X, \omega_X)$,
%or $\omega_X$ is monotone, then for generic $J \in \mathcal{J}_{tame}(X, \pi_X, \omega_X)$ in the former case and   $J \in \mathcal{V}_{comp}(X, \pi_X, \omega_X)$ in the later case,
$(X, \pi_X, \omega_X) $  induces a module homomorphism
\begin{eqnarray*}
HP(X, \Omega_X,  J, \Lambda_X ): HP_*(Y_+, \omega_+,\Gamma_+, J_+, \Lambda^+_P) \to HP_*(Y_-, \omega_-, \Gamma_-,  J_-, \Lambda^-_P)
\end{eqnarray*}
with   natural decomposition (\ref{e27}). The map  $HP(X, \Omega_X,  J, \Lambda_X )$ is defined by counting
  embedded  holomorphic curves, without using Seiberg Witten theory. (Recall that the symplectic form here is given by $\Omega_X=\omega_X + \pi_X^*\omega_B$.) Moreover, the cobordism maps satisfy the following properties:

\begin{enumerate}
\item (Composition rule)
Let $(X_+, \pi_{X+}, \omega_{X+}) $ and $(X_-, \pi_{X-}, \omega_{X-}) $  be   respectively  fiberwise  symplectic    cobordisms from $(Y_+, \pi_+, \omega_+)$ to $(Y_0, \pi_0, \omega_0)$  and  from $(Y_0, \pi_0, \omega_0)$  to $(Y_-, \pi_-, \omega_-)$.  Assume   both of them  satisfy  assumptions $(\spadesuit)$. Let $(X_R, \pi_{R}, \omega_{X_R})$ be  the composition of $(X_+, \pi_{X_+}, \omega_{X_+})$ and $(X_-, \pi_{X_-}, \omega_{X_-})$, and $J_R$  is defined in  Section \ref{section22}. Let $\Lambda_{X_R}= \Lambda_{X+} \circ \Lambda_{X-}$.  Assume that  $J_{X \pm}$ are generic   and there exists $R_0 \ge 0$ such that  $J_{R}$ is generic for any $R \ge R_0$, then we have
\begin{equation*}
\begin{split}
& \sum_{\Gamma_{X_R} \vert_{X_{\pm}}= \Gamma_{X_{\pm}}, \Gamma_{X_R} \in H_2(X_R, \partial X_R, \mathbb{Z})} {HP}(X_R,\Omega_{X_R},   \Gamma_{X_R},  J_R,  \Lambda_{X_R}) \\
&= HP(X_-,\Omega_{X_-},  \Gamma_{X_-},  J_{X_-}, \Lambda_{X_-}) \circ HP(X_+,\Omega_{X_+}, \Gamma_{X_+},  J_{X_+}, \Lambda_{X_+}),
\end{split}
\end{equation*}
for any $R \ge R_0$.

\item (Commute with U-map)
\begin{equation*}
 U_+\circ HP(X, \Omega_X, J,  \Lambda_X)=HP(X, \Omega_X, J,  \Lambda_X)\circ U_-.
\end{equation*}
\item (Blowup)
Suppose that $(\Gamma_X, \omega_X)$ is monotone. Take a point $x \in X$ in regular fiber and a generic $J \in \mathcal{J}_{tame}(X, \pi_X, \omega_X)$ such that $J$ is integral near $x$. Let $(X, 'J')$ be  the  blowup of $(X, J)$ at $x$. Then there exists an admissible $2$-form $\omega_{X'}$ such that $J' \in \mathcal{J}_{tame}(X', \pi_{X'}, \omega_{X'})$. Let $E$ be the homology class of the exceptional sphere. Regard   $\Gamma_X \in H_2(X', \partial X', \mathbb{Z})$ and assume that $\Gamma_X \cdot E=0$. Let  $\Lambda_{X'}$ be  a $X'$-morphism such that $\Lambda_X(Z)=\Lambda_{X'}(Z)$ for any $Z$ with relative class $\Gamma_X$, then we have
\begin{equation*}
HP(X, \Omega_X,  \Gamma_X, J, \Lambda_X)= HP(X', \Omega_{X'}, \Gamma_X, J',  \Lambda_{X'}).
\end{equation*}

%\item
%The following two statements are  about the case that $d=0$. In the following two cases, $HP(X, \omega_X, \Lambda_X)$ is still well-defined and independent on the almost complex structures.
%\begin{enumerate}
%
%
%
%
%\item
%Suppose that $Y_{\pm}=\emptyset$ and $b_2^+(X) >1$,  then
%\begin{eqnarray*}
%HP(X, \omega_X , \Lambda_X)(\Lambda_{[\emptyset]})= \sum\limits_{A \in H_2(X, \mathbb{Z})}Gr(X, \Omega, A) \Lambda_X(A) \circ \Lambda_{[\emptyset]}
%\end{eqnarray*}
%
%\item
%Suppose that $Y_{+} \ne \emptyset$ or $Y_{-} \ne \emptyset$ and  $X$ is relative minimal.  When $d=0$, $\overline{HP}(X, \omega_X, \Lambda_X)$ is a canonical isomorphism.
%\end{enumerate}
\end{enumerate}
\end{thm}

\begin{remark}  \label{r2}
Here are some remarks about Theorem \ref{Thm1}:
\begin{itemize}
\item
Note that  if  $(X_{+},  \pi_{X{+}}, \omega_{X_+})$ and  $(X_{-}, \pi_{X{-}}, \omega_{X_-})$  satisfy  assumptions $(\spadesuit)$, then their composition also satisfies the assumptions $(\spadesuit)$. Thus the statement about composition rule  in Theorem \ref{Thm1} makes sense.
\item
The cobordism maps $HP(X, \Omega_X, J, \Lambda_X)$ in Theorem \ref{Thm1}  also can be defined  over local coefficient of  $\mathbb{Z}$ module. See the discussion in Section \ref{section19}.

\item
 Theorem \ref{Thm1} is also true  when $Y_{+}$ and $Y_-$ are disconnected.

\item
If $X$ has separating singular fibers and  each  separating singular fiber contains a exceptional sphere. Let $\{E_i\}_i$ denote the homology class of these  exceptional spheres.  Assume that  $\Gamma_X \in H_2(X, \partial X, \mathbb{Z})$ such that  $\Gamma_X \cdot E_i \le 0$ for  all $i$.  %Suppose that $g(\Sigma_i')=0$, i.e. $\Sigma_i'$ is exceptional sphere.
%Then for any $\Gamma_X \in H_2(X, \partial X, \mathbb{Z})$ such that $\partial_{Y_{\pm}} \Gamma_X =\Gamma_{\pm}$  and $\Gamma_X \cdot E_i=0$ for  all $i$,
Then the cobordism maps $HP(X, \Omega_X, \Gamma_X, J,     \Lambda_X)$ are  still well defined. We will discuss this in Section \ref{section4}.

% More detail please see remark \ref{r1}.

\end{itemize}
\end{remark}

\begin{thm}\label{Thm2}
\begin{comment}
Given the same assumption of Theorem \ref{Thm1}, suppose that  $(X, \pi_X)$ is relative minimal and $\omega_X$ is monotone.
For generic $J \in \mathcal{V}_{comp}(X,\pi_X, \omega_X)$,
$HP(X, \omega_X,  J, \Lambda_X)$ is equivalent to $HM(X, \omega_X,  r, \Lambda_X)$ under the isomorphism $\mathcal{T}^{\pm}_{r*}$. In other words, the following diagram commutes: \label{C21}
\begin{displaymath}
\xymatrix{
HP_*(Y_+, \omega_+ ,\Gamma_+, J_+ , \Lambda_P^+) \ar[d]^{HP(X, \omega_X,  J,  \Lambda_X) } \ar[r]^{\mathcal{T}_{r*}^+} & HM^{-*}(Y_+, \mathfrak{s}_{\Gamma_+}, -\pi \varpi^+_r,  J_+, \Lambda_S^+)  \ar[d]^{ HM(X, \omega_X,  r,  \Lambda_X) }\\
HP_*(Y_-, \omega_-, \Gamma_-,  J_- ,\Lambda_P^-) \ar[r]^{\mathcal{T}_{r*}^-} &HM^{-*}(Y_-, \mathfrak{s}_{\Gamma_-}, -\pi \varpi^-_r, J_-, \Lambda_S^-) }
\end{displaymath}
\end{comment}
Given the same assumptions of Theorem \ref{Thm1}, suppose that $(X, \omega_X)$ or $(\Gamma_X, \omega_X)$ is monotone for some $\Gamma_X \in H_2(X, \partial X, \mathbb{Z})$. Then there exists a nonempty open subset
$\mathcal{V}_{comp}(X, \pi_X, \omega_X)$ of $ \mathcal{J}_{comp}(X, \Omega_X)$ such that for
generic $J \in \mathcal{V}_{comp}(X,\pi_X, \omega_X)$, both of  ${HP}_{sw}(X, \Omega_X, J, \Gamma_X, \Lambda_X)$ and $HP(X, \Omega_X, J, \Gamma_X, \Lambda_X)$ are well defined. Moreover,  we have
%$HP(X, \omega_X,  J, \Lambda_X)=\overline{HP}(X, \omega_X, J, \Lambda_X)$ and
\begin{equation*}
 HP(X, \Omega_X,   \Gamma_X, J, \Lambda_X)= {HP}_{sw}(X, \Omega_X, \Gamma_X, J,  \Lambda_X).
\end{equation*}
In particular, the cobordism map $HP(X, \Omega_X,   \Gamma_X,  J, \Lambda_X)$ is independent of  the choice of $J \in \mathcal{V}_{comp}(X,\pi_X, \omega_X) $.
\end{thm}
\begin{remark}
This remark concerns the  monotonicity assumptions  in  Theorem \ref{Thm2}.  These assumptions are only used to guarantee that  the existence of  $\mathcal{V}_{comp}(X,\pi_X, \omega_X)$ and do not play any role in other argument.

When we define the cobordism maps  for  cases \ref{assumption1} and \ref{assumption2}, the methods in this paper rely on a genus bound  in Lemma \ref{C1}.  Roughly speaking, $\mathcal{V}_{comp}(X,\pi_X, \omega_X)$ is the set of  cobordism admissible almost complex structures such that the genus bound holds.  Under the monotonicity assumptions, we show that it is nonempty and open.

 The author  believes that the conclusions in Theorem \ref{Thm2}  should be  true   without the  monotonicity assumptions. Here are some thoughts of the author which support the  above opinion. For convenience, the local coefficients are taken to be the one in Subsection \ref{section23}. Then   the chain maps   ${CP}(X, \Omega_X, \Gamma_X, J) $  and ${CP}_{sw}(X, \Omega_X, \Gamma_X, J) $ can be regarded  as  Taylor series. In fact, Section   \ref{section21} provides a  nonempty  open subset  of almost complex structures $\mathcal{V}^L_{comp}(X, \pi_X, \omega_X, J_{\pm})$ such that  for generic  $J \in \mathcal{V}^L_{comp}(X, \pi_X, \omega_X, J_{\pm})$, the genus bound holds for any curve with energy less than $L$, and the coefficients of  ${CP}(X, \Omega_X, \Gamma_X, J) $ are well defined up to order $L$.

If one could  define     $HP(X, \Omega_X,   \Gamma_X, J)$  for generic $J$ and show that it  is independent of  $J$ by using any method  which beyond this paper, then take a generic $J_L \in  \mathcal{V}^L_{comp}(X, \pi_X, \omega_X, J_{\pm})$,  the methods in this paper   still can show that  the chain maps of $HP$ and $HP_{sw}$ are equal up to order $L$, i.e.,
\begin{equation*}
 CP_{sw}(X, \Omega_X,   \Gamma_X, J_L)= {CP}(X, \Omega_X, \Gamma_X, J_L) + o(L).
\end{equation*}
As  the cobordism maps   are  independent of $J$, then  $CP_{sw}$ and $CP$ are chain homotopic to  $CP_{sw}(X, \Omega_X,   \Gamma_X, J)$ and  ${CP}(X, \Omega_X, \Gamma_X, J)$ respectively, where $J$ is an almost complex structure which is independent of $L$. Take $L \to \infty$, then we get the same conclusions of Theorem \ref{Thm2}.
\end{remark}

\begin{remark}
In case  \ref{assumption3},  we do not use the  genus bound  in Lemma \ref{C1}.  The conclusions in Theorem \ref{Thm2} are  true without the monotonicity assumptions. Moreover, the open set $\mathcal{V}_{comp}(X,\pi_X, \omega_X)=\mathcal{J}_{comp}(X,  \Omega_X)$.
\end{remark}

%
%\begin{remark}
%To define the cobordism maps in Theorems \ref{Thm3} and \ref{Thm1}, we only need to assume that $[\Gamma_{\pm}] \cdot[\Sigma]>g(\Sigma)-1$. The stronger assumption  $[\Gamma_{\pm}] \cdot[\Sigma]>g(\Sigma) $ is to guarantee that  the $U$-map is well defined for generic $J_{\pm} \in \mathcal{J}_{comp}(Y_{\pm}, \pi_{\pm}, \omega_{\pm})$.
%\end{remark}
%

%\begin{remark}
%If both of $Y_+$ and $Y_-$ are non-empty, by Poincar\'e duality,  there is  no relative class $\Gamma_X \in H_2(X, \partial X, \mathbb{Z})$ such that $\partial_{Y_{\pm}} \Gamma_X =\Gamma_{\pm}$ whenever $\Gamma_+ \cdot [\Sigma] \ne \Gamma_- \cdot [\Sigma]$. Hence, we assume that $\Gamma_+ \cdot [\Sigma] = \Gamma_- \cdot [\Sigma]$ when $Y_+$ and $Y_-$ are nonempty throughout.
%\end{remark}

\begin{remark}
For the case that $\Gamma_{\pm} \cdot[\Sigma]=0$, the map $HP(X, \Omega_X, J, \Lambda_X)$ is still well-defined by counting  holomorphic curves with zero ECH index.   The manifold $(X, \Omega_X)$  can be arbitrary symplectic cobordism.  Moreover, $HP(X, \Omega_X, J, \Lambda_X)$  satisfies the following properties:
\begin{enumerate}
\item
Suppose that $Y_{\pm}=\emptyset$ and $b_2^+(X) >1$,  then
\begin{eqnarray*}
HP(X, \Omega_X ,  J, \Lambda_X)(\Lambda_{[\emptyset]})= \sum\limits_{A \in H_2(X, \mathbb{Z})}Gr(X, A) \Lambda_X(A) \circ \Lambda_{[\emptyset]},
\end{eqnarray*}
where $Gr(X, A)$ is the Gromov invariant defined in \cite{T2}.
\item
Suppose that $Y_{+} \ne \emptyset$ or $Y_{-} \ne \emptyset$ and  $X$ is relatively minimal Lefschetz fibration, also, the genus of the fiber is at least two,   then $HP(X, \Omega_X, J, \Lambda_X)$ is a canonical isomorphism.
\item
In above two cases,   $HP(X, \Omega_X, J, \Lambda_X)$  is independent on the choice of  almost complex structures.
\end{enumerate}
The first statement follows from the definition of Taubes' Gromov invariant.  The second statement follows from the observation that the only closed holomorphic curve in $\overline{X}$ with zero ECH index is the empty curve. In both cases,   $HP(X,  \Omega_X, \Lambda_X)=HP_{sw}(X, \Omega_X, \Lambda_X)$.
\end{remark}

\begin{remark}
In many applications, the tuple $(Y, \pi, \omega, \Gamma)$ satisfies the monotonicity property which is similar to  Definition \ref{def11}.  In this case, the periodic Floer homology  of $(Y, \pi, \omega, \Gamma)$  can be defined  with $\mathbb{Z}_2$ or $\mathbb{Z}$ coefficient.  It follows from the observation that the holomorphic curves with the same ECH index have  same energy. The compactness of the moduli space implies that there are only finitely many holomorphic curves that contribute  to the differential.

This observation is   true for the cobordism case as well. (See the proof of Corollary \ref{C53}.) If the pair $(\Gamma_X, \Omega_X)$ is monotone in the sense of Definition \ref{def11}, then  the cobordism maps  $HP_{sw}(X, \Omega_X, \Gamma_X)$ and $HP(X, \Omega_X, \Gamma_X)$ given  in Theorems  \ref{Thm3} and \ref{Thm1}    can be defined with     $\mathbb{Z}_2$ or $\mathbb{Z}$ coefficients.
In Remark \ref{r8}, we   explain more about the case in  Theorem   \ref{Thm3}, as the cobordism maps are defined by counting solutions to Seiberg Witten equations rather than holomorphic curves.

Note that the same argument is not available when $(X, \Omega_X)$ is monotone.  One may use a similar argument as in Lemma \ref{C32} to control the energy of holomorphic curves with fixed genus. However, the curves that contribute to the cobordism maps do not have priori upper bound on the genus.
 \end{remark}

Besides the main results above,  here we state a result which concerns the assumptions $(\spadesuit)$.  The proposition that follows  asserts   that after suitable perturbation, the assumptions $(\spadesuit)$  holds in most cases, the exceptional cases are that the base manifold  $B$ is a disk.  The proposition    is an analogy of   Theorem 2.5.2 of \cite{VPK} in PFH setting.  It will be proved in Section \ref{section1}.
\begin{prop} \label{C42}
Let $(X, \pi_X, \omega_X)$ be a fiberwise symplectic   cobordism from $(Y_+, \pi_+, \omega_+)$ to $(Y_-, \pi_-, \omega_-)$.  Given $Q \ge 1$, we can always find $\omega_{\pm}'$ such that
\begin{itemize}
\item
Every periodic orbit of $\omega_+'$ with degree less than $Q$ is either $Q$-negative  elliptic  or hyperbolic.
\item
Every periodic orbit of $\omega_-'$ with degree less than $Q$ is either $Q$-positive elliptic  or hyperbolic.
\item
The cohomology class of $\omega_{\pm}'$ is unchanged, i.e.,  $[\omega_{\pm}']=[\omega_{\pm}]. $
\item
Given a metric $g_{\pm}$ over $Y_{\pm}$, for  any $\delta>0$, there exists a constant $c_0 $ depending on $g_{\pm}$, such that we can arrange that $|\omega'_{\pm} - \omega_{\pm}|_{g_{\pm}} \le c_0 \delta$.
%Moreover, $[\omega_{\pm}']=[\omega_{\pm}]. $
\end{itemize}
Moreover, we can find an admissible $2$-form $\omega_X'$ such that $(X, \pi_X,\omega_X')$ is  a fiberwise symplectic cobordism from $(Y_+, \pi_+, \omega_+')$ to $(Y_-, \pi_-, \omega_-')$.
\end{prop}

We have organized the rest of the  paper in the following way: Section $4$  is an overview of  the foundation of holomorphic curve  and  periodic Floer homology.  We prove  Theorem \ref{Thm3} in Section $5$.  In Section $6$, we prove   Proposition \ref{C42}.   Section $7$ presents some partial results about defining cobordism maps $HP(X, \Omega_X, J, \Lambda_X)$ on PFH by using holomorphic curve method. Finally, in Sections 8 and 9, we explore the relationship of ${HP}_{sw}(X, \Omega_X, J,  \Lambda_X)$  and  $HP(X, \Omega_X, J, \Lambda_X)$  under    monotonicity assumptions.
%%%%%%%%%%%%%%%%%%%%%%%%%%%%%%%

\ack{The author would like to thank  Prof.Yi-Jen Lee  for suggesting the  problem, as well as  for  her helpful comments.  He also wants to thank the anonymous referees whose comments, suggestions, and corrections, have greatly improved this paper. Without their support, this paper would not have been possible.}

\section{ECH index and periodic Floer homology}
%\subsection{Orbits  set}
This section supplies the background material that is used subsequently to define the periodic Floer homology and its cobordism maps.  For more details,  please refer to \cite{H3}.

% we  review the foundation of periodic Floer homology   like holomorphic curve, ECH index and ECH partition,etc. We also need these kind of concepts when we define the cobordism map. More detail please refer to \cite{H3}.

\subsection{J-holomorphic curves and currents}
\begin{definition}
A $J$ holomorphic curve is a map $u:(C, j) \to \overline{X}$ satisfying $du+ J\circ du\circ j=0$, where $(C, j)$ is  a Riemann surface possibly with punctures. Two holomorphic curves $u:(C, j) \to \overline{X}$  and $u':(C', j') \to \overline{X}$ are equivalent if there exists a biholomorphism $\varphi: (C,j)\to (C', j')$ such that $u=u'\circ \varphi$.
\end{definition}

\begin{definition}
A holomorphic curve $u$ is called somewhere injective if $u^{-1}\{u(z)\}=z$ for some $z \in C$,  we also call it simple.
\end{definition}
\begin{remark}
%If $u: C \to \overline{X}$ is somewhere injective, then we don't distinguish the map and its image.
Sometimes we just use $C$ to denote the holomorphic curves,   especially in the case that $u$ is somewhere injective.
\end{remark}

%The number $\int_{\mathcal{C}}\omega_X= \sum\limits_{a} d_a \int_{C_a} \omega_X$ is called $\omega_X$-energy of $\mathcal{C}$.

%If $\int_C u^* \omega_X < \infty$, then  the holomorphic curve $u$ is asymptotic to periodic orbits.

Given orbit sets $\alpha_+=\{(\alpha_{+, i}, m_i)\}$ and $\alpha_-=\{(\alpha_{-,j}, n_j)\}$,   the   moduli space  $\mathcal{M}_X^J(\alpha_+, \alpha_-)$  is a collection of equivalent class  of holomorphic maps $u: C \to \overline{X}$ with the following properties:
 $u$ has positive ends at covers of $\alpha_{+, i}$ with total multiplicity $m_i$, negative ends at covers of $\alpha_{-,j}$ with total multiplicity $n_j$, and no other ends.
Here $C$ is a  compact Riemann surface possibly with punctures and may be disconnected. When $\overline{X}=\mathbb{R} \times Y$ and $J \in \mathcal{J}_{comp}(Y, \pi, \omega)$, we just use $\mathcal{M}_Y^J(\alpha_+, \alpha_-)$ to denote the moduli space. Given $Z \in H_2(X, \alpha_+, \alpha_-)$, we use $\mathcal{M}_X^J(\alpha_+, \alpha_-, Z)$ to denote the elements in $\mathcal{M}_X^J(\alpha_+, \alpha_-)$ with relative homology class $Z$.
% Given a holomorphic curve $u$,

Given a curve $u \in \mathcal{M}_X^J(\alpha_+, \alpha_-) $, % the $\omega_X$-energy of $u$ %$\mathcal{C}$
%is defined by $\int_C u^*\omega_X$.
the $\Omega_X$-energy $E_{\Omega_X}(u)$ of $u$ which is defined in \cite{FYHKE} is given by
\begin{equation*}
E_{\Omega_X}(u)= \int_{C \cap \mathbb{R}_+ \times Y_+} \omega_+  + \int_{C \cap \mathbb{R}_- \times Y_-} \omega_-  + \int_{C \cap u^{-1}(X)} u^* \Omega_X.% \int_C u^*\omega_X + d vol(B).
\end{equation*}
By Stokes formula, the $\Omega_X$-energy only depends on $\alpha_+$, $\alpha_-$ and  its relative homology class.  For $i \in \mathbb{Z}$ and $L \in \mathbb{R}$, define
$$\mathcal{M}_{X,I=i}^{J,L}(\alpha_+, \alpha_-)= \bigsqcup\limits_{I(\alpha_+, \alpha_-, Z)=i \  E_{\Omega_X}(Z) < L} \mathcal{M}_X^J(\alpha_+, \alpha_-, Z), $$
where $I(\alpha_+, \alpha_-, Z)$ is the ECH index. We will review the definition of $I(\alpha_+, \alpha_-, Z)$ in the upcoming section.

%\begin{remark}
%Let $u: C \to \overline{X}$ be a holomorphic curve from $\alpha_+$ to $\alpha_-$ with $d=[\Sigma] \cdot [\alpha_{\pm}]$. The $\Omega_X$-energy $E_{\Omega_X}(u)$ of $u$ which is defined in \cite{FYHKE} is given by $$E_{\Omega_X}(u)= \int_{C} u^*\omega_X + \int_{C \cap u^{-1}(X)} u^*\pi_X^* \omega_B= \int_C u^*\omega_X + d vol(B).$$ Therefore, a uniform bound  on $\omega_X$-energy ensures that we can use the Gromov compactness in \cite{FYHKE}.
%\end{remark}

\begin{remark}
In the case  that  $(X, \pi_X, \omega_X)$ is a  fiberwise symplectic cobordism,  the $\Omega_X$-energy of a holomorphic curve $u \in \mathcal{M}^J(\alpha_+, \alpha_-)$ is      $E_{\Omega_X}(u) =\int_C u^*\omega_X + d vol(B),$ where  $d=[\Sigma] \cdot [\alpha_{\pm}]$.  Therefore, a uniform upper  bound  on $\int_C u^*\omega_X$ ensures that we can use the Gromov compactness in \cite{FYHKE}.
\end{remark}

 Most of the time, we only care  about the underlying current of the holomorphic curves, rather than the holomorphic maps.  Thus it is  convenient to introduce the notion of holomorphic currents.

\begin{definition} \label{def4}
 A $J$-holomorphic current from $\alpha_+=\{(\alpha_{+, i}, m_i)\}$   to $\alpha_-=\{(\alpha_{-,j}, n_j)\}$ is a finite set of pairs  $\mathcal{C}=\{(C_a, d_a)\}$, where  $\{C_a\}_a$ are distinct, irreducible, somewhere injective $J$ holomorphic curves with $E_{\Omega_X} (C_a) < \infty$ and $\{d_a\}_a$ are positive integers. If $C_a$ is a $J$-holomorphic curve whose positive ends are asymptotic to  $\alpha_+(a)=\{(\alpha_{+,i}, m_{ia})\}$ and negative ends are asymptotic to  $\alpha_-(a)=\{(\alpha_{-,j}, n_{ja})\}$, then $m_i=\sum\limits_a d_am_{ia}$ and  $n_j=\sum\limits_a d_a n_{ja}$. Let  $\widetilde{\mathcal{M}}_X^J(\alpha_+, \alpha_-)$   denote the moduli space of   $J$ holomorphic currents from $\alpha_+$ to $\alpha_-$.
\end{definition}

%In addition to the moduli space $\widetilde{\mathcal{M}}_X^J(\alpha, \beta)$,

Note that for each  $u: \tilde{C} \to \overline{X}$ in $\mathcal{M}^J_{{X}}(\alpha_+, \alpha_-)$, we can associate a holomorphic current $\mathcal{C} \in \widetilde{\mathcal{M}}_X^J(\alpha_+, \alpha_-) $ in the following way.    According to Theorem 6.19 of  \cite{Wen2}, there is a factorization $u= v \circ  \varphi$, where $v: C \to \overline{X}$ is a somewhere injective holomorphic curve and $\varphi: \tilde{C} \to C$ is a  branched covering. Then the  underlying  holomorphic current of $u$ is $\mathcal{C}=\{(C, d)\}$, where $d$ is degree of $\varphi$. If $\tilde{C}$ is reducible, then $\mathcal{C} $ is a union of holomorphic currents of all irreducible components.

%In our setting, the holomorphic curves which we deal with are not closed, but we can still talk about the intersection number in the following sense.
The following definition generalizes the concept of intersection numbers to punctured holomorphic curves.
\begin{definition} [Cf. Definition 4.7 of \cite{H4}] \label{def1}
Let $C$   be a  simple irreducible holomorphic curves from $\alpha_+$ to $\alpha_-$, the self intersection number is defined to be
\begin{equation*}
C\star C = \frac{1}{2}(2g(C)-2 + {\rm ind} C + h(C) + 2e_Q(C)+ 4\delta(C)) \in \frac{1}{2} \mathbb{Z}.
\end{equation*}
where $h(C)$ is the number of ends of $C$ at hyperbolic orbits and $\delta(C) \ge 0$ is a count of the singularities of $C$ in $\overline{X}$ with positive integer weights, and $e_Q(C)$ is the  total multiplicity of all elliptic orbits in $\alpha_+$ that are $Q$-negative, plus   the  total multiplicity of all elliptic orbits in $\alpha_-$ that are $Q$-positive.

If $C$ and $C'$ are two distinct simple irreducible holomorphic curves, then $C\star C' \in \mathbb{Z}$ are defined to be the algebraic count of  their intersection points.
\end{definition}

\begin{remark}
It is worth noting that if $C$ is closed, then $C \star C$ agrees with the usual self intersection  number  $C \cdot C$ by adjunction formula.
%Thus above definition can be  view as a generalization of the usual intersection number in closed cases.
\end{remark}
\begin{definition} \label{def3}
A holomorphic current $\mathcal{C} =\{(C_a, d_a)\} \in \widetilde{\mathcal{M}}_X^J(\alpha_+, \alpha_-) $  is called embedded if it satisfies  the following  properties：
\begin{itemize}
\item
For any $a$, $C_a$ is embedded and $d_a=1$.
\item
$C_a \star C_b =0$ for any $a \ne b$.
\end{itemize}
A holomorphic curve $u \in   {\mathcal{M}}_X^J(\alpha_+, \alpha_-)$ is called embedded if its underlying holomorphic current is embedded.
\end{definition}
\subsection{ECH index and Fredholm index}
In this subsection, we briefly review the  ECH index and Fredholm index for holomorphic curves.

\textbf{ECH Index.} Let $\mathcal{C}$ be a holomorphic current from  $\alpha_+= \{(\alpha_{+, i}, m_i)\}$ to $\alpha_-= \{ (\alpha_{-, j}, n_j)\}$.  Let $\tau$ be a homotopy class of symplectic trivializations $\tau^+_i$ of the restriction of $(\ker{\pi_{+*} }, J_+)$ along $\alpha_{+,i}$ and $\tau_j^-$ of the restriction of $(\ker{\pi_{-*} }, J_-)$ along $\alpha_{_-, j}$.  The ECH index is  defined by the following formula:
\begin{eqnarray*}
I(\mathcal{C}) = c_{\tau}(\mathcal{C}) + Q_{\tau}(\mathcal{C}) + \sum_i \sum\limits_{p=1}^{m_i} \mu_{\tau}(\alpha_{+, i}^{p})- \sum_j \sum\limits_{q=1}^{n_j} \mu_{\tau}(\alpha_{-,j}^{q}),
\end{eqnarray*}
where $c_{\tau}(\mathcal{C})$ and $Q_{\tau}(\mathcal{C})$ are respectively the relative Chern number and the relative self-intersection numbers (See Section 4.2 of \cite{H2}), and $\mu_{\tau}$ is the  Conley-Zehender index.  The ECH index $I$ only depend on orbit sets $\alpha_+$, $\alpha_-$ and relative homology class of $\mathcal{C}$.

\textbf{Fredholm Index.} Let $u: C \to \overline{X}$ be a $J$-holomorphic curve from $\alpha_+= \{(\alpha_{+, i}, m_i)\}$ to $\alpha_-= \{ (\alpha_{-, j}, n_j)\}$. For each $i$, let $k_i$ denote the number of ends of $u$ at $\alpha_{+, i}$, and let $\{p_{ia}\}^{k_i}_{a=1}$ denote their multiplicities. Likewise,
for each $j$, let $l_j$ denote the number of ends of $u$ at $\alpha_{-,j}$, and let $\{q_{jb}\}^{lj}_{b=1}$ denote their multiplicities. Then the Fredholm index of $u$ is defined by
\begin{eqnarray*}
{\rm ind} u = -\chi(C) + 2 c_{\tau}(u^*T\overline{X}) + \sum\limits_i \sum\limits_{a=1}^{k_i} \mu_{\tau} (\alpha^{p_{ia}}_{+, i}) -  \sum\limits_j \sum\limits_{b=1}^{l_j} \mu_{\tau} (\alpha^{q_{jb}}_{-,j}).
\end{eqnarray*}
%When $u$ is somewhere injective,
Sometimes we write ${\rm ind} C$ instead of  ${\rm ind} u$.

The following theorem summarizes two important relationships between  Fredholm index and ECH index for simple holomorphic curve.
\begin{theorem} [\cite{H1}, \cite{H2}] \label{ECH1}
Let $C \in \mathcal{M}^J_X(\alpha_+, \alpha_-)$ be a simple holomorphic curve, then
\begin{itemize}
\item
${\rm{ind}}(C) \le I(C) - 2\delta(C)$, equality holds if and only if $C $ is admissible. (Definition of admissible will be given in next section.)
%&\mbox{If $\alpha$ and $\beta$ are admissible, then ${\rm{ind}}(C) =I(C) \mod 2 $ },

\item
If $\alpha_+$ and $\alpha_-$ are admissible, then ${\rm{ind}}(C) =I(C) \mod 2 $,
\end{itemize}
where $\delta(C)$ is a count of the singularities of $C$ in $\overline{X}$ with positive integer weights.
\end{theorem}
%The proof of these two inequalities pleased refer to \cite{H1} and $\cite{H2}$.

The following theorem will be also used frequently:
\begin{theorem} [Theorem 5.1 \cite{H2}] \label{ECH2}
Let  $\mathcal{C}=\{(C_a, d_a) \}$  be  a holomorphic current  in $\overline{X}$, then
%\begin{equation}
%I(\mathcal{C} + \mathcal{C}') \ge I(\mathcal{C}) + I(\mathcal{C}') + 2 \mathcal{C} \cdot \mathcal{C}'.
%\end{equation}
%In particular,
%\label{A5}
\begin{equation} \label{e18}
I(\mathcal{C}) \ge \sum\limits_a d_aI(C_a) + \sum\limits_a d_a(d_a-1) C_a \star C_a + 2\sum\limits_{a \neq b }d_ad_b C_a \star C_b.
\end{equation}
%where $C_a \cdot C_b  \ge 0$ is the algebraic count of intersections of $C_a$ and $C_b$.
\end{theorem}

\subsection{ECH partition}
Given a simple periodic orbit $\gamma$ and a holomorphic curve $C$, suppose that $C$ has positive ends at covers of $\gamma$ whose total covering multiplicity is $m$, the multiplicities of these covers form a partition of the integer $m$, which is denoted by  $p_+(\gamma^m, C)$. We called it a positive partition of $C$ at $\gamma$. Similarly, we  define $p_-(\gamma^m, C)$  for negative ends and called it a  negative partition of $C$ at $\gamma$.
\begin{definition}
Let $u: C \to \overline{X} $ be a $J$ holomorphic curve from $\alpha_+$ to $\alpha_-$ without $\mathbb{R}$-invariant cylinder. $u$ is called positively admissible if the   positive partition    at $\gamma$ is $p_{+}(m,\gamma)$, i.e., $p_{+}(\gamma^m,C)=p_{+}(m,\gamma)$.   Likewise, $u$ is called negatively admissible if the negative partition   at $\gamma$ is $p_{-}(m,\gamma)$. $u$ is called admissible if $u$ are both positively and negatively admissible.
%If $\overline{X}=\mathbb{R} \times Y$ and $J$ is symplectization admissible, $u$ is admissible if
Here $p_{+}(m,\gamma)$ and $p_{-}(m,\gamma)$ are defined in \cite{H1}, \cite{H2}. When $u$ is admissible, we also say that $u$ satisfies the ECH partition condition.
\end{definition}

\begin{definition}
A connector   $u: C \to \mathbb{R} \times Y$  is a union of branched cover of trivial cylinders with zero Fredholm index. Here each connected component of $C$ is a punctured sphere. A connector is trivial if it is a union of unbranched cover of trivial cylinders, otherwise, it is nontrivial.
\end{definition}
\begin{remark}
For the sake of  convenient,  we use  stronger definition here. %the definition of connector here is stronger than the original definition in \cite{H1}.
Note that the original definition  of connector in   \cite{H1}  doesn't involve any constrain on Fredholm index.
\end{remark}
%\begin{remark}
%It is worth noting that the definition of connector is a little different from \cite{H1}.
%\end{remark}
In this article, we don't use the precise definition of $p_{+}(m,\gamma)$ and $p_{-}(m,\gamma)$, we only need to know the following two facts.

 \begin{itemize}
 \item
 Let $\gamma$ be an  elliptic orbit, for any partition $p(m,\gamma)$, there is no nontrivial connector from $p_{+}(m,\gamma)$ to $p(m,\gamma)$. Likewise, there is no nontrivial  connector from $p(m,\gamma)$ to $p_{-}(m,\gamma)$. Cf. exercise 3.13, 3.14 of \cite{H3}.
 \item
Suppose that  $u$ is a simple holomorphic curve, then $u$ is admissible if and only if $I(u)={\rm ind}(u)$. Cf. \cite{H1}, \cite{H2}.
\end{itemize}

\subsection{Meaning of generic almost complex structure} \label{section15}
 The purpose of this subsection is to explain the precise meaning of ``generic almost complex structure''.
To this end, we first need to  recall the behavior of holomorphic curve at the ends. The following descriptions follow Section 2 of \cite{HT2}.

Let $\gamma$ be a non-degenerate periodic orbit of $(Y, \pi, \omega)$. Let $\varphi_{\gamma} : S^1 \times D \to Y$ be the coordinate in Lemma 2.3 of \cite{LT}. Granted this coordinate, we can express the ends of the holomorphic curve as a graph of certain functions. More details are explained as follows:  Suppose that $C$ is a somewhere injective $J$-holomorphic curve in $\overline{X}$ and  $\mathpzc{E} \subset C$ is a positive end  at $\gamma^{q_{\mathpzc{E}}}$. Let $\hat{\varphi}_{\gamma}= Id \times \varphi_{\gamma} : \mathbb{R} \times S^1 \times D \to \mathbb{R} \times Y$, then there exists a constant $s_0>0$  such that $\hat{\varphi}_{\gamma}^{-1}(\mathpzc{E} \cap ([s_{0} , +\infty) \times Y))$ is the image of a map
\begin{equation} \label{e35}
\begin{split}
&[s_{0}, +\infty) \times \mathbb{R} / 2\pi q_{\mathpzc{E}} \to \mathbb{R} \times S^1 \times D \\
&(s, \tau) \to (s, (\tau, \varsigma(\tau, s))),
\end{split}
\end{equation}
and $ \varsigma(\tau, s)= e^{-\lambda_{q_{\mathpzc{E}}}s} (\varsigma_{q_{\mathpzc{E}}}  + \mathfrak{r} (\tau, s)) $, where the notation is explained as follows.  $\lambda_{q_{\mathpzc{E}}}$ is the samllest positive eigenvalue of the set of $2\pi q_{\mathpzc{E}}$ periodic eigenfunctions of operator:
\begin{eqnarray} \label{e60}
&& L_{\gamma}: C^{\infty}(\mathbb{R}; \mathbb{C}) \to C^{\infty}(\mathbb{R}; \mathbb{C}) \nonumber \\
&& \eta \to \frac{i}{2} \frac{d \eta}{dt} + \nu \eta + \mu \bar{\eta},
\end{eqnarray}
and $\varsigma_{q_{\mathpzc{E}}}$ (possibly zero) is a $2\pi q_{\mathpzc{E}} $-periodic eigenfunction of $L_{\gamma}$. $\nu$ and $\mu$ are respectively $2\pi$-periodic real and complex  functions, they  are determined by the  pair $(\omega, J)$.  $\mathfrak{r}$ is an error term such that $|\mathfrak{r}| \le e^{-\varepsilon |s|}$ for some $\varepsilon >0 $.

%As a  consequence of Lemma \ref{C15}  later,
 Let  $\mathcal{J}_{1tame}(X, \pi_X,  {\omega_X}, J_{\pm}) $  and $\mathcal{J}_{1comp}(X, \Omega_X,  J_{\pm})$  be  subsets  of $\mathcal{J}_{tame}(X, \pi_X,  {\omega_X})$ and   $\mathcal{J}_{comp}(X,    {\Omega_X}, J_{\pm})$ respectively with the following properties:
\begin{enumerate} [label=\textbf{J.0}]
\item \label{j0}
If $J \in  \mathcal{J}_{1tame}(X, \pi_X,  {\omega_X}, J_{\pm}) $ or $ \mathcal{J}_{1comp}(X,    {\Omega_X}, J_{\pm}) $,  then all simple $J$ holomorphic curves are Fredholm regular in the sense of \cite{Wen2},   possibly  except for  fibers.
 \end{enumerate}
%  Lemma \ref{C15} subsequently show that they are Baire  subsets.
\begin{lemma}
$ \mathcal{J}_{1tame}(X, \pi_X,  {\omega_X}, J_{\pm})$  and $\mathcal{J}_{1comp}(X,   {\Omega_X}, J_{\pm})$  are respectively Baire subset of  $\mathcal{J}_{tame}( X, \pi_X, {\omega_X}, J_{\pm})$    and $\mathcal{J}_{comp}(X,  {\Omega_X}, J_{\pm})$.
%, every somewhere injective $J$ holomorphic curve $u$  which is not tangent to $T\overline{X}^{vert}$ everywhere is Fredholm regular.
\label{C15}
\end{lemma}
\begin{proof}
The lemma can be proved by the standard argument, for example, Chapter 7 of  \cite{Wen2} or  Section 3.4 of \cite{MS}.
\end{proof}

%Let
%\begin{equation*}
%\begin{split}
%& \mathcal{J}_{1tame}(X, \pi_X,  {\omega_X}, J_{\pm})\subset  \mathcal{J}_{tame}(X, \pi_X,  {\omega_X}, J_{\pm})\\
%&\mathcal{J}_{1comp}(X, \pi_X,  {\omega_X}, J_{\pm})\subset  \mathcal{J}_{comp}(X, \pi_X,  {\omega_X}, J_{\pm})
% \end{split}
%\end{equation*}
%$\mathcal{J}_1(X, \pi_X,  {\omega_X}, j_B)^{reg} \subset \mathcal{J}(X, \pi_X,  {\omega_X}, j_B)$, $\mathcal{J}_{comp1}(X, \pi_X,  {\omega_X}, j_B)^{reg} \subset \mathcal{J}_{comp}(X, \pi_X,  {\omega_X}, j_B)$, $\mathcal{J}'_1(X, \pi_X,  {\omega_X}, j_B)^{reg} \subset \mathcal{J}'(X, \pi_X,  {\omega_X}, j_B)$ and  $\mathcal{J}_{comp1}'(X, \pi_X,  {\omega_X}, j_B)^{reg} \subset \mathcal{J}_{comp}'(X, \pi_X,  {\omega_X}, j_B)$ respectively.
%be Baire subsets such that for  $J \in  \mathcal{J}_{1tame}(X, \pi_X,  {\omega_X}, J_{\pm}) $ or $ \mathcal{J}_{1comp}(X, \pi_X,  {\omega_X}, J_{\pm}) $,  all simple $J$-holomorphic curve are Fredholm regular in the sense of \cite{Wen2} except of fibers. The existence of the Baire subsets  will be provided by  Lemma \ref{C15}.

%For $J \in \mathcal{J}^{reg}_{1\omega_X}$,  transversality holds for all somewhere injective curves except fiber. When $J \in \mathcal{J}^{reg'}_{1\omega_X}  $, transversality holds for all somewhere injective curves.

Let $C$ be an embedded holomorphic curve with  zero ECH index. Let $\mathpzc{E} \subset C$ denote an end, reintroduce the notation $q_{\mathpzc{E}}$, $\lambda_{\mathpzc{E}}$ and $\varsigma_{q_{\mathpzc{E}}}$ as above. Introduce a subset $div_{\mathpzc{E}} \subset \{1, \dots , q_{\mathpzc{E}}\}$ as follows. An integer $q \in div_{\mathpzc{E}}$ if either of the following is true:
\begin{enumerate}
\item
$q=q_{\mathpzc{E}}$,
\item
$q$ is a proper divisor of $q_{\mathpzc{E}}$, and there is a $2\pi q$-periodic eigenfunction $\varsigma_q$ of $L_{\gamma}$ with eigenvalue $\lambda_q$ of same sign as $\lambda_{q_{\mathpzc{E}}}$, such that $|\lambda_{q_{\mathpzc{E}}}| \ge|\lambda_q|>0$.
\end{enumerate}

The following  conditions are analogs of the  criteria [Ja], [Jb] in \cite{LT}. We introduce   subsets $\mathcal{J}_{2tame}(X, \pi_X,  {\omega_X}, J_{\pm}) \subset \mathcal{J}_{1tame}(X, \pi_X,   {\omega_X}, J_{\pm})$ and  $\mathcal{J}_{2comp}(X,  {\Omega_X}, J_{\pm}) \subset \mathcal{J}_{1comp}(X,  {\Omega_X}, J_{\pm})$ as follows: $J \in  \mathcal{J}_{2tame}(X, \pi_X,  {\omega_X}, J_{\pm}) $ or $\mathcal{J}_{2comp}(X,    {\Omega_X}, J_{\pm})$ if every embedded $J$ holomorphic curve $\mathcal{C}$ with zero ECH index
%or  $\mathcal{J}_{\omega_X}^{reg'} \subset \mathcal{J}_{1\omega_X}^{reg'}$
satisfies the following properties:
\begin{enumerate} [label=\textbf{J.\arabic*}]
\item \label{j1}

For each end $\mathpzc{E} \subset \mathcal{C}$,  $div_{\mathpzc{E}}=\{q_{\mathpzc{E}}\}$.

\item \label{j2}

[Non-degenerate] For each end $\mathpzc{E} \subset \mathcal{C}$, $ \varsigma_{q_{\mathpzc{E}}} \ne 0$.

\item \label{j3}

[Non-overlapping ]Let $\mathpzc{E}$ and $\mathpzc{E}'$ denote two distinct pairs of either negative or positive ends of $\mathcal{C}$ such that $\gamma_{\mathpzc{E}}=\gamma_{\mathpzc{E}'}$, $q_{\mathpzc{E}}=q_{\mathpzc{E}'}=q$. Fix   $2\pi q$ periodic functions  $\varsigma_{q_{\mathpzc{E}}}, \varsigma_{q_{\mathpzc{E}'}}$ for $\mathpzc{E}$ and $\mathpzc{E}'$-version of $L_{\gamma}$. Then
$\varsigma_{q_{\mathpzc{E}}}(t) \ne \varsigma_{q_{\mathpzc{E}'}}(t+ 2\pi k)$  for any $k$.
\end{enumerate}

\begin{lemma}
 $\mathcal{J}_{2tame}(X, \pi_X,  {\omega_X}, J_{\pm}) $  and $\mathcal{J}_{2comp}(X, {\Omega_X}, J_{\pm}) $  are respectively   Baire subsets  of $\mathcal{J}_{tame}(X, \pi_X,  {\omega_X}, J_{\pm})$  and $\mathcal{J}_{comp}(X,  {\Omega_X}, J_{\pm})$.
\end{lemma}
\begin{proof}
The proof follows the same argument in Section 3   of \cite{HT2}.   Propositions 3.2 and 3.9 of \cite{HT2} show that the symplectization
admissible almost complex structures that satisfy the criterions analogy to \ref{j0}, \ref{j2} and \ref{j3}  form a Baire subset.   Note  that our lemma differs from the analogs in \cite{HT2} only in the following aspects: The symplectization $\mathbb{R} \times Y$  and index $1$ holomorphic curves are respectively  replaced by $\overline{X}$ and index $0$ holomorphic curves. Also, the symplectic form and almost complex structures are changed to the one in our setting accordingly.  But observe that our symplectic form and almost complex structures agree with the one in symplectization case over the ends of $\overline{X}$.   We sketch the proof as follows.

Similar to \cite{HT2}, we  define the  universal moduli space $\mathcal{C}_n$, the essential difference  is to    replace  index 1 curves by index zero curves. Let $\hat{\mathcal{J}}_n $ be subset of $J \in \mathcal{J}_{comp}(X,   {\Omega_X}, J_{\pm})  $  such that there is no $J$-holomorphic curve in  $\mathcal{C}_n$     violates  \ref{j0} and \ref{j2}. % is not Fredholm regular and has degenerate ends.
It is easy to check that  $\hat{\mathcal{J}}_n $  is open, so it suffices to show that   $\hat{\mathcal{J}}_n $ is dense. Fix a $J$, let $\mathcal{U}$ be a suitable neighborhood of $J$.  Fix a section $\psi: \mathcal{U} \to \mathcal{C}_n$, where $\psi(J')=(J', C')$. Recall that our $J'$ is fixed along the periodic orbits.  Therefore,
  fix a positive   end  $\mathpzc{E}$ of   $C$,  expression (\ref{e35})  defines a map $  \mathcal{U} \to  \mathcal{B}_{\mathpzc{E}}$,  where $ \mathcal{B}_{\mathpzc{E}} $ is eigenspace corresponding to the smallest positive eigenvalue.  Equivalently, we can express this map as a function $\hat{a}_{_{\mathpzc{E}}} : \mathcal{U} \to \mathbb{R} \ or \ \mathbb{C}$.  The holomorphic curve $C'$ has degenerate end if and only if $J' \in  \hat{a}_{_{\mathpzc{E}}}^{-1}(0)$.

  The strategy of \cite{HT2} is to show that $\hat{a}_{_{\mathpzc{E}}}$ is submersion, then by implicit function theorem, $codim \hat{a}_{_{\mathpzc{E}}}^{-1}(0) = dim  \mathcal{B}_{\mathpzc{E}} \ge 1$. As a consequence,  $\hat{\mathcal{J}}_n $   is dense.  Lemmas  3.6 and 3.7 of \cite{HT2} can be copied to our case with only notation change. To show that   $\hat{a}_{_{\mathpzc{E}}}$ is submersion, we need to construct a $j \in T_J\mathcal{U}$ such that $\nabla_j \hat{a}_{_{\mathpzc{E}}} = (d  \hat{a}_{_{\mathpzc{E}}} )_J \circ d\psi_J(j) \ne 0 $.  Note that  $d\psi_J(j)= D_C^{-1}(j_C)$ in our case, since there is no $\mathbb{R}$-action, where $j_C \in Hom^{0, 1}(TC, N_C)$  is determined by $j$ (Cf. (3.7) of \cite{HT2}).  We can construct   $f \in Hom^{0, 1}(TC, N_C)$   as in   \cite{HT2}, because,  the construction only  takes place in the end $\mathpzc{E}$.    Since $f$ is supported  away from trivial half cylinder $\mathbb{R}_+ \times \gamma$, we can choose a $j$ such that $j_C =f$.
 Again,    the analysis in pages 50-52 of \cite{HT2}  only  takes place in the end $\mathpzc{E}$,
   the argument can be applied  to our case to show that $\nabla_j \hat{a}_{_{\mathpzc{E}}} \ne 0 $.

 Consider a subset  $\tilde{\mathcal{J}}_n $  of $ J \in \hat{\mathcal{J}}_n $ such that there is  no $J$ holomorphic curve in $\mathcal{C}_n$  violates  \ref{j3}.   Define $\mathcal{B} =\oplus_{\mathpzc{E}}\mathcal{B}_{\mathpzc{E}}$. Compare to \cite{HT2}, there is no need to modulo  $\mathbb{R}$ here, because of the lack of global $\mathbb{R}$-action. Define $p: \mathcal{U} \to \mathcal{B}$ by using expression (\ref{e35}). In fact, $p$ is submersion by applying the same argument Lemma 3.10 of \cite{HT2}. The holomorphic curve $C$ corresponding $J \in \mathcal{U}$   has overlapping ends  if and only if $p(J ) \in Z$, where $Z$ is a  subspace of $\mathcal{B}$ with positive codimension. By implicit function theorem, $\tilde{\mathcal{J}}_n $  is dense.

%the space $\mathbb{P}$ is to be replaced by $\mathcal{B}$ since there is no $\mathbb{R}$-translation in cobordism case, where $\mathcal{B}$  is eigenfunction spaces of the asymptotic operators.  The precise definition of $\mathcal{B}$ and $\mathbb{P}$ please refer to \cite{HT2}.

Define $\mathcal{J}_{2comp}(X,  {\Omega_X}, J_{\pm}) =\cap_n \tilde{\mathcal{J}}_n$, then   for $J \in  \mathcal{J}_{2comp}(X,   {\Omega_X}, J_{\pm})$,  there is no simple index zero $J$-holomorphic curve with degenerate ends and overlapping pairs.  If $C$ is embedded with $I(C)=0$, then $C$ satisfies \ref{j1} because of the ECH partition condition.

The case for $\mathcal{J}_{2tame}(X, \pi_X,  {\omega_X}, J_{\pm})$ is similar.
\end{proof}

Let   $\mathcal{J}_{3comp}(X,  {\Omega_X}, J_{\pm})$ be subset of   $\mathcal{J}_{comp}(X,   {\Omega_X}, J_{\pm})$ such that $J \vert_{\mathbb{R} \times Y_{\pm} } $ belongs to $ \mathcal{J}_{comp}(Y_{\pm}, \pi_{\pm}, \omega_{\pm})^{reg}$. It is easy to check that it is a Baire subset. The intersection of $\mathcal{J}_{2comp}(X,  \Omega_X, J_{\pm})$ and $\mathcal{J}_{3comp}(X, \Omega_X, J_{\pm})$ is denoted by $\mathcal{J}_{comp}(X,    {\Omega_X}, J_{\pm})^{reg}$.
$\mathcal{J}_{tame}(X, \pi_X,  {\omega_X}, J_{\pm})^{reg} $ is defined in the same way with only notation change.

%Let $\mathcal{J}_{3tame}(X, \pi_X,  {\omega_X}, J_{\pm})$ and  $\mathcal{J}_{3comp}(X, \pi_X,  {\omega_X}, J_{\pm})$ be subsets of $\mathcal{J}_{tame}(X, \pi_X,  {\omega_X}, J_{\pm})$ and  $\mathcal{J}_{comp}(X, \pi_X,  {\omega_X}, J_{\pm})$ such that $J \vert_{\mathbb{R} \times Y_{\pm} } \in \mathcal{J}_{comp}(Y_{\pm}, \pi_{\pm}, \omega_{\pm})^{reg}$ respectively. It is easy to check that they are Baire subsets. We define  $\mathcal{J}_{tame}(X, \pi_X,  {\omega_X}, J_{\pm})^{reg}$ and $\mathcal{J}_{comp}(X, \pi_X,  {\omega_X}, J_{\pm})^{reg}$ to be respectively the intersection of $\mathcal{J}_{2tame}(X, \pi_X. \omega_X, J_{\pm})$ and $\mathcal{J}_{3tame}(X, \pi_X. \omega_X, J_{\pm})$,  and $\mathcal{J}_{2comp}(X, \pi_X. \omega_X, J_{\pm})$ and $\mathcal{J}_{3comp}(X, \pi_X. \omega_X, J_{\pm})$.

In this paper, an almost complex structure $J$ is called generic if $J$ belongs to $\mathcal{J}_{tame}(X, \pi_X,  {\omega_X}, J_{\pm})^{reg} $ or $J \in \mathcal{J}_{comp}(X,    {\Omega_X}, J_{\pm})^{reg} $.

\subsection{ Closed holomorphic curves in $\overline{X}$} Let $\pi_X: \overline{X} \to \overline{B}$ be the completion of a  fiberwise symplectic cobordism as before and  $J \in \mathcal{J}_{tame}(X, \pi_X, \omega_X)$. Note that if $u: C \to \overline{X}$ is a somewhere injective closed $J$-holomorphic curve, then $\pi_X \circ u: C \to \overline{B}$ is holomorphic by assumption. Hence, $\pi_X \circ u$ is constant by maximum principle. In other words, the  closed $J$ holomorphic curves in $\overline{X} $ are contained in fibers. Conversely, $\pi_X$ is complex linear implies that  all irreducible components  of the fibers are somewhere injective holomorphic curves.

\subsection{Review of   periodic Floer homology }
Let $\pi: Y \to S^1$ be  a surface fibration over $S^1$ together with a $Q$-admissible $2$-form $\omega$. Fix $\Gamma \in H_1(Y, \mathbb{Z})$ with $g(\Sigma)< d=\Gamma \cdot [\Sigma] \le Q$.
% Let $\omega$ be a $Q$-admissible two form on $Y$ and $c_{\Gamma}= 2 P D (\Gamma) + c_1(K^{-1})$,
Take a generic symplectization admissible almost complex structure $J$ on $\mathbb{R} \times Y$ and a $[\omega]$-complete local coefficient system $\Lambda_P$ in the sense of \cite{KM}, then one can define  periodic Floer homology  $HP_*(Y, \omega, \Gamma, J, \Lambda_P)$ as follows.  The chain complex of  periodic Floer homology is freely generated by admissible orbit sets with homology class $\Gamma$,
and the differential is defined by counting holomorphic currents between generators with ECH index $1$. More precisely,
\begin{equation*}
\partial  = \sum\limits_{\alpha  \in \mathcal{P}(Y, \omega, \Gamma) }\sum\limits_{\beta \in \mathcal{P}(Y, \omega, \Gamma)} \sum_{Z \in H_{2}(Y, \alpha, \beta)} \left(\#_2 \widetilde{\mathcal{M}}^J_{Y, I=1}(\alpha, \beta, Z)/\mathbb{R} \right) \Lambda_{Z}.
\end{equation*}

There is an additional  structure on periodic Floer homology which is called $U$-map. It is defined as follows:
Fix a point $y \in \mathbb{R} \times Y$, let $\mathcal{M}^J_{Y, I=2}(\alpha, \beta, Z)_y$  be the holomorphic curves in $\mathcal{M}^J_{Y, I=2}(\alpha, \beta, Z)$ such that $y$ lies in the image of the curves.  Suppose that $\alpha$ and $ \beta$ are  admissible orbit sets, then  it is a compact  one dimensional manifold. The $U$-map is defined by
\begin{equation*}
U_y =\sum\limits_{\alpha  \in \mathcal{P}(Y, \omega, \Gamma) }\sum\limits_{\beta \in \mathcal{P}(Y, \omega, \Gamma)} \sum_{Z \in H_{2}(Y, \alpha, \beta)} \left(\#_2 \mathcal{M}^J_{Y, I=2}(\alpha, \beta, Z)_y  / \mathbb{R}\right) \Lambda_{Z}.
\end{equation*}
It satisfies $\partial \circ U =U \circ \partial$ and it is independent of $y$ in homology level. (Cf. \cite{HT3})
%More details please refer to  \cite{H1} and \cite{LT}.

\begin{remark}
When $d \le g(\Sigma)$, the periodic Floer homology and $U$-map are  still well-defined by using  a larger class of  almost complex structures. (Cf. \cite{LT})
\end{remark}

It is worth noting that the cases $d=\Gamma \cdot [\Sigma]=0$ or $Y = \emptyset$, in either case, $\mathcal{P}(Y, \omega, \Gamma)$ consists of only one element, the empty set. For any $J \in \mathcal{J}_{comp}(Y, \pi, \omega)$(not necessarily generic), the closed holomorphic curve in $\mathbb{R} \times Y$ has homology class $m[\Sigma]$  for some $m \ge 1$ and $I(m[\Sigma])=0 \mod 2$.  Hence, $\mathcal{M}_{Y, I=1}^{J}(\emptyset, \emptyset)= \emptyset$ and $\partial \emptyset=0$. In conclusion,    $HP(Y, \omega, \Gamma, \Lambda_P)=\Lambda_P$.

%%%%%%%%%%%%%%%%%%%%%%%%%%%%%%%%%%%%%%%%%%%%%%%%%%%%%%%
\section{Cobordism maps on HP via Seiberg Witten theory}
In this section, we  follow   \cite{HT} to define the cobordism maps $ {HP}_{sw}(X, \Omega_X, J, \Lambda_X)$ via  Seiberg Witten theory and Theorem \ref{Thm0}.
%We have organized this section in the following way:
Firstly, let us have a quick review of Seiberg Witten cohomology and its cobordism maps.  In Section \ref{section17},  we show that the isomorphism in Theorem \ref{Thm0} is canonical. This is a counterpart  of  Section 3 of \cite{HT}.   The amount of work here is relatively  fewer, because, ECH admits a filtration  structure while PFH does not.  Sections \ref{section7} and  \ref{section18}  present a convergence result which is  used to prove the holomorphic curve axiom for Theorem \ref{Thm3}. This part is corresponding to Section 7 of \cite{HT}.  With all these preparations, we prove   Theorem \ref{Thm3}.

\subsection{Review  of Seiberg Witten theory}\label{section3}
For our purpose,  we only introduce a version of Seiberg Witten cohomology and its cobordism maps defining by non-exact perturbation.   Most of what follows here paraphrase  parts of the accounts in \cite{LT} and chapter $29$ of \cite{KM}.
\subsubsection{Seiberg Witten coholomogy}
Given a closed Riemannian $3$-manifold $(Y, g)$, a $Spin^c$ structure $\mathfrak{s}$ on $Y$  is a  pair $(S, \mathfrak{cl} )$, where $S$ is a rank $2$  Hermitian
vector bundle over $Y$ and  $\mathfrak{cl}: TY \to End(S)$  is a bundle map such that
\begin{eqnarray} \label{e6}
\mathfrak{cl}(u)\mathfrak{cl}(v)+ \mathfrak{cl}(v)\mathfrak{cl}(u)=-2g(u,v),
\end{eqnarray}
for any $u$, $v \in TY$ and $\mathfrak{cl}(e_1)\mathfrak{cl}(e_2)\mathfrak{cl}(e_3)=1$,  where  $\{e_1, e_2, e_3\}$ is an  orthonormal  frame for $TY$. Let $\mathds{A}$ be a Hermitian connection on $\det S$, it determines a    $Spin^c$ connection on $S$ compatible with metric and satisfying
\begin{eqnarray} \label{e17}
\nabla_\mathds{A} (\mathfrak{cl}(u) \psi) =  \mathfrak{cl}(\nabla^g u)\psi +  \mathfrak{cl} (u) \nabla_\mathds{A} \psi,
\end{eqnarray}
 where $u \in TY$, and $\psi$ is a section of $S$,  and $\nabla^g$ is the Levi-Civita connection of $g$.
 %A $Spin^c$ connection is equivalent to a connection on $\det S$, so we don't distinguish them.
 The Dirac operator is defined by $D_{\mathds{A}} \psi = \sum\limits_{i=1}^3\mathfrak{cl}(e_i) \nabla_{ \mathds{A}, e_i}\psi$.
 The set of  isomorphism class of  $Spin^c$ structures is an affine space over $H^2(Y, \mathbb{Z})$.

Let $(Y, \pi)$ be a surface fibration over $S^1$ together with an admissible $2$-form $\omega$.  Let $J \in \mathcal{J}_{comp}(Y, \pi, \omega)$, %be a symplectization admissible almost complex structure.
 now we define a metric $g$ on $Y$ such that
\begin{itemize}
\item
  $|R|_g=1$ and $g(R, v)=0$ for any $ v \in \ker \pi_*$.
 \item
$g \vert_{\ker\pi_*} $ is given by $\omega(\cdot, J\cdot)$.
\end{itemize}
It is worth noting that $*_3 \omega = \pi^*dt$.

With  the  above choice of metric, given a $Spin^c$ structure $\mathfrak{s}=(S, \mathfrak{cl})$,  then  there is a  decomposition
\begin{equation} \label{e2}
S=E \oplus E K^{-1},
\end{equation}
where $E$ is a Hermitian line bundle and $K^{-1}=\ker \pi_*$, regards as complex line bundle via $J$. Given $\Gamma \in H_1(Y, \mathbb{Z})$, we  define a  $Spin^c$  structure $\mathfrak{s}_{\Gamma}$ such that $c_1(E)=PD(\Gamma)$.  When $E$ is the trivial line bundle, there is a canonical connection $A_{K^{-1}}$ such that $D_{A_{K^{-1}}}(1,0)=0$.  In general case, any connection  $\mathds{A}$  on $\det S$ can be written as  $2A +A_{K^{-1}}$  for some connection $A$ of $E$.

Given a connection $A$ of $E$  and a section $\psi$ of $S$, this pair $(A, \psi)$ is called configuration.
A configuration $(A ,\psi)$ satisfies the   three dimensional Seiberg Witten equations if the following equations hold:
\begin{equation} \label{e1}
  \begin{cases}
   D_{A} \psi =0 \\
*_3F_A= r(q_3(\psi )-i*_3\omega)- \frac{1}{2}*_3F_{A_{K^{-1}}} -\frac{1}{2} i*_3\wp_3, \\
  \end{cases}
\end{equation}
where $q_3(\psi )$ is a  certain quadratic form and $r$ is a large positive constant and $\wp_3$ is a closed $2$-form with cohomology class $2\pi c_1(\mathfrak{s}_{\Gamma})$.   In fact, the solutions to equations (\ref{e1}) are critical points of the Chern-Simon-Dirac functional:
\begin{equation} \label{e49}
\begin{split}
&\mathfrak{a}(A, \psi)= -\frac{1}{2}\int_Y (A-A_0)\wedge d(A-A_0)- \int_Y(A-A_0) \wedge (F_{A_0}+ \frac{1}{2}F_{A_{K^{-1}}})\\
& - \frac{i}{2}\int_Y(A-A_0)\wedge (2r\omega + \wp_3) + r\int_Y \psi^*D_A \psi,
\end{split}
\end{equation}
where $A_0$ is a fixed reference connection of $E$.

Two pairs $(A, \psi)$ and $(A', \psi')$ are  gauge equivalent if there exists $u \in Map(Y, S^1)$ such that
$A'=A-u^{-1}du$  and  $ \psi'=u\psi.$ The chain complex ${CM}^{*}(Y,  -\pi \varpi_{r}, \mathfrak{s}_{
\Gamma},  \Lambda_S)$ is a  free module generated by gauge equivalent classes  of  solutions to  (\ref{e1}). By Lemma 29.12 of \cite{KM}, there is no reducible solution $(\psi =0)$ to (\ref{e1}) whenever $\Gamma \cdot [\Sigma] \ne g(\Sigma)-1$.

Given $\mathfrak{c}_{\pm} \in {CM}^{*}(Y,  -\pi \varpi_{r}, \mathfrak{s}_{\Gamma}, \Lambda_S)$, let $(A_{\pm}, \psi_{\pm})$ be a representative of $\mathfrak{c}_{\pm} $, then the differential $<\delta \mathfrak{c}_+, \mathfrak{c}_->$ is defined by counting index one solutions to the following flow line equations (possibly with an  additional  choice of abstract perturbation):
\begin{equation}
  \begin{cases}
  \frac{\partial}{\partial s}\psi(s) + D_{A(s)} \psi(s) =0 \\
\frac{\partial}{\partial s}A(s) + *_3F_A= r(q_3(\psi )-i*_3\omega)- \frac{1}{2}*_3F_{A_{K^{-1}}} - \frac{1}{2}i*_3 \wp_3 \\
  \end{cases}
\end{equation}
with condition $\lim\limits_{s \to \pm\infty}(A(s), \psi(s))=u_{\pm} \cdot (A_{\pm}, \psi_{\pm})$, modulo gauge equivalence and $\mathbb{R}$ action.  For more details about the abstract perturbation,  please refer to \cite{KM}.  The homology of $({CM}^{*}(Y,  -\pi \varpi_{r}, \mathfrak{s}_{\Gamma}, \Lambda_S), \delta)$ is denoted by ${HM}^{*}(Y,  \mathfrak{s}_{\Gamma}, -\pi \varpi_{r}, J, \Lambda_S)$.

\subsubsection{Cobordism maps on Seiberg Witten cohomology }
Let $(Y_{+}, g_{+})$ and $(Y_{-}, g_{-})$ be two Riemannian $3$-manifolds. Let $X$ be a cobordism from $Y_+$ to $Y_-$, i.e., $\partial X=Y_+ \bigsqcup (-Y_-)$. Given  a Riemannian metric $g$ on $X$ such that $g \vert_{Y_{\pm}} =g_{\pm}$, a $Spin^c$ structure $\mathfrak{s}_X$ on $X$  consists of a rank $4$  Hermitian
vector bundle $S=S_+\oplus S_-$ over $X$,  where $S_+$ and $S_-$ are  rank $2$  Hermitian vector bundles, together with a  map $\mathfrak{cl}_X: TX \to End(S)$ satisfying  (\ref{e6}) and
\begin{equation*}
\mathfrak{cl}_X(e_1)\mathfrak{cl}_X(e_2)\mathfrak{cl}_X(e_3)\mathfrak{cl}_X(e_4)=\left(
  \begin{array}{ccc}
    -1 & 0 \\
    0 & 1 \\
  \end{array}
\right),
\end{equation*}
 whenever $\{e_1, e_2, e_3, e_4\}$ is an orthonormal frame for $TX$. The definitions of $Spin^c$ connection and Dirac operator are similar to the three dimensional case.

Let $\mathfrak{s}_{\pm}=(S_{Y_{\pm}}, \mathfrak{cl}_{Y_{\pm}})\in Spin^c(Y_{\pm})$ and $\mathfrak{s}_X=(S_+\oplus S_-, \mathfrak{cl}_X) \in  Spin^c(X)$, we say that    $\mathfrak{s}_X \vert_{Y_{\pm}}=\mathfrak{s}_{\pm}$  if  $S_{Y_{\pm}}=S_+ \vert_{Y_{\pm}}$ and $\mathfrak{cl}_{Y_{\pm}}(\cdot) = \mathfrak{cl}_X(\nu_{\pm})^{-1}\mathfrak{cl}_X(\cdot)$, where $\nu_+$ is outward unit normal vector of $Y_+$ and $\nu_-$ is  inward unit normal vector of $Y_-$.

Let $(X, \Omega_X)$ be a symplectic   cobordism from $(Y_+, \pi_+, \omega_+)$ to $(Y_-, \pi_-, \omega_-)$ and $J\in \mathcal{J}_{comp}(X,  \Omega_X)$ which agrees with symplectization-admissible almost complex structures $J_{+}$ and $J_-$ on the ends.  The metric on $\overline{X}$ is given by $g(\cdot, \cdot)=\Omega_X(\cdot, J\cdot)$. It is worth noting that $\Omega_X$ is self-dual with respect to the  metric $g$ and $|\Omega_X|_g=\sqrt{2}$. (Cf. Lemma 1 of \cite{Kim})  Moreover,  $g$ agrees with  $ds^2+ g_{\pm}$ on the ends $[0, +\infty) \times Y_+$ and $(-\infty, 0] \times Y_-$ respectively.  Similar to  three dimensional case, there is a  decomposition
\begin{equation}\label{e3}
S_+= E \oplus EK_X^{-1},
\end{equation}
where $E$ is a Hermitian line bundle and $K_X^{-1}$ is the canonical bundle of $(X,J)$. Furthermore,  the splitting (\ref{e3}) agrees with the splitting (\ref{e2}) \ on the ends $[0, \infty) \times Y_+$ and $(-\infty, 0] \times Y_- $.
Again as three dimensional case, when $E$ is the trivial bundle,  there is a canonical connection $A_{K^{-1}}$ such that $D_{A_{K^{-1}}}(1,0)=0$.

Fix  a $2$-form $\wp_{3,\pm}$ on $Y_{\pm}$  such that $[\wp_{3, \pm}]= 2\pi c_1(\mathfrak{s}_{\pm}) $ and let $\wp_4$ be a closed $2$-form on $\overline{X}$ which agrees with   $\wp_{3,+}$ and $\wp_{3,-}$  on the ends. For a connection $A$ on $E$ and a section $\psi$ of $S_+$,  the perturbed version of  $4$-dimensional Seiberg Witten  equations is
\begin{equation} \label{e4}
  \begin{cases}
D_{A} \psi =0 \\
F_{A}^+= \frac{r}{2}(q_4 (\psi) - i \Omega_X) - \frac{1}{2}F_{A_{K^{-1}}}^+ - \frac{i}{2} \wp_4^+,
  \end{cases}
\end{equation}
where $q_4(\psi)$ is a certain quadratic form and $\wp_4^+$ is the self-dual part of $\wp_4$.  After a gauge transformation, we may assume that $\nabla_A= \partial_s + \nabla_{A(s)}$ (temporal gauge) on the ends, where $A(s)$ is connection of $E \vert_{\{s\} \times Y_{\pm}}$. Given $\mathfrak{c}_{\pm} \in {CM}^{*}(Y_{\pm},  -\pi \varpi_{\pm,r}, \mathfrak{s}_{\pm},  \Lambda^{\pm}_{S})$ and their representatives  $(A_{\pm}, \psi_{\pm})$, let $\mathfrak{M}^r_{\overline{X}, ind=0}(\mathfrak{c}_+, \mathfrak{c}_-)$ be  the  moduli space of solutions to (\ref{e4})(possibly with an additional choice of abstract perturbation) with boundary conditions
\begin{equation*}
\lim\limits_{s \to \pm\infty}(A(s), \psi(s))=(A_{\pm}, \psi_{\pm}),
\end{equation*}
 modulo  gauge equivalence. The solution $(A, \psi)$ is also called instanton.

Let $\mathcal{B}_X(\mathfrak{c}_+, \mathfrak{c}_-)= \{(\mathds{A}, \Psi)\} / \backsim  $, where %$(\mathds{A}, \Psi)$ is a pair consisting of a $Spin^c$ connection and a section  of $S_+$,
$(\mathds{A}, \Psi)$ is a configuration such that $(\mathds{A}, \Psi)$ is asymptotic to a  representative of $\mathfrak{c}_{\pm}$ on each end in the sense of last paragraph, modulo the gauge transformation. Given an element in $\mathcal{B}_X(\mathfrak{c}_+, \mathfrak{c}_-)$, its relative homotopy class refers to the element in $\pi_0 \mathcal{B}_X(\mathfrak{c}_+, \mathfrak{c}_-)$.
The cobordism maps  in chain level is given by
\begin{equation*}
{CM}(X,  \Omega_X, J, r,  \Lambda_X)  = \sum\limits_{\mathfrak{c}_{+}, \mathfrak{c}_{-}} \sum_{\mathcal{Z} \in \pi_0 \mathcal{B}_X(\mathfrak{c}_+, \mathfrak{c}_-)}\#\mathfrak{M}^r_{\overline{X}, ind=0}(\mathfrak{c}_+, \mathfrak{c}_-, \mathcal{Z}) \Lambda_X(\mathcal{Z}).
\end{equation*}
This map induces a homomorphism  in homology level,    $${HM}(X,  \Omega_X, J, r, \Lambda_X ): {HM}^{*}(Y_{+},  -\pi \varpi_{+,r}, \mathfrak{s}_{+}, J_+, \Lambda_S^+) \to {HM}^{*}(Y_{-},  -\pi \varpi_{-,r}, \mathfrak{s}_{-}, J_-, \Lambda_S^-).$$
The cobordism maps  ${HM}(X,  \Omega_X, J, r, \Lambda_X )$ satisfies the natural composition rule.(Cf. Chapter 26 of \cite{KM})
\begin{remark}
In general, to define either  ${HM}^{*}(Y,  \mathfrak{s}_{\Gamma}, -\pi \varpi_{r},  \Lambda_S)$ or ${HM}(X,  \Omega_X,  r, \Lambda_X )$, one needs  to choose a suitable abstract perturbation. But for the same reasons as in \cite{Te1} and \cite{HT}, they play   a minor role in the arguments. So we suppress them form notation.

\end{remark}

\subsection{Canonical isomorphism } \label{section17}
Although not explicitly exhibited in \cite{LT}, the isomorphism in Theorem \ref{Thm0} is canonical.  The purpose of this section is to make this point clear.
\subsubsection{ Invariance of HM and cobordism maps}
The  cohomology $HM^{*}(Y, \mathfrak{s}_{\Gamma}, -\pi \varpi_r, J, \Lambda_{S})$  defined in  Section  \ref{section3}, in fact,  is   an invariant in the following sense:
\begin{theorem}[Theorem 34.4.1 \cite{KM}]
 $HM^{*}(Y, \mathfrak{s}_{\Gamma}, -\pi \varpi_r, J, \Lambda_{S})$ is independent on $J$ and $r$. It depends only on $[\omega]$, $Y$, $\Gamma$ and $\Lambda_S$ .
\end{theorem}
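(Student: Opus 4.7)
The plan is to regard $(J, r, \omega)$ as auxiliary data and construct continuation isomorphisms between the chain complexes attached to any two admissible choices sharing the cohomology class $[\omega]$, using the trivial cobordism $\mathbb{R}\times Y$ equipped with an $s$-dependent family of perturbations. This is the standard Floer-theoretic continuation scheme, adapted to the non-exact monopole Floer setting treated in \cite{KM}.

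First, fix two sets of data $(J_0, r_0, \omega_0)$ and $(J_1, r_1, \omega_1)$ with $[\omega_0]=[\omega_1]$, and select a smooth family $\{(J_t, r_t, \omega_t)\}_{t\in[0,1]}$ interpolating between them. Since $[\omega_0]=[\omega_1]$, we may choose the path within the fixed cohomology class so that $\omega_t-\omega_0$ is exact. Cut off the family in the $s$-variable so that the data are stationary for $|s|\gg 0$ and consider the resulting $4$-dimensional Seiberg--Witten equation \eqref{e4} on $\mathbb{R}\times Y$. Counting index zero solutions, weighted by $\Lambda_S$ evaluated on relative homotopy classes, yields a chain map
\begin{equation*}
K_{01}: CM^{*}(Y, -\pi\varpi_{r_0}, \mathfrak{s}_\Gamma, \Lambda_S) \to CM^{*}(Y, -\pi\varpi_{r_1}, \mathfrak{s}_\Gamma, \Lambda_S).
\end{equation*}

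Second, reversing the path provides $K_{10}$. A two-parameter family of paths (constant path versus the concatenation) produces a one-dimensional parameterized moduli space whose boundary delivers chain homotopies $K_{10}\circ K_{01}\simeq \mathrm{Id}$ and $K_{01}\circ K_{10}\simeq \mathrm{Id}$, so $K_{01}$ descends to an isomorphism on $HM^{*}$. The independence of the abstract Kronheimer--Mrowka perturbations follows from the same construction applied to paths of such perturbations.

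The main obstacle is compactness of the continuation moduli spaces under a non-exact perturbation. The Chern--Simons--Dirac functional $\mathfrak{a}$ is genuinely multi-valued, so for fixed asymptotic ends the topological energy over a relative homotopy class $\mathcal{Z}\in \pi_0\mathcal{B}_X(\mathfrak{c}_+,\mathfrak{c}_-)$ can be arbitrarily large, and Gromov-type compactness only applies within each energy stratum. This is precisely why one requires $\Lambda_S$ to be $(c_1(\mathfrak{s}_\Gamma), 2\pi r[\omega])$-complete: the formal sum $\sum_{\mathcal{Z}} \#\mathfrak{M}^{r}_{\overline{X}, \mathrm{ind}=0}(\mathfrak{c}_+,\mathfrak{c}_-,\mathcal{Z})\, \Lambda_X(\mathcal{Z})$ defining $K_{01}$ converges in the local coefficient module because only finitely many classes contribute below each energy level. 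One must also verify that the completeness condition is compatible along the homotopy, which constrains how $r_t$ and $[\omega_t]$ may vary — this is the reason the statement asserts invariance within a fixed $[\omega]$ and for a fixed $\Lambda_S$, rather than absolute invariance. Finally, reducible solutions do not interfere in the regime $\Gamma\cdot [\Sigma]\neq g(\Sigma)-1$ by Lemma 29.12 of \cite{KM}, so the standard transversality arguments go through to produce the required chain maps and chain homotopies generically.
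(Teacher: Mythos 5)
Your proposal is correct in outline, and it coincides with the continuation-map scheme that the paper relies on; note, though, that the paper does not prove this statement itself but cites it as Theorem~34.4.1 of \cite{KM}. The discussion immediately following the theorem in the paper (defining $HM(\rho, \{r_s\})$ via counting index-zero solutions of the parameterized equation), together with Lemmas~\ref{C18} and~\ref{C35}, follows exactly your scheme: interpolating families $\{(\omega_s, J_s, r_s)\}$, cut off in the $s$-variable, chain maps obtained from index-zero moduli counts weighted by $\Lambda_S$, and chain homotopies from one-parameter families of homotopies. Your identification of the completeness requirement on $\Lambda_S$ as the substitute for an a priori energy bound under non-exact perturbation is exactly the point the paper flags when it notes that one must ``establish the finiteness results for the parameterized moduli spaces when $\{r_s\}_{s \in [0, 1]}$ is not constant path,'' referring to Section~31 of \cite{KM}; and your observation that $\Gamma\cdot[\Sigma]\neq g(\Sigma)-1$ removes reducibles matches the appeal to Lemma~29.12 of \cite{KM} made earlier in the paper. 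The one caveat worth keeping in mind is that when $r_t$ varies, the period class $2\pi r_t[\omega]$ rescales, so the compatibility of the completeness condition along the path is really a statement that the relevant notion is invariant under positive scaling of the class; you gesture at this but do not spell it out, and that is the only place where the argument would need a sentence of care beyond what you wrote.
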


To say more about the theorem,  given two pairs $(\omega_0, J_0)$ and $(\omega_1, J_1)$ such that $[\omega_0]=[\omega_1]$, then we can take a  homotopy  $\rho=\{(\omega_s, J_s)\}_{s\in[0,1]}$  from   $(\omega_0, J_0)$ to  $(\omega_1, J_1)$, where $[\omega_s]=[\omega_0]$ and  $J_s \in \mathcal{J}_{comp}(Y, \pi, \omega_s)$ for each $s \in [0,1]$. Let $\{r_s\}_{s\in[0,1]}$ be a homotopy between $r_0$ and $r_1$, and fix  a generic abstract perturbation $\mathfrak{p}=\{\mathfrak{p}_s\}_{s \in [0,1]}$,  then the data $\rho$,  $\{r_s\}_{s\in[0,1]}$ and $\mathfrak{p}$ induce an isomorphism
\begin{eqnarray*}
HM(\rho, \{r_s\})_{\mathfrak{p} }: HM^{*}(Y, \mathfrak{s}_{\Gamma}, -\pi \varpi^1_{r_1}, J_1, \Lambda_S)_{\mathfrak{p}_1} \to  HM^{*}(Y, \mathfrak{s}_{\Gamma}, -\pi \varpi^1_{r_0}, J_0, \Lambda_S)_{\mathfrak{p}_0}.
\end{eqnarray*}
In fact, $HM(\rho, \{r_s\})_{\mathfrak{p} }$ is induced by the chain map
\begin{eqnarray*}
CM(\rho,\{ r_s\})_{\mathfrak{p}}: CM^{*}(Y, \mathfrak{s}_{\Gamma}, -\pi \varpi^1_{r_1}, J_1, \Lambda_S)_{\mathfrak{p}_1} \to  CM^{*}(Y, \mathfrak{s}_{\Gamma}, -\pi \varpi^0_{r_0}, J_0, \Lambda_S)_{\mathfrak{p}_0},
\end{eqnarray*}
where $CM(\rho, \{r_s\})_{\mathfrak{p} }$ is defined by counting  index zero solutions to the following equations:
\begin{equation}
  \begin{cases}
  \frac{\partial}{\partial s}\Psi(s) + D_{\mathds{A}(s)} \Psi(s) =\mathfrak{S}_s \\
\frac{\partial}{\partial s}\mathds{A}(s) + *_3F_{\mathds{A}}= (q_3(\Psi )-i*_3r_s\omega_s) - \frac{1}{2}i*_3 \wp_3 + \mathfrak{T}_s,
  \end{cases}
\end{equation}
where the pair $(\mathfrak{S}_s, \mathfrak{T}_s)$ comes from the abstract perturbation $\mathfrak{p}$.
%\begin{equation}
%  \begin{cases}
%  \frac{\partial}{\partial s}\Psi(s) + D_{\mathds{A}(s)} \Psi(s) =\mathfrak{S}_s \\
%\frac{\partial}{\partial s}\mathds{A}(s) + *_3F_{\mathds{A}}= (q_3(\Psi )-i*_3r_s\omega_s) - \frac{1}{2}i*_3 \wp_3 + \mathfrak{T}_s,
%  \end{cases}
%\end{equation}
%where the pair $(\mathfrak{S}_s, \mathfrak{T}_s)$ comes from the abstract perturbation $\mathfrak{p}_s$.

\begin{lemma} \label{C18}
$HM(\rho, r_s)_{\mathfrak{p}}$ mentioned above satisfies the following properties:
\begin{enumerate}
\item
$HM(\rho, r_s)_{\mathfrak{p} }$ only depends  on the homotopy class of  $\rho$ and $\{r_s\}_{s \in [0,1]}$.
\item
Let  $(\rho, \{{r_{s}}\}_{s \in [0,1]})$ and $(\rho', \{r'_{s}\}_{s \in [0,1]})$ be two  homotopies  such that $(\rho, \{{r_{s}}\})_{s=0} = (\rho', \{r'_{s}\})_{s=1}$, then $HM(\rho' \circ \rho , {r'_{s} \circ r_{s}}) = HM(\rho', {r'_{s}}) \circ HM(\rho, {r_{s}}) $.
\end{enumerate}
\end{lemma}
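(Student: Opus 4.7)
The plan is to adapt the standard continuation-map invariance and composition arguments from Floer theory to the Seiberg--Witten setting, exactly along the lines of Chapters 25--26 of \cite{KM}. In both parts the essential analytic input beyond what is done there is the uniform $C^0$-control on the Seiberg--Witten solutions at large $r$ that Taubes establishes in \cite{Te1}--\cite{Te4}; this is what guarantees the required compactness of the $4$-dimensional moduli spaces in the presence of the non-exact perturbation by $r\omega$, and it is the principal obstacle in both parts. Once these estimates are in hand, the transversality theorems for parameterized families and the gluing theorems at broken trajectories are routine extensions of their exact-perturbation analogues.

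For (1), I would take any two homotopies $(\rho^i, \{r_s^i\}, \{\mathfrak{p}_s^i\})$ ($i=0,1$) with matching endpoints and interpolate between them by a $2$-parameter family $(\rho^u, \{r_s^u\}, \{\mathfrak{p}_s^u\})_{u \in [0,1]}$ of homotopies. For generic choice of interpolation the parameterized moduli space
\[
\mathfrak{M}^{\mathrm{par}}(\mathfrak{c}_+, \mathfrak{c}_-) = \bigl\{(u, (\mathds{A}, \Psi)) : u \in [0,1],\ (\mathds{A}, \Psi) \in \mathfrak{M}^{\rho^u, r_s^u}_{\mathfrak{p}^u_s}(\mathfrak{c}_+, \mathfrak{c}_-)\bigr\}
\]
is a smooth manifold whose index$=-1$ stratum is a compact oriented $0$-manifold; the signed count of that stratum defines a map $K$. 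Standard analysis of the codimension-one boundary of its index$=0$ stratum (interior ends at $u=0,1$ together with broken-trajectory ends on each side) then yields the chain-homotopy identity
\[
CM(\rho^1, \{r_s^1\})_{\mathfrak{p}^1_s} - CM(\rho^0, \{r_s^0\})_{\mathfrak{p}^0_s} = \delta \circ K + K \circ \delta,
\]
so that $HM(\rho^0, \{r_s^0\}) = HM(\rho^1, \{r_s^1\})$ in cohomology.

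For (2), I would concatenate the two homotopies along a long neck $[-T, T] \times Y$ on which all of the data agree with the common endpoint configuration, producing a single homotopy $(\rho' \sharp_T \rho, \{r'_s \sharp_T r_s\})$. For every finite $T$ this lies in the same homotopy class as the naive concatenation $\rho' \circ \rho$, and hence by part~(1) induces $HM(\rho' \circ \rho, \{r'_s \circ r_s\})$ on cohomology. On the other hand, as $T \to \infty$ a standard neck-stretching/gluing argument -- whose only subtle input is precisely the Taubes compactness recalled above, together with the non-degeneracy of the critical points at the common end -- identifies the index$=0$ moduli space over the stretched cobordism with the fiber product
\[
\bigsqcup_{\mathfrak{c}_0} \mathfrak{M}^{\rho, r_s}_{ind=0}(\mathfrak{c}_+, \mathfrak{c}_0) \times \mathfrak{M}^{\rho', r'_s}_{ind=0}(\mathfrak{c}_0, \mathfrak{c}_-),
\]
whose signed count at the chain level is exactly $CM(\rho', \{r'_s\}) \circ CM(\rho, \{r_s\})$. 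Matching the two descriptions on cohomology yields the composition law.
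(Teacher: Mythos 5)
Your structural argument --- an interpolating two-parameter family of data producing a chain homotopy for part (1), and neck-stretching producing the fiber-product identification for part (2) --- is exactly what the paper is invoking when it refers to the general continuation-map outline in Chapter VII of \cite{KM}. The one point you should correct is the identification of the key technical input. You attribute the compactness of the parameterized moduli spaces to Taubes' uniform $C^0$-estimates from \cite{Te1}--\cite{Te4}. Those estimates are the engine for passing between Seiberg--Witten solutions and pseudoholomorphic curves (i.e.\ for the $HP \cong HM$ identification and for Theorem \ref{C31}), not for the internal invariance of $HM$ under changes of $(\omega_s, J_s, r_s)$. What is actually at issue in this lemma is the finiteness of the parameterized moduli spaces when the coefficient $r_s$ of the non-exact perturbation $r_s\omega_s$ is allowed to vary along the homotopy: because the perturbation term is not exact, one must control the drop in the Chern--Simons--Dirac functional as $s$ runs over $[0,1]$ so that, together with the $(c_{\Gamma},[\omega])$-complete local coefficient system, the relevant sums converge. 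This is precisely the framework of Section 31 of \cite{KM}, which is what the paper points to. With that substitution your outline matches the paper's proof.
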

Note that any two   such data $\{\rho, \{r_s\}_{s \in[0,1]}\}$ and  $\{\rho', \{r'_s\}_{s \in[0,1]}\}$ lie inside the same homotopy class.
Therefore, $HM(\rho, r_s)$   only depends  on $\{(\omega_i, J_i)\}_{i=0,1}$ and $\{r_i\}_{i=0,1}$.

 Let $(X,   \Omega_X)$ be a symplectic  cobordism from $(Y_+, \pi_+, \omega_+)$ to $(Y_-, \pi_-, \omega_-)$, then the cobordism map $HM(X, \Omega_X,\Lambda_X)$ is invariance in the following sense:
\begin{lemma} \label{C35}
Let $\rho_{\pm}=\{(\omega_{\pm}, J_{\pm s})\}_{s \in [0,1]}$ and  $\{r_s\}_{s \in [0,1]}$ be homotopies,  and  $\Omega_{X_0}$ and $\Omega_{X_1}$  be two symplectic forms  such that $\Omega_{X_1} = \Omega_{X_2}$ in a small  collar neighborhood of $Y_{\pm}$,      then  we have  the following commutative diagram.
\begin{displaymath}
\xymatrix{
HM^*(Y_+,  \mathfrak{s}_{\Gamma_+},  -\pi \varpi^+_{r_1},  J_{+1}, \Lambda^+_S) \ar[d]^{HM(X, \Omega_{X_1}, J_1, r_1, \Lambda_X)} \ar[r]^{HM(\rho_+, r_s)} & HM^{*}(Y_+, \mathfrak{s}_{\Gamma}, -\pi \varpi^+_{r_0}, J_{+0}, \Lambda^+_S) \ar[d]^{HM(X, \Omega_{X_0}, J_0, r_0, \Lambda_X)}\\
HM^*(Y_-,   \mathfrak{s}_{\Gamma_-},  -\pi \varpi^-_{r_1},  J_{-1}, \Lambda^-_S)   \ar[r]^{ HM(\rho_-, r_s)} &HM^{*}(Y_-, \mathfrak{s}_{\Gamma_-},  -\pi \varpi^-_{r_0}, J_{-0}, \Lambda^-_S)}
\end{displaymath}
\end{lemma}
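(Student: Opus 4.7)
The plan is to realize both compositions as arising from a single $1$-parameter family of cobordism data, then apply a standard parametric moduli space argument to produce a chain homotopy.

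First, I would construct a smooth $1$-parameter family $\{(\omega_{X_s}, J_s, r_s)\}_{s \in [0,1]}$ of cobordism data on $\overline{X}$ that equals $(\omega_{X_i}, J_i, r_i)$ at $s = i$ for $i=0,1$ and that restricts to $\rho_{\pm}$ together with $\{r_s\}$ on the cylindrical ends $[0,\infty) \times Y_+$ and $(-\infty, 0] \times Y_-$. Such a family exists because admissibility of the $2$-form is preserved under convex combinations of forms whose restrictions to the ends coincide with the fixed admissible $\omega_{\pm}$, and because $\mathcal{J}_{comp}(X, \pi_X, \omega_{X_s})$ is nonempty and contractible for each $s$. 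I would also fix a generic one-parameter family of abstract perturbations $\{\mathfrak{p}_s\}$ matching $\mathfrak{p}_0, \mathfrak{p}_1$ on the ends.

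Next, for each pair of generators $(\mathfrak{c}_+, \mathfrak{c}_-)$ and each relative homotopy class $\mathcal{Z} \in \pi_0 \mathcal{B}_X(\mathfrak{c}_+, \mathfrak{c}_-)$, form the parametric moduli space
\begin{equation*}
\widehat{\mathfrak{M}}(\mathfrak{c}_+, \mathfrak{c}_-, \mathcal{Z}) = \bigsqcup_{s \in [0,1]} \mathfrak{M}^{r_s}_{\overline{X}, \mathrm{ind} = -1}(\mathfrak{c}_+, \mathfrak{c}_-, \mathcal{Z}) \times \{s\}
\end{equation*}
of index $-1$ solutions to the Seiberg-Witten equations with data $(\omega_{X_s}, J_s, r_s, \mathfrak{p}_s)$; generically this is a $0$-manifold. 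Counts weighted by $\Lambda_X(\mathcal{Z})$ define a degree $-1$ map $K$. By the standard compactness and gluing package of Chapters 24--26 of \cite{KM}, the compactification $\overline{\widehat{\mathfrak{M}}}$ has oriented boundary of four types: endpoints at $s = 0$ and $s = 1$ (which, after gluing with the continuation trajectories defining $CM(\rho_{\pm}, r_s)$, reproduce the two compositions in the diagram), and interior breakings at some $s \in (0,1)$ where either an index-$1$ flow line on $Y_+$ or on $Y_-$ splits off. Enumerating the ends yields the chain homotopy identity
\begin{equation*}
CM(X, \omega_{X_0}, r_0, \Lambda_X) \circ CM(\rho_+, r_s) - CM(\rho_-, r_s) \circ CM(X, \omega_{X_1}, r_1, \Lambda_X) = \delta_- K + K \delta_+,
\end{equation*}
which descends to the asserted commutative diagram in homology.

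Compatibility with the local coefficients follows from the composition rule $\Lambda_X(Z_+ + Z + Z_-) = \Lambda^-_{Z_-} \circ \Lambda_X(Z) \circ \Lambda^+_{Z_+}$ together with the equicontinuity/completeness condition on $\Lambda_X$, so that the resulting sums converge in the Novikov-type completion uniformly in $s$. The main obstacle is securing compactness of $\widehat{\mathfrak{M}}$: one needs a uniform bound on the Seiberg-Witten energy along the whole path, which follows because $\{\omega_{X_s}\}$ is a compact family of admissible forms with fixed cohomological behavior on the ends, so the $[\omega_X]$-finiteness built into the definition of an $X$-morphism applies uniformly in $s$. Beyond this, transversality for a generic path of perturbations and the gluing analysis identifying the boundary strata with the expected compositions are direct adaptations of the homotopy-invariance arguments in \cite{KM}; the only fibration-specific input is the preservation of admissibility along the interpolating family, which is immediate.
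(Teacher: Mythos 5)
Your proposal takes essentially the same route the paper uses: the paper proves this lemma together with the preceding one by appealing to the homotopy-invariance machinery of Chapter VII of \cite{KM}, and explicitly flags that the one additional ingredient beyond the standard outline is establishing finiteness of the parametrized moduli spaces when $\{r_s\}_{s\in[0,1]}$ is not a constant path, for which it cites the a priori estimates of Section 31 of \cite{KM}. Your step-by-step parametric moduli space construction and the resulting chain homotopy identity are a correct spelling out of that outline.

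The one place your account is slightly off target is the explanation of why compactness of $\widehat{\mathfrak{M}}$ holds. You attribute the needed uniform energy bound to the $[\omega_X]$-finiteness built into the definition of an $X$-morphism. But that completeness condition on $\Lambda_X$ controls a different thing: it ensures the formal sum over relative homotopy classes $\mathcal{Z}$ converges in the Novikov-type ring. The compactness of each individual parametrized moduli space $\bigsqcup_{s}\mathfrak{M}^{r_s}_{\overline{X},\mathrm{ind}=-1}(\mathfrak{c}_+,\mathfrak{c}_-,\mathcal{Z})\times\{s\}$ at a fixed $\mathcal{Z}$ is a separate matter, and the subtlety created by a non-constant $\{r_s\}$ is that the large perturbation $2\pi r_s\omega_s$ itself changes along the path, so the analytic-versus-topological energy comparison must be carried out uniformly in $s$. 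That is precisely the content of the Section 31 estimates the paper points to (and of Lemma 6.1 in this paper in the cobordism setting). Your proof goes through once that attribution is corrected; no new idea is missing.
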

\begin{proof}[Proof of Lemma \ref{C18} and \ref{C35}]
 The lemmas can be proved by using the general outline in chapter VII of \cite{KM}. It is worth noting that  one needs to establish the finiteness results for the parameterized moduli spaces when $\{r_s\}_{s \in [0, 1]}$ is not a constant path. This can be done by using the same   argument in Section 31 of \cite{KM}.
\end{proof}

In conclusion,  Lemma \ref{C18} implies  that  $HM^{*}(Y, \mathfrak{s}_{\Gamma}, -\pi \varpi_{r}, J,  \Lambda_S)_{\mathfrak{p}}= HM^{*}(Y, \mathfrak{s}_{\Gamma}, \omega, \Lambda_S)$  is independent of $r$ and almost complex structure $J$ and abstract perturbation.  Lemma \ref{C35} gives us a well-defined cobordism map $HM(X, \Omega_X, \Lambda_X)$ between $HM^{*}(Y_{\pm}, \mathfrak{s}_{\Gamma_{\pm}}, \omega_{\pm}, \Lambda^{\pm}_S)$.

\subsubsection{Review of $Q$-$\delta$ flat approximation.}
Let $\omega$ be a $Q$-admissible $2$-form over $\pi: Y \to S^1$ and $J \in \mathcal{J}_{comp}(Y, \pi, \omega)$. Let   $\omega_{\Sigma}= \omega \vert_{\pi^{-1}(0)}$ and  $\phi_{\omega}$ be time one flow of $R$ starting at $\pi^{-1}(0)$. It satisfies $\phi_{\omega}^*\omega_{\Sigma} =\omega_{\Sigma} $.  As explained in Section \ref{section11},  $Y$ can be identified with a mapping torus $\Sigma_{\phi_{\omega}}$ via diffeomorphism  (\ref{e47}).
The definition of $Q$-$\delta$ flat  approximation of $(\phi_{\omega}, J)$ is given by the following lemma:
\begin{lemma} [Lemma 2.1, Lemma 2.2, 2.4 of  \cite{LT}] \label{C41}
Given  $Q \ge 1$,  $0 <\delta \ll 1$ and $J \in \mathcal{J}_{comp}(Y, \pi, \omega)$,  we can   modify $(\phi_{\omega}, J)$ to $(\phi_{\omega'}, J')$ such that
\begin{enumerate}
\item
 $(\phi_{\omega'}, J') = (\phi_{\omega} ,J)$ outside  a $\delta$-neighborhood of the periodic orbits with degree less than $Q$.
\item
The  set of $\phi_{\omega'}$  periodic orbits with degree less than $Q$ is  identical to  set of $\phi_{\omega}$ periodic orbits with degree less than $Q$. Moreover, if $\gamma$ is elliptic or hyperbolic as defined using $\phi_{\omega}$, then  then it is respectively elliptic or hyperbolic as defined
using $\phi_{\omega'}$ with the same rotation number.
\item
Given a periodic orbit $\gamma$ of $\phi_{\omega'}$ with degree less than $Q$,  then  there is an embedding
$$\varphi: S_{\tau}^1 \times  D_z \to \Sigma_{\phi_{\omega'}} $$
such that
\begin{enumerate}
\item
The Reeb vector field is given  by
\begin{equation*}
q\varphi_*^{-1}(R) =\partial_{\tau}-2i(\nu_{\gamma} z + \mu_{\gamma} \bar{z} +\mathfrak{r}' ) \partial_{z} + 2i(\nu_{\gamma} \bar{z} + \bar{\mu}_{\gamma} z + \bar{\mathfrak{r}}') \partial_{\bar{z}}.
\end{equation*}
where
\begin{enumerate}
\item
If $\gamma$ is elliptic, then  $\nu_{\gamma}=\frac{1}{2}\theta$ and $\mu_{\gamma}=0$  and  $\theta$ is an irrational number.
\item
If $\gamma$ is positive hyperbolic,  then $\nu_{\gamma}=0$ and $\mu_{\gamma}=-\frac{i}{4\pi} \log|\lambda|$, where $\lambda $ is the eigenvalue of the linear Poincar\'e return map with $|\lambda|>1$.
\item
 If $\gamma$ is negative hyperbolic, then  $\nu_{\gamma}=\frac{1}{4}$, $\mu_{\gamma}=-\frac{i}{4\pi} \log|\lambda|e^{i  \tau}$, where $\lambda$ is the same as above.
 \item
 $\mathfrak{r'}=0$ in a small neighborhood of $z=0$.
\end{enumerate}
\item
$\varphi^*J'(\partial_{z})=i\partial_z $ and $J'$ satisfies $|J-J'| \le c_0 \delta$ and $|\nabla(J-J')| \le c_0$.
\item
$\omega' \vert_{\{\tau\} \times D} =\frac{i}{2} dz \wedge d \bar{z}$ and $[\omega']=[\omega]$.
\end{enumerate}

\end{enumerate}
\end{lemma}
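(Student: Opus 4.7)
The plan is to prove this lemma by working locally in a tubular neighborhood of each periodic orbit $\gamma$ of degree less than $Q$, putting the pair $(\phi_{\omega}, J)$ into the required normal form there, and then interpolating back to the original data outside a $\delta$-neighborhood via cutoff functions. Since the periodic orbits of degree less than $Q$ are finite (by the $Q$-admissibility of $\omega$) and isolated, we may work near each orbit separately.

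First, I would construct the tubular coordinates $\varphi\colon S^1_\tau \times D_z \to \Sigma_{\phi_\omega}$ around $\gamma$. Parametrize $\gamma$ by its flow parameter (with period $2\pi q$, where $q$ is the degree), then use parallel transport of the linearized flow along $\gamma$ together with the symplectic form $\omega$ on $\ker \pi_\ast$ to produce a transverse complex coordinate $z$. In these coordinates the Reeb vector field $R$ takes the form $\partial_\tau + V(\tau,z,\bar z)$ for some $V$ vanishing at $z=0$, and the linearization $P_\gamma$ is encoded in the linear part of $V$. The three cases -- elliptic, positive hyperbolic, negative hyperbolic -- correspond to bringing the linearized monodromy into the standard rotational, real-hyperbolic, or twisted-real-hyperbolic model by a further linear change of coordinates in $z$. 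This produces the coefficients $\nu_\gamma,\mu_\gamma$ exactly as stated, with the error term $\mathfrak r'$ capturing the higher-order $O(|z|^2)$ corrections.

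Next, I would modify the pair to match the normal form strictly on a small disk $|z| < \delta/4$. Define a smooth cutoff $\chi(z)$ equal to $1$ for $|z|\le \delta/4$ and $0$ for $|z|\ge \delta/2$, and replace the error term $\mathfrak r$ in the Taylor expansion of $V$ by $\mathfrak r'=(1-\chi)\mathfrak r$, which vanishes near $z=0$ and agrees with $\mathfrak r$ outside. This yields a new Reeb vector field $R'$, which integrates to a diffeomorphism $\phi_{\omega'}$ of $\Sigma$ preserving $\omega_\Sigma$ (modifying $\omega$ inside the tube by adding an exact form $d\mathfrak a$ to absorb the change; the identity $[\omega']=[\omega]$ follows). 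For the almost complex structure, apply the same cutoff philosophy: modify $J$ so that $\varphi^* J'(\partial_z) = i\partial_z$ on $|z|\le \delta/4$ and $J'=J$ on $|z|\ge \delta/2$, which is possible because both structures are compatible with the fixed symplectic form $\omega'$, and the space of such compatible structures is contractible, so a pointwise interpolation works. For elliptic orbits, a preliminary perturbation of $\omega$ within its cohomology class (supported away from the orbits themselves) may be needed to guarantee that the rotation number $\theta$ is irrational, which is a generic property.

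The main obstacle will be to verify clause (2): that the perturbation creates no new periodic orbits of degree less than $Q$ and preserves the type and rotation number of the existing ones. Since $\gamma$ itself is unchanged (the modification vanishes on $\{z=0\}$) and $P_\gamma$ is determined by the linear part of $R'$ at $z=0$, which equals the linear part of $R$ by construction, $\gamma$ retains its type and rotation number. For the absence of new orbits, note that the cutoff is applied only to the higher-order error $\mathfrak r$, so the time-$q$ return map of $R'$ on the transverse disk differs from that of $R$ by a $C^1$-small perturbation (controllable by $\delta$) that vanishes at $z=0$; for $\delta$ sufficiently small, the only fixed point of this return map in $|z|<\delta$ is $z=0$ itself. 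Outside the $\delta/2$-tube the data is unmodified so no orbits are created there, and an orbit cannot enter and exit the tube in one period without being too long when $\delta$ is small compared to the injectivity radius of the flow. Finally, rescaling $z$ if necessary ensures $\omega'\vert_{\{\tau\}\times D}=\frac{i}{2}dz\wedge d\bar z$ in the coordinates, completing the proof.
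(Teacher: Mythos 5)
There is a genuine gap in the proposal, and it concerns the linear part of the Reeb vector field near each orbit. After the initial tubular-coordinate construction, the Reeb field has the form $\partial_\tau - 2i(\nu(\tau) z + \mu(\tau)\bar z + \mathfrak r)\partial_z + \text{c.c.}$, where $\nu(\tau)$ and $\mu(\tau)$ are $\tau$-dependent functions whose monodromy determines $P_\gamma$. You claim that a further \emph{linear change of coordinates in $z$} brings the linearized monodromy into the rotational or hyperbolic model and thereby produces the constants $\nu_\gamma,\mu_\gamma$, with $\mathfrak r'$ absorbing only the $O(|z|^2)$ terms. This is false: a $\tau$-independent linear change in $z$ does not eliminate the $\tau$-dependence of the linear coefficients (and conjugating the monodromy says nothing about the full period function), while a $\tau$-dependent Floquet change of coordinates cannot simply be cut off to the identity at the tube boundary without feeding back into the vector field exactly the $O(|z|)$ terms you were trying to remove. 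Because of this, your proposed modification $\mathfrak r'=(1-\chi)\mathfrak r$ touches only the quadratic error and leaves behind a vector field whose linear part is still $\nu(\tau)z + \mu(\tau)\bar z$ near $z=0$, which is not the claimed normal form. Consequently clause (3)(a) of the lemma is not achieved.

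The paper's outline (following Lemmas 2.1--2.4 of \cite{LT}) resolves this by \emph{homotoping the linear coefficients themselves}: it chooses a path $\{(\nu^\lambda,\mu^\lambda)\}_{\lambda\in[0,1]}$ from the model $(\nu_\gamma,\mu_\gamma)$ to the actual $(\nu,\mu)$, constrained so that the associated monodromy matrices $K^\lambda$ stay in a single $SL(2,\mathbb R)$ conjugacy class, and then interpolates via the logarithmic cutoff $\lambda^*(z)=\chi(\log|z|/\log\rho_0)$. The logarithmic scale is essential: a plain cutoff of the \emph{linear} term on a length scale $\delta$ produces an $O(1)$ contribution to $\nabla R'$ (from $\nabla\chi\cdot O(|z|)$ with $|\nabla\chi|\sim\delta^{-1}$ and $|z|\sim\delta$), which is precisely what could spawn new low-degree orbits and destroy the estimate $|\nabla(J-J')|\le c_0$. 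Your argument that the return map changes by a $C^1$-small perturbation is correct for cutting off $\mathfrak r=O(|z|^2)$, but that is not the term that needs to be tamed; for the linear part only the logarithmic cutoff gives the required control. In addition you drop the generic perturbation $\mathfrak e$ supported in an annulus (used in \cite{LT} to guarantee nondegeneracy and rationality properties after the interpolation); this is a smaller omission but needed to close the argument. The parts of your proposal concerning the Moser correction to restore area-preservation and the contractibility of compatible $J$ are fine.
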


In order to make the proof of  Lemma \ref{C34} that follows  clear, we  outline   the  construction of  $Q$-$\delta$ flat approximation $(\omega', J')$    as follows:
\begin{enumerate}
\item
Let $\gamma$ be a  periodic orbit with degree $q \le Q$ and  $z=(z_1, z_2)$ be   symplectic coordinates centered on the corresponding periodic points of $\phi_{\omega}$. Construct a tubular neighborhood $\varphi_{\gamma}: S^1 \times D^* \to \Sigma_{\phi_{\omega}}$ such that
\begin{enumerate}
\mathitem
\begin{align} \label{e77}
\varphi_{\gamma *}^{-1}(\frac{1}{q} \partial_t) = \partial_{\tau} -2 i(\nu z + \mu \bar{z} +\mathfrak{r}) \partial_{z} + 2 i(\nu \bar{z} + \bar{\mu}{z} + \bar{\mathfrak{r}})\partial_{\bar{z}},
\end{align}
where $\nu \in C^{\infty}(S^1; \mathbb{R})$, $\mu \in C^{\infty}(S^1; \mathbb{C})$, and $|\mathfrak{r}| \le c_0 |z|^2$, $|\nabla \mathfrak{r}| \le c_0 |z|$.
\item
$J \varphi_{\gamma *} \partial_{z}= i\varphi_{\gamma *} \partial_{z}$ along $\gamma$.
\end{enumerate}
\item
Fix a homotopy $\{(\nu^{\lambda}, \mu^{\lambda})\}_{\lambda \in [0,1]}$ satisfying the following properties:
\begin{enumerate}
\item
$(\nu^{0}, \mu^{0}) = (\nu_{\gamma}, \mu_{\gamma})$ and $(\nu^{1}, \mu^{1})=(\nu, \mu)$, where $(\nu_{\gamma}, \mu_{\gamma})$ is defined in \cite{LT}.
\item
Let $K^{\lambda} \in SL(2: \mathbb{R})$ such that $K^{\lambda}z = \eta(2\pi)$, where $\eta$ is  a solution to the following equations:
\begin{equation*}
  \begin{cases}
 \frac{i}{2} \frac{d \eta}{d \tau} + \nu^{\lambda} \eta +  \mu^{\lambda} \bar{\eta}=0  \\
\eta(0)=z.
  \end{cases}
\end{equation*}
We require that $K^{\lambda}$ is conjugated to $K^0$ in $SL(2: \mathbb{R})$ for any $\lambda \in [0,1]$.
\end{enumerate}

\item
Fix $\rho_0 \in (0, \delta^{16})$,   and let  $\lambda^*(z) = \chi(\frac{ \log |z|}{\log \rho_0})$,  we define
$$\mathfrak{r}^{(1)} = (\nu^{\lambda^*} - \nu_{\gamma})z + (\mu^{\lambda^*} - \mu_{\gamma})\bar{z}  + \lambda^* \mathfrak{r} + \mathfrak{e}, $$
where $\mathfrak{e}$ is a generic small  perturbation which is supported in $|z| \in (\rho_0^{\frac{7}{16}}, \rho_0^{\frac{5}{16}})$.

Let $D \subset D^*$ with radius $c_0^{-1} \delta$. Define $\{u^{(1)}_{\tau}: D \to D^*\}_{\tau \in [0, 2\pi]}$ to be $u^{(1)}_{\tau} (z) = \eta(\tau)$, where $\eta$ is a solution to the following equations
\begin{equation*}
  \begin{cases}
 \frac{i}{2} \frac{d \eta}{d \tau} + \nu_{\gamma} \eta +  \mu_{\gamma} \bar{\eta} + \mathfrak{r}^{(1)} (\eta)=0  \\
\eta(0)=z.
  \end{cases}
\end{equation*}

\item
In general, $u^{(1)}_{\tau}$ is not area-preserving. But one can use Morser's trick to construct a diffeomorphism $\rho_{\tau}: D^* \to D^*$ such that $u_{\tau}^*(\frac{i}{2} dz \wedge d \bar{z} )= \frac{i}{2} dz \wedge d \bar{z}$, where $u_{\tau} = \rho_{\tau} \circ u^{(1)}_{\tau}$. Define
\begin{equation*}
\phi_{\omega}'=
  \begin{cases}
 (\varphi_{\gamma}^{-1})^*u_{\frac{ 2\pi k}{ q}}   \mbox{ \ on \  $\varphi_{\gamma}(\frac{ 2\pi k}{ q} \times D)$ for any $ k \in \{0, 1, \dots, q-1\}$ }\\
 % \ and  \ $\gamma$ \ with degree $\le Q$}\\
\phi_{\omega} \ \ \ \ \ otherwise,
  \end{cases}
\end{equation*}
then  $\phi_{\omega}'$ induces a $Q$-admissible $2$-form $\omega'$ on $Y$, it is what we require.

\item
The almost complex structure $J'$ is defined to be $J$ outside the image of $\varphi_{\gamma}$ for periodic orbits $\gamma$ with degree less than $Q$. Let $R'$ be the Reeb vector of $\omega'$, we require that  $J' \partial_s =R'$, the only ambiguity concerns
the action of $\ker \pi_*$.  In the image of $\varphi_{\gamma}$,  it is straightforward to construct an almost complex structure $J' \in \mathcal{J}_{comp}(Y, \pi, \omega')$ such that it satisfies $\varphi_{\gamma*}J'(\partial_z) = i \varphi_{\gamma*}(\partial_z)$ in a small neighborhood of $\gamma$,  $|J-J'| \le c_0 \delta$ and $|\nabla(J-J')| \le c_0$.
\end{enumerate}
According to Proposition 2.1 of \cite{LT}, the tautological identification of the respective generators  induces   a canonical isomorphism $\Psi: HP(Y, \omega, \Gamma, J, \Lambda_P) \to HP(Y, \omega', \Gamma, J', \Lambda_P) $.

The following lemma is the counterpart of Proposition B.1 of \cite{Te1}.
\begin{lemma} \label{C34}
Let $(\omega_0, J_0)$ and $(\omega_1, J_1)$ be $Q$-$\delta_i$ flat approximations of $(\omega, J)$. Then there exists a family of pairs $\{(\omega_s, J_s)\}_{s \in [0,1]}$ such that $(\omega_s, J_s)_{s=i}=(\omega_i, J_i)$ for $i=0,1$ and $(\omega_s, J_s)$ is $Q$-$\delta$ flat  approximation of $(\omega, J)$ for each $s \in [0, 1]$, where $\delta= \min \{\delta_0, \delta_1\}$.
\end{lemma}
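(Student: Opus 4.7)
The strategy is to run a parametric version of the construction in Lemma \ref{C41}. Recall that each $Q$-$\delta_i$ flat approximation $(\omega_i, J_i)$ is determined by a finite list of auxiliary choices, one bundle of data per periodic orbit $\gamma$ of degree $\le Q$: a tubular neighborhood $\varphi_{\gamma,i}$, a homotopy $\{(\nu^\lambda_i, \mu^\lambda_i)\}_{\lambda\in[0,1]}$ of linear symplectic models (all conjugate to $K^0$ in $SL(2,\mathbb{R})$), a cutoff radius $\rho_{0,i}$ together with a cutoff function $\chi_i$, and a small generic transverse perturbation $\mathfrak{e}_i$ supported in an annulus; plus the extension of $J_i$ away from the orbits. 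My plan is to interpolate each of these pieces of data in $s$, then feed the interpolated data back into the five steps of the recipe to produce $\{(\omega_s, J_s)\}_{s\in[0,1]}$.

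First I would join the tubular neighborhoods $\varphi_{\gamma,0}$ and $\varphi_{\gamma,1}$ by an isotopy $\{\varphi_{\gamma,s}\}$, which is possible because both germs along $\gamma$ already straighten $\tfrac{1}{q}\partial_t$ to the standard form and because the space of embeddings with the prescribed $1$-jet along $\gamma$ is contractible (isotopy extension). Next, the space of homotopies $\{(\nu^\lambda,\mu^\lambda)\}_\lambda$ from $(\nu_\gamma,\mu_\gamma)$ to $(\nu,\mu)$ with each $K^\lambda$ conjugate to $K^0$ is connected (the condition cuts out an open condition in a contractible function space), so one picks a smooth $2$-parameter family $(\nu^\lambda_s,\mu^\lambda_s)$. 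Interpolating $\rho_{0,s}\in(0,\delta^{16})$ linearly and $\chi_s$ linearly is trivial. For the generic perturbation $\mathfrak{e}_s$ I would first interpolate linearly between $\mathfrak{e}_0$ and $\mathfrak{e}_1$; this $1$-parameter family may fail transversality at isolated $s$, but a further $C^\infty$-small perturbation confined to the same annulus restores genericity, since the transversality conditions relevant to Lemma \ref{C41} are open and dense. The resulting $\mathfrak{r}^{(1)}_s$ is then smooth in $s$.

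Running steps (3)--(5) of the recipe with the interpolated data produces $u^{(1)}_{\tau,s}$, then via Moser's trick a family of area-preserving corrections $\rho_{\tau,s}$, and finally $\phi_{\omega_s}'$, $\omega_s$ and $J_s$. One must verify: the Reeb flow linearization along each $\gamma$ is unchanged through the family (so rotation numbers and hyperbolic/elliptic type are preserved, cf.\ item (2) of Lemma \ref{C41}); outside the $\delta$-neighborhoods of the low-period orbits nothing changes, so $(\omega_s,J_s)=(\omega,J)$ there; and the estimates $|J-J_s|\le c_0\delta$, $|\nabla(J-J_s)|\le c_0$ hold uniformly in $s$ because they are inherited from the analogous estimates for the interpolated pieces, all of which are supported inside the $\delta$-neighborhood where $\delta=\min\{\delta_0,\delta_1\}$. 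The cohomology condition $[\omega_s]=[\omega]$ propagates because the interpolation is built from exact modifications inside disks in each fiber.

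The main obstacle I anticipate is the parametric Moser step: showing the correcting diffeomorphisms $\rho_{\tau,s}$ depend smoothly on $s$ and respect the flat model on $|z|\le c_0^{-1}\delta^2$. The standard Moser argument produces $\rho_{\tau,s}$ as the time-one flow of a time-dependent vector field $Z_{\tau,s}$ obtained by solving a Poincar\'e-type equation on $D^*$; smoothness in $s$ follows from smoothness of $u^{(1)}_{\tau,s}$ and of the resulting $2$-form, provided one chooses the primitive continuously, which can be arranged since $D^*$ retracts to a point and the forms agree with $\tfrac{i}{2}dz\wedge d\bar z$ near $z=0$ and on the outer collar. Once this is in hand the rest of the verification is immediate, and the resulting $\{(\omega_s,J_s)\}_{s\in[0,1]}$ is the required family of $Q$-$\delta$ flat approximations.
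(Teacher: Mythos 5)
Your proof is correct and is essentially the paper's argument: interpolate the auxiliary data (radius, homotopy of linear models, etc.) in the recipe for $Q$-$\delta$ flat approximations and rerun the construction parametrically. The paper proceeds in two decoupled stages --- first shrink both cutoff radii to a common $\rho_* \ll \rho_0, \rho_1$, producing for each $i$ a homotopy from $(\omega_i, J_i)$ to its $\rho_*$-version, and only then interpolate the homotopies $\{(\nu_i^{\lambda}, \mu_i^{\lambda})\}$ of linear models, citing Taubes' Proposition B.1 of \cite{Te1} for the existence of the two-parameter family of conjugate-to-$K^0$ interpolants --- whereas you deform all parameters at once. More substantively, you explicitly handle the interpolation of the tubular neighborhoods $\varphi_{\gamma,i}$ (via isotopy extension and contractibility of the space of embeddings with prescribed $1$-jet along $\gamma$) and of the small generic perturbation $\mathfrak{e}_i$ in step (3) of the outline; the paper opens with ``Given tubular neighborhoods\ldots'' and then asserts the construction depends only on $\rho$ and the homotopy, implicitly fixing $\varphi_{\gamma}$ and $\mathfrak{e}$ to be the same for both approximations. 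Your argument therefore covers a slightly more general situation and makes the parametric Moser step explicit. One caution: in your $\mathfrak{e}_s$-interpolation, the generic correction restoring whatever transversality $\mathfrak{e}$ is supposed to provide must be arranged rel endpoints, so that $\mathfrak{e}_{s=0} = \mathfrak{e}_0$ and $\mathfrak{e}_{s=1} = \mathfrak{e}_1$ exactly --- this is routine (genericity for paths rel endpoints) but worth stating, since otherwise you would change $(\omega_i, J_i)$ themselves.
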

\begin{proof}
Given  tubular neighborhoods $\varphi_{\gamma}: S^1 \times D^* \to \Sigma_{\phi_{\omega}}$ described in the first point of the outline above.  Note that the constructions of $(\omega_0, J_0)$ and $(\omega_1, J_1)$ only depend on the choice of the radius $\rho$ and homotopy $\{(\nu^{\lambda}, \mu^{\lambda})\}_{\lambda \in [0,1]}$.

Let $\delta= \min \{\delta_0, \delta_1\}$ and $\rho_*  \ll \rho_0, \rho_1\in (0, \delta^{16})$. We can take a decreasing family $\rho_s$ from $\rho_0$ to $\rho_*$. For each $s$, we can follow above outline to construct a $Q$-$\delta$ flat approximation of $(\omega, J)$, and this give a homotopy of $Q$-$\delta$ flat approximation  from $(\omega_0, J_0)$  to the $\rho_*$ version of    $(\omega_0, J_0)$. Similar conclusion can be made for $(\omega_1, J_1)$.

Based on the above understanding,   we assume that $(\omega_0, J_0)$ and $(\omega_1, J_1)$  are defined by using $\rho_*  \in (0, \delta^{16})$ but different homotopies $\{(\nu_0^{\lambda}, \mu_0^{\lambda})\}_{\lambda \in [0,1]}$ and $\{(\nu_1^{\lambda}, \mu_1^{\lambda})\}_{\lambda \in [0,1]}$ respectively. In fact, we can insert a  family of homotopies   $\{(\nu_s^{\lambda}, \mu_s^{\lambda})\}_{s, \lambda \in [0,1]}$ between them so that it satisfies (a) and (b) in the second point of our outline  for each $s \in [0,1]$. (Cf. Proof of Proposition B.1 of \cite{Te1})
Therefore, for each $s$, we can follow the outline to  construct a $Q$-$\delta$ flat approximation $(\omega_s. J_s)$ for $(\omega, J)$.
\end{proof}

%\begin{remark}
% The construction in above lemma doesn't need to assume  the  conditions \Romannum{1}, or \Romannum{2}, or \Romannum{3} and  $(X, \pi_X)$ is relative minimal.
%\end{remark}

%\[\omega_{X}'=
%\left\{
%\begin{array}
%    {r@{\quad:\quad}l}
%    \omega_X   & \mbox{on $X \setminus ((-2\varepsilon, 0 ]\times Y_+ \bigcup  (0, 2\varepsilon]\times Y_- )$} \\
%    \omega_+ + d(\phi_+(s_+) \mathfrak{a}_+) & \mbox{ on $(-2\varepsilon, 0 ]\times Y_+ $}\\
%    \omega_- + d(\phi_-(s_-)\mathfrak{a}_-) & \mbox { on $(0, 2\varepsilon]\times Y_- $},
%\end{array}
%\right.
%\]

\subsubsection{Isomorphism between HP and HM}
With the previous understanding, we show that the isomorphism in Theorem \ref{Thm0} is canonical in this subsection.
\begin{definition}
Let $F, G: CP(Y_+, \omega_+, \Gamma_+,  \Lambda_P^+) \to CP(Y_-, \omega_-, \Gamma_-,  \Lambda_P^-)$ be two homomorphisms, where $F= \sum\limits_{\alpha_+,\alpha_-}\sum\limits_{Z \in H_2(X, \alpha_+, \alpha_-)} f_Z \Lambda_Z$ and $G= \sum\limits_{\alpha_+, \alpha_-}\sum\limits_{Z \in H_2(X, \alpha_+, \alpha_-)} g_Z \Lambda_Z$.  We say that $F$ equals to $G$ up to order $L$ if $f_Z=g_Z$ for any $Z$ with $\int_Z \omega_X <L$, denoted by $F=G +o(L)$. The analogy can be defined for the maps between chain complexes of Seiberg Witten cohomology.
\end{definition}

Suppose that $(\omega, J )$ is $Q$-$\delta$ flat, by Theorem 6.1  of \cite{LT}, we have  a bijective map
\begin{eqnarray} \label{e62}
T_r: CP_*(Y, \omega, \Gamma, J, \Lambda_P) \to  CM^{-*}(Y, \mathfrak{s}_{\Gamma}, -\pi \varpi_{r}, J, \Lambda_S).
\end{eqnarray}
In addition, given $L>0$, there exists $r_L>0$ such that for any $r> r_L$, we have
\begin{equation} \label{e28}
\#\mathcal{M}_{Y, I=1}^{J, L}(\alpha, \beta) / \mathbb{R}=\#\mathfrak{M}_{Y, ind=1}^{J, L}(T_r(\alpha), T_r(\beta)) / \mathbb{R}.
\end{equation}
A priori, the isomorphism $\mathcal{T}_{r*}$ is not induced by $T_r$ straightforward in general case, because the constant $r_L$ may go to infinity as $L \to \infty$.
%But in fact the following definition of $\mathcal{T}_r$ is chain homotopy  to $T_r$  because of Lemma \ref{C16}.

To define $\mathcal{T}_r$, recall that the cohomology $HM^{-*}(Y, \mathfrak{s}_{\Gamma}, -\pi \varpi_{r}, J, \Lambda_S)$ is independent on $r$. We can take unbounded generic sequences $\{L_n\}_{n=1}^{\infty}$ and $\{r_n\}_{n=1}^{\infty}$ such that $r_n> r_{L_n}$ and a sequences of homotopies $\{r_{n, s} \vert s \in [0,1]\}_{n=1}^{\infty}$ such that $r_{n, s=0}=r$ and $r_{n, s=1}=r_n$, also a sequence of generic abstract perturbations.
Define
\begin{eqnarray}\label{e61}
\mathcal{T}_r = \lim\limits_{n \to \infty}   CM(r_{n, s}) \circ T_{r_n} :  CP_*(Y, \omega, \Gamma, J, \Lambda_P) \to  CM^{-*}(Y, \mathfrak{s}_{\Gamma}, -\pi \varpi_{r}, J, \Lambda_S).
\end{eqnarray}
%The following Lemma \ref{C16} tells us that we can take $CM(r_{n, s})= T_{r} \circ T_{r_n}^{-1}+ o(L)$, then above limit is well-defined.  By (\ref{e28}),  $\mathcal{T}_r $ is a chain map.
%The shifting $\psi_n$ guarantees that above limit is well defined.
For each $n$, $CM(r_{n, s})$ can be factored as in Section 4 of \cite{Te1}. Using the limit argument in Lemma 4.6 of \cite{Te1}, one can show that the energy of the trajectory which contributes to  each small piece is positive unless that it is close to the constant  trajectory. Moreover, the energy of the  broken trajectory which consists of "constant  trajectory" is still close to zero. Use these properties one can show that   energy of the trajectory which contributes  to   $CM(r_{n, s})$ has an $n$-independent lower bound, provided that $r$ is sufficiently large. Hence, the above limit is well defined.  Also, note that $\mathcal{T}_r $  may dependent on the choice of the sequences and homotopies, but $\mathcal{T}_{r*} $ does not.

The following lemma is  a counterpart of Lemma 3.4 of \cite{HT}. To simplify the notation, for any two $Q$-$\delta$ flat approximations, we suppress the canonical isomorphism between them in   both  chain complex level and homology level.
\begin{lemma}
Suppose that $\rho=\{(\omega_s, J_s )\}_{s \in [0,1]}$ such that $(\omega_s, J_s)$ is  $Q$-$\delta$ flat  and  $J_s \in \mathcal{J}_{comp}(Y, \pi, \omega_s)^{reg}$  for each $s\in[0,1]$. Given $L>0$, then there exists $c_L>1$ such that if $\min\limits_{s \in[0,1]} r_s \ge c_L$, $CM(\rho, r_s)$  is chain homotopy to $T_{r_0} \circ T_{r_1}^{-1}+ o(L)$.
\label{C16}
\end{lemma}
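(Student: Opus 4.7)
The plan is to follow the strategy of Lemma 3.4 of \cite{HT}, adapted to the present setting in which the parameter $r$ is allowed to vary along the homotopy.

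\textbf{Subdivision and telescoping.} First I would partition the interval as $0=s_0<s_1<\cdots<s_N=1$ with mesh small enough that on each subinterval $[s_{i-1},s_i]$ the family $(\omega_s,J_s,r_s)$ varies within a small $C^\infty$-neighborhood of $(\omega_{s_i},J_{s_i},r_{s_i})$. By Lemma \ref{C18}, $CM(\rho,r_s)$ is chain homotopic to the composition $CM(\rho|_{[s_0,s_1]},r_s)\circ\cdots\circ CM(\rho|_{[s_{N-1},s_N]},r_s)$. Since the telescoping product $\prod_{i=1}^{N}T_{r_{s_{i-1}}}\circ T_{r_{s_i}}^{-1}$ equals $T_{r_0}\circ T_{r_1}^{-1}$, it would suffice to show that each piece satisfies $CM(\rho|_{[s_{i-1},s_i]},r_s)=T_{r_{s_{i-1}}}\circ T_{r_{s_i}}^{-1}+o(L)$, provided the $o(L)$ bounds add compatibly over the $N$ pieces (with $N$ fixed by $L$ and the diameter of $\rho$).

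\textbf{Parameterized Taubes correspondence on a short subinterval.} On a single short subinterval the parameterized Seiberg--Witten equations describe a cobordism whose underlying geometry is a small perturbation of the $\mathbb{R}$-invariant structure on $\mathbb{R}\times Y$ built from $(\omega_{s_i},J_{s_i})$. Using the parameterized version of the Taubes convergence theorem from \cite{Te1}--\cite{Te4}, I would show: for $r$ larger than some $c_L$, every index-zero solution to the parameterized Seiberg--Witten equations with $\omega_X$-energy less than $L$ is close (modulo gauge) to a holomorphic current for the $\mathbb{R}$-invariant data at some $s\in[s_{i-1},s_i]$; conversely, the gluing/perturbation argument from \S6 of \cite{LT} produces a unique Seiberg--Witten solution near each such trivial-cylinder current. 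Because the underlying cobordism carries no genuine symplectic content on the short piece, the low-energy holomorphic currents that arise are forced to be unions of trivial $\mathbb{R}$-invariant cylinders, and matching them up with generators realizes precisely the composite $T_{r_{s_{i-1}}}\circ T_{r_{s_i}}^{-1}$ on the PFH side modulo the tautological identification of generators under $Q$-$\delta$ flatness.

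\textbf{Energy lower bound for extra contributions.} The decisive input, in the spirit of the limit argument in Lemma 4.6 of \cite{Te1}, is an $r$-independent and $s$-independent positive lower bound on the $\omega_X$-energy of any index-zero parameterized trajectory which is \emph{not} close to a trivial-cylinder configuration. Such a bound forces every ``non-identity'' contribution to appear only in relative classes $Z$ with $\int_Z\omega_X\geq L$, so that after choosing $c_L$ large enough the remainder lies in $o(L)$. I expect this step to be the principal obstacle: one must verify that the parameterized moduli spaces do not exhibit any extra low-energy breaking or bubbling as $s$ varies and as $r_s$ becomes large, and that each ingredient of the Taubes five-paper machinery (a priori estimates, vortex extraction, gluing) transfers uniformly to the $r$-dependent, $s$-dependent setting. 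Once this uniform lower bound is in place, the combination of the subdivision, the fiber-wise correspondence, and the telescoping identity yields the claim.
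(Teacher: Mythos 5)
Your subdivision-and-telescoping setup matches the paper's, but the paper then finishes by contradiction rather than by establishing the direct parameterized correspondence you propose. The paper assumes the conclusion fails for a sequence of paths $\{r_{j,s}\}$ with $\min_s r_{j,s} \to \infty$, and from the subdivision concludes that for each $N$ some small piece $I_{i_N}$ fails to be the canonical bijection up to order $L$. Letting $N\to\infty$ and using small abstract perturbations $\mathfrak{p}_{j,k,s}$ whose instantons have nonnegative index, a compactness argument (as in the proof of Theorem 34.4.1 of \cite{KM}) produces a fixed $s_{j,k}\in[0,1]$ and a non-$\mathbb{R}$-invariant, index-zero instanton for the $s_{j,k}$-data. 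Passing to a subsequence with $s_{j,k}\to s_*$ and applying the one-directional convergence of Theorem 1.7 of \cite{Lee} together with Theorem 5.1 of \cite{21}, these instantons converge to a non-$\mathbb{R}$-invariant holomorphic current $\mathcal{C}$ with $I(\mathcal{C})=0$, which is impossible for generic $J_{s_*}$.

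What this buys over your route: the paper never needs a full parameterized Taubes bijection (in particular, it never needs the Gr $\Rightarrow$ SW direction on each short piece, only SW $\Rightarrow$ Gr in the $r\to\infty$ limit), and it never needs to establish the uniform $r$- and $s$-independent energy gap you flag as the principal obstacle. That gap is obtained indirectly: its failure would manifest exactly as a sequence of low-energy non-trivial index-zero instantons, which the compactness plus genericity argument rules out. Your step asserting that ``the underlying cobordism carries no genuine symplectic content'' so that low-energy currents are forced to be trivial cylinders is the point where your outline is hand-wavy; in the paper this is replaced by the concrete statement that a non-$\mathbb{R}$-invariant index-zero current cannot exist for generic $J_{s_*}$. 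Your approach, carried out fully, would amount to re-proving substantial chunks of the $Q$-$\delta$-flat Taubes correspondence in a parameterized family, whereas the paper reduces everything to the already-established $3$-dimensional results of \cite{Lee}, \cite{21}, and transversality for a single generic $J_{s_*}$. As a minor remark, your worry about $o(L)$ errors accumulating across the $N$ pieces is unfounded: energy is nonnegative and additive along a composition of level-$i$ pieces, so any contribution that ever passes through a class of energy $\geq L$ remains of energy $\geq L$, and the telescoping of the main terms is exact.
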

\begin{proof} The argument basically is the same as Lemma 3.4 of \cite{HT}. Arguing by contradiction,   suppose that for each positive integer $j$, there is a path $\{ r_{j,s}\vert s \in [0,1] \}$ such that the conclusion fails and $\lim\limits_{j \to \infty} \min\limits_{s \in [0, 1]} r_{j,s} = +\infty$.

For each $j$ and positive integer $k$, we  choose a family of abstract perturbations $\{\mathfrak{p}_{j,k,s} \vert s\in [0,1]\}$ suitable for defining $HM(\rho, r_{j,s})$ and satisfying
\begin{enumerate}
\item
$|\mathfrak{p}_{j,k,s}|_{\mathcal{P}} < k^{-1}$,
\item
The $\mathfrak{p}_{j,k,s}$ instantons between generators of $CM^*(Y, \mathfrak{s}_{\Gamma}, -\pi\varpi_{r_{j,s}}, J_s ,\Lambda_S )$ have nonnegative index.
\end{enumerate}

Now fix $j$ and $k$, let $N$ be a  positive large  integer. Taking a partition of $[0,1]$,
\begin{eqnarray*}
0=s_0< s_1 < \dots s_N=1 \mbox{ and } |s_i-s_{i-1}| \le \frac{2}{N}.
\end{eqnarray*}

Let $I_i:   CM^{*}(Y, \mathfrak{s}_{\Gamma}, -\pi\varpi_{r_{i}}, J_i ,\Lambda_S) \to CM^{*}(Y, \mathfrak{s}_{\Gamma}, -\pi\varpi_{r_{i-1}}, J_{i-1} ,\Lambda_S)$ be the chain map which is defined by $\rho \vert_{[s_{i-1}, s_i]}$, $r_{j, s} \vert_{[s_{i-1}, s_i]}$ and $\mathfrak{p}_{j,k, s} \vert_{[s_{i-1}, s_i]}$. Let $I=I_1 \circ  \dots \circ I_N$,  then Lemma \ref{C18} implies that $I$  is chain homotopy to $CM(\rho, r_s)$.

If $I_i$ is canonical bijection of the generators up to order $L$, i.e., $I_i= T_{r_{s_{i-1}}} \circ T_{r_{s_{i}}}^{-1} + o(L)$,  then $I= T_{r_{0}} \circ T_{r_1}^{-1} + o(L)$, contradict with our assumption. Therefore, for each $N$, we can find $i_N$ such that $I_{i_N}$ is not canonical bijection of the generators up to order $L$.

Let $N \to \infty$,  using the compactness argument as the  proof of Theorem 34.4.1 \cite{KM} and the second property of our $\mathfrak{p}_{j,k,s}$, there exists  $s_{j,k} \in [0,1]$ and an index zero, non-$\mathbb{R}$ invariant  $\mathfrak{p}_{j,k,s_{j,k}}$-instanton $\mathfrak{d}_{j,k}$ between generators of
$ CM^{*}(Y, \mathfrak{s}_{\Gamma}, -\pi\varpi_{r_{s_{j,k}}}, J_{s_{j,k}} ,\Lambda_S)$. Moreover, the   total drop in  the $s_{j,k}$-version of Chern-Simon-Dirac functional along  $\mathfrak{d}_{j,k}$ is uniformly bounded by $r_j(c_0+L)$, where $r_j= \max\limits_{s \in [0,1]} r_{j, s}$.

Passing a subsequence, we can assume that $\lim\limits_{j \to \infty} \lim\limits_{k \to \infty} s_{j,k}=s_*$.  The argument in Section 5 of \cite{LT} and Section 4 of \cite{Te4}  can  be repeated almost verbatim
to conclude the following:  the instantons  $\mathfrak{d}_{j,k}$ converges to a non-$\mathbb{R}$ invariant  $J_{s_*}$- holomorphic currents (possibly broken) $\mathcal{C}$ in $\mathbb{R} \times Y$ between generators of $CP_*(Y, \omega_{s_*}, \Gamma, J_{s_*}, \Lambda_P)$. In addition,  by Theorem 5.1 of  \cite{21}, $I(\mathcal{C})=0$.  However, our $J_{s_*}$ here is generic, this is impossible.
\end{proof}

\begin{corollary}
Suppose that $(\omega_0, J_0)$ and $(\omega_1, J_1)$ are $Q$-$\delta$ flat approximations of $(\omega, J)$, then the following diagram commutes. \label{C17}
\begin{displaymath}
\xymatrix{
HP_*(Y,  \omega_0 ,\Gamma, J_0, \Lambda_P) \ar[d]^{\Psi_{0, 1}} \ar[r]^{\mathcal{T}_{r_0*}} & HM^{-*}(Y, \mathfrak{s}_{\Gamma}, -\pi \varpi^0_{r_0}, J_0, \Lambda_S) \ar[d]^{HM(\rho, r_s)}\\
HP_*(Y,  \omega_1 ,\Gamma, J_1, \Lambda_P)   \ar[r]^{ \mathcal{T}_{r_1 *}} &HM^{-*}(Y, \mathfrak{s}_{\Gamma}, -\pi \varpi^1_{r_1}, J_1, \Lambda_S)}
\end{displaymath}
The map $\Psi_{0,1}$ is induced by canonical bijection of the generators.
\end{corollary}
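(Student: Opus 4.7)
The plan is to reduce the commutativity at the homology level to Lemma \ref{C16}, using Lemma \ref{C34} to produce a single path of $Q$-$\delta$ flat approximations interpolating between $(\omega_0, J_0)$ and $(\omega_1, J_1)$, and invoking the composition rule for Seiberg--Witten cobordism maps to relate $HM(\rho, r_s)$ to the limits defining $\mathcal{T}_{r_0}$ and $\mathcal{T}_{r_1}$.

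My first step would be to apply Lemma \ref{C34} to obtain a family $\rho = \{(\omega_s, J_s)\}_{s \in [0,1]}$ of $Q$-$\delta$ flat approximations with $(\omega_s, J_s)|_{s=i} = (\omega_i, J_i)$, together with a homotopy $\{r_s\}_{s \in [0,1]}$ from $r_0$ to $r_1$ and a generic abstract perturbation suited to counting the parameterized moduli spaces. By Lemma \ref{C18}, the induced map $HM(\rho, r_s)$ at the homology level depends only on the endpoints, so this choice is harmless. Recall that by construction $\mathcal{T}_{r_i}$ is given by the limit $\lim_{n \to \infty} CM(r^i_{n,s}) \circ T_{r^i_n}$ along unbounded sequences $r^i_n \to \infty$.

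The main calculation would proceed as follows. Fix $L > 0$. For the first route I would concatenate $\rho$ with the defining homotopy $\{r^1_{n,s}\}$ to obtain a path $\tilde{\rho}_n$ of $Q$-$\delta$ flat approximations going from $(\omega_1, J_1)$ at parameter $r^1_n$ to $(\omega_0, J_0)$ at parameter $r_0$. By the composition rule of Lemma \ref{C18}, $CM(\rho, r_s) \circ CM(r^1_{n,s})$ is chain homotopic to $CM(\tilde{\rho}_n, \tilde{r}_{n,s})$. Choosing $n$ large enough so that $\min_s \tilde{r}_{n,s}$ exceeds the $L$-dependent threshold of Lemma \ref{C16}, that lemma yields $CM(\tilde{\rho}_n, \tilde{r}_{n,s}) \simeq T_{r_0} \circ \Psi_{0,1} \circ T_{r^1_n}^{-1} + o(L)$, where I have restored the canonical bijection $\Psi_{0,1}$ that is suppressed from the statement. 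Composing on the right with $T_{r^1_n}$ gives $CM(\rho, r_s) \circ CM(r^1_{n,s}) \circ T_{r^1_n} \simeq T_{r_0} \circ \Psi_{0,1} + o(L)$, and then passing to the limit defining $\mathcal{T}_{r_1}$ produces $CM(\rho, r_s) \circ \mathcal{T}_{r_1} \simeq T_{r_0} \circ \Psi_{0,1} + o(L)$. An analogous application of Lemma \ref{C16} to the constant path at $(\omega_0, J_0)$ yields $\mathcal{T}_{r_0} \circ \Psi_{0,1} \simeq T_{r_0} \circ \Psi_{0,1} + o(L)$. Since both compositions therefore agree up to $o(L)$ for every $L$ and the local coefficient is $[\omega]$-complete, letting $L \to \infty$ delivers the desired commutativity at the homology level.

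The principal obstacle I anticipate is threading the various chain homotopies coming from the composition rule, the definition of $\mathcal{T}_r$, and Lemma \ref{C16} together so that the $o(L)$ errors do not compound. Each step introduces a chain homotopy whose defect must be controlled in the $[\omega]$-complete sense, and one must arrange the generic almost complex structures and abstract perturbations consistently along each concatenated path. The $n$-independent energy lower bound established inside the proof of Lemma \ref{C16} (itself relying on the factorization argument of \cite{Te1} together with the compactness result of \cite{Lee}) is what will allow the inner limit defining $\mathcal{T}_{r_1}$ to commute with the outer chain homotopies produced by the composition rule, and this is the key technical input that has to be deployed with care.
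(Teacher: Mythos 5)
The principal gap is in the step where you claim to choose ``$n$ large enough so that $\min_s \tilde{r}_{n,s}$ exceeds the $L$-dependent threshold of Lemma~\ref{C16}.'' This is not possible with your construction. Your $\tilde{\rho}_n$ is the concatenation of the defining homotopy $\{r^1_{n,s}\}$, along which the parameter $r$ descends from $r_n$ down to $r_1$, followed by the fixed path $\rho$, whose $r$-parameter ranges between $r_1$ and $r_0$. Consequently $\min_s \tilde{r}_{n,s} \le \min\{r_0, r_1\}$ for every $n$, a bound independent of $n$. Since the threshold $c_L$ in Lemma~\ref{C16} depends on $L$ and a priori grows without bound as $L \to \infty$, the hypothesis of Lemma~\ref{C16} cannot be verified for all $L$ at the fixed $r_0, r_1$ of the diagram. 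Your final auxiliary step suffers the same defect: applying Lemma~\ref{C16} to a constant path at $(\omega_0, J_0, r_0)$ to conclude $\mathcal{T}_{r_0} \simeq T_{r_0} + o(L)$ again requires $r_0 \ge c_L$ for every $L$.

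The paper resolves precisely this issue by first invoking the endpoint-invariance of $HM(\rho, r_s)$ (Lemma~\ref{C18}) to replace the original path by a three-piece detour: an $r$-deformation from $r_0$ up to $r_n$ at constant $(\omega_0, J_0)$, then the $(\omega, J)$-deformation $\rho$ carried out at the \emph{constant} large parameter $r_n$, then an $r$-deformation from $r_n$ back down to $r_1$ at constant $(\omega_1, J_1)$. Lemma~\ref{C16} is applied only to the middle piece, whose $r$-parameter is constantly $r_n \ge c_{L_n}$, giving $CM(\rho, r_n) \simeq T_{r_n} \circ T_{r_n}^{-1} + o(L_n)$; composing the two outer pieces with $T_{r_n}$ and letting $n \to \infty$ then produces $\mathcal{T}_{r_0}$ and $\mathcal{T}_{r_1}^{-1}$ by the very definition of $\mathcal{T}_r$. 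The ability to route the $(\omega, J)$-deformation through arbitrarily large $r$ — rather than at the fixed parameters $r_0, r_1$ — is the factorization your proposal is missing, and without it the $o(L)$ control fails for large $L$.
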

\begin{proof}  By Lemma \ref{C34},  we can find a homotpy $\rho=\{(\omega_s, J_s)\}_{s \in [0,1]}$ from  $(\omega_0, J_0)$ to $(\omega_1, J_1)$ so that $(\omega_s, J_s)$ is $Q$-$\delta$ flat pairs for each $s \in[0,1]$.

Taking  increasing unbounded sequences  $\{L_n\}_{n=1}^{\infty}$ and  $\{r_n\}_{n=1}^{\infty}$ such that $r_n > \max\{r_{L_n}, c_{L_n}\}$. Let $\rho_i=\{(\omega_i, J_i)\}_{s\in [0,1]}$ and  homotopy $\{ r^i_{n, s}\}_{s \in [0,1]}$ such that $r^i_{n, s=0} =r_i$ and $r^i_{n, s=1} =r_n $, $i=0,1$.      By Lemmas \ref{C16} and \ref{C18}, we know that $HM(\rho, r_s)$ is induced by the following map
\begin{eqnarray*}
 &&CM(\rho_0, r^0_{n, s}) \circ CM(\rho, r_n) \circ CM(\rho_1, r^1_{n, s})^{-1}\\
&=&CM(\rho_0, r^0_{n, s}) \circ (T_{r_n} \circ T_{r_n}^{-1} + o(L_n)) \circ CM(\rho_1, r^1_{n, s})^{-1}.
\end{eqnarray*}
 The corollary follows from (\ref{e61}) and taking $n \to \infty$.
\end{proof}

%Given $(\omega, J)$, let $(\omega', J')$ be a $Q$-$\delta$ flat approximation of $(\omega, J)$, then we have a canonical isomorphism $\Psi: HP_*(Y, \omega, \Gamma, J, \Lambda_P) \to HP_*(Y, \omega', \Gamma, J', \Lambda_P) $ induced by tautological bijection between the generators. Let $$\mathcal{T}_{r*}':  HP_*(Y, \omega', \Gamma, J', \Lambda_P) \to HM^{-*}(Y, \mathfrak{s}_{\Gamma}, -\pi \varpi_{r}', J', \Lambda_S)  $$ be the isomorphism in Theorem \ref{Thm0}.    Take a homotopy $\rho=\{(\omega_s, J_s)\}_{s \in [0,1]}$ from $(\omega, J)$ to $(\omega', J')$, then we have an isomorphism
Let $\Psi: HP_*(Y, \omega, \Gamma, J, \Lambda_P) \to HP_*(Y, \omega', \Gamma, J', \Lambda_P) $ be the canonical isomorphism induced by  $Q$-$\delta$ flat approximation. Let $$\mathcal{T}_{r*}':  HP_*(Y, \omega', \Gamma, J', \Lambda_P) \to HM^{-*}(Y, \mathfrak{s}_{\Gamma}, -\pi \varpi_{r}', J', \Lambda_S)  $$ be the isomorphism provided by  Theorem \ref{Thm0}.    Take a homotopy $\rho=\{(\omega_s, J_s)\}_{s \in [0,1]}$ from $(\omega, J)$ to $(\omega', J')$, then we have an isomorphism
 \begin{eqnarray}\label{e52}
\mathcal{T}_{*}: HP_*(Y, \omega, \Gamma, J, \Lambda_P) \to  HM^{-*}(Y, \mathfrak{s}_{\Gamma}, [\omega], \Lambda_S),
\end{eqnarray}
where $\mathcal{T}_*$ is defined by $\mathcal{T}_*=i_r \circ HM(\rho, r) \circ \mathcal{T}_{r*}' \circ \Psi $ and $i_r:  HM^{-*}(Y, \mathfrak{s}_{\Gamma}, -\pi \varpi_{r}, J,\Lambda_S) \to  HM^{-*}(Y, \mathfrak{s}_{\Gamma}, [\omega], \Lambda_S)$ is the direct limit isomorphism. By Corollary  \ref{C17} and Lemma \ref{C18},  $\mathcal{T}_* $ is independent of the choice of the  $Q$-$\delta$ flat approximation and $r$.

\begin{corollary}[Cf. Corollary 6.7 of \cite{LT}]
$HP_*(Y, \omega, \Gamma, \Lambda_P)$  is independent on $J$.
\end{corollary}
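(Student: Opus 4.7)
The plan is to read off the corollary as an immediate consequence of the isomorphism $\mathcal{T}_\ast$ that was just constructed, together with the fact that the target of $\mathcal{T}_\ast$ is manifestly $J$-independent. More precisely, for any two generic symplectization admissible almost complex structures $J_0,J_1 \in \mathcal{J}_{comp}(Y,\pi,\omega)^{reg}$, I would produce a canonical isomorphism
\begin{equation*}
\Phi_{J_0,J_1} \;=\; \mathcal{T}_\ast^{J_1,-1} \circ \mathcal{T}_\ast^{J_0} \;:\; HP_\ast(Y,\omega,\Gamma,J_0,\Lambda_P) \;\longrightarrow\; HP_\ast(Y,\omega,\Gamma,J_1,\Lambda_P),
\end{equation*}
where $\mathcal{T}_\ast^{J_i} : HP_\ast(Y,\omega,\Gamma,J_i,\Lambda_P) \to HM^{-\ast}(Y,\mathfrak{s}_\Gamma,[\omega],\Lambda_S)$ is the isomorphism built in the preceding paragraph. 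Since $HM^{-\ast}(Y,\mathfrak{s}_\Gamma,[\omega],\Lambda_S)$ depends only on $[\omega]$, $Y$, $\Gamma$, $\Lambda_S$, the composition makes sense and $\Phi_{J_0,J_1}$ is an isomorphism.

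To make this bona fide, I need to check three things. First, that each $\mathcal{T}_\ast^{J_i}$ is well defined, which is exactly the content of Corollary \ref{C17} combined with Lemma \ref{C18}: the map was shown to be independent of the $Q$-$\delta$ flat approximation, of the parameter $r$, and of the auxiliary homotopy $\rho$ used in its construction, with the direct-limit map $i_r$ absorbing the $r$-dependence on the Seiberg--Witten side. Second, I should verify the cocycle relation $\Phi_{J_1,J_2}\circ \Phi_{J_0,J_1}=\Phi_{J_0,J_2}$ and $\Phi_{J_0,J_0}=\mathrm{id}$, which is tautological once $\mathcal{T}_\ast^{J_i}$ lands in a common group. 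Third, that the definition extends from the $J\in\mathcal{J}_{comp}(Y,\pi,\omega)^{reg}$ case to arbitrary $J$: for a non-generic $J$ the chain complex $CP_\ast$ is not well defined in the usual sense, but this is not an obstacle at the homology level since one defines $HP_\ast(Y,\omega,\Gamma,\Lambda_P)$ using a generic perturbation and the above $\Phi_{J_0,J_1}$ identifies any two such choices canonically.

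I expect the only mildly delicate step to be the verification that $\mathcal{T}_\ast^{J_i}$ is truly canonical, i.e.\ that different choices of the $Q$-$\delta$ flat approximation $(\omega',J')$ of $(\omega,J_i)$, of the interpolating homotopy $\rho$ between $(\omega,J_i)$ and $(\omega',J')$, and of the unbounded sequence $\{r_n\}$ and abstract perturbations used to pass from $\mathcal{T}_r$ to $\mathcal{T}_\ast$, all yield the same map. The first two are handled by Corollary \ref{C17} applied to two homotopies of $Q$-$\delta$ flat pairs connected by Lemma \ref{C34}, while the independence of $r$ follows from Lemma \ref{C18}(2) composed with the direct limit isomorphism $i_r$. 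Granting this, the corollary is immediate.
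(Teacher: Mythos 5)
Your proposal is correct and follows exactly the paper's argument: the paper defines the identification for generic $J_0,J_1$ as $\mathcal{T}_{1*}^{-1}\circ\mathcal{T}_{0*}$, both factoring through the $J$-independent group $HM^{-*}(Y,\mathfrak{s}_{\Gamma},[\omega],\Lambda_S)$, which is precisely your $\Phi_{J_0,J_1}$. The additional checks you flag (well-definedness of $\mathcal{T}_*$, cocycle relation) are the same ingredients the paper relies on via Corollary~\ref{C17} and Lemma~\ref{C18}, just made explicit.
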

\begin{proof}
For any two almost complex structures $J_0, J_1 \in \mathcal{J}_{comp}(Y, \pi, \omega)^{reg}$, the isomorphism between $HP_*(Y, \omega, \Gamma, J_0, \Lambda_P)$ and $ HP_*(Y, \omega, \Gamma, J_1, \Lambda_P)$ is given by $\mathcal{T}_{1*}^{-1} \circ \mathcal{T}_{0*}$.
\end{proof}

\subsection{Convergence theorem} \label{section7}
%In this subsection, we  prove a convergence theorem for a sequence of Seiberg Witten solutions to  (\ref{e4}).
%Compare with the result in \cite{Lee}, the computation is much easier in our case since both of $\Omega$, $\wp_4$ and the metric $g$ are $\mathbb{R}$-invariant on the ends.
Proposition \ref{C32} that follows asserts that  a sequence of Seiberg Witten solutions to  (\ref{e4})  give rise to broken holomorphic curves.    It is a counterpart of Proposition 7.1 of \cite{HT}. In this subsection, we assume that $(\omega_{\pm}, J_{\pm})$ is $Q$-$\delta$ flat, unless otherwise stated. Recall that $T_r^{\pm}$ is the bijection in (\ref{e62}) and $s: \overline{X} \to \mathbb{R}$ is the coordinate function.

 We prove a  convergence result for a slightly general setting than we need here.  Given a symplectic form $\Omega_X$ over $\overline{X}$, assume that  there exists a  positive function $K:  ( -\infty, \epsilon] \cup [-\epsilon, \infty)  \to \mathbb{R}$ such that   $\Omega_X = \omega_{\pm}  + K^2 ds \wedge \pi^*_{\pm} dt $ over  $ ( -\infty, \epsilon]  \times Y_- \cup [-\epsilon, \infty) \times Y_+$.   Additionally, we assume that $K$ satisfies the following properties:
\begin{enumerate}
\item
$|K-1|_{C^2} \le c_0e^{-\frac{s}{c_0}}$.
\item
There exists a positive constant $C$ such that  $ C^{-1} \le  K \le   C$.
\end{enumerate}

Take a  $J \in \mathcal{J}_{comp}(X, \Omega_X )$,  %(Now the $\Omega_X$  in second condition of  Definition \ref{def8} is $\omega_X + \pi_X(K^2\omega_B)$.),
note that $J$ is still compatible with $\Omega_X$ over $\overline{X}$. Define a metric $g=\Omega_X(\cdot, J \cdot)$. Obviously, $g$ is not product metric on the ends.  But $\Omega_X$  still is a self-dual harmonic $2$-form with respective to $g$ and $|\Omega_X|_g =\sqrt{2}$. Along the ends of $\overline{X}$, the metric $g$ satisfies the following properties:
\begin{enumerate}
\item
$g= K^2ds^2 + g_3$,  where $g_3$ is  an  $s$-dependence metric on $Y_{\pm}$.
\item
$ |R_{\pm}|_{g_3} =K$ and $g_3(v, w)= \omega_{\pm}(v, J w)$ for any $v, w \in \ker \pi_{\pm*} $.
\item
$*_3 \omega_{\pm} =K \pi_{\pm}^*dt$.
\item
For any $a \in \Omega^1(Y) $ and $b \in \Omega^2(Y)$, we have
\begin{equation*}
*_4(ds \wedge a ) =K^{-1}*_3 a, \ and \ *_4 b =K ds \wedge *_3 b.
\end{equation*}
\end{enumerate}
Write $F_A =ds \wedge E_A + F_{A(s)}$ over the ends of $\overline{X}$, then (\ref{e4}) becomes
\begin{equation}   \label{e71}
  \begin{cases}
\frac{1}{K}\nabla_{A, s} \psi + D^{g_3}_{A(s)} \psi =0 \\
\frac{1}{K} E_A + *_3 F_{A(s)} + \frac{1}{2}*_3 F_{A_{K^{-1}}} -r(q_3(\psi) -iKdt) + \frac{i}{2}*_3\wp_3=0.
  \end{cases}
\end{equation}
Note that $E_A =\partial_s A $ if $A$ is in temporal gauge.

Note that the splitting (\ref{e3}) still holds. We decompose $\psi=(\alpha, \beta)$ with respective to this splitting. We have the following relation.
\begin{equation} \label{e73}
\begin{split}
& \mathfrak{cl}_X(\Omega_X )\alpha=-2i \alpha  \ \ \mbox{and}  \ \  \mathfrak{cl}_X(\Omega_X )\beta=2i \beta  \\
& \mathfrak{cl}_Y(K dt)\alpha=i \alpha  \ \ \mbox{and}  \ \    \mathfrak{cl}_Y(K dt)\beta= -i \beta.
\end{split}
\end{equation}

%In order to state the main result, we need the following definition.
The following definition  is  an analogy of the $\Omega_X$-energy of holomorphic curve.
\begin{definition}
Given a configuration $\mathfrak{d} = (A, \psi) \in \mathcal{B}_X( \mathfrak{c}_+, \mathfrak{c}_-)$,  the $\Omega_X$  energy of  $\mathfrak{v} $ is defined as follows: $$\mathcal{F}_{\Omega_X} ( \mathfrak{d} ) =  \frac{i}{2\pi} \int_{\mathbb{R}_- \times Y_-} F_A \wedge \omega_- +   \frac{i}{2\pi} \int_{X} F_A \wedge \Omega_X +   \frac{i}{2\pi} \int_{\mathbb{R}_+ \times Y_+} F_A \wedge \omega_+.$$
\end{definition}
Note that $\mathcal{F}_{\Omega_X} ( \mathfrak{d} )$ only depends  on $\mathfrak{c}_{\pm}$ and  the relative homotopy class of $\mathfrak{d}$.
%\begin{remark}
%Note that it is  an analogy of the $\Omega_X$-energy of holomorphic curve.  Also,  $\mathcal{F}_{\Omega_X} ( \mathfrak{d} )$ only depend on $\mathfrak{c}_{\pm}$ and  the relative homotopy class of $\mathfrak{d}$.
%\end{remark}

The main result of this section is as follows:
\begin{prop}\label{C32}
Let $(A_{+}, \psi_{+})$ and $(A_-, \psi_-)$ be   solutions  to $r$-version of (\ref{e1}) and  $[(A_{\pm}, \psi_{\pm})] = T_r^{\pm}(\alpha_{\pm})$.  Let  $\mathfrak{d}=(A, \psi)$ be  a  solution to $r$-version of  (\ref{e4})  which is asymptotic to $(A_{\pm}, \psi_{\pm})$ at the ends. Assume that $\mathcal{F}_{\Omega_X} ( \mathfrak{d} )  \le L$.  Given any $\delta>0$, there exists $\kappa_{\delta} \ge 1$ and $c_0 \ge 1$ such that  the following are true  for $r \ge \kappa_{\delta} $:
\begin{enumerate}
\item
Each point in $\overline{X}$ where $|\alpha| \le 1- \delta$ has distance less than $c_0 r^{-\frac{1}{2}}$ from $\alpha^{-1}(0)$.
\item
There exist
\begin{enumerate}
\item
a positive integer $N \le c_0$ and an open cover of $\mathbb{R}$ as  $\bigcup_{1 \le k \le N }I_k$, each of length at least $2\delta^{-1}$, with $[-1, 1] \subset I_{k_0}$ and
\item
a broken holomorphic current $\{\mathcal{C}_k\}_{1\le k \le N}$ from $\alpha_+ $ to $\alpha_-$
such that  for each $k$,  we have
\begin{enumerate}
\item
 $\begin{aligned}
\sup_{z \in C \in \mathcal{C}_k} dist(z, \alpha^{-1}(0)) + \sup_{z \in \alpha^{-1}(0)} dist \left(z,  \bigcup_{C  \in \mathcal{C}_k } C \right) < \delta.
 \end{aligned} $
\item
Let $I \subset I_k$ be an interval of length $1$ and let $\nu$
denote the restriction to $s^{-1}(I)$ with $|\nu|_{\infty}=1$ and $|\nabla \nu| \le \delta^{-1}$. Then
\begin{equation*}
|\frac{i}{2\pi}\int_{s^{-1}(I) }F_{\hat{A}} \wedge \nu - \int_{\mathcal{C}} \nu | \le \delta,
\end{equation*}
where $\hat{A} = A - \frac{1}{2}(\bar{\alpha} \nabla_A \alpha - \alpha \nabla_A \bar{\alpha})$.
\end{enumerate}
\end{enumerate}
\end{enumerate}
\end{prop}

%Let us sketch the key points of the proof of the convergence theorem in ECH setting and  illustrate how to adapt them  in our setting.
The essential difference between \cite{HT} and our case is  how to get the uniformly upper bound on the function $\underline{\mathcal{M}}$, where $\underline{\mathcal{M}}$ is defined as follows.   Under the assumption in Proposition \ref{C32} and let $\mathfrak{d}=(A, \psi)$ be  a  solution to the $r$-version of  (\ref{e4}),  we define a function $\underline{\mathcal{M}}: \mathbb{R} \to \mathbb{R}$ by
\begin{equation*}
\underline{\mathcal{M}}(s)= \int_{[s, s+1]} r(1-|\alpha|^2).
\end{equation*}

Firstly, let us  summarize the analytic results which we need in the proof of Proposition \ref{C32}.
\begin{enumerate}
\item
An $r$-independent upper bound on the function $\underline{\mathcal{M}}$. The precise statement  and the proof will be given in Lemma \ref{C10} latter.
\item    \label{C46}
Provided that the last point is true, we can  obtain  the counterparts of Lemmas  5.3, 5.6, 5.7, 5.9, 5.10, 5.11 and 5.12 in  \cite{LT}. Because these lemmas  carry over almost verbatim to our setting, we will not repeat them here. We only remind that   the counterpart of Lemmas 5.12, $|\beta| \le c_0 r^{-1}$ and  $|\nabla_A \beta| \le c_0 r^{-\frac{1}{2}}$ are true at the points $x \in \overline{X}$ that has distance $c_0^{-1}$ equal or less   to $\mathbb{R}_{\pm} \times \gamma_{\pm} $ and $|s(x)| \ge 1$. But this is enough for our purpose. The proof of these Lemmas can be copied with the relevant argument in Section 3 of \cite{Te4}.  The proof there relies on the elliptic estimates for certain differential inequalities. One can deduce these differential inequalities from the Seiberg Witten equations and   $Weizenb\ddot{o}ck$ formula.  Since  elliptic estimates  are local, the proof  in  \cite{Te4} can be applied  to our cases without essential change.
We refer to the lemmas mentioned above as priori estimates for the Seiberg Witten equations (\ref{e4}).

\end{enumerate}
\begin{remark}
One can compare the  parallel results in ECH setting (Sections 3  of \cite{Te4}) with PFH setting (our case here or \cite{LT}),  the only difference is that the assumptions ${A}_{\mathfrak{v}} < r^2 $ or $f_{\mathfrak{v}} > -r^2 $ and $E(A) \le L$  in ECH setting  %corresponding Lemmas in \cite{Te4}
are replaced by $\mathcal{F}_{\Omega_X} ( \mathfrak{d} )  \le L$. These kinds of assumptions are equivalent to give an upper bound on the topological energy of Seiberg Witten equations. In Remark \ref{r5}, we will explain  a little more about the difference.   %This will be proved  in Lemma \ref{C26}.
\end{remark}
With the above analytic results in hand, we   follow   Section 4 in \cite{Te4} to sketch how to use them to prove the convergence theorem.  For the convenience of readers, we  restate some key lemmas (Lemmas  \ref{C44}, \ref{C45}) in \cite{Te4} and \cite{HT}.

The following proposition is a local version of Proposition \ref{C32}.  Roughly speaking,  it asserts that given a sequence of solutions $\{(A_{r_n}, \psi_{r_n})\}$,   there exists a subsequence of  $\{(A_{r_n}, \psi_{r_n})\}$ converges  to a holomorphic current $\mathcal{C}$ in certain sense over any compact subset of $\overline{X}$.
\begin{prop}(Cf.Prop 4.1 \cite{Te4} and Prop 7.8 \cite{HT})  \label{C44}
Under the same assumptions in Proposition \ref{C32} and fixed a $\delta>0$. Let $\mathbb{I}$ be a connected subset of $\mathbb{R}$ of length at least $2\delta^{-1}+16$.  Let $I \subset \mathbb{I}$ be a connected subset of points with distance at least $7$ from $\partial \mathbb{I}$ and $|I| \ge 2 \delta^{-1}$. Then there exists $\kappa_{\delta} \ge 1$ and $c_0 \ge 1$ such that  the following are true  for $r \ge \kappa_{\delta} $:
\begin{enumerate}
\item
Each point in $s^{-1}(I)$ where $|\alpha| \le 1- \delta$ has distance less than $c_0 r^{-\frac{1}{2}}$ from $\alpha^{-1}(0)$.
\item
There exists holomorphic current $\mathcal{C}$ defined in a  neighborhood of the
closure of $s^{-1}(I)$,  such that
\begin{enumerate}
\item $\begin{aligned}
    \sup_{z \in C \in \mathcal{C} \cap s^{-1}(I)} dist(z, \alpha^{-1}(0)) + \sup_{z \in \alpha^{-1}(0)  \cap s^{-1}(I)} dist \left(z,  \bigcup_{C  \in \mathcal{C} } C \right) < \delta.
\end{aligned}$
%\item
%\begin{equation*}
%\sup_{z \in C \in \mathcal{C} \cap s^{-1}(I)} dist(z, \alpha^{-1}(0)) + \sup_{z \in \alpha^{-1}(0)  \cap s^{-1}(I)} dist \left(z,  \bigcup_{C  \in \mathcal{C} } C \right) < \delta.
%\end{equation*}
\item
Let  $\nu$ be a $2$-form with support in $s^{-1}(I)$  such that $|\nu| \le 1$ and $|\nabla \nu| \le \delta^{-1}$, then
\begin{equation*}
|\frac{i}{2\pi}\int_{s^{-1}(I) }F_{\hat{A}} \wedge \nu - \int_{\mathcal{C}} \nu | \le \delta,
\end{equation*}
where $\hat{A} = A - \frac{1}{2}(\bar{\alpha} \nabla_A \alpha - \alpha \nabla_A \bar{\alpha})$.
\end{enumerate}
\end{enumerate}
\end{prop}
\begin{proof}(Sketch)
With the  above analytic results in hand,  Proposition 4.1  and its proof in \cite{Te4}  can be carried  over almost verbatim to our setting.
%Roughly speaking,  Proposition 4.1 in \cite{Te4} states a local convergence results, i.e. a  subsequence of $(A_r, \psi_r)$ converges  to a holomorphic current $\mathcal{C}$ in certain sense over any compact subset of $\overline{X}$.
To see how these work, let $\mathcal{F}_r$ be the current associate   to the curvature of $A_r$, i.e., $\mathcal{F}_r(\nu) =\frac{i}{2\pi} \int_{\overline{X}} F_{A_r} \wedge \nu$ for any 2-form $\nu$ with compact support in a compact set $K \subset \overline{X}$. By the analytic result above, there is an $r$-independent bound  on the $L^1$ norm of curvature, i.e., $|F_{A_r}|_{L^1(K)} \le C_K$. As a result, the current $\mathcal{F}_r$ converges weakly to a current $\mathcal{F}_{\infty}$. By  the technique called   positive cohomology assignment  which is introduced in \cite{T1}, the support  of $\mathcal{F}_{\infty}$ is a holomorphic current $\mathcal{C}$. Moreover, it satisfies the properties (a) and (b).
%Moreover, the uniform bound $\frac{i}{2\pi} \int_{\overline{X}} F_{A_r} \wedge \omega_X \le L$ gives a upper bound on $\int_{\mathcal{C}}\omega_X$.

If $s^{-1}(I)$ is non-compact, then one can take a sequence of compact sets to cover $s^{-1}(I)$.  The  argument above  supplies a holomorphic current in each compact set. The argument on page 2874-2875 of \cite{Te4} shows that the holomorphic currents over these compact sets can be patched together.
\end{proof}
%\begin{remark}
%In the proof of following proposition and the proof of Theorem \ref{A10}, we also need the counterpart of Lemma 5.3, 5.6, 5.7, 5.9, 5.10, 5.11 and 5.12 in  \cite{LT}. Because these lemmas and their proofs carry over almost verbatim to our setting, we will not repeat them here. We only remind that in the counterpart of Lemmas 5.12, $|\beta| \le c_0 r^{-1}$ and  $|\nabla_A \beta| \le c_0 r^{-\frac{1}{2}}$ are true at the points $x \in \overline{X}$ that has distance or less  $c_0^{-1}$ to $\mathbb{R}_{\pm} \times \gamma_{\pm} $ and $|s(x)| \ge 1$. But this is enough for our purpose.
%\end{remark}
Basically, Lemmas 4.6, 4.7 and 4.8 in \cite{Te4} concern the following phenomenon:  If the energy of holomorphic curve is sufficiently small,  then it closes to the trivial cylinder in certain sense. The statement and the proof there can be adapted to our setting with only notation change. We omit them here.

The following lemma is an analogy of Lemma 4.9 in \cite{Te4} and  Lemma 7.10 \cite{HT}  in our setting.
\begin{lemma}(Cf. Lemma 4.9 in \cite{Te4} and  Lemma 7.10 \cite{HT}) \label{C45}
Under the same assumptions in Proposition \ref{C32}. Let $\mathbb{I} \subset \mathbb{R}-\{0\}$ be a connected subset with length at least $16$. Given $\varepsilon>0$,  let $\mathcal{I} =\{k \in \mathbb{Z} \vert \int_{s^{-1}[k, k+1]} F_{\hat{A}} \wedge \omega  \ge \varepsilon\}$,   where $\omega = \omega_+ $ or $\omega_-$.
Let $I' \subset \mathbb{I} -(\cup_{k \in \mathcal{I}} [k, k+1])  $ be a connected component. Then $\frac{i}{2\pi}\int_{s^{-1}(I')} F_{\hat{A}} \wedge \omega  \ge -\varepsilon^2$.
\end{lemma}
\begin{proof}
% We can almost copy the proof in Lemma 4.9  of \cite{Te4}.
 Let $\hat{I}'$ be the portion of $I'$ with distance at least $16 + R_{\varepsilon'}$ from $\partial I'$, where  $R_{\varepsilon'}$  is  defined in  Lemma 4.9 of \cite{Te4}.  Let $\hat{I}''$ be the rest, i.e., $\hat{I}''=I' - \hat{I}'$.  It suffices to estimate $\frac{i}{2\pi}\int_{s^{-1}(\hat{I}')} F_{\hat{A}} \wedge \omega $ and $\frac{i}{2\pi}\int_{s^{-1}(\hat{I}'')} F_{\hat{A}} \wedge \omega $.

To obtain the lower bound for $\frac{i}{2\pi}\int_{s^{-1}(\hat{I}'')} F_{\hat{A}} \wedge \omega $, note that the argument in \cite{Te4} only use the priori estimates for Seiberg Witten equations. Since the  relevant analytic results are also true in our setting,  we can  copy the  relevant  argument to get lower bounded for $\frac{i}{2\pi}\int_{s^{-1}(\hat{I}'')} F_{\hat{A}} \wedge \omega $,.
% However,  note that when Taubes prove the lower bound for  $\int_{s^{-1}(\hat{I}')} F_{\hat{A}} \wedge  \omega$, where $\hat{I}'$ will be introduce in next paragraph,  he  use the exactness of  $d a$  in ECH setting while our $\omega$ ($\omega=\omega_+$ or $\omega_-$) is not exact. But we can make the following adjustment:

 To estimate $\int_{s^{-1}(\hat{I}')} F_{\hat{A}} \wedge  \omega $,  the proof in \cite{Te4} use the exactness of  $d a$  in ECH setting while our $\omega $  is not exact. But we can make the following adjustment:
We  decompose $\omega = \omega_0 + \sum_{\gamma} d y_{\gamma}$, where  $\gamma$ runs over  simple periodic orbits with degree less than $Q$,   $y_{\gamma}$ is a $1$-form supports in a $10\varepsilon'$ neighborhood  of $\gamma$ and $\omega_0$  supports  away from $\gamma$. The term
$\int_{s^{-1}(\hat{I}')} F_{\hat{A}} \wedge  dy_{\gamma} = \int_{s^{-1}(\partial \hat{I}')} F_{\hat{A}} \wedge y_{\gamma}$ can be estimated as (4-14) in \cite{Te4}.  As a result, we obtain the lower bound for  $\int_{s^{-1}(\hat{I}')} F_{\hat{A}} \wedge  dy_{\gamma}$.   Let $\mathcal{C}$ be the holomorphic current provided by Proposition \ref{C44}, (\cite{Te4} uses notation $\vartheta$), therefore, $$|\int_{s^{-1}(\hat{I}')} F_{\hat{A}} \wedge  \omega_0 - \int_{\mathcal{C} \cap  s^{-1}(\hat{I}') } \omega_0| < \delta.$$ Note that the argument in Lemma 4.9 of  \cite{Te4} show that $\mathcal{C} \vert_s$ is contained in a $2\varepsilon'$ neighborhood of $\gamma$ for any $s \in \hat{I}'$, thus $\int_{\mathcal{C} \cap  s^{-1}(\hat{I}') } \omega_0 =0$.  Hence,  $\int_{s^{-1}(\hat{I}')} F_{\hat{A}} \wedge  \omega_0 > -\delta$.
\end{proof}
The consequence of Lemma \ref{C45} is that there are only finitely many elements in $\mathcal{I}$. The solutions $\{(A_r, \psi_r)\}_r$ over $\mathbb{I} -(\cup_{k \in \mathcal{I}} [k, k+1])$ have small energy. Apply Proposition \ref{C44} to $\{(A_r, \psi_r)\}_r$, $\{(A_r, \psi_r)\}_r$ supplies  a nontrivial holomorphic current in each connected component of $s^{-1}(\cup_{k \in \mathcal{I}} [k, k+1])) $, while in each connected component of $s^{-1}(\mathbb{I} -(\cup_{k \in \mathcal{I}} [k, k+1]) )$, the holomorphic current  is close to the trivial cylinders and they are corresponding to ends of holomorphic curves. Roughly speaking,  the above construction supplies us with  the broken holomorphic current promised in Proposition \ref{C32}.  For more  details, we refer the reader to Section 4 of \cite{Te4}.

 \subsection{Energy Bound} \label{section18}
In this subsection, we prove the promised bound for $\underline{\mathcal{M}}$.
Firstly, let us introduce some functionals as follows:
\begin{enumerate}
\mathitem
\begin{align*}
\mathfrak{S}(A, \psi)=\big( \frac{F^+_{\mathds{A}}}{2} - \mathfrak{cl}_X^{-1}(\Psi \Psi^*)_0 + i\frac{\eta}{4}, D_{\mathds{A}} \Psi \big),
\end{align*}
where $\mathds{A}=2A+A_{K^{-1}}$, $\Psi=\sqrt{2r}\psi$ and $\eta= 2r\Omega_X + 2\wp_4^+ $.

\item
As in \cite{KM}, we define the topological energy and analytic energy as follows:
%\begin{enumerate}
\begin{enumerate}
\mathitem
\begin{align*}
\mathscr{E}_{anal}(X_R)(A,\psi)&= \frac{1}{4} \int_{X_R} |F_{\mathds{A}}|^2 + \int_{X_R} |\nabla_{\mathds{A}} \Psi|^2  + \int_{X_R} 2|\frac{i}{4}\eta - \mathfrak{cl}_{X_R}^{-1}(\Psi \Psi^*)_0 |^2 \\
&\quad+\frac{1}{4}\int_{X_R} R_g |\Psi|^2 - i \int_{X_R} F_{\mathds{A}} \wedge    \frac{1}{2}*\wp_4,
\end{align*}
 where $X_R=\{x \in \overline{X} \vert | s(x)| \le R \}$.

\mathitem
\begin{align*}
\mathscr{E}_{top}(X_R)(A,\psi)&= \frac{1}{4} \int_{X_R} F_{\mathds{A}}\wedge F_{\mathds{A}}   - \int_{\partial X_R} <\Psi, D_{\mathds{A}}\Psi> +  i \int_{X_R} F_{\mathds{A}} \wedge (r\Omega_X + \frac{1}{2}\wp_4)\\
& + \int_{\partial X_R} \frac{H}{2} |\Psi|^2,
\end{align*}
 where   $H$ is mean curvature.
\end{enumerate}

%\item
%\begin{eqnarray*}  \displaystyle
%\mathscr{E}_{anal}(X_R)(A,\psi)&=& \frac{1}{4} \int_{X_R} |F_{\mathds{A}}|^2 + \int_{X_R} |\nabla_{\mathds{A}} \Psi|^2  + \int_{X_R} 2|\frac{i}{4}\eta - \mathfrak{cl}_{X_R}^{-1}(\Psi \Psi^*)_0 |^2 \\
%&+&\frac{1}{4}\int_{X_R} R_g |\Psi|^2 - i \int_{X_R} F_{\mathds{A}} \wedge (r \pi_X^*\omega_B  + \frac{1}{2}*\wp_4).
%\end{eqnarray*}
%
%\item
%\begin{equation*}
%\mathscr{E}_{top}(X_R)(A,\psi)= \frac{1}{4} \int_{X_R} F_{\mathds{A}}\wedge F_{\mathds{A}}   - \int_{\partial X_R} <\Psi, D_{\mathds{A}}\Psi> +  i \int_{X_R} F_{\mathds{A}} \wedge (r\omega_X + \frac{1}{2}\wp_4),
%\end{equation*}
% where $X_R=\{x \in \overline{X} \vert | s(x)| \le R \}$.
%\end{enumerate}
\item
We follow \cite{LT} to define a functional  $Q_F$  as follows:
 \begin{eqnarray*}
%\mathfrak{a}(A, \psi)&=& -\frac{1}{2}\int_{Y_+} (A-A_0)\wedge d(A-A_0)- \int_{Y_+}(A-A_0) \wedge (F_{A_0}+ \frac{1}{2}F_{A_{K^{-1}}})\\
%&& - \frac{i}{2}\int_{Y_+}(A-A_0)\wedge (2r\omega_+ + \wp_3) + r\int_{Y_+} \psi^*D_A \psi\\
&&Q_F(A)=i \int_{Y_{\pm}}(A-A_0) \wedge \omega_{\pm},
 \end{eqnarray*}
where $A_0$ is a fixed reference connection of $E \to Y_{\pm}$.  We abuse notation $A_0$ to denote a reference connection of $E \to \overline{X}$ such that $A_0$ is $\mathbb{R}$ invariant on the ends.
\item
Let $\mathfrak{a}_{\pm}$ be the Chern-Simon-Dirac functional for $Y_{\pm}$. Recall that we have defined them  in (\ref{e49}).
\end{enumerate}

\begin{lemma} \label{C50}
Assume that $\wp_4$ is a closed $2$-form such that $\wp_4= \wp_3$ when $|s|\ge 0$. Suppose that  $\mathfrak{d}=(A, \psi)$ is a  solution to $r$-version of  (\ref{e4})  which is asymptotic to  solutions  $(A_{+}, \psi_{+})$ and $(A_-, \psi_-)$ to (\ref{e1}) at the ends.  Then there exists a constant $c_0>0$ such that for $r \ge c_0$, we have
\begin{enumerate}
\item
$|\alpha| \le 1+ \frac{c_0}{r}$,
\item
$|\beta|^2 \le c_0 r^{-1}(1-|\alpha^2|) + \frac{c^2_0}{r^2}$.
\end{enumerate}
\end{lemma}
\begin{proof}
The proof is the same as Lemma 7.3 of \cite{HT}.
\end{proof}

The next lemma shows that the Chern-Simon-Dirac functional  is  almost decreasing. Since the metric $g_3$ varies along the ends, the  Chern-Simon-Dirac functional  $\mathfrak{a} (\mathfrak{d} \vert_{s })$ should be understood as the one  defined by metric $g_3 \vert_{\{s\} \times Y}$.
\begin{lemma}  \label{C51}
Assume that $\wp_4$ is a closed $2$-form such that $\wp_4= \wp_3$ when $|s|\ge 0$. Suppose that  $\mathfrak{d}=(A, \psi)$ is a Seiberg Witten solution to $r$-version of  (\ref{e15})  which is asymptotic to  solutions  $(A_{+}, \psi_{+})$ and $(A_-, \psi_-)$ to (\ref{e1}) at the ends.  For any $0<s_- \le s_+$ or $s_- \le s_+ <0$, we have
\begin{equation*}
\begin{split}
  &  \ \ \ \ \mathfrak{a} (\mathfrak{d} \vert_{s_-}) - \mathfrak{a} (\mathfrak{d}\vert_{s_+}) \\
  & = \frac{1}{2}  \int_{s_-}^{s_+}\Bigg( \int_Y(K |\mathfrak{B}(A,\psi)|^2 + K^{-1}|E_A |^2  + 2rK |D_A^{g_3} \psi|^2 + 2rK^{-1}| \nabla_{A,s} \psi|^2 )vol_{g_3} \Bigg)ds\\
&+   \int_{s_-}^{s_+}  \Bigg( \int_Y \left( r  <\psi, (\partial_sD^{g_3}) \psi>   +r h(\psi, D^{g_3}_A \psi) \right) vol_{g_3} \Bigg) ds, \\
 %+ (K^{-1}-1) \int_Y \partial_s A\wedge ir \omega. \\
%&+\int_{s \in [s_-, s_+]} ( K^{-1}-1) \left(  r^2( 1 - K|\alpha|^2  +  K|\beta|^2) vol_{g_3} + r^2O(|\alpha||\beta|)\right)
\end{split}
\end{equation*}
where $\mathfrak{B}(A,\psi)=*_3F_A-r(q_3(\psi) - i Kdt) +\frac{1}{2}*_3F_{A_{K^{-1}}} + \frac{i}{2}*_3\wp_3$, $h=\partial_s g$, and $\partial_s (D^{g_3})$ is derivative  of the Dirac operator defined by metric $g_3$ along $s$ direction.
Furthermore,
%$ \mathfrak{a}(\mathfrak{d} \vert_{s_-}) - \mathfrak{a}( \mathfrak{d}\vert_{s_+})  \ge -c_0 r.$
\begin{equation} \label{e72}
\begin{split}
    \mathfrak{a}(\mathfrak{d} \vert_{s_-}) - \mathfrak{a}(\mathfrak{d}\vert_{s_+}) & \ge  \frac{1}{4C}  \int_{s \in [s_-, s_+]}(  |\mathfrak{B}(A,\psi)|^2 + |E_A|^2) + 2r(  |D_A^{g_3} \psi|^2 + | \nabla_{A, s} \psi|^2 )-c_0r.
 %+ (K^{-1}-1) \int_Y \partial_s A\wedge ir \omega. \\
%&+\int_{s \in [s_-, s_+]} ( K^{-1}-1) \left(  r^2( 1 - K|\alpha|^2  +  K|\beta|^2) vol_{g_3} + r^2O(|\alpha||\beta|)\right)
\end{split}
\end{equation}
\end{lemma}
\begin{proof}
Without loss of generality, assume that $(A ,\psi)$ is in  temporal gauge. By direct  computation, we have
\begin{equation*}
\begin{split}
& \partial_s \mathfrak{a}(\mathfrak{d} \vert_{s})=
   -\int_Y \partial_s A \wedge (F_{A(s)} + \frac{1}{2}F_{A_{K^{-1}}} + ir\omega + \frac{i}{2}\wp_3) + r\int_Y \left(  < \partial_s \psi, D^{g_3}_A \psi>  +   <  \psi, D^{g_3}_A   ( \partial_s\psi)> \right)\\
&+  r\int_Y  <\psi, \mathfrak{cl}_Y(\partial_sA) \psi>  +   r\int_Y <\psi, (\partial_sD^{g_3}) \psi>  + r\int_Y h(\psi, D^{g_3}_A \psi)\\
  & =   -\int_Y \partial_s A \wedge (F_{A(s)} + \frac{1}{2}F_{A_{K^{-1}}} -r *_3q_3(\psi)+ ir\omega + \frac{i}{2}\wp_3) + r\int_Y 2Re < \partial_s \psi, D^{g_3}_A \psi> \\
 &+   r\int_Y <\psi, (\partial_sD^{g_3}) \psi>   +r\int_Y h(\psi, D^{g_3}_A \psi).
\end{split}
\end{equation*}
Then the conclusion follows from the flow line equations (\ref{e71}).

Using Cauchy-Schwarz  inequality, the term   $rh(\psi, D^{g_3}_A \psi)$ can be absorbed by the term  $2r K |D_{A}^{g_3} \psi|^2$. By the assumption on $K$, we have $  |h|+|\partial_s (D^{g_3})| \le c_0e^{-\frac{s}{c_0}}  $. Combine this with Lemma \ref{C50}, we get the inequality  (\ref{e72}).

\end{proof}

\begin{lemma}
Assume that $\wp_4$ is a closed $2$-form such that $\wp_4= \wp_3$ when $|s|\ge 0$. Suppose that  $\mathfrak{d}=(A, \psi)$ is a Seiberg Witten solution to $r$-version of  (\ref{e4})  which is asymptotic to  solutions  $(A_{+}, \psi_{+})$ and $(A_-, \psi_-)$  to (\ref{e1}) at the ends.  Assume that  $ \mathcal{F}_{\Omega_X}(\mathfrak{d}) \le L$, then there exists  a constant $c_0>0$ such that for $r \ge c_0$, we have \label{C26}
\begin{enumerate}
\item
$|\mathfrak{a}( \mathfrak{d} \vert_{\partial X})| \le c_0(1+ 2\pi L) r$, where $\mathfrak{a}( \mathfrak{d} \vert_{\partial X})= \mathfrak{a}_+( \mathfrak{d} \vert_{\{0\} \times Y_+}) -  \mathfrak{a}_-( \mathfrak{d} \vert_{\{0\} \times Y_-})$.
\item
$\mathscr{E}_{top}(X_R)(A,\psi) \le  c_0 (1+2\pi L )r + c_0 R r.$

\item
$  \frac{1}{8} \int_X |F_{\mathds{A}}|^2 + \int_X |\nabla_{\mathds{A}} \Psi|^2  + 2\int_X |\frac{i}{4}\eta - \mathfrak{cl}_X^{-1}(\Psi \Psi^*)_0 |^2  \le c_0(1+ 2\pi L) r$.
\item
$\underline{\mathcal{M}}(s)=\int_{[s, s+1]} r(1-|\alpha|^2) \le c_0 (1+2\pi L ) r^{\frac{1}{2}}$.
\end{enumerate}
\end{lemma}
\begin{proof} For the  sake of simplicity, we  assume that $Y_-=\emptyset$.
\begin{comment}
Firstly, let us introduce some functional. Let
\begin{eqnarray*}
\mathfrak{S}(A, \psi)=\big( \frac{F^+_{\mathds{A}}}{2} - \mathfrak{cl}_X^{-1}(\Psi \Psi^*)_0 + i\frac{\eta}{4}, D_{\mathds{A}} \Psi \big),
\end{eqnarray*}
where $\mathds{A}=2A+A_{K^{-1}}$, $\Psi=\sqrt{2r}\psi$ and $\eta= 2r\Omega + 2\wp_4^+ $.

As in \cite{KM}, we define the topological energy and analytic energy as follows:
\begin{eqnarray*}
\mathscr{E}_{anal}(X_R)(A,\psi)&=& \frac{1}{4} \int_{X_R} |F_{\mathds{A}}|^2 + \int_{X_R} |\nabla_{\mathds{A}} \Psi|^2  + \int_{X_R} 2|\frac{i}{4}\eta - \mathfrak{cl}_{X_R}^{-1}(\Psi \Psi^*)_0 |^2 \\
&+&\frac{1}{4}\int_{X_R} R_g |\Psi|^2 - i \int_{X_R} F_{\mathds{A}} \wedge (r \pi_X^*\omega_B  + \frac{1}{2}*\wp_4),
\end{eqnarray*}
\begin{eqnarray*}
\mathscr{E}_{top}(X_R)(A,\psi)= \frac{1}{4} \int_{X_R} F_{\mathds{A}}\wedge F_{\mathds{A}}   - \int_{\partial X_R} <\Psi, D_{\mathds{A}}\Psi> +  i \int_{X_R} F_{\mathds{A}} \wedge (r\omega_X + \frac{1}{2}\wp_4).
\end{eqnarray*}
 where $X_R=\{x \in \overline{X} \vert | s(x)| \le R \}$.
By directly computation, one can show that $|\mathfrak{S}(A, \psi)|^2_{L^2(X_R)}=\mathscr{E}_{anal}(X_R)(A,\psi)- \mathscr{E}_{top}(X_R)(A,\psi)$ for any $R \ge 0$.

%Let us introduce the  Chern-Simon-Dirac functional $\mathfrak{a}$ and functional $Q_F$ as follows:
Define a functional  $Q_F$  by
 \begin{eqnarray*}
%\mathfrak{a}(A, \psi)&=& -\frac{1}{2}\int_{Y_+} (A-A_0)\wedge d(A-A_0)- \int_{Y_+}(A-A_0) \wedge (F_{A_0}+ \frac{1}{2}F_{A_{K^{-1}}})\\
%&& - \frac{i}{2}\int_{Y_+}(A-A_0)\wedge (2r\omega_+ + \wp_3) + r\int_{Y_+} \psi^*D_A \psi\\
&&Q_F(A)=i \int_{Y_+}(A-A_0) \wedge \omega_+,
 \end{eqnarray*}
where $A_0$ is a fixed reference connection of $E \to Y_+$.  We abuse notation $A_0$ to denote a reference connection of $E \to \overline{X}$ such that $A_0$ is $\mathbb{R}$ invariant on the ends.
\end{comment}
The proof is divided into the following four steps.
\begin{enumerate} [label=\textbf{Step \arabic*}]
\item
%Given $s_+ > s_- \ge 0$,   we have (Cf. \cite{HT} Lemma 7.6 or \cite{Te4}  Lemma 3.4 )
%\begin{equation} \label{e25}
%\begin{split}
%& \ \ \ \ \  \mathfrak{a}( \mathfrak{d} \vert_{s=s_-}) - \mathfrak{a}( \mathfrak{d} \vert_{s=s_+})\\
%&= \frac{1}{2} \int_{s \in [s_- ,s_+]} (|E_A|^2 + |\mathfrak{B}(A,\psi)|^2 + 2r( |\nabla_{A, s} \psi|^2 + |D_{A(s)} \psi|^2)),
%\end{split}
%\end{equation}
%where $E_A=F_A(\partial_s, \cdot)$ and $\mathfrak{B}(A,\psi)=*_3F_A-r(q_3(\psi) - i dt) +\frac{1}{2}*_3F_{A_{K^{-1}}} + \frac{i}{2}*_3\wp_3$.
%Therefore, $\mathfrak{a}( \mathfrak{d} \vert_{s=s_-}) \ge \mathfrak{a}( \mathfrak{d}\vert_{s=\infty})$.
 Note that  $\mathfrak{a}(A_+, \psi_+) + rQ_F(A_+)$ is gauge-invariant.  By Lemma 5.1 in \cite{LT},
\begin{equation} \label{e53}
 |\mathfrak{a}( A_+ ,\psi_+) + rQ_F(A_+, \psi_+)| \le c_0 r.
\end{equation}
  On the other hand, $Q_F(A_+, \psi_+) = 2\pi( \mathcal{F}_{\Omega_X}(\mathfrak{d}) -\mathcal{F}_{\Omega_X}(A_0))
 $. Therefore,
\begin{equation} \label{e63}
Q_F(A_+, \psi_+) \le 2\pi L+ c_0
\end{equation}
provided that $\mathcal{F}_{\Omega} (\mathfrak{d}) \le L$.    Combine Lemma \ref{C51}, (\ref{e53}) and (\ref{e63}), we have
\begin{eqnarray} \label{e26}
\mathfrak{a}( \mathfrak{d} \vert_{\partial X})\ge \mathfrak{a}(A_+, \psi_+) -c_0 r \ge - (c_0+ 2\pi L) r.
 \end{eqnarray}
By direct  computation, we have
\begin{equation} \label{e48}
\mathscr{E}_{top}(X)(A,\psi)= \frac{1}{4} \int_X F_{\mathds{A}_0}\wedge F_{\mathds{A}_0}   + \frac{i}{4} \int_X F_{\mathds{A}_0} \wedge (r\Omega_X + \frac{1}{2}\wp_4) - 2\mathfrak{a}( \mathfrak{d} \vert_{\partial X}) + \int_{\partial X} rH|\psi|^2.
\end{equation}
By our assumption, $|H| \le c_0$. Therefore, $\mathscr{E}_{top}(X)(A,\psi) \le c_0(1+ 2\pi L)r$.

\item
Replace $X$ by $X_R$  and $\mathfrak{a}( \mathfrak{d} \vert_{\partial X})$ by  $\mathfrak{a}( \mathfrak{d} \vert_{\partial X_R})$ in (\ref{e26}) and  (\ref{e48}), then we complete the proof of  the second assertion in the lemma. Note that  the  extra term $c_0 Rr $ comes from the energy $\frac{i}{4} \int_{X_R} F_{\mathds{A}_0} \wedge r\Omega_X $.

\item
Follows  from Lemma \ref{C50} and  Cauchy-Schwarz inequality, we have
\begin{eqnarray*}
\mathscr{E}_{anal}(X)(A,\psi) &\ge&  \frac{1}{4} \int_X |F_{\mathds{A}}|^2 + \int_X|\nabla_{\mathds{A}} \Psi|^2  +  \int_X 2|\frac{i}{4}\eta - \mathfrak{cl}_X^{-1}(\Psi \Psi^*)_0 |^2 \\
 &-& i\int_X F_{\mathds{A}} \wedge \frac{1}{2}*\wp_4- c_0r\\
&\ge & \frac{1}{4} \int_X |F_{\mathds{A}}|^2 + \int_X |\nabla_{\mathds{A}} \Psi|^2  + \int_X 2|\frac{i}{4}\eta - \mathfrak{cl}_X^{-1}(\Psi \Psi^*)_0 |^2  - \frac{1}{8}\int_X |F_{\mathds{A}}|^2 \\
&-& c_0\int_X |\wp_4|^2- c_0r\\
&\ge & \frac{1}{8} \int_X |F_{\mathds{A}}|^2 + \int_X|\nabla_{\mathds{A}} \Psi|^2  + \int_X 2|\frac{i}{4}\eta - \mathfrak{cl}_X^{-1}(\Psi \Psi^*)_0 |^2  - c_0r.
\end{eqnarray*}

By direct computation, we know that
$$|\mathfrak{S}(A, \psi)|^2_{L^2(X_R)}=\mathscr{E}_{anal}(X_R)(A,\psi)- \mathscr{E}_{top}(X_R)(A,\psi)$$ for any $R \ge 0$ (Cf. Section 4.5 of \cite{KM}).
In particular,  $\mathscr{E}_{top}(X)(A,\psi)= \mathscr{E}_{anal}(X)(A,\psi)$ when $(A, \psi)$ is a solution to (\ref{e4}).  Thus we have
\begin{eqnarray*}
&&\frac{1}{4} \int_X F_{\mathds{A}_0}\wedge F_{\mathds{A}_0}   + i  \int_X F_{\mathds{A}_0} \wedge (r\Omega_X +\frac{1}{2}\wp_4) - 2\mathfrak{a}( \mathfrak{d} \vert_{\partial X}) \\
&\ge & \frac{1}{8} \int_X |F_{\mathds{A}}|^2 +  \int_X|\nabla_{\mathds{A}} \Psi|^2  + \int_X 2|\frac{i}{4}\eta - (\Psi \Psi^*)_0 |^2  -c_0 r\\
&\Longrightarrow &    \frac{1}{8} \int_X |F_{\mathds{A}}|^2 + \int_X |\nabla_{\mathds{A}} \Psi|^2  + \int_X 2 |\frac{i}{4}\eta - (\Psi \Psi^*)_0 |^2  \le  - 2\mathfrak{a}( \mathfrak{d} \vert_{\partial X}) + c_0r.
\end{eqnarray*}
In particular, $\mathfrak{a}( \mathfrak{d} \vert_{\partial X}) \le c_0 r$, and hence $|\mathfrak{a}( \mathfrak{d} \vert_{\partial X})| \le c_0(1+ 2\pi L) r$. Moreover,
\begin{eqnarray} \label{e50}
\frac{1}{8} \int_X |F_{\mathds{A}}|^2 + \int_X |\nabla_{\mathds{A}} \Psi|^2  + \int_X 2 |\frac{i}{4}\eta - \mathfrak{cl}_X^{-1}(\Psi \Psi^*)_0 |^2  \le c_0(1+ 2\pi L) r.
\end{eqnarray}
These deduce the first and third bullets of the lemma.
\item
To prove the last bullet of the lemma.  By relation (\ref{e73}) and  Lemma \ref{C50}, we have
\begin{eqnarray} \label{e51}
\int_{X} 2|\frac{i\eta}{4} - \mathfrak{cl}_X^{-1}(\Psi \Psi^*)_0|^2 = \int_{X} r^2(1-|\alpha|^2)^2 + \mathfrak{e},
\end{eqnarray}
where $|\mathfrak{e}| \le \frac{1}{1000}\int_{X} r^2(1-|\alpha|^2)^2 +  c_0 r$.
Combine this with (\ref{e50}), we have
\begin{equation}\label{e64}
\int_X r(1-|\alpha|^2)^2 \le c_0(1+ 2\pi L) .
\end{equation}
By $H\ddot{o}lder$ inequality,
\begin{eqnarray}
\int_X r(1-|\alpha|^2) \le c_0 r^{\frac{1}{2}} \left(\int_{X} r(1-|\alpha|^2)^2 \right)^{\frac{1}{2}} \le c_0(1+ 2\pi L)^{\frac{1}{2}} r^{\frac{1}{2}}.
\end{eqnarray}

To estimate $\int_{[s, s+1] \times Y_+} r(1-|\alpha|^2)$ for $s \ge 0$,  the argument is the same as the proof of Lemma 5.4 of \cite{LT}. By the  first bullet of the lemma and inequalities  (\ref{e26}) and Lemma \ref{C51},
\begin{equation} \label{e65}
 \int_{\mathbb{R}_+ \times Y_+} (|E_A|^2 + |\mathfrak{B}(A,\psi)|^2 + 2r (|\nabla_{A, s} \psi|^2 + |D_{A(s)} \psi|^2) )\le c_0(1+ 2\pi L) r,
\end{equation}
in particular, $\int_{[s, s+1] \times Y_+} |E_A|^2 \le c_0(1+ 2\pi L) r $.

 By the Seiberg Witten equations (\ref{e4}), (\ref{e73}) and  Lemma \ref{C50},
\begin{align}\label{e15}
%\begin{split}
&i\int_{[s, s+1] \times Y_+} K^2ds \wedge \pi_+^*dt \wedge (K^{-1}*_3  E_A)  + i\int_{[s, s+1] \times Y_+} K^2ds \wedge \pi_+^*dt \wedge (F_{A(s)} + \frac{1}{2}F_{A_{K^{-1}}}) \nonumber \\
=& \int_{[s, s +1] \times Y_+} r(1-|\alpha|^2) + \mathfrak{e},
%\end{split}
\end{align}
where $|\mathfrak{e}| \le \frac{r}{100} \int_{[s,s+1] \times Y_+} (1-|\alpha|^2) + c_0$.

The first term  of (\ref{e15})
$$|i\int_{[s, s+1] \times Y_+} K  ds \wedge \pi_+^*dt \wedge *_3 E_A| \le c_0 \left(  \int_{[s, s+1] \times Y_+} |E_A|^2\right)^{ \frac{1}{2}} \le c_0(1+2\pi L) r^{\frac{1}{2}},$$
and the second term of (\ref{e15}) is $c_1(\mathfrak{s}_{\Gamma_+} ) \cdot [\pi_+^*(Kdt)]$, these give the promised  bound on  $\int_{[s, s+1] \times Y_+} r(1-|\alpha|^2)$  in the lemma.

\end{enumerate}
\end{proof}

\begin{lemma}\label{C10}
Assume that $\wp_4$ is a closed $2$-form such that $\wp_4= \wp_3$ when $|s|\ge 0$. Suppose that  $\mathfrak{d}=(A, \psi)$ is a Seiberg Witten solution to $r$-version of  (\ref{e4})  which is asymptotic to  solutions   $(A_{+}, \psi_{+})$ and $(A_-, \psi_-)$  to (\ref{e1}) at the ends to (\ref{e1}) at the ends. Assume that  $\mathcal{F}_{\Omega} (\mathfrak{d})  \le L$, then there exists a constant $c_0>0$ such that for $r \ge c_0$,
\begin{equation*}
\underline{\mathcal{M}}(s)= \int_{[s, s+1]} r(1-|\alpha|^2) \le c_0(1+2\pi L).
\end{equation*}
\end{lemma}
\begin{proof}
Fix $R \ge 2$, note that the conclusions in Lemma \ref{C26} are still true  by replacing $X$ by $X_R$.
Let $w=1-|\alpha|^2$, then $w$ satisfies the following equation (Cf. the proof of Lemma 2.2 of \cite{T1})
\begin{eqnarray} \label{e12}
\frac{1}{2}d^*dw + r|\alpha|^2 w -|\nabla_A \alpha|^2 + \mathfrak{e}_w=0,
\end{eqnarray}
where $|\mathfrak{e}_w| \le c_0(|\alpha|^2 + |\beta|^2 + |\nabla_A \beta|^2)$.

Let $\chi$ be a cut-off function such that $\chi=1$ on $X_{\frac{1}{2}R}$ and $\chi=0$ on $\overline{X} \backslash X_{\frac{3}{4}R}$. Multiply both sides of the equality (\ref{e12}) by $\chi$, then we have
\begin{equation} \label{e13}
\frac{1}{2}d^*d(\chi w)- \frac{1}{2}w(d^*d\chi) - <d\chi, dw>   + \chi r|\alpha|^2 w - \chi|\nabla_A \alpha|^2 + \chi\mathfrak{e}_w=0.
\end{equation}
Integrate both sides of (\ref{e13}), then
\begin{eqnarray*}
\int_{X_R} \chi r|\alpha|^2 w  &\le&  \int_{X_R} \chi|\nabla_A \alpha|^2   +  \int_{X_R} 2|d\chi||\nabla_A \alpha| +  \int_{X_R} \chi |\mathfrak{e}_w| +  c_0 \\
&\le& c_0 \int_{X_R} (|\nabla_A \alpha|^2 +  |\nabla_A \beta|^2 )  + c_0.
\end{eqnarray*}
By the third point of Lemma \ref{C26},  $\int_{X_R} (|\nabla_A \alpha|^2 +  |\nabla_A \beta|^2 ) \le c_0(1+2\pi L)$. Hence,  $\int_{X_{\frac{1}{2}R}} r|\alpha|^2 w \le c_0(1+2\pi L)$.

On the other hand,  inequality (\ref{e64}) tells us that that $$\int_{X_{\frac{1}{2}R}} r w^2=\int_{X_{\frac{1}{2}R}} r(1-|\alpha|^2)^2 \le c_0 (1+2\pi L).$$
%\begin{eqnarray*}
%\int_{X_R} 2|\frac{i\eta}{4} - \mathfrak{cl}_X^{-1}(\Psi \Psi^*)_0|^2 = \int_{X_R} r^2(1-|\alpha|^2)^2 + \mathfrak{e} \le c_0(1+2\pi L)r,
%\end{eqnarray*}
%where $|\mathfrak{e}| \le \frac{1}{1000}\int_{X} r^2(1-|\alpha|^2)^2 +  c_0 r$. Thus
Therefore, $ \int_{X_{\frac{1}{2}R}} r(1-|\alpha|^2) = \int_{X_{\frac{1}{2}R}} rw^2 + \int_{X_{\frac{1}{2}R}} r|\alpha|^2 w  \le c_0(1+2\pi L)$. In particular, $\underline{\mathcal{M}}(0) \le c_0 (1+2\pi L)$.

To bound $\underline{\mathcal{M}}(s)=\int_{[s,s+1]}r(1-|\alpha|^2)$ for $s>0$, we may assume that $\underline{\mathcal{M}}(s)$ attains its maximum at some $s_* \in (1, \infty)$. Let $\chi_*$ be a cut off function such that $\chi_*=1$ on $s_*-1 \le s \le s_* +1 $ and $\chi_*=0$ whenever $|s-s_*| \ge 2$.     %Lemma \ref{C26} and inequalities  (\ref{e26}) and (\ref{e25}),
%\begin{eqnarray*}
%&& \int_{s \in [0 , +\infty)} (|E_A|^2 + |\mathfrak{B}(A,\psi)|^2 + 2r |\nabla_{A(s), s} \psi|^2 + 2r |D_{A(s)} \psi|^2) \le  c_0(1 + 2\pi L) r.
%\end{eqnarray*}

Recall the inequality (\ref{e65}), apply $Weizenb\ddot{o}ck$ formula for $D_{A(s)}$ and use the fact that  $\int_{\{s\} \times Y_+} \frac{i}{2\pi}F_{A(s)} \wedge \pi_+^*(Kdt) = c_1(E) \cdot [\pi_+^*(Kdt) ] $, we have
\begin{eqnarray} \label{e54}
c_0(1+2\pi L)r &\ge& \int_{ [0 , +\infty) \times Y_+} (|E_A|^2 + |\mathfrak{B}(A,\psi)|^2 + 2r |\nabla_{A(s), s} \psi|^2 + 2r |D_{A(s)} \psi|^2) \nonumber \\
&\ge & \int_{ [0 , +\infty) \times Y_+}  \chi_*(|E_A|^2 + |\mathfrak{B}(A,\psi)|^2 + 2r |\nabla_{A(s), s} \psi|^2 + 2r |D_{A(s)} \psi|^2) \nonumber \\
&\ge &  \int_{ [0 , +\infty) \times Y_+}  \chi_* (|E_A|^2 + |B_A|^2 + 2r|\nabla_A \psi|^2 + r^2w^2 ) -c_0r.
\end{eqnarray}
Therefore, $\int_{ [0 , +\infty) \times Y_+}  \chi_* (r w^2 + 2|\nabla_A \alpha|^2 + 2 |\nabla_A \beta|^2 )\le c_0(1+2\pi L) $.  Replace the cut off function $\chi$ by $\chi_*$ in (\ref{e13}), and repeat the same argument, we obtain
\begin{eqnarray*}
\int_{s \in [s_*-1, s_*+1]} r|\alpha|^2w \le c_0(1+2\pi L).
\end{eqnarray*}
Hence, $\int_{s \in [s_*-1, s_*+1]} r(1-|\alpha|^2) \le \int_{s \in [s_*-1, s_*+1]} (r|\alpha|^2w +  rw^2) \le c_0(1+2\pi L) $.
\end{proof}

\begin{comment}
\begin{proof}
Firstly, Proposition 4.1  and its proof in \cite{Te4}  carry  over almost verbatim to our setting, except for the following change. The assumption $A_{\mathfrak{d}} < r^2$ or $i_{\mathfrak{d}}> r^2$ are replaced by $\frac{i}{2\pi}\int_{\overline{X}} F_A \wedge \omega_X \le L$. In addition,  the symplectic form $da + a\wedge ds$ is to be replaced by $\Omega$ here.

Secondly, Lemma 4.6, Corollary 4.7, Lemma 4.8, Lemma 4.9  and their proof also carry  over almost verbatim to our setting. Hence, our proposition follows from Lemma \ref{C10} and the part $1-4$ in the proof of Proposition 4.5 in \cite{Te4}.

\end{proof}
\end{comment}

\begin{remark}
Assume that $K\equiv 1$. Using the same limit argument in the proof of Proposition 5.2 of \cite{HT}, the following statement is true. Let  $\{(A^r_{\pm}, \psi^r_{\pm})\}_{r \ge r_0}$ be a family of solutions   to (\ref{e1}) with sufficiently small abstract perturbation $\mathfrak{p}_{\pm}$. Let  $\{\mathfrak{d}_r\}_{r\ge r_0}=\{(A_r, \psi_r)\}_{r\ge r_0}$ be  a  family of solutions to  (\ref{e4}) which is perturbed by  the small abstract perturbation $\mathfrak{p}$.  Assume that $\mathfrak{d}_r$ is asymptotic to $(A^r_{\pm}, \psi^r_{\pm})$ at the ends, where $\mathfrak{p}=\mathfrak{p}_{\pm}$ on the ends.  Suppose that $ \mathcal{F}_{\Omega_X}(\mathfrak{d}_r) \le L$ and $\{(A^r_{\pm}, \psi^r_{\pm})\}_{r \ge r_0}$  converges to orbit set $\alpha_{\pm}$ in the current sense, then there is  a broken  $J$ holomorphic current between $\alpha_+$ and $\alpha_-$.
\end{remark}

\begin{remark} \label{r5}
One could compare the result here with   Proposition 5.1 of \cite{Te4} and  Proposition 7.1 of \cite{HT}, the proof in our setting is easier  than the  ECH setting. The reasons are as follows.
\begin{itemize}
\item
Let $\mathfrak{d}=(A, \psi)$ be an instanton which is asymptotic to $(A_{\pm}, \psi_{\pm})$. In both PFH and ECH cases, we need to control the quantity $A_{\mathfrak{v}}=\mathfrak{a}_-(A_-, \psi_-) -\mathfrak{a}_+(A_+, \psi_+)  $. In PFH case, the Chern-Simon-Dirac functional is of the form
\begin{equation*}
\mathfrak{a}=\mathfrak{a}_0 -\frac{r}{2}Q_F,
\end{equation*}
where $\mathfrak{a}_0$ is gauge invariant. Choose a suitable gauge, Lemma 5.1 of \cite{LT} shows that $|\mathfrak{a}_0 | \le c_0r $. %Since there is only finitely many generators, the constant $c_0$ is independent of the generators.
Therefore, the quantity $A_{\mathfrak{v}}$ is completely controlled by $Q_F(A_+) -Q_F(A_-)$, while  $Q_F(A_+) -Q_F(A_-)$ is controlled by $\mathcal{F}_{\Omega_X}(\mathfrak{d}) $.

However, in ECH case, the relevant part  is  more complicated.  At generators, the Chern-Simon-Dirac functional is of the form
\begin{equation*}
\mathfrak{a}=\frac{1}{2}(\mathfrak{cs}  -rE).
\end{equation*}
The quantity $A_{\mathfrak{v}}$ is not determined by the energy $i\int_{\overline{X} }F_A \wedge d\lambda =E(A_+) -E(A_-)$.  The reason is as follows.  If the  spin-c structure is torsion, one may follow  the  same argument in PFH setting  to show that     $A_{\mathfrak{v}}$ is  controlled by  $E(A_+) $
 and  $E(A_-) $. However, the energy does not give any bound on $E(A_+) $
 and  $E(A_-) $. In fact, $E(A_-)$ and  $i\int_{\overline{X} }F_A \wedge d\lambda $ are controlled by $E(A_+)$, however,  this is nontrivial and is not obvious from the  perspective of Seiberg Witten theory.
The same problems happen when the spin-c structure is non-torsion. Also,  note that the   Chern-Simon functional  $\mathfrak{cs} $ is not gauge-invariant in this case, we need to take  into  account the grading of the generators. Thus one also needs the spectral flow estimate.
 %If spin-c structure is torsion, in this case $\mathfrak{cs} $ is  gauge invariant, however, there are infinitely many generators. In both cases,  we need to take  into  account the grading of the generators.  %That is why in the proof of ECH case, Taubes need to estimate the spectral flow.
%The perturbation in  (\ref{e1}) has nicer properties. The chain complex has only finitely generated.   In ECH setting, at first glance,  the Chern-Simon-Dirac function $\mathfrak{a}$ and energy $E(A)=\int_Y F_A \wedge \lambda $ are two different concepts. One need to do a lot of work to relate them by using the spectral  flow estimates.(Cf. Lemma 7.7 of \cite{HT} and  Proposition 4.11 of \cite{Te1}). While in our case, (\ref{e53}) provides us a simply way to relate these concepts.
\item
Another difference  is that the energy $i\int_{Y} F_A \wedge dt $ is a topological invariant, while it is not in the ECH setting. This difference simplifies the analysis in many places. In fact, this already causes differences  in the last bullet.  The corresponding  quantities of $E(A_{\pm}) $ are constant in the PFH setting. This is why we can get the uniform bound  $|\mathfrak{a}_0 | \le c_0r $.    Another example is that it is very difficult to get the  estimate (\ref{e54}) in ECH setting, because, one needs to  make effort to get the bound on $i\int_{\{s\} \times Y} F_{A(s)} \wedge \lambda$. Lemmas 5.2 and 5.3 of \cite{Te4} and argument in pages 79-81   are trying  to realize the similar bound for   $\int_{\{s\} \times Y} F_{A(s)} \wedge \lambda$.
\end{itemize}
\end{remark}

\subsection{Proof of Theorem \ref{Thm3}}
In order to reduce the proof of Theorem  \ref{Thm3} to the case that $(\Omega_X \vert_{Y_{\pm}}, J \vert_{\mathbb{R}_{\pm} \times Y_{\pm}})$ is $Q$-$\delta$ flat,  we prove the following lemma firstly. It plays the same role as Lemma 6.6 of \cite{HT}.

%In the following lemma, we perturb the   symplectic fibered cobordism such that it is $Q$-flat approximation on the ends.
\begin{lemma}
Let $(X, \pi_X, \omega_X )$ be a fiberwise symplectic cobordism from $(Y_+, \pi_+,\omega_+)$ to $(Y_-, \pi_-,\omega_-)$. Let $J \in \mathcal{J}_{tame}(X, \pi_X, \omega_X)^{reg}$ or $J \in \mathcal{J}_{comp}(X,  \Omega_X)^{reg}$ such that  $J=J_{\pm}$ on $\mathbb{R}_{\pm} \times Y_{\pm}$.  Given sufficiently small $\delta >0$, then there exist an admissible $2$-form $\omega_X'$ and an almost complex structure $J' \in \mathcal{J}_{tame}(X,\pi_X, \omega'_X)^{reg} $ or $J' \in \mathcal{J}_{comp}(X,  \Omega'_X)^{reg} $ accordingly so that $(\omega_X', J' )\vert_{\mathbb{R}_{\pm}\times Y_{\pm}} = (\omega_{\pm}', J_{\pm}')$ is a $Q$-$\delta$ flat approximation of $(\omega_{\pm}, J_{\pm})$, where $\Omega'_X=\omega_X' + \omega_B$. The almost complex structure $J'$ satisfies estimates $|J' - J| \le c_0 \delta$ and $|\nabla(J'-J)| \le c_0$. Moreover, $(X, \omega_X')$ is monotone if  $(X, \omega_X)$ is monotone.  \label{C27}
\end{lemma}
\begin{proof}   For simplicity, we assume that $Y_-=\emptyset$. Let $R_+$ be the  Reeb vector field of $\omega_+$. By Lemma \ref{C41}, we can find a $Q$-$\delta$ flat approximation of $(\omega_+, J_+)$, the result is denoted by $(\omega_+', J_+')$. % $(\omega_+', J_+')$ satisfies the  properties in Lemma \ref{C41}.

According to the construction of $Q$-$\delta$ flat approximation, we know that  $\omega_+'=\omega_+ + d(h_+ dt)$ and $|R_+-R_+'| \le c_0\delta$, where $h_+$ is a function which supports  in  $\delta$-tubular neighborhoods of periodic orbits with degree less than $Q$,  $R_{+}$ and $R_+'$ are Reeb vector fields  of $\omega_+$ and $\omega_+'$ respectively. It is easy to check that $|d^vh_+| \le c_0 \delta$,  where $d^vh_+$ is the component of $dh_+$ which doesn't contain $dt$. Consequently,  $|h_+| \le c_0 \delta^2$,

Let $U_+=[0, -\epsilon] \times Y_+$ be a collar neighborhood of $Y_+$ such that $\Omega_X \vert_{U_+}= \omega_+ + ds \wedge \pi_+^*dt$ and $J \vert_{U_+} =J_+$.  We define a new admissible $2$-form $\omega_X'$ on $X$ by
 \[\omega_X'=
\left\{
\begin{array}
    {r@{\quad:\quad}l}
     \omega_+ + d(\phi(s)h_+ )\wedge \pi_+^*dt & \mbox{on $U_+$} \\
    \omega_{X}  & \mbox{ on $X \setminus U_+$}, \\
\end{array}
\right.
\]
where $\phi(s)$ is a cut off function such that $\phi=1$ near $s=0$ and $\phi=0$ near  $s= -\epsilon$, and $|\phi'| \le c_0 \epsilon^{-1}$.   Note that $\omega_X' $ has the  same cohomology class as $\omega_X$, therefore, $\omega_X' $  is  monotone whenever  $\omega_X $  is  monotone. In addition, $\omega_X' + \pi_X^*\omega_B$ is still symplectic whenever $\delta \ll \epsilon$ is small enough.

Let $\omega_{+s} = \omega_+ + d(\phi(s)h_+ )\wedge dt \vert_{\{s\} \times Y_+} $ and $R_s$ be  Reeb vector field of $(\omega_{+s}, \pi_+^*dt)$.  Let $\{J_s\}_{s \in [0,1]}$ be a family of almost complex structures on $\ker \pi_+$ starting from $J_+ \vert_{\ker \pi_+}$ and ending  at $J_+' \vert_{\ker \pi_+}$,  and $J_s \vert_{\ker \pi_{+}}$ is compatible with $\omega_s \vert_{\ker \pi_{+}}$  for each $s$.    The almost complex structure $J'$ is defined as follows:
\begin{enumerate}
\item
 $J'(\partial_s) = R_s$,  $J'$ maps $\ker{ \pi_+}$ to itself  and $J' \vert_{\ker \pi_+} = J_s \vert_{\ker \pi_+}$ in $U_+$.
 \item
 $J'$ agrees with $J$ in $X \setminus U_+$.
\end{enumerate}
By construction, we have $|J'-J| \le c_0 \delta$ and $|\nabla(J'-J)| \le c_0$.
It is straight forward to check that $J' \in \mathcal{J}_{tame}(X, \pi_X, \omega'_X)$  and  $J' \in \mathcal{J}_{comp}(X,   \Omega'_X)$ whenever $J \in \mathcal{J}_{tame}(X, \pi_X, \omega_X)$ and $J \in \mathcal{J}_{comp}(X,  \Omega_X)$ respectively. %Hence, $J' \in  \mathcal{V}_{comp}(X, \pi_X, \omega'_X, j_B).$

Finally, we make a small perturbation  such that the resulting almost complex structure $J' \in \mathcal{J}_{tame}(X, \pi_X, \omega'_X)^{reg}$ or $\mathcal{J}_{comp}(X, \Omega_X)^{reg}$. In addition, we make  that such a perturbation   supports in $U_+$ and the image of $\varphi_{\gamma}$ but disjoints with the periodic orbits.
\end{proof}

\begin{remark} \label{r7}
The conclusions in Lemma \ref{C27} are also true when  $(X, \Omega_X)$ is a symplectic cobordism and $J \in \mathcal{J}_{comp}(X, \Omega_X)$, because, the construction only takes place in collar neighborhoods of $Y_{+}$ and $Y_-$ where $\Omega_X= \omega_{\pm} + ds\wedge \pi_{\pm}^* dt$.
\end{remark}

\begin{proof}[Proof of Theorem \ref{Thm3}]
The homomorphism ${HP}_{sw}(X, \Omega_X, \Lambda_X)$ is defined by $${HP}_{sw}(X, \Omega_X, \Lambda_X)=\mathcal{T}_-^{-1} \circ HM(X, \Omega_X, \Lambda_X) \circ \mathcal{T}_+.$$
Note that we have a natural decomposition
\begin{eqnarray*}
HP_{sw}(X, \Omega_X, \Lambda_X)&=&\mathcal{T}_-^{-1} \circ HM(X, \Omega_X, \Lambda_X) \circ \mathcal{T}_+\\
&=& \sum\limits_{\mathfrak{s}_X \in Spin^c(X), \mathfrak{s}_X \vert_{Y_{\pm}}= \mathfrak{s}_{\Gamma_{\pm}}}\mathcal{T}_-^{-1} \circ HM(X, \Omega_X, \mathfrak{s}_X, \Lambda_X) \circ \mathcal{T}_+\\
&=&\sum\limits_{\mathfrak{s}_X \in Spin^c(X), \mathfrak{s}_X \vert_{Y_{\pm}}= \mathfrak{s}_{\Gamma_{\pm}}} {HP}_{sw}(X, \Omega_X, \mathfrak{s}_X, \Lambda_X).
\end{eqnarray*}
It is well known that $Spin^c(X)$ is  an affine space over $H_2(X, \partial X, \mathbb{Z})$.  Therefore,
\begin{eqnarray*}
HP_{sw}(X, \Omega_X, \Lambda_X) = \sum\limits_{\Gamma_X \in H_2(X, \partial X, \mathbb{Z}), \partial_{Y_{\pm}} \Gamma_X= \Gamma_{\pm}} HP_{sw}(X, \Omega_X, \Gamma_X, \Lambda_X).
\end{eqnarray*}
The first three assertions of  Theorem  \ref{Thm1} are  direct consequences of the corresponding properties of  cobordism maps on Seiberg Witten Floer cohomology.

To prove  the fourth point in the theorem, first of all, we assume that $(\omega_{\pm}, J_{\pm})$ is $Q$-$\delta$ flat.  Fix an  almost complex structure $J \in \mathcal{J}_{comp}(X,  \Omega_X)$ such that $J \vert_{\mathbb{R}_{\pm} \times Y_{\pm}}=J_{\pm}$  is generic, then the  chain map ${CP}_{sw}(X, \Omega_X, J, \Lambda_X)$ is defined by
\begin{equation} \label{e29}
{CP}_{sw}(X, \Omega_X, J, \Lambda_X) = \lim\limits_{r \to \infty}(T_r^{-})^{-1} \circ CM(X, \Omega_X, J, r, \Lambda_X) \circ T_r^+.
 \end{equation}
Suppose that ${CP}_{sw}(X, \Omega_X, J, \Lambda_X) $ is non-zero,  then the existence of holomorphic current $\mathcal{C}$  follows immediately from Proposition \ref{C32}.  The analogy  of Theorem 5.1  of  \cite{21} tells us that $\mathcal{C}$ has zero ECH index.

For the case that $(\omega_{\pm}, J_{\pm})$ is not $Q$-$\delta$ flat, we can use the argument in Section 6 of \cite{HT} to show that the conclusion still holds.  The main points are sketched as follows: By Lemma \ref{C27} and Remark \ref{r7}, we can construct a sequence of  pair $(\Omega_{X n}, J_n)$ over $\overline{X}$ such that  each $(\Omega_{Xn} \vert_{ Y_{\pm}} , J_n\vert_{\mathbb{R}_{\pm} \times Y_{\pm}})$   is a $Q$-$\frac{1}{n}$ flat approximation of $(\omega_{\pm}, J_{\pm})$ and $J_n$ $C^0$-converges to $J$.   The pair $(\Omega_{X n}, J_n)$ plays the same role as $(\lambda_n', J_n')$ in Section 6 of \cite{HT}.
Define the  chain map by
\begin{equation} \label{e79}
{CP}_{sw}(X, \Omega_X, J, \Lambda_X)=\lim\limits_{n \to \infty}(\Psi_n^-)^{-1} \circ {CP}_{sw}(X, \Omega_{X_n}, J_n, \Lambda_X) \circ \Psi_n^+,
\end{equation}
where    $CP_{sw}(X, \Omega_{X_n}, J_n, \Lambda_X)$ is  $(\Omega_{X n}, J_n)$ version of (\ref{e29}) and $\Psi_n^{\pm}$ is the canonical isomorphism induced by $Q$-$\frac{1}{n}$ flat approximation.
%by the composition of   $(\omega_{X n}, J_n)$ version of (\ref{e29}) and the canonical identification $CP(Y_{\pm}, \omega_{\pm}, \Gamma_{\pm}, J_{\pm}, \Lambda_{P}^{\pm}) \to CP(Y_{\pm}, \omega'_{\pm}, \Gamma_{\pm}, J'_{\pm}, \Lambda_{P}^{\pm})  $.
The analogy of commutative diagram in  Lemma 6.5 of \cite{HT}  can be proved similarly, thus  $CP_{sw}(X, \Omega_X, J, \Lambda_X)$ induces the cobordism map $HP_{sw}(X, \Omega_X, J, \Lambda_X)$.   If the chain map  (\ref{e79}) is non-trivial, then  Proposition \ref{C32} supplies  us   a $J_n$ holomorphic current $\mathcal{C}_n$ for sufficiently large $n$.  By Taubes'Gromov compactness (Lemma 6.8 of \cite{HT}), $\mathcal{C}_n$ converges to a $J$ holomorphic current $\mathcal{C}$. This complete the proof of the holomorphic curve axiom.

Note that Lemma 6.8 of \cite{HT} is weaker than the usual one (eg. Lemma 9.9 of \cite{H1}), because, $J_n$ only $C^0$-converges to $J$. The new ingredient of the proof is Lemma 6.13 of  \cite{HT}, it also can be applied to our case.  In sum, the analogy of Lemma 6.8 of \cite{HT} is also true in our setting.
%For more detail, we refer reader to section 6 of \cite{HT}.

%Combine  Lemma \ref{C27} and the argument in section 6 of \cite{HT}, the conclusion still hold.

%The fourth point of the theorem follows from the definition and Theorem 1 of \cite{T4}.
%
%
%
%To see the last point of the theorem.  Fix  $J  \in \mathcal{J}_{comp}(X, \pi_X, \omega_X) \bigcap   \mathcal{J}_{tame}(X, \pi_X, \omega_X)$, note that the closed holomorphic curves in $\overline{X}$ are contained in fiber. Moreover, the ECH index of them are negative. The only closed curve in $\overline{X}$  with $I=0$ is empty set.  In addition, the argument in the proof of Theorem \ref{A10} can be applied to this case, then there is a bijection between $\mathcal{M}^{J}_{X, I=0}(\emptyset, \emptyset)$ and $\mathfrak{M}^{J, r}_{X, I=0}(T_r(\emptyset), T_r(\emptyset))$.   As a consequence, $\#\mathfrak{M}_{X, \rm{ind}=0}^{J, r}(T_r^+(\emptyset), T_r^-(\emptyset))=1$.

\end{proof}

\begin{remark} \label{r8}
%We will prove a  ``Seiberg-Witten to Gromov" type degeneration which is analogy to the one in \cite{HT}. (See Proposition \ref{C32} for precise statement.) In particular,
% Fix a relative homotopy class $\mathcal{Z} \in  \pi_0\mathcal{B}_X(\mathfrak{c}_+, \mathfrak{c}_-)$.
The convergence theorem  \ref{C32} implies that  a sequence of  instantons   $ \{\mathfrak{d}_r\}_r$ in $ \mathfrak{M}^r_{\overline{X}}(\mathfrak{c}_+, \mathfrak{c}_-)$     gives   a relative homology class $Z$, where $Z$ is the relative class of holomorphic curves given by the instantons. %This gives a correspondence between $\mathcal{Z}$ and $Z$.
Note that the relative homology class $Z$ only depends   on the relative homotopy class of $ \{\mathfrak{d}_r\}_r$.  Moreover, this correspondence is 1-1. If the instantons contribute to the cobordism maps, then the ECH index of $Z$ is zero. (Cf. Theorem 5.1 of \cite{21})

Assume that  $(\Gamma_X,\Omega_X)$ is monotone, then  there are only finitely many relative homology classes supporting  holomorphic curves with zero ECH index.(Cf. Corollary \ref{C53})   The observations in the  last paragraph imply that there are only   finitely many  relative homotopy classes such that  $ \mathfrak{M}^r_{\overline{X}, ind=0}(\mathfrak{c}_+, \mathfrak{c}_-, \mathcal{Z})$ is nonempty.  In particular,   the cobordism map  $HP_{sw}(X, \Omega_X, \Gamma_X)$  can be defined with $\mathbb{Z}_2$  or $\mathbb{Z}$ coefficient.
\end{remark}

\section{ Proof of  Proposition \ref{C42} } \label{section1}
%In this subsection, we prove the Proposition\ref{C42}.
%The construction in this section doesn't rely on the assumption $(\spadesuit)$.
 %we establish a proposition which is analogy of the Theorem 2.5.2 of \cite{VPK} in PFH setting.

Let  $(Y, \pi )$ be a surface fibration over circle  together with an admissible $2$-form $\omega$,  and $J \in \mathcal{J}_{comp}(Y, \pi, \omega)$.  By  Lemma \ref{C41}, we can assume that $(\omega, J)$ is $Q$-$\delta$ flat.  Let $\gamma $ be an elliptic orbit with degree $q \le Q$, then there is a small tubular neighborhood of $\gamma $ such that it can be identified with the following standard form
$$\left(S_{\tau}^1 \times D_z, qR= \partial_{\tau} -i\theta z \partial_z + i\theta \bar{z} \partial_{
\bar{z}}, \omega=\frac{i}{2} dz \wedge d \bar{z} + \left(\frac{\theta}{2} \bar{z}dz + \frac{\theta}{2} z d\bar{z} \right) \wedge d\tau \right) ,$$
together with the  standard almost complex structure $J(\partial_x) =\partial_y$. Assume that $0<\theta<1$, otherwise, we change of coordinate $(t=\tau, w=e^{-ik \tau} z)$ for suitable $k \in \mathbb{Z}$.

Let us assume that $0< \theta < \frac{1}{2}$, the construction  that follows also works for   $\frac{1}{2}<\theta<1$. The only difference  is that one needs  to choose a suitable function $C(r)$. (See Remark \ref{r3})  Take a disk $D_{\delta'} \subset D$ with radius $0< \delta' \ll 1$ and a smooth function $f: D \to \mathbb{R}$  such that $f$ is constant outside  $D_{\delta'}$ and $f=f(r)$.
Define a new admissible $2$-form by  $$\omega_f = \omega + df \wedge d\tau. $$
By directly computation,
%$\nabla f = \frac{x}{r} f' \partial_x + \frac{y}{r} f' \partial_y.$
the Reeb vector field of $\omega_f$  over $S^1 \times D_{\delta'}$ is
$$qR_f= \partial_{\tau} - ( \theta + \frac{f'}{r}) x\partial_y + (\theta + \frac{f'}{r})  y\partial_x. $$
Let $C(r)$ be a smooth function satisfying the following properties:
\begin{enumerate}
\item
 $C(r)=\frac{1}{2} -\theta $ on $D_{\frac{\delta''}{2}}$, where $0< \delta'' \le \frac{1}{2} \delta'$.
\item
$C(r)\ge0$ and $C(r)=0$ outside  $D_{\delta''}$. Also,  $C'(r) \le 0$.
\end{enumerate}
 Take $f'/ r =C(r)$, then we know that $f(r)=\int_0^r sC(s) ds$ and $f'(0)=0$. Therefore, $\gamma $ is still a periodic orbit of $R_f$ with degree $q$.

The  Reeb flow is
\begin{equation*}
\left\{
\begin{aligned}
&\frac{dx}{d\tau}=(\theta + C(r)) y \\
& \frac{dy}{d\tau}=-(\theta+ C(r)) x.
\end{aligned}
\right.
\end{equation*}
%\begin{equation*}
%\begin{split}
%& \frac{dx}{d\tau}=(\theta + C(r)) y\\
%& \frac{dy}{d\tau}=-(\theta+ C(r)) x.
%\end{split}
%\end{equation*}
When $r \le \frac{\delta''}{2}$, above equations  become
\begin{equation*}
\left\{
\begin{aligned}
& \frac{dx}{d\tau}=\frac{1}{2} y\\
&  \frac{dy}{d\tau}=- \frac{1}{2} x.
\end{aligned}
\right.
\end{equation*}
%\begin{equation*}
%\begin{split}
%& \frac{dx}{d\tau}=\frac{1}{2} y\\
%& \frac{dy}{d\tau}=- \frac{1}{2} x.
%\end{split}
%\end{equation*}
The linear  Poincar\'e  return map is given by sending $(x, y)$ to $(-x, -y)$ when $r \le \frac{\delta''}{2}$.
\begin{remark} \label{r3}
If $\frac{1}{2} <\theta< 1 $, we can take   $C(r)$ to be a smooth function such that
\begin{enumerate}
\item
 $C(r)=\frac{1}{2} -\theta $ on $D_{\frac{\delta''}{2}}$, where $0< \delta'' \le \frac{1}{2} \delta'$.
\item
$C(r)\le 0$ and $C(r)=0$ outside  $D_{\delta''}$. Also,  $C'(r) \ge  0$.
\end{enumerate}
\end{remark}

\begin{lemma}
The  set of $\phi_{\omega_f}$  periodic orbits with degree less than $q$ is  identical to  the set of $\phi_{\omega}$ periodic orbits with degree less than $q$.
\end{lemma}
\begin{proof}
Let $\gamma_{q'}$ be a periodic orbit with degree $q' \le q$. Suppose that there exists $\tau_0$ such that $\gamma_{q'}(\tau_0) \in D_{\delta''}$, otherwise, there is nothing to prove.  We  assume that $\tau_0=0$.

First of all, we claim that $\gamma_{q'}(\tau) \in D_{2\delta''}$ for any $\tau$.  Let $\tau_{\max} = \sup\{\tau \in [0,2\pi] \vert \gamma_{q'}(\tau) \in D_{\delta''}\}$, if $\tau_{\max}=2\pi$, then we have done.  If $\tau_{\max}<2\pi$, then for $\tau> \tau_{\max}$, $\gamma_{q'}(\tau)$ satisfies the ODE $\partial_{\tau} x =\theta y$ and $\partial_{\tau}y=-\theta x$.  It cannot escape outside $D_{2\delta''}$, because, it is a rotation. In particular, the degree of $\gamma_{q'}$ is at least $q$, and hence $q'=q$.

Now the Reeb vector is $qR_f=\partial_{\tau} - (\theta + C(r)) \partial_{\phi}$ in $D_{2\delta''}$, where $(r, \phi)$ is the polar coordinate. The Reeb flow equation becomes
\begin{equation*}
\left\{
\begin{aligned}
& \frac{dr}{d\tau}=0\\
& \frac{d\phi}{d\tau}= - (\theta + C(r)).
\end{aligned}
\right.
\end{equation*}
%\begin{equation*}
%\begin{split}
%& \frac{dr}{d\tau}=0\\
%& \frac{d\phi}{d\tau}= - (\theta + C(r)).
%\end{split}
%\end{equation*}
Therefore, $r=r_0$ and $\phi(\tau)=\phi_0 - (\theta + C(r_0)) \tau $. Since  $0<\theta \le \theta + C(r_0) \le \frac{1}{2}$, $\phi(2\pi) \ne \phi(0) \mod 2\pi$ unless that $r_0=0$. In other words, $\gamma_q=\{r=0\}=\gamma$.
\end{proof}

Define a new coordinate $(t' , x' , y' )$ by
\begin{equation*}
\begin{split}
& t'=\tau \ \ x'=\cos (\frac{1}{2} \tau)x -\sin (\frac{1}{2} \tau)y \ \ y'= \sin (\frac{1}{2} \tau) x + \cos (\frac{1}{2} \tau) y.
\end{split}
\end{equation*}
\begin{comment}
Then
\begin{equation*}
\begin{split}
& \partial_x =\cos(\frac{1}{2} \tau) \partial_{x'} + \sin(\frac{1}{2} \tau) \partial_{y'}\\
& \partial_y= -\sin(\frac{1}{2} \tau) \partial_{x'} + \cos(\frac{1}{2} \tau) \partial_{y'}\\
%& \partial_{\tau}= \partial_{t'} + (-\pi sin(\pi \tau)x - \pi \cos(\pi \tau)y ) \partial_{x'} +(\pi \cos(\pi \tau)x -\pi\sin(\pi \tau)y) \partial_{y'}.
&\partial_{\tau}= \partial_{t'} + \frac{1}{2}(-sin(\frac{1}{2} \tau)x -  \cos(\frac{1}{2} \tau)y ) \partial_{x'} + \frac{1}{2}(\cos(\frac{1}{2} \tau)x -\sin(\frac{1}{2} \tau)y) \partial_{y'}\\
%&=\partial_{t'} + \frac{1}{2}(x \partial_y -y\partial_x).
\end{split}
\end{equation*}
and satisfying $J(\partial_{x'})=\partial_{y'}$.
\end{comment}
Under the  coordinate above, $(S^1_{\tau} \times D_{\frac{\delta''}{4}}, \omega_f, qR_f)$ can be identified with $$\left( [0,2\pi] \times D_{\frac{\delta''}{4}} /((2\pi, x, y) \sim (0, -x,-y)), dx'\wedge dy', \partial_{t'} \right),$$
and the almost complex structure $J $ satisfies $J(\partial_{x'})=\partial_{y'}$.

Take a function $h$ with compact support on  $D_{\frac{\delta''}{4}}$ such that  $h=\frac{1}{2}( (x')^2-(y')^2)$ on  $D_{\frac{\delta''}{8}}$. Define
\begin{equation*}
\omega_{f}'= dx' \wedge dy' + \varepsilon dh\wedge dt'.
\end{equation*}
Then $\gamma=\{x'=y'=0\}$  still is a periodic orbit. Moreover, for  sufficiently small $\varepsilon>0$, there is no  periodic orbit  with period $q$ other than $\gamma$.
The linear Reeb flow is
\begin{equation*}
\left\{
\begin{aligned}
& \frac{dx'}{dt}=-\varepsilon y'\\
& \frac{dy'}{dt}=-\varepsilon x'.
\end{aligned}
\right.
\end{equation*}
%\begin{equation*}
%\begin{split}
%& \frac{dx'}{dt}=-\varepsilon y'\\
%& \frac{dy'}{dt}=-\varepsilon x'.
%\end{split}
%\end{equation*}
Then the linear Poincar\'e return map has two real eigenvalues $-e^{\varepsilon}$ and $-e^{-\varepsilon}$. Therefore, $\gamma $ is negative hyperbolic.

%\begin{prop}
%Let $(Y, \pi)$ be a surface fiber bundle over circle and $\omega$ be an admissible $2$-form. Given $L \ge Q \ge 0$, we can always find another admissible $2$-from $\omega'$ such that
%\begin{enumerate}
%\item
%Every periodic orbit with degree less than $Q$ is either $L$-negative (positive) elliptic  or hyperbolic.
%\item
%$[\omega']=[\omega].$
%\end{enumerate}
%\end{prop}

\begin{proof}[ Proof of Proposition \ref{C42}]
First of  all, we can always perturb $(\omega_{+}, J_+)$ such that it is $Q$-$\delta$ flat. If there exists a periodic orbit $\gamma $ of $\omega_{+}$ which is not $Q$-negative elliptic or hyperbolic, then we perform the above construction to make it become negative hyperbolic. This process may create another elliptic orbit, but they have larger degree. Repeat this process until  all  periodic orbits with degree less than $Q$ satisfy the conclusion. Obviously, the cohomology class of $\omega_+$ does not change under the above construction. Also, from the construction,  we know that
\begin{center}
$ \begin{aligned}
|\omega_+' -\omega_+| \le c_0 \delta.
\end{aligned}$
\end{center}
%\begin{equation*}
%|\omega_+' -\omega_+| \le c_0 \delta.
%\end{equation*}
We can modify  $\omega_-$ in the same way.

By using cut-off functions, we can always modify $\omega_X$ such that $\omega_X=\omega_{\pm}$ in a collar neighborhood of $Y_{\pm}$. Since $\omega_{\pm}'=\omega_{\pm}+ d\mu_{\pm}$ for some $\mu_{\pm}$, we can define $\omega_X'$ by the same formula (\ref{e46}).
\end{proof}

%%%%%%%%%%%%%%%%%%%%%%%%%%%%%%%%%%%%%%%%%%%%%%%%%%%%%%

\section{Cobordism maps on HP via holomorphic curve  }
%In this section, we  present a  result about defining   the cobordism  map $HP(X, \omega_X, J, \Lambda_X)$ on PFH via holomorphic curve method in certain cases.

%Except the case that $X$ is a Lefschetz fibration over a disk,  we can always modify $\omega_{\pm}$ and $\omega_X$ so that it satisfies \Romannum{1} or \Romannum{2} or \Romannum{3} by making use of Proposition \ref{C42}.
In principle, the cobordism maps  in chain level should be defined by
\begin{equation} \label{e21}
CP(X, \Omega_X, J, \Lambda_X)=\sum\limits_{\alpha_{\pm} \in  \mathcal{ P}( Y_{\pm}, \omega_{\pm}, \Gamma_{\pm})} \sum\limits_{Z \in H_2(X, \alpha_+, \alpha_-)} \#_2 \mathcal{M}_{X, I=0}^J( \alpha_+, \alpha_-, Z) \Lambda_X(Z).
\end{equation}
However,  Section 5.5 of  \cite{H3} points out that  (\ref{e21}) doesn't make sense in general, because, the moduli space $\mathcal{M}_{X, I=0}^J( \alpha_+, \alpha_-, Z)$ is not compact due to the appearance of holomorphic curves with negative ECH index.

In this section, we show that actually  definition   (\ref{e21})  works  for fiberwise symplectic cobordism  $(X, \pi_X, \omega_X)$  satisfying  the assumptions $(\spadesuit)$. Firstly, we  show that the ECH index is nonnegative and investigate the holomorphic currents with small indexes  $(0\le I\le 1)$.  Then we construct the cobordism maps  separately for the following three cases: $(X, \pi_X)$ contains no separating fiber, monotone case and $(X, \pi_X)$ is not relatively  minimal. Finally, we show that the cobordism maps satisfy certain natural properties.

\begin{remark} \label{r9}
It is worth  mentioning  that recently C.Gerig defines the cobordism maps on  ECH  for certain symplectic cobordisms \cite{CG}.  It is worth comparing our results with C.Gerig's on ECH. %  that   He considers the symplectic cobordism which comes from nearly symplectic manifold.
%In his case, he shows that the ECH index is nonnegative but he need to deal with certain  multiply covered holomorphic curves.
In both cases, we borrow the idea in \cite{H4} to use the $Q$-positive (negative) orbits to guarantee the nonnegative of the ECH index.   Besides, our proof here is  also based on  the following two simple observations:
\begin{enumerate} [label=\textbf{O.\arabic*},ref=O.\arabic*]
\item
If $C$ is a holomorphic curve in $\overline{X}$ with at least one end, then $g(C) \ge g(B)$. (See Lemma \ref{C1} below.)
\item \label{o2}
If $Y_+ \ne \emptyset$ and $Y_- \ne \emptyset$,  and $C$ is a holomorphic curve in $\overline{X}$ with at least one end, then   $C$ has at least one positive end and one negative end.
\end{enumerate}
Both of these  observations come from the fibration structure, which  does not appear in  the ECH setting.
In the ECH case, even the ECH index is nonnegative,  C.Gerig  needs to deal with certain  multiply covered holomorphic planes.
Compare to his case, our situation is even better,   the  multiply covered holomorphic curves cannot contribute to the cobordism maps   because of the observations above. For example, there is no holomorphic plane  in case  \ref{assumption3} due to \ref{o2}.
\end{remark}

\textbf{In the remaining of this paper, we focus on the fiberwise symplectic cobordism case.       We always assume that the  fiberwise symplectic   cobordism $(X, \pi_X, \omega_X)$  satisfies the assumptions $(\spadesuit)$,
unless otherwise stated. }

\subsection{Index computation  }
In this subsection, we show that the ECH index of holomorphic current is nonnegative under   assumptions $(\spadesuit)$. Moreover, the moduli space $\mathcal{M}^J_{X, I=i}(\alpha_+, \alpha_-)$ only consists of embedded holomorphic curves in the sense of Definition \ref{def3}, where $i=0$ or $1$.

%Firstly, let us make sure that the ECH index is nonnegative.
\begin{lemma} \label{C43}
Let $\alpha_{+}$ and $\alpha_-$ be  orbit  sets  of $(Y_{+} , \pi_{+}, \omega_{+})$  and $(Y_{-} , \pi_{-}, \omega_{-})$ respectively,  and $J \in \mathcal{J}_{tame}(X, \pi_X, \omega_X)$. Suppose that $C$ is an irreducible somewhere injective $J$ holomorphic curve  from $\alpha_+$ to $\alpha_-$  in $\overline{X}$ and $C$ is not closed,  then $g(C) \ge g(B)$. \label{C1}
\end{lemma}
\begin{proof}
Let $d=[\alpha_{\pm}] \cdot [\Sigma]$, then $C \cap [\Sigma]=d$. The intersection positivity  implies that $C$ intersects a generic fiber with $d$ distinct points.  Since $\pi_X: \overline{X} : \to \overline{B}$ is complex linear,  $\pi_X: C \to \overline{B}$ is a degree $d$ branched cover.

Suppose that $C$ has $k_+$ positive ends and $k_-$ negative ends, $1 \le k_{\pm} \le d$, then by  Riemann-Hurwitz formula,
\begin{eqnarray} \label{e55}
&&2-2g(C) - k_+ - k_-= d (-2g(B)) -b \nonumber \\
&\Rightarrow& g(C)=1+ dg(B) +\frac{1}{2}b - \frac{1}{2}(k_+ + k_-)\\
&\Rightarrow& g(C) \ge 1+dg(B)-d \ge g(B)+ (d-1)(g(B)-1) \ge g(B). \nonumber
\end{eqnarray}
where $b \ge 0$ is the sum over all the branch points of the order of multiplicity minus one.
\end{proof}

Recall that $C \star C$ is the generalized self-intersection number in Definition \ref{def1}. The following lemma tells us  that it is nonnegative.
\begin{lemma} \label{C40}
%Let $\alpha_{\pm}$ be  orbit set of $(Y_{\pm} , \pi_{\pm}, \omega_{\pm})$(not necessarily admissible)
Let $\alpha_{+}$ and $\alpha_-$ be  orbit  sets  of $(Y_{+} , \pi_{+}, \omega_{+})$  and $(Y_{-} , \pi_{-}, \omega_{-})$ respectively (not necessarily admissible),   and  $C \in \mathcal{M}_X^J(\alpha_+, \alpha_-)$ be an irreducible simple holomorphic curve. Assume that $C$ is not closed, then
$C\star C \ge i \ge 0$ for generic $J \in \mathcal{J}_{tame}(X, \pi_X, \omega_X)$, where $i=1$ in case  \ref{assumption1}, $i=\frac{1}{2}$ in case \ref{assumption2} and $i=0$ in case \ref{assumption3}.
\end{lemma}
\begin{proof}
We  divide the proof into three  cases, and we prove the result case by case.
\begin{itemize}
\item
In case \ref{assumption1}, the conclusion follows from Lemma \ref{C1} and Definition \ref{def1}.
\item
In case \ref{assumption2}, $C\star C \ge 0$ by Lemma \ref{C1}. It suffices  to rule out the  possibility that $C\star C=0$.  If $C\star C=0$, it follows from   Definition \ref{def1} that  $C$ satisfies
\begin{equation} \label{e56}
g(C)=1 \ \ \mbox{and} \ \ {\rm{ind }}C = e_Q(C)=\delta(C)=h(C)=0.
\end{equation}
By (\ref{e56}) and the assumption \ref{assumption2},  all  ends of $C$ are asymptotic to elliptic orbits with degree larger than $1$.  Substitute $g(C)=g(B)=1$ for (\ref{e55}), we deduce that $k_{\pm}=d$, where $k_{+}$ and $k_-$ are the numbers of positive and negative ends of $C$ respectively,  and $d$ is the degree of $\pi_X: C \to \overline{B}$. Consequently,  all the ends of $C$ are asymptotic to periodic orbits with degree $1$, we get a contradiction.
%However, follows from the proof of Lemma \ref{C1},  $k_{\pm}=d$ whenever $g(C)=1$, where $k_{\pm}$ are the number of positive and negative ends of $C$ respectively and $d$ is the degree of $\pi_X: C \to \overline{B}$.  This implies that all the ends of $C$ are asymptotic to periodic orbits with degree $1$, we get contradiction.
\item
In case \ref{assumption3}, by observation \ref{o2},   every  holomorphic curve in $\overline{X}$ which is not closed has at least one positive end and at least one negative end.  Therefore, $2e_Q(C_a)+ h(C_a) \ge 2$  under our assumption. Then  the conclusion follows  immediately  from  Definition \ref{def1}.

% obverse  that  every  holomorphic curve in $\overline{X}$ which is not closed has at least one positive end and at least one negative end.  As a consequence, $2e_Q(C_a)+ h(C_a) \ge 2$  under our assumption.
\end{itemize}
\end{proof}

%Let $\mathcal{M}_X^J(\alpha)=\{\mbox{$J$-holmorphic currents asymptotic to $\alpha$}\}$.

\begin{corollary}
%Let $\alpha_{\pm}$ be  orbit set of $(Y_{\pm} , \pi_{\pm}, \omega_{\pm})$(not necessarily admissible) and  $\mathcal{C} \in \widetilde{\mathcal{M}}_X^J(\alpha_+, \alpha_-)$.
 Let $\alpha_{+}$ and $\alpha_-$ be  orbit  sets  of $(Y_{+} , \pi_{+}, \omega_{+})$  and $(Y_{-} , \pi_{-}, \omega_{-})$ respectively (not necessarily admissible),  and  $\mathcal{C} \in \widetilde{\mathcal{M}}_X^J(\alpha_+, \alpha_-)$.  Assume that $\mathcal{C}$ doesn't contain closed components. Then for generic $J \in \mathcal{J}_{tame}(X, \pi_X, \omega_X)$, $I(\mathcal{C}) \ge 0$.  \label{C3}
\end{corollary}

\begin{proof}
\begin{comment}
By Theorem 5.1 of \cite{H2},
\begin{equation} \label{e18}
I(\mathcal{C}) \ge \sum\limits_a d_aI(C_a) + \sum\limits_a \frac{d_a(d_a-1)}{2} (2g(C_a)-2 + {\rm ind}(C_a) + h(C_a) +4\delta(C_a)) + 2\sum\limits_{a \neq b }d_ad_b C_a \cdot C_b,
\end{equation}
where $C_a \cdot C_b  \ge 0$ is the algebraic count of intersections of $C_a$ and $C_b$.
\end{comment}
%By Theorem 4.15 of \cite{H2},
\begin{comment}
By Theorem \ref{ECH1}, $I(C_a) \ge {\rm ind}(C_a) \ge 0$ for generic $J \in \mathcal{J}_{tame}(X, \pi_X, \omega_X)$.
Keep in mind   that every holomorphic curve in $\overline{X}$ which is not closed has at least one positive end and at least one negative end, so we have $2e_D(C_a)+ h(C_a) \ge 2$ in case   \Romannum{3}.
By Lemma \ref{C1} and our assumption,  we have
\begin{equation*}
 C_a \cdot C_a=\frac{1}{2}(2g(C_a)-2 + {\rm ind}(C_a) + 2e_D(C_a)+ h(C_a) +4\delta(C_a))\ge 0.
\end{equation*}
In fact, $C_a \cdot C_a \ge 1$ in case \Romannum{1} and  $C_a \cdot C_a \ge \frac{1}{2}$ in case \Romannum{2}.  According to inequality (\ref{e18}), we have $I(\mathcal{C}) \ge 0$.
\end{comment}
By Theorem \ref{ECH1}, $I(C_a) \ge {\rm ind}(C_a) \ge 0$ for generic $J \in \mathcal{J}_{tame}(X, \pi_X, \omega_X)$. According to Lemma \ref{C40} and  inequality (\ref{e18}), we have $I(\mathcal{C}) \ge 0$.
\end{proof}

The following two lemmas  analyze the holomorphic curves with ECH index zero and one.
\begin{lemma}
%Let $\alpha_{\pm}$ be  orbit sets of $(Y_{\pm} , \pi_{\pm}, \omega_{\pm})$
 Let $\alpha_{+}$ and $\alpha_-$ be  orbit  sets  of $(Y_{+} , \pi_{+}, \omega_{+})$  and $(Y_{-} , \pi_{-}, \omega_{-})$ respectively, and $\mathcal{C}=\{(C_a, d_a)\}\in \widetilde{\mathcal{M}}_X^J(\alpha_+, \alpha_-)$ without closed components. Assume that at least one of $\alpha_+$ and $\alpha_-$ is admissible.  For generic $J \in \mathcal{J}_{tame}(X, \pi_X, \omega_X)$, if $I(\mathcal{C}) = 0$, then  \label{C2}
\begin{enumerate}
\item
For each $a$, $I(C_a)={\rm ind}(C_a)=0$.
%and $d_a=1$ for any $a$.
\item
%If $a \neq b$, then $C_a \star C_b =0$.
$\mathcal{C}$ is embedded in the sense of Definition \ref{def3}.
\item
If $u$ is a $J$ holomorphic curve whose associated  current is $\mathcal{C}$, then $u$ is admissible.
\end{enumerate}
\end{lemma}
\begin{proof} For cases  \ref{assumption1} and \ref{assumption2},    the conclusions follows immediately from   Lemma \ref{C40},   Theorem \ref{ECH1} and \ref{ECH2}.
%inequality (\ref{e18}) and
% Theorem 4.15 of \cite{H2}.

For case \ref{assumption3}, if $d_a>1$ for some $a$, then we must have $C_a \star C_a=0$.  % Observe that   $C$ has at least one positive end and one negative end, thus $h(C_a) +2 e_Q(C_a) \ge 2$.
Recall that  $h(C_a) +2 e_Q(C_a) \ge 2$ because of observation \ref{o2}.
The equality  $C_a \star C_a=0$ forces
\begin{equation*}
g(C_a)= {\rm ind} C_a = \delta(C_a)=0 \ \ \ and \ \ \  h(C_a) +2 e_Q(C_a) = 2.
\end{equation*}
If $e_Q(C_a) =1$  and $ h(C_a) =0$, then $C_a$ has only one end, which contradicts with \ref{o2}.  As a result, the only possibility is that
%Since $C_a$ is not closed, we have
\begin{equation} \label{e40}
g(C_a)={\rm ind }C_a =\delta(C_a)=e_Q(C_a)=0, \ \ \mbox{and} \ \ h(C_a)=2.
\end{equation}
%Then we obtain contradiction, because, $C_a$ has at least one positive and negative end and $\alpha_{\pm}$ are admissible.
Thus $C_a$ has respectively one positive end and one negative end at hyperbolic orbit. However, $\alpha_{+}$ or $\alpha_-$ is admissible, this forces $d_a =1$.
\end{proof}

\begin{lemma}
%Let $\alpha_{\pm}$ be  admissible orbit set of  $(Y_{\pm} , \pi_{\pm}, \omega_{\pm})$.
Let $\alpha_{+}$ and $\alpha_-$ be admissible  orbit  sets  of $(Y_{+} , \pi_{+}, \omega_{+})$  and $(Y_{-} , \pi_{-}, \omega_{-})$ respectively.  Let $\mathcal{C}=\{(C_a, d_a)\}\in \widetilde{\mathcal{M}}_X^J(\alpha_+, \alpha_-)$ without closed components. For generic $J \in \mathcal{J}_{tame}(X, \pi_X, \omega_X)$,  if $I(\mathcal{C}) = 1$, then \label{C11}
\begin{enumerate}
\item
%$d_a=1$ for any $a$.
$\mathcal{C}$ is embedded in the sense of Definition \ref{def3}.

\item
There exists a unique $a_0$ such that $I(C_{a_0})= {\rm ind}(C_{a_0})=1 $. Without loss of generality, we always assume that $a_0=1$. For $a \ge 1$, $I(C_a)={\rm ind}(C_a)=0$.

%\item
%If $a \neq b$, then $C_a \star C_b =0$.

\item
If $u$ is a $J$ holomorphic curve whose associated current is $\mathcal{C}$, then $u$ is admissible.
\end{enumerate}
\end{lemma}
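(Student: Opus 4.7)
The plan is to reuse the three ingredients that drove the proof of Lemma \ref{C2} — the ECH index inequality from Theorem \ref{A5}, the self-intersection bound $C_a\cdot C_a\geq i$ from Lemma \ref{C40}, and the parity/writhe information from Theorem \ref{ECH1} — and to spend the one extra unit of budget $I(\mathcal{C})=1$ to get a slightly finer decomposition. The entire argument is driven by the fundamental inequality
\begin{equation*}
1 \;=\; I(\mathcal{C}) \;\geq\; \sum_a d_a I(C_a) \;+\; \sum_a d_a(d_a-1)\, C_a\cdot C_a \;+\; 2\sum_{a\ne b} d_a d_b\, C_a\cdot C_b,
\end{equation*}
every summand of which is a non-negative integer or half-integer: $I(C_a)\geq\operatorname{ind}(C_a)\geq 0$ for generic $J$, $C_a\cdot C_a\geq i\in\{1,\tfrac12,0\}$ in cases \Romannum{1}/\Romannum{2}/\Romannum{3}, and $C_a\cdot C_b\geq 0$ by intersection positivity.

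The first step is to show $d_a=1$ for every $a$. Suppose some $d_{a_0}\geq 2$. In case \Romannum{1} the middle summand alone is $\geq 2>1$, contradiction. In case \Romannum{2}, $d_{a_0}\geq 3$ forces the middle summand $\geq 3$; hence $d_{a_0}=2$, and then equality forces $C_{a_0}\cdot C_{a_0}=\tfrac12$ with all other summands vanishing. Lemma \ref{C1} together with the parity half of Theorem \ref{ECH1} then pins $C_{a_0}$ down to a genus-one curve with $\operatorname{ind}(C_{a_0})=\delta(C_{a_0})=e_D(C_{a_0})=0$ and $h(C_{a_0})=1$; but the unique hyperbolic end now appears in $\alpha_+$ or $\alpha_-$ with multiplicity at least $d_{a_0}=2$, contradicting admissibility. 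Case \Romannum{3} is handled exactly as in Lemma \ref{C2}: $C_{a_0}\cdot C_{a_0}\geq 1$ produces the $\geq 2$ contradiction immediately, whereas $C_{a_0}\cdot C_{a_0}=0$ forces $g(C_{a_0})=\operatorname{ind}(C_{a_0})=\delta(C_{a_0})=0$ with $h(C_{a_0})+2e_D(C_{a_0})=2$, and combined with $d_{a_0}\geq 2$ this violates admissibility of $\alpha_\pm$.

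With $d_a=1$ in hand, the inequality becomes $1\geq\sum_a I(C_a)+2\sum_{a\ne b}C_a\cdot C_b$, and since the intersection term is an even non-negative integer bounded by $1$, it vanishes, giving claim (3). Each sub-orbit-set $\alpha_{a,\pm}$ inherits admissibility from $\alpha_\pm$, so Theorem \ref{ECH1} yields $I(C_a)\equiv\operatorname{ind}(C_a)\pmod 2$ and $I(C_a)\geq\operatorname{ind}(C_a)+2\delta(C_a)\geq 0$, hence $\sum_a I(C_a)\leq 1$. To rule out $\sum_a I(C_a)=0$ I would run a direct parity computation in the formula $I=c_\tau+Q_\tau+\mu_\tau^I$: linearity of $c_\tau$ and the identity $Q_\tau(Z_a+Z_b)-Q_\tau(Z_a)-Q_\tau(Z_b)\in 2\mathbb{Z}$ handle the first two pieces, while admissibility of $\alpha_\pm$ together with the oddness of $\mu_\tau$ on every iterate of an elliptic orbit makes $\mu_\tau^I(\alpha_\pm)-\sum_a\mu_\tau^I(\alpha_{a,\pm})$ even. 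Thus $I(\mathcal{C})\equiv\sum_a I(C_a)\pmod 2$, and the bound $\sum_a I(C_a)\leq 1$ forces $\sum_a I(C_a)=1$: exactly one $I(C_{a_0})=\operatorname{ind}(C_{a_0})=1$ while every other $I(C_a)=\operatorname{ind}(C_a)=0$ and $\delta(C_a)=0$, proving claims (1) and (2). For claim (4), the factorization $u=v\circ\varphi$ with $d_a=1$ identifies $u$ up to biholomorphism with the disjoint union $\bigsqcup_a C_a$; we now have $I(\mathcal{C})=\sum_a I(C_a)=\sum_a\operatorname{ind}(C_a)=\operatorname{ind}(u)$, so the total slack in the fundamental inequality vanishes. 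Since that slack decomposes as a sum of non-negative contributions — partition mismatches at each orbit and the $Q_\tau$ cross-terms once $C_a\cdot C_b=0$ — each piece must vanish separately, forcing the combined partition of $u$'s positive (respectively negative) ends at $\gamma^m$ to equal $p_{\operatorname{out}}(m,\gamma)$ (resp.\ $p_{\operatorname{in}}(m,\gamma)$); hence $u$ is admissible. The main obstacle is cleanly formulating and invoking the "partition slack is non-negative" statement, which simultaneously rules out $\sum_a I(C_a)=0$ and upgrades componentwise admissibility to admissibility of $u$; the rest is a direct transcription of the Lemma \ref{C2} argument.
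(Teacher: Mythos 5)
Your overall strategy matches the paper's: you use the ECH index inequality (\ref{e18}) together with the self-intersection bound of Lemma \ref{C40} to force $d_a=1$, then read off claims (2), (3) from the decomposition, and run a mod-$2$ argument on the ECH index formula to locate the unique index-one component. The parity argument you sketch — linearity of $c_\tau$, bilinearity of $Q_\tau$ modulo $2$, and the mod-$2$ computation of $\mu_\tau^I$ from admissibility of $\alpha_\pm$ and oddness of elliptic Conley--Zehnder indices — is exactly the paper's computation phrased more abstractly. Your Case \Romannum{2} treatment is, if anything, slightly cleaner than the paper's: once $C_{a_0}$ is pinned to a genus-one curve with $\operatorname{ind}=0$, $h=1$, $e_D=\delta=0$, you pass directly to the admissibility contradiction from $d_{a_0}\geq 2$ at the unique hyperbolic end, whereas the paper first shows this configuration forces $Y_+=\emptyset$ or $Y_-=\emptyset$ before reaching the same conclusion.

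There is, however, a genuine gap in Case \Romannum{3}. You claim it is ``handled exactly as in Lemma \ref{C2},'' with a dichotomy between $C_{a_0}\cdot C_{a_0}\geq 1$ (immediate contradiction) and $C_{a_0}\cdot C_{a_0}=0$ (configuration (\ref{e40})). But Lemma \ref{C2} has $I(\mathcal{C})=0$, which forces $d_{a_0}(d_{a_0}-1)C_{a_0}\cdot C_{a_0}\leq 0$ and hence $C_{a_0}\cdot C_{a_0}=0$ whenever $d_{a_0}\geq 2$; here you have $I(\mathcal{C})=1$, so $d_{a_0}=2$ with $C_{a_0}\cdot C_{a_0}=\frac{1}{2}$ contributes only $1$ to the middle sum in (\ref{e18}) and is perfectly consistent with the budget. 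This is exactly the subcase the paper handles with configurations (\ref{e42})--(\ref{e44}). It leads to the same conclusion — $I(C_{a_0})=0$ kills (\ref{e42}), and both (\ref{e43}) and (\ref{e44}) have $h(C_{a_0})\geq 1$, so $d_{a_0}\geq 2$ once more violates admissibility at the hyperbolic orbit — but as written your proof omits the case entirely, and this is precisely where the extra unit of index budget makes Lemma \ref{C11} require more work than Lemma \ref{C2}. A secondary, smaller point: in the $C_{a_0}\cdot C_{a_0}=0$ subcase you write $h(C_{a_0})+2e_D(C_{a_0})=2$, which would allow $(h,e_D)=(0,1)$ and thus no hyperbolic end to anchor the admissibility contradiction; in fact, since every non-closed curve here has at least one positive and one negative end, $h+e_D\geq 2$, which together with $h+2e_D=2$ forces $e_D=0$, $h=2$ as in (\ref{e40}), so the contradiction does go through — but this step should be made explicit.
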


\begin{proof}
We  divide the proof into three  cases as before, and we prove them case by case.
\begin{itemize}
\item
Fist of all, we consider  case \ref{assumption1}.  The first and third  conclusions  and uniqueness of $a_0$ follow from  the formula (\ref{e18}).  The  third conclusion of the lemma follows from
Theorem  \ref{ECH1}. To show the second conclusion,  it   suffices  to rule out the case that all $C_a$ have $I(C_a)=0$ but $I(\mathcal{C})=1 $.

Arguing by contradiction. Suppose that $I(\mathcal{C})=1 $ but $I(C_a)=0$ for any $a$. By (5.5) of \cite{H2}, we have
\begin{eqnarray*}
I(\sum\limits_a C_a)= \sum\limits_a I(C_a) + 2 \sum\limits_{a \ne b }(C_a\star C_b-l_{\tau}(C_a, C_b) ) +\mu_{\tau}(\sum\limits_a C_a) - \sum\limits_a \mu_{\tau}(C_a),
\end{eqnarray*}
%\begin{eqnarray*}
%I(C_a +C_b)- I(C_a) -I(C_b)=2C_a \star C_b +\mu_{\tau}(C_a+C_b) -\mu_{\tau}(C_a)-\mu_{\tau}(C_b)- 2l_{\tau}(C_a,C_b).
%\end{eqnarray*}
Suppose that each $C_a$  has   ends at $\gamma$ with total multiplicity $m_a$ and $m=\sum_a m_a$, where $\gamma$ is an embedded periodic orbit. If $\gamma$ is elliptic and its  rotation number is $\theta$, then
\begin{eqnarray*}
&&\sum_{q=1}^m\mu_{\tau}(\gamma^q)  - \sum_a \sum_{q=1}^{m_a}\mu_{\tau}(\gamma^q)
%&=&\left( \sum\limits_{q=1}^{m+n} -  \sum\limits_{q=1}^{m}- \sum\limits_{q=1}^{n}\right)\mu_{\tau}(\gamma^q)\\
%&=& \left( \sum\limits_{q=1}^{m} -  \sum_a\sum\limits_{q=1}^{m_a}\right)(2  \lfloor q\theta \rfloor  +1)\\
= \left( \sum\limits_{q=1}^{m} -  \sum_a\sum\limits_{q=1}^{m_a}\right)2  \lfloor q\theta \rfloor =even.
\end{eqnarray*}
%Suppose that $C_a$ and $C_b$ have common ends at $\gamma$, where $\gamma$ is an embedded periodic orbit. If $\gamma$ is elliptic orbit with rotation number $\theta$, then
%\begin{eqnarray*}
%&&\mu_{\tau}(C_a+C_b) -\mu_{\tau}(C_a)-\mu_{\tau}(C_b)\\
%%&=&\left( \sum\limits_{q=1}^{m+n} -  \sum\limits_{q=1}^{m}- \sum\limits_{q=1}^{n}\right)\mu_{\tau}(\gamma^q)\\
%&=& \left( \sum\limits_{q=1}^{m+n} -  \sum\limits_{q=1}^{m}- \sum\limits_{q=1}^{n}\right)(2  \lfloor q\theta \rfloor  +1)\\
%&=& \left( \sum\limits_{q=1}^{m+n} -  \sum\limits_{q=1}^{m}- \sum\limits_{q=1}^{n}\right)2  \lfloor q\theta \rfloor =even.
%\end{eqnarray*}
Since $\alpha_+$ and $\alpha_-$ are admissible, the  same conclusion automatically  holds  when $\gamma$ is hyperbolic. Hence,
%$I(C_a +C_b) -I(C_a)- I(C_b) = 0 \mod 2 .$  In general, using the same argument and formula
%\begin{eqnarray*}
%I(\sum\limits_a C_a)= \sum\limits_a I(C_a) + 2 \sum\limits_{a \ne b }(C_a\star C_b-l_{\tau}(C_a, C_b) ) +\mu_{\tau}(\sum\limits_a C_a) - \sum\limits_a \mu_{\tau}(C_a),
%\end{eqnarray*}
 we  have $I(\sum\limits_a C_a)= \sum\limits_a I(C_a) \mod 2$ which  contradicts with $I(\mathcal{C})=1$.

Therefore, there exists $a_0=1$ such that $I(C_1)=1$. Since ${\rm ind}(C_1) \le I(C_1)$ and $I(C_1)={\rm ind}(C_1)\mod 2 $,  we must have ${\rm ind}(C_1)=1$.

\item
Now we consider case \ref{assumption2}. If $d_a >1$ for some $a$, then  $C_a \star C_a =\frac{1}{2}$.
Recall that $g(C_a) \ge 1$ under the assumptions.  Definition \ref{def1} and  equality $C_a \star C_a =\frac{1}{2}$ deduce  that $C_a $ satisfies any one of the following:
\begin{equation} \label{e58}
\begin{split}
&g(C_a)-1=h(C_a) =\delta(C_a)=e_Q(C_a)=0, \ \ \mbox{and} \ \ {\rm ind }C_a=1, \\
\end{split}
\end{equation}
\begin{equation}\label{e41}
\begin{split}
&g(C_a)-1={\rm ind }C_a =\delta(C_a)=e_Q(C_a)=0, \ \ \mbox{and} \ \ h(C_a)=1, \\
&\mbox{and  $C_a$ has no ends at elliptic orbits}.
\end{split}
\end{equation}
The reasons for second line in (\ref{e41}) are  as follows. If $g(C_a)=1$, the Riemann-Hurwitz formula (\ref{e55}) tells us that all the ends of $C_a$ are asymptotic to  periodic orbits with degree $1$.
On the other hand, the assumption  \ref{assumption2} and $e_Q(C_a)=0$ imply  that  $C_a$ has no ends at elliptic orbits with  degree $1$.

Firstly,  the case (\ref{e58}) cannot happen. If $h(C_a)=0$,  then all the ends of $C_a$ are asymptotic to elliptic orbits. As a result,  ${\rm{ind}} C_a =0 \mod 2$.  We obtain a contradiction.

%Also note that (\ref{e41}) can  happen only when $Y_+=\emptyset $ or $Y_-=\emptyset $.
In the case  (\ref{e41}), all the ends of $C_a $ are asymptotic to hyperbolic orbits.  The  assumption that $\alpha_{+}$ and $\alpha_-$ are admissible forces $d_a=1$.

  Repeat the same argument in case \ref{assumption1}, we obtain the  conclusion.

\item
For case \ref{assumption3}, if $d_a>1$ for some $a$, then $C_a \star C_a =\frac{1}{2}$ or $C_a \star C_a =0$. If $C_a \star C_a =0$, follow from the proof of Lemma \ref{C2}, then $C_a$ satisfies (\ref{e40}).

If $C_a \star C_a =\frac{1}{2}$, by $2e_Q(C_a) + h(C_a) \ge 2$, one can check that $C_a$ satisfies any one of the following:
%\begin{enumerate}
%\item
\begin{equation}\label{e42}
g(C_a)=\delta(C_a)=e_Q(C_a)=0, \ \ \mbox{and} \ \ {\rm ind }C_a=1, \ \  h(C_a)=2,
\end{equation}
\begin{equation}\label{e43}
\mbox{or} \ \ g(C_a)=\delta(C_a)= {\rm ind }C_a=e_Q(C_a)=0, \ \ \mbox{and} \ \  h(C_a)=3,
\end{equation}
\begin{equation}\label{e44}
\mbox{or} \ \ g(C_a)=\delta(C_a)= {\rm ind }C_a=0, \ \ \mbox{and} \ \  e_Q(C_a)=1, \ \  h(C_a)=1.
\end{equation}
%\end{enumerate}
In either of these cases, $C_a$ has at least one end at hyperbolic orbits.  Then $d_a>1$ contradicts with the assumption that $\alpha_+$ and $\alpha_-$ are admissible. As a result, $d_a=1$ for any $a$.  Repeat the same argument in case \ref{assumption1}, we get the same conclusion.
\end{itemize}
 \end{proof}

Assume that $u$ is a  holomorphic curve with $0\le I(u) \le 1$. If we remove the admissible assumption  on  $\alpha_{\pm}$,   then $u$ may not be embedded. The following lemma asserts that the Fredholm index of $u$ is still nonnegative for generic $J$.
\begin{lemma} \label{C39}
%Let $\alpha_{\pm}$ be  orbit sets (not necessary  to be admissible)of  $(Y_{\pm} , \pi_{\pm}, \omega_{\pm})$.
Let $\alpha_{+}$ and $\alpha_-$ be  orbit  sets  of $(Y_{+} , \pi_{+}, \omega_{+})$  and $(Y_{-} , \pi_{-}, \omega_{-})$ respectively (not necessary  to be admissible). For generic $J \in \mathcal{J}_{tame}(X, \pi_X, \omega_X)$, any $u \in \mathcal{M}^J_{X, I =i} (\alpha_+, \alpha_-)$ without closed irreducible components  has  nonnegative Fredholm index, i.e.,
\begin{equation*}
{\rm ind} u \ge 0,
\end{equation*}
where $i=0$ or $i=1$.
\end{lemma}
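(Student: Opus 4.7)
The plan is to decompose $u$ into its somewhere injective constituents and control each by combining genericity of $J$ with the multiplicity restrictions forced by $I(\mathcal{C})\le 1$ and Lemma \ref{C40}. Write $u=\bigsqcup_a u_a$ with $u_a=v_a\circ\varphi_a$, where $v_a\colon C_a\to\overline{X}$ is the somewhere injective curve underlying the $a$-th component and $\varphi_a\colon\tilde C_a\to C_a$ is a branched cover of degree $d_a$; then ${\rm ind}(u)=\sum_a{\rm ind}(u_a)$. For generic $J\in\mathcal{J}_{tame}(X,\pi_X,\omega_X)$, each simple $C_a$ is Fredholm regular so ${\rm ind}(C_a)\ge 0$, and Lemma \ref{C40} gives $C_a\cdot C_a\ge 1,\tfrac12,0$ in cases \Romannum{1}, \Romannum{2}, \Romannum{3} respectively.

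Riemann--Hurwitz combined with the cover identity $c_\tau(u_a^*T\overline{X})=d_a\,c_\tau(v_a^*T\overline{X})$ yields the key decomposition
\begin{equation*}
{\rm ind}(u_a)=d_a\,{\rm ind}(C_a)+r_a^{\text{int}}+\sum_{\mathpzc{E}\subset C_a}\Delta_{\mathpzc{E}},
\end{equation*}
where $r_a^{\text{int}}\ge 0$ counts interior ramification of $\varphi_a$ and $\Delta_{\mathpzc{E}}$ is the Conley--Zehnder discrepancy at the end $\mathpzc{E}$ induced by its partition in the cover. At hyperbolic ends, multiplicativity of the CZ index under iteration forces $\Delta_{\mathpzc{E}}=0$. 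At an elliptic end at $\gamma^{q}$ with rotation number $\theta$, covered by punctures at $\gamma^{p_1},\dots,\gamma^{p_k}$ (so $\sum_j p_j=d_a q$, $k\le d_a$), one computes
\begin{equation*}
\Delta_{\mathpzc{E}}=2\Bigl(\sum_j\lfloor p_j\theta\rfloor-d_a\lfloor q\theta\rfloor\Bigr)+(k-d_a).
\end{equation*}

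The $D$-positive/$D$-negative elliptic hypotheses in conditions \Romannum{1}, \Romannum{2}, \Romannum{3} are precisely what is needed to control the elliptic correction: they pin down $\lfloor p\theta\rfloor\in\{0,-1\}$ for multiplicities $p\le D$, and combined with $k\le d_a$ and the available $r_a^{\text{int}}$ one obtains $r_a^{\text{int}}+\sum_{\mathpzc{E}}\Delta_{\mathpzc{E}}\ge 0$. The sharp inequality (\ref{e18}) together with $I(\mathcal{C})\le 1$ and the self-intersection bounds of Lemma \ref{C40} further restrict the multiplicities $d_a$: in cases \Romannum{1} and \Romannum{2} at most one component admits $d_a>1$, and only minimally; in case \Romannum{3}, where the $C_a\cdot C_a$ bound is weakest, the compensating constraint $h(C_a)+2e_D(C_a)\ge 2$ forces the ends of any multiply-covered component to be predominantly hyperbolic, so elliptic corrections vanish at those ends. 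Case by case this yields ${\rm ind}(u_a)\ge 0$ and hence ${\rm ind}(u)\ge 0$.

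The main obstacle is the elliptic bookkeeping: the subadditivity $\lfloor a\theta\rfloor+\lfloor b\theta\rfloor\le\lfloor(a+b)\theta\rfloor$ goes the wrong way for a direct lower bound on $\Delta_{\mathpzc{E}}$, so the $D$-positive/negative hypothesis must be used essentially to control the integer parts $\lfloor p_j\theta\rfloor$ when $p_j$ may exceed $D$. Case \Romannum{3} requires the most work because higher multiplicities are not ruled out by the ECH index inequality alone, and the argument must appeal to the end-type restrictions of type (\ref{e40})--(\ref{e44}) encountered in the proof of Lemma \ref{C11}.
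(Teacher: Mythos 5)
Your approach runs in the same direction as the paper's proof --- decompose $u$ into branched covers of somewhere-injective pieces, bound $\mathrm{ind}(u_a)-d_a\,\mathrm{ind}(C_a)$ by Riemann--Hurwitz plus Conley--Zehnder bookkeeping, and lean on the $D$-positive/negative hypotheses together with the classification (\ref{e40})--(\ref{e44}) of multiply-covered pieces. But there are two concrete gaps.

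First, a bookkeeping inconsistency. The decomposition $\mathrm{ind}(u_a)=d_a\,\mathrm{ind}(C_a)+r_a^{\text{int}}+\sum_{\mathpzc{E}}\Delta_{\mathpzc{E}}$ is false as written if $\Delta_{\mathpzc{E}}$ denotes only the CZ discrepancy: Riemann--Hurwitz gives $-\chi(\tilde C_a)+d_a\chi(C_a)=b_a$, where $b_a$ counts \emph{all} ramification, including the contribution $d_a-k_{\mathpzc{E}}\ge 0$ over each puncture of $C_a$ with $k_{\mathpzc{E}}$ preimages. Your $r_a^{\text{int}}$ drops this. Once you put it back, the $(k-d_a)$ summand in your expression for $\Delta_{\mathpzc{E}}$ cancels against it, so the net end contribution at an elliptic end is $2\bigl(\sum_j\lfloor p_j\theta\rfloor-d_a\lfloor q\theta\rfloor\bigr)$, and at a hyperbolic end it is $d_a-k_{\mathpzc{E}}\ge 0$ rather than $0$. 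The omission happens to be in your favor since every dropped term is nonnegative, but the displayed identity does not hold.

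Second, and more importantly, the proof is not actually finished: you flag that the argument "must appeal to the end-type restrictions of type (\ref{e40})--(\ref{e44})" and that case \Romannum{3} "requires the most work," but you never carry it out, and that step is precisely the content of the paper's proof. The paper observes that any irreducible non-simple component of $u$ sits over a simple embedded curve $C$ of one of the types (\ref{e40})--(\ref{e44}). In (\ref{e40})--(\ref{e43}) every end of $C$ is at a hyperbolic orbit, so $\mu_\tau$ is multiplicative under iteration, the CZ discrepancy vanishes, and $\mathrm{ind}(u)=d\,\mathrm{ind}(C)+b\ge 0$ with no elliptic bookkeeping whatsoever. Only (\ref{e44}) has an elliptic end, and there it is a single embedded elliptic orbit $\gamma_+$ of small degree. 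Since every end multiplicity of $u$ is bounded by the degree of $\alpha_\pm$, which is $<Q\le D$, the $D$-negative hypothesis gives $\mu_\tau(\gamma_+^p)=-1$ for \emph{every} relevant iterate $p$, and the negative end orbit $\gamma_-$ is forced negative hyperbolic so $\mu_\tau(\gamma_-^{q_j})=q_j$; the computation then collapses to $\mathrm{ind}(u)=b+d(1+n)-k-dn=b+d-k\ge 0$. So rather than a general elliptic estimate governed by subadditivity of $\lfloor\cdot\theta\rfloor$ (which, as you note, goes the wrong way), the proof pins the CZ index outright in the one elliptic configuration that can occur. You should write out the (\ref{e40})--(\ref{e44}) dichotomy and the computation in case (\ref{e44}); without it, the elliptic case is asserted rather than proved.
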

\begin{proof}
From the proof of Lemmas \ref{C2} and \ref{C11}, we know that $u$ has no nodes but $u$ could be  multiple covers of simple curves,  as our $\alpha_{\pm}$  may not be admissible anymore.

Without loss of generality,  we may assume that $u$ is an irreducible non-simple curve. Then $u$ is branched cover of an embedded curve $C$. According to the discussion in Lemmas \ref{C2} and \ref{C11},  $C$ satisfies (\ref{e40}) or (\ref{e41}) or (\ref{e42}) or (\ref{e43}) or (\ref{e44}). The covering degree is denoted by $d$. In the cases  (\ref{e40}) or (\ref{e41}) or (\ref{e42}) or (\ref{e43}), all the ends of $u$ are asymptotic to hyperbolic orbits, by Riemann-Hurwitz formula, we have
$${\rm ind}u= d {\rm ind}C +b \ge 0, $$
where $b \ge 0$ is the sum over all the branch points of the order of multiplicity minus one.

In the case (\ref{e44}), $C$ is a holomorphic cylinder from $\gamma_+^m$ to $\gamma_-^n$. One of them is elliptic orbit while the other one is hyperbolic.  Without loss of generality, assume that $\gamma_+$ is an embedded elliptic orbit and   $\gamma_-$ is an embedded hyperbolic orbit. Note that $\gamma_-$ is negative hyperbolic, otherwise, ${\rm ind}C=1 \mod 2$.  We can take a trivialization $\tau $ such that $\mu_{\tau}(\gamma_+^p)=-1$ for any $p\le Q$ and $\mu_{\tau}(\gamma_-)=1$.  Therefore, ${\rm ind}C=0$ implies that $2c_{\tau}(C)=1+n$.

 %By our assumption, we can take a trivialization $\tau $ such that $\mu_{\tau}(\gamma_+^p)=-1$ for any $p\le Q$ and $\mu_{\tau}(\gamma_-)=1$. Therefore, ${\rm ind}C=0$ implies that $2c_{\tau}(C)=1+n$ and $\gamma_-$ is negative hyperbolic.

By Riemann-Hurwitz formula, we have
\begin{equation*}
\begin{split}
{\rm ind}u&= b + 2d c_{\tau}(C) + \sum_i \mu_{\tau}(\gamma_+^{p_i}) -\sum_j \mu_{\tau}(\gamma_-^{q_j})\\
&= b+ d(1+n) - \sum_i - \sum_j q_j  \ge 0.
\end{split}
\end{equation*}
The last step in above inequality is a consequence of $\sum_i  \le d$ and   $ \sum_j q_j =dn$.

One can perform the same argument when  $\gamma_+$ is hyperbolic and $\gamma_-$ is elliptic, then we obtain the same conclusion.
\end{proof}

\subsection{$(X, \pi_X)$ contains no separating singular fiber.}
 In this subsection, we define  the  cobordism maps $HP(X, \Omega_X, J, \Lambda_P)$ by counting holomorphic curves  in the case that  $X$ contains no separating singular fiber.
\subsubsection{Compactness and Transversality}
Recall that the fibers  of $(X,\pi_X)$  are holomorphic curves. The following lemma rules out the fibers provided that the degree $\Gamma_{\pm}\cdot [\Sigma]$ is large enough.
\begin{lemma} \label{C25}
Suppose that $(X, \pi_X)$ contains no  separating singular fiber. Let $\mathcal{C}=\{(C_a, d_a)\}\in \widetilde{\mathcal{M}}_X^J(\alpha_+, \alpha_-)$. Assume that  $d=\alpha_{\pm} \cdot [\Sigma]> g(\Sigma)-1$. If $\mathcal{C}$ contains closed components, then $I(\mathcal{C}) \ge 2$ for generic $J \in \mathcal{J}_{tame}(X, \pi_X, \omega_X)$.
\end{lemma}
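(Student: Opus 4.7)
The plan is to split the current $\mathcal{C}$ into a closed part $\mathcal{C}_{\mathrm{cl}}$ (those components contained in a single fiber of $\pi_X$) and an open part $\mathcal{C}_{\mathrm{op}}$ (the remaining components, which go from $\alpha_+$ to $\alpha_-$), compute $I$ on each piece, and then combine via an additivity formula for the ECH index whose cross-term is controlled by ordinary intersection numbers.

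First I would classify the closed components. Since $\pi_X$ is $(J, j_B)$-holomorphic, the maximum principle forces any closed $J$-holomorphic curve $C_a$ to lie inside a single fiber of $\pi_X$. Under the hypothesis that $X$ has no separating singular fibers, each fiber is either a smooth genus-$g$ regular fiber, or a non-separating singular fiber whose normalization is an embedded genus-$(g-1)$ surface mapped to $X$ with one double point. In either case $[C_a] = [\Sigma]$ in $H_2(\overline{X})$, so writing $\mathcal{C}_{\mathrm{cl}} = \sum_{a \in I_{\mathrm{cl}}} d_a C_a$ the hypothesis that $\mathcal{C}$ contains a closed component gives $[\mathcal{C}_{\mathrm{cl}}] = m[\Sigma]$ with $m = \sum_{a \in I_{\mathrm{cl}}} d_a \geq 1$.

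Next I would use the splitting of the ECH index. Because $\mathcal{C}_{\mathrm{cl}}$ has no ends, the Conley--Zehnder terms of $I(\mathcal{C})$ are entirely those of $\mathcal{C}_{\mathrm{op}}$, and the relative Chern and $Q_\tau$ terms add with only a topological cross-term:
\begin{equation*}
I(\mathcal{C}) \;=\; I(\mathcal{C}_{\mathrm{cl}}) + I(\mathcal{C}_{\mathrm{op}}) + 2\,[\mathcal{C}_{\mathrm{cl}}] \cdot [\mathcal{C}_{\mathrm{op}}].
\end{equation*}
For the closed piece $I(\mathcal{C}_{\mathrm{cl}}) = c_1(T\overline{X}) \cdot [\mathcal{C}_{\mathrm{cl}}] + [\mathcal{C}_{\mathrm{cl}}] \cdot [\mathcal{C}_{\mathrm{cl}}]$; since $[\Sigma] \cdot [\Sigma] = 0$ and $c_1(T\overline{X}) \cdot [\Sigma] = 2-2g$ by adjunction on a regular fiber, we get $I(\mathcal{C}_{\mathrm{cl}}) = m(2-2g)$. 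The cross-term is $[\mathcal{C}_{\mathrm{cl}}] \cdot [\mathcal{C}_{\mathrm{op}}] = m \cdot d$ since $[\mathcal{C}_{\mathrm{op}}] \cdot [\Sigma] = d$ by the degree condition.

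Finally, $\mathcal{C}_{\mathrm{op}}$ by construction contains no closed component, so Lemma \ref{C3} gives $I(\mathcal{C}_{\mathrm{op}}) \geq 0$ for generic $J$. Putting everything together,
\begin{equation*}
I(\mathcal{C}) \;\geq\; m(2-2g) + 0 + 2md \;=\; 2m\bigl(d + 1 - g\bigr) \;\geq\; 2,
\end{equation*}
using $m \geq 1$ and the hypothesis $d > g-1$, i.e.\ $d \geq g$. The step requiring the most care is the additivity formula with the ordinary intersection cross-term $[\mathcal{C}_{\mathrm{cl}}] \cdot [\mathcal{C}_{\mathrm{op}}]$: one must check that the trivialization $\tau$ contributes nothing for $\mathcal{C}_{\mathrm{cl}}$ (which follows from $\mathcal{C}_{\mathrm{cl}}$ having no ends) and that $Q_\tau(\mathcal{C}_{\mathrm{cl}} + \mathcal{C}_{\mathrm{op}}) - Q_\tau(\mathcal{C}_{\mathrm{cl}}) - Q_\tau(\mathcal{C}_{\mathrm{op}})$ equals $2m d$; this reduces to the fact that a closed class pairs with a relative class by the usual algebraic intersection in $\overline{X}$.
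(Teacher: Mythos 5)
Your proof is correct and follows essentially the same route as the paper: separate off the closed part as $m[\Sigma]$ (using that the absence of separating singular fibers forces every closed component's class to be $[\Sigma]$), invoke Lemma \ref{C3} for the remaining current with no closed component, and observe that the index difference is $m\langle c_1(T\overline{X}),[\Sigma]\rangle + 2mQ_\tau(\mathcal{C}',[\Sigma]) + m^2Q_\tau([\Sigma],[\Sigma]) = 2m(d-g(\Sigma)+1)$, which is $\geq 2$ under the degree hypothesis. The only cosmetic difference is that you phrase the additivity as a decomposition $\mathcal{C}=\mathcal{C}_{\mathrm{cl}}+\mathcal{C}_{\mathrm{op}}$ with cross-term $2[\mathcal{C}_{\mathrm{cl}}]\cdot[\mathcal{C}_{\mathrm{op}}]$, while the paper directly writes $\mathcal{C}=\mathcal{C}'+m[\Sigma]$; these are the same computation.
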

\begin{proof}
By our assumptions,  the homology class of regular fibers and  singular fibers are $[\Sigma]$, thus we can rewire  $\mathcal{C}= \mathcal{C}'+ m[\Sigma]$ for some $m \ge 1$ and $\mathcal{C}'$ has no closed component.   By definition of ECH index,
\begin{eqnarray*}
I(\mathcal{C})-I(\mathcal{C}')= m<c_1(T\overline{X}), [\Sigma]> +  2m Q_{\tau}(\mathcal{C} , [\Sigma]) + m^2Q_{\tau}([\Sigma] , [\Sigma]).
\end{eqnarray*}
According to  the definition of $Q_{\tau}$ and adjunction  formula, we have
 \begin{eqnarray*}
&&Q_{\tau}(\mathcal{C}, [\Sigma]) = \mathcal{C} \cap[\Sigma] =d,  Q_{\tau}( [\Sigma], [\Sigma])= [\Sigma] \cdot [\Sigma]=0, \\
&& <c_1(T\overline{X}), [\Sigma]>=\chi(\Sigma) + [\Sigma] \cdot [\Sigma]=2-2g(\Sigma).
\end{eqnarray*}
Therefore, $I(\mathcal{C})=I(\mathcal{C}')+ 2m(d-g(\Sigma)+1) \ge 2$.

\end{proof}

\begin{lemma}
Let  $\alpha_{+}$ and $\alpha_-$ be  admissible  orbit sets  of $(Y_{+} , \pi_{+}, \omega_{+})$  and $(Y_{-} , \pi_{-}, \omega_{-})$ respectively.   Assume that  $d=\alpha_{\pm} \cdot [\Sigma]> g(\Sigma)-1$  and  $(X, \pi_X)$ contains no  separating singular fiber. Then for generic $J \in \mathcal{J}_{tame}(X, \pi_X, \omega_X)$ and any $L>0$, the moduli space $\mathcal{M}_{X,I=0}^{J,L}(\alpha_+, \alpha_-)$ is compact. \label{C4}
\end{lemma}
\begin{proof}
%For the sake of simplicity, we assume that $Y_-=\emptyset $.
The proof here is similar to Lemma 7.19 of \cite{HT1}.
Let $\{u_n\}_{n=1}^{\infty} \subset\mathcal{M}_{X,I=0}^{J,L}(\alpha_+, \alpha_-) $ be a sequence of holomorphic curves and their underlying currents are $\{\mathcal{C}_n\}_{n=1}^{\infty}$. By Taubes' Gromov compactness (Cf. Lemma 6.8 of [1]),  $\{\mathcal{C}_n\}_{n=1}^{\infty}$ converges to a broken holomorphic current
$\mathcal{C}_{\infty}= \{ \mathcal{C}_0, \dots , \mathcal{C}_N \}$
in the sense of Section 9 of \cite{H1}. Convergent as a  current implies that $[\mathcal{C}_n]= [\mathcal{C}_{\infty}]$ for large $n$. Then $J_0(\mathcal{C}_n)=J_0(\mathcal{C}_{\infty})$, where $J_0$ is a number which only  depends on relative homology class. For the precise definition of $J_0$, please refer to $\cite{H2}$.  According to Corollary 6.11 and  Proposition 6.14 of  \cite{H2},  $J_0(\mathcal{C}_n)=J_0(\mathcal{C}_{\infty})$ implies  that the genus of  domains of $\{u_n\}_{n=1}^{\infty}$ have an $n$ independent upper bound.%(Cf. section 6 of  [15])

With the above understanding, we can apply the  Gromov compactness in  \cite{FYHKE} to $\{u_n\}_{n=1}^{\infty}$.  After passing a subsequence,   $\{u_n\}_{n=1}^{\infty}$ converges to a broken holomorphic curve $u_{\infty}=\{u^{-N_-}, \dots, u^0, \dots, u^{N_+}\}$, where $u^0 \in \mathcal{M}_X^J(\alpha^0, \beta^0)$ and  $u^i \in \mathcal{M}_{Y_+}^{J_+}(\alpha^i, \beta^i)$ for $i>0$ and  $u^i \in \mathcal{M}_{Y_-}^{J_-}(\alpha^i, \beta^i)$ for $i<0$ , $\alpha^i= \beta^{i+1}$, $\alpha^{N_+}=\alpha_+$ and $\beta^{-N_-}=\alpha_-$.    Moreover,
\begin{equation} \label{e74}
I(u_{\infty})=I(u^{-N_-}) \dots + I(u^0) + \dots +I(u^{N_+})=0.
%&&{\rm ind}(u^0) + \dots + {\rm ind}(u^N)=0.
\end{equation}
By Lemma \ref{C25}, $u_{\infty}$ doesn't contain  closed components.

By Lemma \ref{C3}  and the fact that $I(u^i) \ge 0$ in $\mathbb{R} \times Y_{\pm}$, (\ref{e74}) forces $I(u^i)=0$ for all $i$. For $i \ne 0$, $I(u^i)=0 $ implies that $u^i$ are branched cover of cylinders with nonnegative Fredholm index. %By Lemma \ref{C2}, $u^0$ is disjoint union of embedded curves with  ${\rm ind}(u^0) =0$.
By Lemma \ref{C39}, ${\rm ind}(u^0)  \ge 0$.  The additivity of the Fredholm index implies ${\rm ind}(u^i)= 0$ for  $i$. In particular, $ u^i $ is   a   connector  for $i \ne 0$.

Since $u_n$ is  admissible for all $n$, $u^{N_+}$ is  positively admissible. Recall that there is no nontrivial connector from $p_{+}(\gamma, m)$ to other partitions of $\gamma^m$. As a result, $u^{N_+}$ is  a trivial connector, which is ruled out in  the holomorphic building.  Therefore, $u_{\infty}$ has no positive level. Similarly, $u_{\infty }$ has no negative level. In conclusion,  $u_{\infty}=u^{0} \in \mathcal{M}_{X,I=0}^{J,L}(\alpha_+, \alpha_-)$. %In conclusion,  $\mathcal{M}_{X,I=0}^{J,L}(\alpha_+, \alpha_-)$ is compact with respect to the SFT topology.
\end{proof}

\begin{corollary}
 For $ J \in \mathcal{J}_{1tame}(X, \pi_X,  {\omega_X}, J_{\pm}) $ and  any $L>0$, the moduli space  $\mathcal{M}_{X,I=0}^{J,L}(\alpha_+, \alpha_-)$ is a  finite set.  In particular, the equation (\ref{e21}) is meaningful. \label{C22}
\end{corollary}
\begin{proof}
The corollary follows from  Lemmas \ref{C4}  and \ref{C15}.
\end{proof}
%In conclusion, (\ref{e21}) is meaningful because of  Corollary \ref{C22}.

\subsubsection{Chain map}
To show that (\ref{e21}) is a chain map, we carry out the following compactness result.    It is similar to Lemma 7.23 of \cite{HT2}.
\begin{lemma}
%Let $\alpha_{\pm}$ be admissible  orbit set of $(Y_{\pm}, \pi_{\pm}, \omega_{\pm})$ with degree $ g(\Sigma)-1 < d \le Q$.
Let  $\alpha_{+}$ and $\alpha_-$ be  admissible  orbit sets  of $(Y_{+} , \pi_{+}, \omega_{+})$  and $(Y_{-} , \pi_{-}, \omega_{-})$ respectively with degree $ d> g(\Sigma)-1$.   Suppose that $(X, \pi_X)$ contains no  separating singular fiber.  For any $L>0$ and generic $J \in \mathcal{J}_{tame}(X, \pi_X, \omega_X)$, let $\{u_n\}_{n=1}^{\infty} \subset \mathcal{M}_{X,I=1}^{J,L}(\alpha_+, \alpha_-)$, then $\{u_n\}_{n=1}^{\infty} $ converges to a broken holomorphic curve $u_{\infty}$, either $u_{\infty} \in \mathcal{M}_{X,I=1}^{J,L}(\alpha_+, \alpha_-)$ or $u_{\infty}= \{u^0 , \dots, u^i, \dots, u^{N_+}\}$ or $u_{\infty}= \{u^{-N_-} , \dots u^i, \dots u^0\}$, where $u^0 \in \mathcal{M}_{X,I=0}^{J,L}(\beta_+, \beta_-)$ and it is embedded, $u^{N_+} \in \mathcal{M}_{Y_{+},I=1}^{J_{+},L}(\alpha_{+}, \beta_{+}) $, $u^{-{N_-}} \in \mathcal{M}_{Y_{-},I=1}^{J_{-},L}(\beta_{-}, \alpha_{-}) $ for some $\beta_{\pm}$ and $\{u^i\}_i$ are connectors for $i\ne 0, \pm N_{\pm}$. \label{C5}
\end{lemma}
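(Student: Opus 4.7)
The proof follows the Gromov/SFT compactness strategy of Lemma \ref{C4}. First, I would apply Taubes' Gromov compactness to the currents $\mathcal{C}_n$ associated to $u_n$ and use the $J_0$--invariant argument of the proof of Lemma \ref{C4} to obtain a uniform bound on the genus of the domains. The SFT compactness theorem of \cite{FYHKE} then produces a subsequential limit broken holomorphic building
\begin{equation*}
u_\infty = \{u^{-N_-}, \dots, u^0, \dots, u^{N_+}\}
\end{equation*}
with $u^0 \in \mathcal{M}_X^J(\alpha^0,\beta^0)$, $u^i$ in the symplectization of $Y_+$ for $i>0$ and of $Y_-$ for $i<0$, matching conditions $\alpha^i=\beta^{i+1}$, $\alpha^{N_+}=\alpha_+$, $\beta^{-N_-}=\alpha_-$, and $\sum_i I(u^i)=1$. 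Closed components in $u^0$ are excluded by Lemma \ref{C25}, since the hypothesis $d>g(\Sigma)-1$ and the absence of separating singular fibers force each such component to contribute at least $2$ to the total ECH index.

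By Lemma \ref{C3} applied to $u^0$ and the standard symplectization analogue, $I(u^i)\ge 0$ for every level. Together with $\sum I(u^i)=1$, exactly one level carries index $1$ and the others carry index $0$. For the $I=0$ levels in the symplectizations, Fredholm regularity for generic $J_\pm$ combined with the ECH index inequality (Theorem \ref{ECH2}) forces each such level to be a union of connectors with nonnegative Fredholm index, exactly as in the last step of the proof of Lemma \ref{C4}.

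The crux is identifying which level carries the $I=1$ piece via the ECH partition condition. Since $u_n$ is admissible for every $n$, the positive asymptotic partition at each $\gamma^m$ in $\alpha_+$ is $p_{\mathrm{out}}(m,\gamma)$, and the negative partition at $\alpha_-$ is $p_{\mathrm{in}}(m,\gamma)$. Suppose $N_+\ge 1$ and the topmost level $u^{N_+}$ were a connector; its top partition would then be $p_{\mathrm{out}}$ at $\alpha_+$, forcing $u^{N_+}$ to be a trivial connector by the first bulleted fact of Section 3.4, which is excluded from any SFT building. Hence whenever $N_+\ge 1$, the $I=1$ level must be $u^{N_+}$ itself; a symmetric application of the $p_{\mathrm{in}}$ fact shows that whenever $N_-\ge 1$, the $I=1$ level must be $u^{-N_-}$. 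Since the $I=1$ level is unique, at most one of $N_+,N_-$ can be positive, which yields exactly the three cases in the statement: no breaking ($u_\infty\in\mathcal{M}_{X,I=1}^{J,L}(\alpha_+,\alpha_-)$); positive breaking only, with the stated structure; or negative breaking only.

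In either of the two broken cases, $u^0$ has $I=0$ and no closed components, so Lemma \ref{C2} gives $d_a=1$, $I(C_a)=\mathrm{ind}(C_a)=0$, and $C_a\cdot C_b=0$ for $a\ne b$, which renders $u^0$ embedded with simple, pairwise disjoint components. The intermediate levels between $u^0$ and the $I=1$ level are $I=0$ symplectization pieces, hence connectors as above. The main technical obstacle is the careful bookkeeping at the two boundary levels: one must combine the admissibility of $u_n$ with the two ECH partition facts of Section 3.4 to rule out an $I=1$ piece buried in the interior of either symplectization tower, and to exclude trivial connectors from the limit building. Everything else reduces to the compactness and index results already established in the paper.
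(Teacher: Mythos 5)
Your overall strategy matches the paper's: Taubes' Gromov compactness plus the $J_0$-bound to control genus, SFT compactness to produce the limit building, index nonnegativity to isolate a unique $I=1$ level, and the ECH partition facts (together with admissibility of $\alpha_\pm$) to force the outermost symplectization levels to be trivial connectors, which are excluded, so the $I=1$ level must be $u^0$ or the outermost level on exactly one side. Your organization of the partition argument (directly showing that an $I=0$ outermost level would be a trivial connector, hence ruled out, so if $N_\pm \ge 1$ that side's boundary level is the $I=1$ one, and uniqueness then forbids both sides breaking) is a slightly cleaner presentation of the paper's three-case analysis, and the content is the same.

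There is, however, one genuine imprecision in the final step. You assert that in the broken cases $u^0$ is embedded by citing Lemma \ref{C2}, but that lemma's hypotheses require \emph{both} orbit sets to be admissible, and for $u^0$ one of the boundary orbit sets is the intermediate $\beta_\pm$, which a priori need not be admissible. In cases \Romannum{1} and \Romannum{2} the citation happens to be harmless because the relevant inputs (Lemma \ref{C40}, inequality (\ref{e18}), and the partition theorem) don't use admissibility at all. But in case \Romannum{3} admissibility is essential to the argument. The paper handles this by not invoking Lemma \ref{C2} directly: it observes that an irreducible multiply-covered component of $u^0$ would have to be a branched cover of a cylinder with one positive and one negative hyperbolic end (equation (\ref{e40})), and a multiplicity-$>1$ end at a hyperbolic orbit already contradicts the admissibility of just the non-intermediate boundary orbit set ($\alpha_-$ when the breaking is on the positive side, $\alpha_+$ on the negative side). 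So only one-sided admissibility is needed, but you cannot simply invoke Lemma \ref{C2} as stated. You should instead reproduce the case-\Romannum{3} portion of its proof with the observation that one admissible end suffices.
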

\begin{proof}% For the sake of simplicity, we assume that $Y_-=\emptyset $.
Arguing as Lemma \ref{C4}, $\{u_n\}_{n=1}^{\infty} \subset \mathcal{M}_{X,I=1}^{J,L}(\alpha_+, \alpha_-)$ converges to a  broken holomorphic curve $u_{\infty}= \{u^{-N_-}, \dots, u^0 , \dots u^{N_+}\} $ in the sense of \cite{FYHKE} and
\begin{eqnarray*}
&&I(u^{-N_-}) \dots + I(u^0) + \dots +I(u^{N_+})=1.
\end{eqnarray*}
%We assume that $u^{N}$ is the top level all time.
%Since $I(u^i) \ge 0$, there exists a  unique level such that ECH index equals to one. Such level should be either the top level or bottom level.
Here we only consider case \ref{assumption3}, the other two cases are similar.

\begin{comment}
If $I(u^1)=0$, then $u^1$ is union of branched cover of trivial cylinder. As consequences of Lemma \ref{C2} and \ref{C11}, $I(u^0) \le 1$ and  ${\rm ind}u^0 \ge 0$ and $u^0$ has no nodes. By the fact that ${\rm ind}(u^1) \ge 0$ and  ${\rm ind}(u^1)=0 \mod 2$ whenever $\alpha_+$ is admissible, %When $\gamma$ is hyperbolic, $u^1$ is trivial cylinder since $\alpha$ is admissible.
 we must have ${\rm ind}(u^1)=0$ and $u^1$ is a connector.    Since $u_n$ is admissible, $u_{\infty}$ is also admissible. So $u^1$ must be a trivial connector.
Hence, $u_{\infty}$ has only one level, i.e. $u_{\infty}=u_0 \in \mathcal{M}_{X,I=1}^{J,L}(\alpha_+)$.

If $I(u^1)=1$, then $I(u^0)=0$ and $I(\tau^i)=0$. By Lemma \ref{C2},  $u^0$ is embedded with ${\rm ind}(u^0)=0$.   By Lemma 1.7 of \cite{HT1},  $\tau^i$ is branched cover of trivial cylinder and ${\rm ind}(\tau^i) \ge 0$. By Lemma 9.5 of \cite{H1} and $u^1$ is  admissible with ${\rm ind}(u^1)=1$. Additivity of Fredholm implies that   ${\rm ind}(\tau^i)=0$ for all $i$.
%Since ${\rm ind}(u^1)=I(u^1) \mod 2$ and $I(u^1) \ge {\rm ind}(u^1)$,   ${\rm ind}(u^1)=1$.
In conclusion, $u^0 \in  \mathcal{M}_{X,I=0}^{J,L}(\beta)$ and $\tau^i$ are connectors.
\end{comment}
First of all, keep in mind that  $0\le I(u^i) \le 1$  and  $u^i$ has nonnegative Fredholm index, for any $N_- \le i\le N_+$.  These follow  from Corollary \ref{C3}, Lemma \ref{C39} and Proposition 7.15 of \cite{HT1}. %we still have   ${\rm ind}u^0 \ge 0$.
Note that  $u^0$ could be a multiply covered holomorphic curve.

If $I(u^{N_+})=I(u^{-N_-})=0$, then $u^{\pm N_{\pm}}$ is a union of branched cover of trivial cylinders.
By the facts that ${\rm ind}(u^{\pm N_{\pm}}) \ge 0$ and  ${\rm ind}(u^{\pm N_{\pm}})=0 \mod 2$ whenever $\alpha_{+}$ and $\alpha_-$ are admissible, %When $\gamma$ is hyperbolic, $u^1$ is trivial cylinder since $\alpha$ is admissible.
 we   have ${\rm ind}(u^{\pm N_{\pm}})=0$ and $u^{\pm N_{\pm}}$ is connector.    Since each $u_n$ is admissible, $u_{\infty}$ is also admissible. As a result,  $u^{\pm N_{+}}$  and $u^{\pm N_{-}}$ must be   trivial connectors, which are ruled out in the holomorphic building.
Hence, $u_{\infty}=u^0 \in \mathcal{M}_{X,I=1}^{J,L}(\alpha_+, \alpha_-)$.

If $I(u^{N_+})=1$, then  $I(u^i)=0$ for all $i< N_+$. %By Lemma \ref{C39},  ${\rm ind}(u^0)\ge 0$.
 By Lemma 1.7 of \cite{HT1},  $u^i$ is branched cover of trivial cylinders with ${\rm ind}(u^i) \ge 0$ for $i \ne 0, N_+$. By Lemma 9.5 of \cite{H1}, $u^{N_+}$ contains a nontrivial component with $I={\rm ind}=1$. Additivity of Fredholm index implies that   ${\rm ind}(u^i)=0$ for all $i < N_+$. Argue as before, $u^{-N_-}$ is a trivial connector, which is ruled out in the holomorphic building. Hence, $u_{\infty}$ has no negative level. If $u^0$ contains  a multiply covered component, then  (\ref{e40}) tells us that it  is a branched  cover of a holomorphic cylinder   with   nonempty  positive  end  and  nonempty  negative end at hyperbolic orbits. However,  this contradicts  with our assumption that $\alpha_-$ is admissible.  Therefore, $u^0$ is embedded.
%Since ${\rm ind}(u^1)=I(u^1) \mod 2$ and $I(u^1) \ge {\rm ind}(u^1)$,   ${\rm ind}(u^1)=1$.
In conclusion, $u^0 \in  \mathcal{M}_{X,I=0}^{J}(\beta_+, \alpha_-, Z')$, $u^{N_+} \in \mathcal{M}^{J_+}_{Y_+, I=1}(\alpha_+, \beta_+)$ and $\{u^i\}_i$ are connectors for $ 0<i< N_+$.

If $I(u^{-N_-})=1$, then we can get the similar conclusion.
\end{proof}

\begin{lemma}
For  generic  $J \in \mathcal{J}_{tame}(X, \pi_X, \omega_X)$, $CP(X, \Omega_X, J, \Lambda_X)$ is  a chain map, namely, \label{C14}
$$ CP(X, \Omega_X, J, \Lambda_X) \circ \partial_+=\partial_- \circ CP(X, \Omega_X, J, \Lambda_X),$$
where $\partial_{\pm}$ is the differential of $CP_*(Y_{\pm}, \omega_{\pm}, \Gamma_{\pm}, J_{\pm}, \Lambda_P^{\pm})$.
\end{lemma}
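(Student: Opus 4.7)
The plan is to analyze the compactification of the moduli space $\mathcal{M}_{X,I=1}^{J,L}(\alpha_+,\alpha_-)$ for admissible orbit sets $\alpha_\pm$ and show that its boundary points split exactly into the two sums matching the two compositions. Throughout, fix $L>0$; energy control via the $[\omega_X]$-completeness of $\Lambda_X$ will be used to assemble the chain-level identity from the finitely many relative classes $Z$ with $\int_Z \omega_X \le L$ at the end. For generic $J$, Corollary \ref{C22} (applied also with $I=1$ after a standard index-one version, which follows from the same argument as Lemma \ref{C4}) guarantees that $\mathcal{M}_{X,I=1}^{J,L}(\alpha_+,\alpha_-)/\{\text{reparametrization}\}$ is a finite union of $1$-dimensional manifolds with boundary.

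The core step is to apply Lemma \ref{C5}. It tells us that a sequence in $\mathcal{M}_{X,I=1}^{J,L}(\alpha_+,\alpha_-)$ either stays inside the moduli space or converges, in the SFT sense, to a broken configuration of exactly one of two types:
\begin{enumerate}
\item[(a)] a top level $u^{N_+}\in\mathcal{M}_{Y_+,I=1}^{J_+,L}(\alpha_+,\beta_+)/\mathbb{R}$, intermediate connectors (which, by the ECH partition condition and the admissibility of $\alpha_+,\beta_+$, are forced to be trivial and hence disappear from the holomorphic building), and a bottom level $u^0\in\mathcal{M}_{X,I=0}^{J,L}(\beta_+,\alpha_-)$;
\item[(b)] the mirror configuration with a bottom level $u^{-N_-}\in\mathcal{M}_{Y_-,I=1}^{J_-,L}(\beta_-,\alpha_-)/\mathbb{R}$ and a top level $u^0\in\mathcal{M}_{X,I=0}^{J,L}(\alpha_+,\beta_-)$.
\end{enumerate}
One then invokes the gluing theorem from \cite{HT1},\cite{HT2} (whose proof carries over to the cobordism setting verbatim, since by Lemmas \ref{C2}, \ref{C11} the curves involved are embedded, Fredholm regular, have index-matching ECH partitions, and meet along trivial connectors), which asserts that every such broken configuration arises as the unique (mod $2$) limit of exactly one end of $\mathcal{M}_{X,I=1}^{J,L}(\alpha_+,\alpha_-)$. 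Consequently the mod-$2$ count of its boundary points equals
\begin{equation*}
\sum_{\beta_+}\#_2\bigl(\mathcal{M}_{Y_+,I=1}^{J_+}(\alpha_+,\beta_+)/\mathbb{R}\bigr)\cdot \#_2\mathcal{M}_{X,I=0}^{J}(\beta_+,\alpha_-)+\sum_{\beta_-}\#_2\mathcal{M}_{X,I=0}^{J}(\alpha_+,\beta_-)\cdot \#_2\bigl(\mathcal{M}_{Y_-,I=1}^{J_-}(\beta_-,\alpha_-)/\mathbb{R}\bigr),
\end{equation*}
which must vanish since the $1$-manifold has an even number of boundary points.

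To upgrade this to the desired chain-level identity, keep track of relative classes: decomposing each moduli space by $Z\in H_2(X,\alpha_+,\alpha_-)$ and using the composition law $\Lambda_X(Z_+ + Z + Z_-) = \Lambda^-_{Z_-}\circ \Lambda_X(Z)\circ \Lambda^+_{Z_+}$, the above cancellation reads
\begin{equation*}
\bigl(\partial_-\circ CP(X,\omega_X,J,\Lambda_X)+CP(X,\omega_X,J,\Lambda_X)\circ \partial_+\bigr)\alpha_+ \equiv 0 \pmod{L}
\end{equation*}
for every admissible generator $\alpha_+$. The $[\omega_X]$-completeness condition on $\Lambda_X$, together with the analogous $[\omega_\pm]$-completeness of $\Lambda_P^\pm$ used to define $\partial_\pm$, ensures both sides of the claimed identity are actually convergent sums of isomorphisms, and letting $L\to\infty$ yields the chain-map identity.

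The main obstacle is the gluing step: one must verify that at each broken configuration the standard Hutchings--Taubes gluing can be performed in the presence of the fibration structure and the (possibly) non-$\mathbb{R}$-invariant neck region near $\partial X$. This is handled by the fact that the intermediate connectors are trivial, so the gluing reduces to gluing a Fredholm-regular index-$1$ curve in a symplectization onto a Fredholm-regular index-$0$ curve in $\overline{X}$ along admissible partition ends, which is precisely the situation treated in \cite{HT1},\cite{HT2}; no new analytic input is required beyond the genericity of $J$ provided by Lemma \ref{C15} and \ref{C4}.
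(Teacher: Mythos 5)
Your overall strategy is the same as the paper's: compactify $\mathcal{M}_{X,I=1}^{J,L}(\alpha_+,\alpha_-)$, identify its boundary strata via Lemma~\ref{C5}, invoke Hutchings--Taubes gluing to match boundary points with products of an $I=1$ symplectization curve and an $I=0$ cobordism curve, and read off the chain-map identity from the mod-$2$ count of boundary points. However, there is a genuine error in your handling of the connectors that masks the real technical content of the argument.

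You assert twice that the intermediate connectors ``are forced to be trivial and hence disappear from the holomorphic building,'' citing the ECH partition condition and the admissibility of the orbit sets, and you use this to claim the gluing reduces to a straightforward gluing of two Fredholm-regular curves along simple ends. This is incorrect. The partition constraints only rule out nontrivial connectors whose positive ends are the outgoing partition $p_{out}$ or whose negative ends are the incoming partition $p_{in}$. In the relevant broken configuration, the top level $u^{N_+}$ is negatively admissible (negative ends at $p_{in}$) and the cobordism level $u^0$ is positively admissible (positive ends at $p_{out}$); the connector levels in between therefore have positive ends matching $p_{in}$ and negative ends matching $p_{out}$, and nothing forbids such connectors from being nontrivial branched covers. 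Indeed, in the statement and proof of Lemma~\ref{C5} these intermediate $u^i$ remain in the limit as connectors of Fredholm index zero, which can be nontrivial. The whole point of the Hutchings--Taubes gluing machinery that the paper invokes — the obstruction bundle $\mathcal{O}$ over $[5r,\infty)^2\times\mathcal{M}$, the section $\mathfrak{s}$, the set $\mathcal{V}_R$, and the identity $\partial V_\delta(u_-,u_+)=\mathfrak{s}^{-1}(0)\cap\mathcal{V}_R$ — exists precisely to count the number of ways of gluing across these nontrivial branched-cover connectors. Claiming they disappear replaces the hard step with a trivial one. Your conclusion is still correct, because the obstruction-bundle count does give the expected answer, but as written your proof does not actually address the central analytic difficulty; you should replace the claim that the connectors vanish with the statement that nontrivial connectors may appear and are handled by the obstruction-bundle gluing count of \cite{HT1}, \cite{HT2}, as the paper does.
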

\begin{proof}
We assume that $Y_-=\emptyset$ for simplicity. %%The statement is equivalent to
%\begin{eqnarray*}
%0&=&CP(X, \omega_X, J,  \Lambda_X)\circ \partial_+ \\
%&=&\sum\limits_W  \left(\sum\limits_{Z_1+Z_2=W} \sigma(\alpha, \beta ,Z_1 )\phi(\beta, Z_2) \right)  \Lambda_X(W),
%\end{eqnarray*}
%where $\sigma(\alpha, \beta ,Z_1 ) = \#_2 \mathcal{M}^{J_+}_{Y_+, I=1}(\alpha, \beta, Z_1)/ \mathbb{R}$ and $\phi(\beta, Z_2) = \#_2 \mathcal{M}_{X, I=0}^J(\beta, Z_2)$.
To show  that $CP(X, \Omega_X, J, \Lambda_X)$ is a chain map,   it suffices to  prove that
\begin{eqnarray} \label{e57}
\sum\limits_{Z_1+Z_2=W} \#_2 \mathcal{M}^{J_+}_{Y_+, I=1}(\alpha, \beta, Z_1) \#_2 \mathcal{M}_{X, I=0}^J(\beta, Z_2)=0,
\end{eqnarray}
where $ W \in H_2(X, \alpha)$, $Z_2 \in H_2(X, \beta)$ and $Z_1 \in H_2(Y_+, \alpha, \beta)$.

The usual strategy is to express (\ref{e57}) as  a  count of $ \partial \mathcal{M}_{X, I=1}^J(\alpha, W)$. However, the compactness result in Lemma \ref{C5} tells us that the broken holomorphic curves from  $ \partial \mathcal{M}_{X, I=1}^J(\alpha, W)$ not only involves elements from  $\mathcal{M}^{J_+}_{Y_+, I=1}(\alpha, \beta, Z_1)$  and $ \mathcal{M}_{X, I=0}^J(\beta, Z_2)$, but also involves connectors.

In fact, the similar phenomenon already happen in the proof of $\partial^2=0$. (See Lemma 7.23 of \cite{HT2}.) In the proof  of $\partial^2=0$, the key point is to glue a pair $(u_-, u_+)$ consisting of  $I=1$ holomorphic curves $u_-$ and $u_+$  in  $\mathbb{R} \times Y $, by inserting connectors in between. In our case,  the pair of  holomorphic curves above is  substituted by $(u_-, u_+)$, where $u_-$ is a curve  in $\overline{X}$ with $I(u_-)=0$ and $u_+$ is a curve in $\mathbb{R} \times Y_+$ with $I(u_+)=1$.

The strategy of gluing two holomorphic curves with $I(u_{\pm})=1$ in \cite{HT1} and \cite{HT2} still can be applied to our cases without essential change, because, their gluing analysis   mainly depends   on what is happening near the neck region. In addition, our  curves  $u_+$ and $u_-$ are admissible.  By Theorem 1.13 of \cite{HT1},   when the  orbit set $\beta$ is admissible,  the gluing coefficient   is $1$, otherwise, it is zero.  In other words, there is exactly one way (in mod 2 sense) to glue $(u_-, u_+)$  together when $\beta$ is admissible.
The argument in Section 7.3 of \cite{HT1} leads to  (\ref{e57}).

%holomorphic curves $u_-$ with $I(u_-)=0$ in $\overline{X}$ and $u_+$ with $I(u_+)=1$ in $\mathbb{R} \times Y_+$, by inserting connectors in between. The strategy of gluing two holomorphic curves with $I(u_{\pm})=1$ in \cite{HT1} and \cite{HT2} still can be applied to our cases without essential change, because, their gluing analysis   mainly depends   on what is happening near the neck region. In addition, both of our orbit sets $\alpha$, $\beta$ and curves $u_{\pm}$ are admissible.  By Theorem 1.13 of \cite{HT1}, the gluing coefficient   is $1$.

 A  summary of Hutchings and Taubes' gluing argument can be found in Section $6.5$ of \cite{VPK}, besides,  it indicates the minor changes in the cobordism case.

\end{proof}
In conclusion,  $CP(X, \omega_X, J,  \Lambda_X)$ induces a homomorphism
$$HP(X, \omega_X, J, \Lambda_X): HP_*(Y_+,  \omega_+,\Gamma_+, J_+, \Lambda^+_P) \to HP_*(Y_-,  \omega_-,\Gamma_-, J_-, \Lambda^-_P)  $$
in homology level.

\begin{comment}
\begin{remark} \label{r1}
If $X$ contains separating singular fibers,  then each  separating singular  fiber is union of pairs of embedded surfaces $(\Sigma_i, \Sigma_i')$.  Fix a  $\Gamma_X \in H_2(X, \partial X, \mathbb{Z})$ such that
$\partial_{Y_{\pm}} \Gamma_X =\Gamma_{\pm}$ and $\Gamma_X \cdot [\Sigma_i] \ge g(\Sigma_i)-1$ and  $\Gamma_X \cdot [\Sigma'_i] \ge g(\Sigma')-1$ for each $i$. Then $HP(X, \omega_X, J, \Gamma_X, \Lambda_X)$ is still well defined because of  the following reasons:  If $\mathcal{C}$ is a holomorphic current whose relative class is $\Gamma_X$ and $\mathcal{C}=\mathcal{C}' + \sum_i (m_i [\Sigma_i] + n_i [\Sigma_i']) $, where $\mathcal{C}'$ doesn't contain any $\Sigma_i$ or $\Sigma_i'$ component.   Without loss of generality, we may assume that $m_i \ge n_i$. By definition and adjunction formula, we have
\begin{equation*}
\begin{split}
I(\mathcal{C}) -I(\mathcal{C}')&=2 \sum_i  \left(\Gamma_X \cdot(m_i [\Sigma_i] + n_i [\Sigma_i'] ) + (m_i-n_i)^2 + m_i(1-2g(\Sigma_i)) + n_i(1-2g(\Sigma_i')) \right)\\
&\ge 2 \sum_i \left( n_i (d+1 -g(\Sigma)) + (m_i-n_i)(\Gamma_X \cdot [\Sigma_i] + 1- g(\Sigma_i))  \right) \ge 2.
\end{split}
\end{equation*}
Above inequality we have used $g(\Sigma)=g(\Sigma_i) + g(\Sigma_i')$. The conclusion in Lemma \ref{C25}  still holds and the fiber bubble still can be ruled.  The cobordism map is defined by previous arguments.
\end{remark}
\end{comment}
\subsection{Monotone case} \label{section21}
As  explained in Section \ref{section12}, in order to compare the  maps $HP_{sw}(X, \Omega_X, J, \Lambda_X)$ and $HP(X, \Omega_X, J, \Lambda_X)$, we need to find almost complex structures  in $\mathcal{J}_{comp}(X,  \Omega_X)$  such that these two maps can be defined  simultaneously. Under the  monotonicity assumptions, we show that there is  a  subset $\mathcal{V}_{comp}(X, \pi_X, \omega_X)^{reg}$ which meets our requirement.

%In order to define the cobordism maps  $HP(X,  \omega_X, J, \Lambda_X)$  when $X$ contains separating singular fiber and to  show that the cobordism maps $HP(X,  \omega_X, J, \Lambda_X)$  are equivalent to $\overline{HP}(X,  \omega_X,  \Lambda_X)$, we need to define them by using almost complex structures in $\mathcal{J}_{comp}(X, \pi_X, \omega_X)$ instead of $\mathcal{J}_{tame}(X, \pi_X, \omega_X)$.

Recall that when we define the cobordism maps in cases  \ref{assumption1} and   \ref{assumption2}, we need to use the genus bound $g(C) \ge g(B)$ to guarantee that  $C \star C \ge \frac{1}{2}$. However, this may not be true if we use $J \in \mathcal{J}_{comp}(X,  \Omega_X)$, because, the projection $\pi_X$   generally is not complex linear anymore. However, at least intuitively,  the bound $g(C) \ge g(B)$  should be  still  true when $J$ is close to   $\mathcal{J}_{tame}(X, \pi_X, \omega_X)$. The  following lemma summarizes the goal of this subsection.
\begin{lemma} \label{C48}
 Assume that $(X, \omega_X)$ is monotone,
 %or $(\omega_X, \Gamma_X)$ is monotone for $\Gamma_X \in H_2(X, \partial X, \mathbb{Z})$,
 then  there is a nonempty open subset $\mathcal{V}_{comp}(X, \pi_X, \omega_X, J_{\pm})$ of $\mathcal{J}_{comp}(X,  \Omega_X, J_{\pm})$ such that it satisfies following properties: Fix a generic $J\in \mathcal{V}_{comp}(X, \pi_X, \omega_X, J_{\pm})$, let $C$ be a simple irreducible  $J$-holomorphic curve  and it is not a fiber on the end of $\overline{X}$. Assume additionally that $C$  is not an exceptional sphere or torus with zero self intersection number, then  $C$ satisfies
\begin{equation} \label{e31}
2C \star C =2g(C)-2 + {\rm{ind}}(C) + h(C) + 2e_Q(C) + 4\delta(C) \ge  i,
\end{equation}
where $i=2$ in case  \ref{assumption1} and $i=1$ in case   \ref{assumption2}.
\end{lemma}
We  postpone the proof for the lemma to Section \ref{section13}.

This nice property guarantees that  we can define $HP(X,  \omega_X, J, \Lambda_X)$  for generic $J \in \mathcal{V}_{comp}(X, \pi_X, \omega_X, J_{\pm}) $ as before.  Moreover, using such an almost complex structure $J$, the assumption that $(X, \pi_X)$ only contains nonseparating fibers can be dropped. In the case that $(\Gamma_X, \omega_X)$ is monotone, the similar conclusion can be obtained. We use  $\mathcal{V}_{comp}(X, \pi_X, \omega_X) $ to denote   $\cup_{J_{\pm}}\mathcal{V}_{comp}(X, \pi_X, \omega_X, J_{\pm}) $, where $J_{\pm}$ runs over all  symplectization admissible almost complex structures.
 %We assume that $(X, \pi_X)$ is relative minimal in this  subsection. The case for non relative minimal case will be discussed in next subsection.

\begin{remark}
Note  that we don't use the  property $g(C) \ge g(B)$ in case  \ref{assumption3}. So in this section, we only consider cases  \ref{assumption1} and  \ref{assumption2}, unless otherwise stated.   In case \ref{assumption3},    we can define the cobordism maps  for generic $J \in \mathcal{J}_{comp}(X,  \Omega_X, J_{\pm}) $ as before, without the  monotonicity assumptions.  The only difference  is that the holomorphic torus and sphere may appear. Note that if they appear, then they lie inside the interior of $X$, because, their intersection number with the fibers on the ends is zero.

Firstly, let us consider the case that there is no holomorphic sphere. Then the ECH index is still nonnegative. But the curves  in  $\mathcal{M}_{X, I=i}^J(\alpha_+, \alpha_-)$ may contain multiple covers of torus for $i=0, 1$.  According to \cite{T2}, the transversality of multiple tori still can be obtained for  generic $J$. Also note that the multiple tori cannot shift to the symplectization  ends or other levels, or break into curves with asymptotic ends under Gromov compactness.  Under these observations, the  cobordism maps can be defined by the previous argument.

 %In fact, the embedded torus with $C \cdot C=0$ cannot appear in this case. Since $0=c_1(C)=\tau \int_C \omega_X=\tau \int_C \Omega_X=0$, $C $ must be constant.
For the cases that the holomorphic sphere with $C \star C=-1$  appears, the discussion is the same as  when $X$ is not relatively  minimal. We  discuss this case in  Section \ref{section4}.
%For the case that  $X$ is not relative minimal, please see the discussion in section 3.3.
\end{remark}

Without loss of generality,  in this section and Section \ref{section4} subsequently, we  assume that $(X, \pi_X)$ has only one singular fiber. If the singular fiber is  separating, then it is a union of two embedded surfaces $\Sigma_1$ and $\Sigma_2$. %In addition, we assume that $(X, \pi_X)$ is relative minimal.
Sections \ref{section20} and \ref{section13} consider the case that $(X, \pi_X)$ is relatively minimal.   The case that $(X, \pi_X)$ is not relatively   minimal  will be considered in  Section \ref{section4}.

%%%%%%%%%%%%%%%%%%%%%%%%%%%%%%%%%%%%%%%%%%%%%%%%%%%%%%%%%%%%%%%%%%%%%%%%%%%%%%%%%%%%

\begin{definition} \label{def5}
Given $L>0$, we define $\mathcal{V}^L_{comp}(X, \pi_X, \omega_X, J_{\pm}) $ to be a subset of $ \mathcal{J}_{comp}(X,  \Omega_X, J_{\pm})$ such that  $J \in \mathcal{V}^L_{comp}(X, \pi_X, \omega_X, J_{\pm}) $  if the following is true:
\begin{enumerate} [label=\textbf{S.\arabic*}]
\item  \label{s1}
For any   irreducible $J$ holomorphic curve  $C$ in $\overline{X}$ which has at least one end and $\int_C \omega_X  \le L$, then  $g(C) \ge g(B)$.

\item  \label{s2}
Let $C \in \mathcal{M}^J(\alpha_+, \alpha_-)$ with $d=[\alpha_{\pm}] \cdot [\Sigma] \ge 1$ and $\int_C \omega_X  \le L$. If $g(C)=g(B)=1$, then on each end of $\overline{X}$,  $C$ has exactly $d$-ends.
\item  \label{s3}
For any   irreducible  closed holomorphic curve $C$  in $\overline{X}$ with $\int_{C} \omega_X \le L$, then $C$ is homologous to $m[\Sigma] + m_1[\Sigma_1] + m_2[\Sigma_2]$ for some nonnegative intergers $m$, $m_1$ and $m_2$.
\end{enumerate}

\end{definition}

\begin{lemma}

\label{C19}
 $\mathcal{V}_{comp}^L(X, \pi_X, \omega_X, J_{\pm}) $ is a nonempty open subset of $ \mathcal{J}_{comp}(X, \Omega_X, J_{\pm})$.
\end{lemma}
\begin{proof} Firstly, it is worth noting that
$$ \mathcal{J}_{comp}(X,   \Omega_X, J_{\pm}) \cap \mathcal{J}_{tame}(X, \pi_X, \omega_X, J_{\pm}) \ne \emptyset \subset \mathcal{V}_{comp}^L(X, \pi_X, \omega_X, J_{\pm}) $$
 for any $L> 0$. In particular, $ \mathcal{V}_{comp}^L(X, \pi_X, \omega_X, J_{\pm}) $ is nonempty for any $L>0$.

For openness, we argue by contradiction. Suppose that   $\mathcal{V}_{comp}^L(X, \pi_X, \omega_X, J_{\pm})$ is not open,
then there exists a sequence of  irreducible  holomorphic curves $\{(C_n, J_n)\}_{n=1}^{\infty}$ with $\int_{C_n} \omega_X  \le  L$  that  do not satisfy the conclusions and $J_n$ converges to  $J_{\infty} \in \mathcal{V}_{comp}^L(X, \pi_X, \omega_X, J_{\pm})$ in $C^{\infty}$ topology. By Gromov compactness \cite{FYHKE},  $\{(C_n, J_n)\}_{n=1}^{\infty}$ converges to $(u_{\infty}, J_{\infty})$, where $u_{\infty}$ is possibly broken with nodes.

Firstly,  we deal with the case that each $C_n$ has at least one end, so does  $u_{\infty}$.  %We may assume that $u_{\infty}$ is not broken and irreducible, otherwise, we consider the an irreducible component in cobordism level of $u_{\infty}$ which has at least one end.  %According to Theorem 6.19 of \cite{Wen2}, we factorize $u_{\infty}= v_{\infty} \circ \varphi$, where $v_{\infty}$ is simple holomorphic curve.
Let $u^0$ be  an irreducible component in cobordism level of $u_{\infty}$ which has at least one end.  %According to Theorem 6.19 of \cite{Wen2}, we factorize $u_{\infty}= v_{\infty} \circ \varphi$, where $v_{\infty}$ is simple holomorphic curve
If   $C_{n}$ violates \ref{s1}, then the genus of  the domain of $u^0$ is strictly less than $g(B)$.
%has genus less than $g(B)$, so does  $u^0$.  % so does $v_{\infty}$ byth Riemann-Hurwitz formula.
     However, it is impossible since $J_{\infty} \in \mathcal{V}_{comp}^L(X, \pi_X, \omega_X, J_{\pm})$.

If   $C_{n}$ violates \ref{s2}, i.e., $g(C_n)=g(B)=1$, but $C_n$ has strictly less than $d$-ends. If $d=1$, then there is nothing to prove, because, we assume that $C_n$ has at least one end. Let us assume that $d\ge 2$.  Since $J_{\infty} \in \mathcal{V}_{comp}^L(X, \pi_X, \omega_X, J_{\pm})$,  the cobordism level of $u_{\infty}$ has exactly one irreducible component $u^0$ with at least one end, otherwise, combine with the discussion in the last paragraph,  the arithmetic genus of the domain of  $u_{\infty}$ is strictly greater than $1$, contradict with $g(C_n)=1$. Moreover, the domain of  $u^0$ has genus one and $u^0$ has  $d$-ends. As a result, $u_{\infty}$ is broken. Obviously, the arithmetic genus of the domain of $u_{\infty}$ is strictly greater than $1$, we get a contradiction.

Assume that each $C_{n}$ is closed and it violates \ref{s3}. Then $u_{\infty}$ cannot be broken, otherwise, the top (bottom) level of   $u_{\infty}$  has no positive (negative) ends.  Then its homology class must be $m[\Sigma]+ m_1[\Sigma_1] + m_2 [\Sigma_2]$  for some nonnegative integers $m, m_1$ and $m_2$ because of  $J_{\infty} \in \mathcal{V}_{comp}^L(X, \pi_X, \omega_X, J_{\pm})$. As a result, this is also true for large $n$, then we get a  contradiction.

\end{proof}

\begin{corollary} \label{C47}
For generic $J \in \mathcal{V}_{comp}^L(X, \pi_X, \omega_X, J_{\pm})$, let $C$ be a simple irreducible $J$-holomorphic curve which has at least one and  $\int_C \omega_X  \le L$.  Then
$2C \star C  \ge  i,$
where $i=2$ in case  \ref{assumption1} and $i=1$ in case   \ref{assumption2} .
\end{corollary}
\begin{proof}
Follows from Lemma \ref{C19} and repeat the same argument in Lemma \ref{C40}.
\end{proof}
\begin{remark}\label{r6}
Note that  any closed simple holomorphic curve $C$ in $\overline{X}$  such that $C \bigcap X \ne \emptyset$ must lie inside the interior of $X$.  For generic $J \in  \mathcal{V}^L_{comp}(X, \pi_X, \omega_X, J_{\pm})$, such a curve $C$ is Fredholm regular.
\end{remark}

\subsubsection{$(\Gamma_X, \omega_X)$ is monotone} \label{section20}
\begin{corollary} \label{C53}
Suppose that $(X, \pi_X)$ is relatively minimal  and  $(\Gamma_X, \omega_X)$ is monotone,  then there exists a nonempty open subset  $\mathcal{V}_{comp}(X, \pi_X, \omega_X, J_{\pm}) $ of  $\mathcal{J}_{comp}(X, \pi_X, \omega_X, J_{\pm})$ with the following significance: For generic $J \in \mathcal{V}_{comp}(X, \pi_X, \omega_X, J_{\pm}) $, the cobordism map  $HP(X, \Omega_X, \Gamma_X, J, \Lambda_X)$ is well defined.
\end{corollary}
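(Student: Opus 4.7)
The plan is to take $\mathcal{V}_{comp}(X,\pi_X,\omega_X,J_\pm)$ to be $\mathcal{V}^L_{comp}(X,\pi_X,\omega_X,J_\pm)$ from Lemma \ref{C19} for a suitable $L=L(\Gamma_X)$, where $L$ is extracted from the monotone condition. Concretely, if $\mathcal{C}$ is a $J$-holomorphic current in a relative class $Z\in H_2(X,\alpha_+,\alpha_-)$ lifting $\Gamma_X$, the monotonicity $c_1(T\overline{X})+2PD(\Gamma_X)=\tau[\omega_X]$, coupled with the ECH index formula, produces a linear relation of the form
\begin{equation*}
\tau \int_{\mathcal{C}} \omega_X \;=\; I(\mathcal{C}) \;+\; F(\alpha_+,\alpha_-,\Gamma_X),
\end{equation*}
in which the dependence on the choice of lift $Z$ cancels between $\int_Z\omega_X$ and the $c_\tau,Q_\tau$ terms. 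Since the $Q$-admissibility of $\omega_\pm$ yields only finitely many admissible orbit sets $\alpha_\pm$ with $[\alpha_\pm]=\Gamma_\pm$, and we only need to consider currents with $I(\mathcal{C})\in\{0,1\}$, the $\omega_X$-energy is uniformly bounded by some constant $L(\Gamma_X)>0$.

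With this $L$, Lemma \ref{C19} gives a nonempty open subset $\mathcal{V}_{comp}:=\mathcal{V}^L_{comp}(X,\pi_X,\omega_X,J_\pm)$. Intersecting with the natural Baire set of regular $J$'s (Fredholm regularity of simple non-fiber curves, plus the genericity conditions of Lemmas \ref{C15} and its end-analytic refinement $\mathcal{J}_{2comp}$, and symplectization genericity on the ends) supplies a Baire subset of $\mathcal{V}_{comp}$ from which to choose $J$. For such $J$, property (1) of $\mathcal{V}^L_{comp}$ delivers the genus inequality $g(C)\ge g(B)$ for every simple holomorphic curve with at least one end and $\omega_X$-energy at most $L$; this is precisely the substitute for Lemma \ref{C1} that was required in the $\mathcal{J}_{tame}$-case. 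Therefore, the chain of lemmas \ref{C40}, \ref{C3}, \ref{C2}, \ref{C11} and \ref{C39} transfers verbatim to currents in class $\Gamma_X$ with no closed irreducible components: the ECH index is nonnegative, the $I=0$ and $I=1$ classifications hold, and Fredholm indices of cobordism-level maps are nonnegative.

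Closed irreducible components are controlled by property (2) of $\mathcal{V}^L_{comp}$, which forces every such component to be homologous to $m[\Sigma]+m_1[\Sigma_1]+m_2[\Sigma_2]$ with $m,m_1,m_2\ge 0$. The ECH-index computation of Lemma \ref{C25}, together with the adjunction estimate recalled in Remark \ref{r1} (needed only when a separating singular fiber is present, in which case we restrict to $\Gamma_X$ satisfying the positivity against $[\Sigma_i],[\Sigma_i']$), then shows that adding any such closed component increases $I(\mathcal{C})$ by at least $2$. Consequently, every current contributing to $\mathcal{M}^J_{X,I=i}(\alpha_+,\alpha_-,Z)$ with $[Z]=\Gamma_X$ and $i\in\{0,1\}$ is free of closed components, so the SFT/Gromov compactness argument of Lemma \ref{C4} and the level-decomposition analysis of Lemma \ref{C5} go through — here one uses the $n$-independent energy bound $L$ and the openness of $\mathcal{V}_{comp}$ to keep the limiting almost complex structure inside $\mathcal{V}_{comp}$.

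Finally, the cobordism map $HP(X,\omega_X,J,\Gamma_X,\Lambda_X)$ is the $\Gamma_X$-component of the count \eqref{e21}, and the chain-map identity is proved by the obstruction-bundle gluing of Lemma \ref{C14}, applied within the relative class $\Gamma_X$. The main obstacle is the first paragraph: one must pin down the exact cancellation in the monotone energy identity across different lifts $Z$ of $\Gamma_X$, and verify that the resulting $L$ is absolute enough to feed into Lemma \ref{C19} independently of the finite set of generator pairs; once this bookkeeping is done, the rest of the argument is a direct adaptation of the $\mathcal{J}_{tame}$ construction carried out in the preceding subsection.
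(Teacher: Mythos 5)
Your proposal follows the paper's proof closely: extract an energy bound $L$ from the monotonicity identity, take $\mathcal{V}_{comp} := \mathcal{V}^L_{comp}$ from Lemma~\ref{C19}, use property~(1) of $\mathcal{V}^L_{comp}$ as a substitute for Lemma~\ref{C1} to recover $C\cdot C\ge i$, and then rerun the chain of Lemmas~\ref{C3}--\ref{C14}. Your bookkeeping about the energy constant (fixing $I$ and $\Gamma_X$ makes $\int_Z\omega_X$ independent of the lift $Z$; finiteness of the orbit-set pairs supplies a genuine maximum $L$) is in fact slightly more careful than the paper's ``the energy is constant'' statement, and is a cleaner justification for passing $L$ into Lemma~\ref{C19}.

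The one place you diverge is the treatment of closed components meeting a separating singular fiber. You invoke Remark~\ref{r1}, which requires the extra hypothesis $\Gamma_X\cdot[\Sigma_i]\ge g(\Sigma_i)-1$; that assumption is not part of the corollary's statement, so this step would narrow the conclusion. The paper instead assumes $X$ is relatively minimal (the non-minimal case being deferred to subsection~3.3), deduces $g(\Sigma_i)\ge 1$, and performs the adjunction calculation directly to show $I(m[\Sigma]+m_1[\Sigma_1]+m_2[\Sigma_2])<0$, which together with property~(2) of $\mathcal{V}^L_{comp}$ and Lemma~2.4 of~\cite{H3} rules out all closed components except multiple fibers at the ends, without any positivity hypothesis on $\Gamma_X$. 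If you want to match the stated generality, replace the Remark~\ref{r1} reference with the relative-minimality adjunction argument, and note (as the paper implicitly does) that the non-relatively-minimal case is handled separately via the exceptional-sphere bookkeeping in subsection~3.3.
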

\begin{proof}
Let $\mathcal{C}_1$ and $\mathcal{C}_2$ be holomorphic currents with the same  relative homology class $\Gamma_X$. Then by definition of ECH index, we have
\begin{equation*}
I(\mathcal{C}_1) - I(\mathcal{C}_2)=<c_1(T\overline{X}) + 2PD(\Gamma_X), \mathcal{C}_1- \mathcal{C}_2 >=\tau \left( \int_{\mathcal{C}_1} \omega_X - \int_{\mathcal{C}_2} \omega_X \right).
\end{equation*}
Therefore, the $\omega_X$-energy of the elements in  $\bigsqcup\limits_{ Z \in H_{2}(X, \alpha_+, \alpha_-), [Z]=\Gamma_X}\mathcal{M}_{X, I=i}^{J}(\alpha_+, \alpha_-, Z)$ are constant  which  only depends on $\alpha_{\pm}$. Their maximum is denoted by $L_*$. Take $\mathcal{V}_{comp}(X, \pi_X, \omega_X, J_{\pm})=\mathcal{V}^{L_*}_{comp}(X, \pi_X, \omega_X, J_{\pm}) $.

Assume that $g(\Sigma) \ge 2$. Since $(X, \pi_X)$ is relatively  minimal,   $g(\Sigma_i) \ge 1$. By adjunction formula, it is easy to check that
\begin{equation} \label{e70}
I(m [\Sigma] + m_1 [\Sigma_1] + m_2 [\Sigma_2])<0.
\end{equation}
  According to Lemma \ref{C19} and Lemma 2.4 in \cite{H3}, for generic $J$ in $\mathcal{V}_{comp}(X, \pi_X, \omega_X, J_{\pm})$, the holomorphic current consists of  closed holomorphic curves must be of the form $(C, d)$, where $C$ is a fiber in the ends of $\overline{X}$. If $g(\Sigma) \le 1$, then the singular fiber is nonseparate, otherwise, $(X, \pi_X)$ is not relatively minimal.

In both cases, by Corollary \ref{C47},  (\ref{e31}) is true for all irreducible simple holomorphic curves with energy less than $L$, except   for the close curves with  the same  homology  as  the fibers.  Replace Lemma \ref{C40} by Corollary \ref{C47},  then repeat the previous arguments in Lemmas \ref{C3}, \ref{C2}, \ref{C11}, \ref{C39}, \ref{C25}, \ref{C4}, \ref{C5} and \ref{C14},  $HP(X, \Omega_X, \Gamma_X, J,  \Lambda_X)$ is well defined.

\end{proof}
%%%%%%%%%%%%%%%%%%%%%%%%%%%%%%%%%%%%%%%%%%%%%%%%%%%%%%%%%%%%%%%%%%%%%%%%%%%%%%%%%%%%%%%%%%%%%%%%%%%%%%%%%%%%%%%%%

\subsubsection{$(X, \omega_X)$ is monotone } \label{section13}
Observe that there are only finitely many pairs of orbit  sets $(\alpha_+, \alpha_-)$ with $[\alpha_{\pm}] \cdot [\Sigma] \le Q$. We label these pairs by $\{(\alpha_+^k, \alpha_-^k)\}_{k=1}^{N_Q}$.   For each pair  $(\alpha_+^k, \alpha_-^k)$, fix a $J_k$ holomorphic curve $u_k: C_k \to \overline{X}$ which violates  the  conditions  \ref{s1} or   \ref{s2}  in Definition \ref{def5}, where $J_k \in \mathcal{J}_{comp}(X,  \Omega_X, J_{\pm})$. Denote $L_0 = \max_k\{\int_{C_k}u_k^*\omega_X\}$ and $I_0= \max_k \{ |{\rm ind} C_k|\}$. Note that in either case, $g(C_k) \le g(B)$.

The consequence of the following lemma is that if $C$ is a simple holomorphic curve which violates  the requirements  \ref{s1} or  \ref{s2} in Definition \ref{def5}, then the Fredholm index of $C$ is larger than $3$.
\begin{lemma}
Suppose that $(X, \omega_X)$ is monotone, there exists  $L_* >0$  with the following   significance: Given $L \ge L_*$,    for $J \in  \mathcal{V}_{comp}^L(X, \pi_X, \omega_X, J_{\pm}) $,  \label{C24}
%\begin{enumerate}
%\item[]
%For  any irreducible simple $J$ holomorphic curve  $C$ in $\overline{X}$ between  orbit sets (at least one is nonempty)  of $Y_+$ and $Y_-$ with $0 \le \rm{ind}(C) \le 2$, then  $g(C) \ge g(B)$.
given orbit  sets $\alpha_{+}$ and $\alpha_-$ (at least one is nonempty) and  an  irreducible  $J$ holomorphic curve  $C \in \mathcal{M}^J(\alpha_+, \alpha_-)$ (not necessarily simple) with $0 \le {\rm{ind} }C \le 3$,    then the following are true:
\begin{enumerate}
\item
$g(C) \ge g(B)$,
\item
If $g(C)= g(B)=1$, then $C$ has exactly $d=\alpha_{\pm} \cdot [\Sigma]$-ends.
\end{enumerate}
%\end{enumerate}
%Then there exists $L_0>0$ such that $ \mathcal{V}^L_{comp}(X, \pi_X, \omega_X, J_{\pm})  \subset \mathcal{U}_{comp}(X, \pi_X, \omega_X, J_{\pm}) $ for any $L \ge L_0$.
\end{lemma}
\begin{proof}
Let $J \in \mathcal{V}_{comp}^{L}(X, \pi_X, \omega_X,   J_{\pm}) $ and  $C$ be  an irreducible $J$ holomorphic curve from $\alpha_+$ to $\alpha_-$ with  $0 \le \rm{ind}(C) \le 3$ and $  \int_{C} \omega_X > L$. Assume additionally that $g(C) \le g(B)$.  %but  $C $ violates  the conclusions above.   Note that $g(C) \le g(B)$ in both cases.

There exists  $k$ such that $ \alpha_{\pm}= \alpha_{\pm}^k $. By  definition of the Fredholm index,
\begin{eqnarray}\label{e66}
{\rm{ind}}(C)- {\rm{ind}}(C_k) = \chi(C_k)- \chi(C) + 2<c_1(T\overline{X}), C-C_k> + \mu_{\tau}^{\rm{ind}} (C)-\mu_{\tau}^{\rm{ind}} (C_k).
\end{eqnarray}
Since there are only finitely many  orbit sets $\alpha_{\pm}$  with degree less than $Q$ and $g(C), g(C_k) \le g(B)$,   we can find a constant $c_Q>0$ which only  dependents  on $Q$ and $g(B)$  such that
\begin{equation} \label{e67}
|\mu_{\tau}^{\rm{ind}} (C)-\mu_{\tau}^{\rm{ind}} (C_k)| +|\chi(C_k)- \chi(C)| \le c_Q.
\end{equation}
Combine (\ref{e66}) and $(\ref{e67})$, we have $ |<c_1(T\overline{X}), C-C_k>| \le c_0$, where $c_0 =I_0 +c_Q +3$.

On the other hands, by the  monotonicity assumption,
\begin{equation} \label{e11}
|\int_C \omega_X - \int_{C_k} \omega_X|= |<\omega_X, C-C_k>|= |\tau <c_1(T\overline{X}), C-C_k>| \le |\tau| c_0,
\end{equation}
In other words, the $\omega_X$-energy  of holomorphic curve which violates the desired properties is uniformly bounded.
    %We give more detail of this point as follows.

Take $L \ge L_* \ge L_0 + |\tau| c_0$.  Suppose that  there exists  $J \in \mathcal{V}_{comp}^{L}(X, \pi_X, \omega_X,   J_{\pm}) $ and    an irreducible $J$ holomorphic curve $C$ violated the conclusions of the lemma.  Then $C$ must satisfy  $0 \le \rm{ind}(C) \le 3$ ,  $  \int_{C} \omega_X > L$ and  $g(C) \le g(B)$.
However,
$$ \int_{C} \omega_X - \int_{C_k} \omega_X > L_* -  \int_{C_k} \omega_X \ge  |\tau| c_0$$
contradicts with (\ref{e11}).

\end{proof}

\begin{corollary} \label{C49}
Assume  that $(X, \omega_X)$ is monotone. For    $J \in  \mathcal{V}_{comp}^{L_*}(X, \pi_X, \omega_X, J_{\pm})^{reg} $, let  $C$ be an  irreducible simple $J$ holomorphic curve with at least one end, then
$2C \star C  \ge  i,$
where $i=2$ in case  \ref{assumption1} and $i=1$ in case   \ref{assumption2}.
\end{corollary}
\begin{proof}
If $\int_C \omega_X \le L_*$,  then the conclusion follows from Corollary \ref{C47}.

If $\int_C \omega_X >L_*$, by  Lemma \ref{C24}, either ${\rm ind } C <0 $ or ${\rm ind } C \ge 4$.  Since $J$ is generic, the former case cannot happen. The conclusion follows from the definition of $C \star C$.
\end{proof}

Now we  examine (\ref{e31}) for closed  simple holomorphic curve $C$.  By Remark  \ref{r6}, we   assume that  $C$ lies inside  the interior of $X$, otherwise, $C$ is  a  fiber over  the ends  of $\overline{X}$.
% $C \bigcap X \ne \emptyset$, otherwise, $C$ is  a  fiber on the end of $\overline{X}$.
\begin{lemma} \label{C36}
Assume  that $(X, \omega_X)$ is monotone and $(X, \pi_X)$ is relatively  minimal.  For generic $J \in \mathcal{V}^{L_*}_{comp}(X, \pi_X, \omega_X, J_{\pm})$, let $C$ be a  simple closed $J$ holomorphic curve in $\overline{X}$, then   either (\ref{e31}) is still true for  $C$ or $[C]=m[\Sigma]$ for $m \ge 0$.
\end{lemma}
\begin{proof}

 %By Remark  \ref{r6}, $C$  is Fredholm regular for generic $J$.
%Let $C$ be an irreducible simple holomorphic curve in $X$. Then $C$  is Fredholm regular for generic $J$.% Since  $[C] \ne m[\Sigma]$, we must have  $C \bigcap X \ne \emptyset$ and $C$ is Fredholm regular.

By Definition  \ref{def5}, any such a closed curve  satisfies  either $[C]=m[\Sigma]+ m_1[\Sigma_1]+m_2[\Sigma_2]$ or $\int_C \omega_X  > L_*$.   By (\ref{e70}) and Remark  \ref{r6},  the former case cannot happen if $g(\Sigma) \ge 2$, and $[C] =m[\Sigma]$ when $g(\Sigma) \le 1$.   Hence, it suffices to consider  $\int_C \omega_X  > L_*$.

If $g(C) \ge 2$, then (\ref{e31}) is automatically true for $C$.   Suppose that $g(C) \le 1$,  by the monotonicity assumption,
\begin{equation*}
{\rm{ind}}(C) = 2<c_1(T \overline{X}), C> + 2g(C)-2 =  \frac{2}{\tau}\int_C \omega_X  + 2g(C)-2.
\end{equation*}
Since $L_* > 2 |\tau|$,  either ${\rm{ind}}(C)<0$ or ${\rm{ind}}(C) \ge 4$.  The former case cannot appear for generic $J$ and  (\ref{e31}) is  true for $C$ in the latter case.
 \end{proof}

\begin{proof}[Proof of Lemma \ref{C48}]
Define $\mathcal{V}_{comp}(X, \pi_X, \omega_X, J_{\pm})=\mathcal{V}^{L_*}_{comp}(X, \pi_X, \omega_X, J_{\pm})$.
The lemma is a direct consequence of Corollary  \ref{C49} and Lemma  \ref{C36}.
\end{proof}

\begin{lemma}
Suppose that $(X, \omega_X)$ is monotone and $(X, \pi_X)$ is relatively minimal. Then there exists a nonempty open subset $\mathcal{V}_{comp}(X, \pi_X, \omega_X, J_{\pm})$  of $\mathcal{J}_{comp}(X,   \Omega_X, J_{\pm})$ such that for $J \in \mathcal{V}_{comp}(X, \pi_X, \omega_X, J_{\pm})^{reg}$, $HP(X, \Omega_X, J , \Lambda_X)$ is well defined.
\end{lemma}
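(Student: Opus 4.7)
The plan is to set $\mathcal{V}_{comp}(X, \pi_X, \omega_X, J_{\pm}) := \mathcal{V}^{L_0}_{comp}(X, \pi_X, \omega_X, J_{\pm})$ for an $L_0$ chosen large enough to satisfy both the threshold of Lemma \ref{C24} (so that $\mathcal{V}^{L_0}_{comp} \subset \mathcal{U}_{comp}$) and the bound $L_0 > 2|\tau|$ required in the proof of Lemma \ref{C36}. Openness and nonemptiness then follow directly from Lemma \ref{C19}, which already establishes these for every $\mathcal{V}^{L}_{comp}$.

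The first substantive step will be to check that, for generic $J \in \mathcal{V}_{comp}(X, \pi_X, \omega_X, J_{\pm})$, inequality (\ref{e31}) holds for every irreducible simple $J$-holomorphic curve $C \subset \overline{X}$ that is not a fiber on the end of $\overline{X}$. I would split into three cases. For non-closed $C$ with $0 \le {\rm ind}(C) \le 2$, the genus bound $g(C) \ge g(B)$ coming from $J \in \mathcal{U}_{comp}$ reproduces (\ref{e31}) exactly as in Lemma \ref{C1}. For closed simple $C$ whose homology class is \emph{not} of the form $m[\Sigma] + m_1[\Sigma_1] + m_2[\Sigma_2]$, either $\int_C \omega_X \le L_0$ and Lemma \ref{C19} forbids such a curve outright, or $\int_C \omega_X > L_0$ and Lemma \ref{C36} yields (\ref{e31}). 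The remaining closed classes of fiber-like form are excluded by an adjunction computation combined with the monotonicity identity $[\omega_X] = \tau c_1(T\overline{X})$: in the relatively minimal setting $g(\Sigma_i) \ge 1$, one computes that $I(m[\Sigma] + m_1[\Sigma_1] + m_2[\Sigma_2]) < 0$ for any $(m, m_1, m_2) \ne 0$, so the transversality argument of Lemma 2.4 of \cite{H3} forbids such components from appearing for a generic $J$.

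With (\ref{e31}) secured, the remainder of the proof replays the arguments already developed in the tame setting. The index computations of Lemmas \ref{C3}, \ref{C2}, \ref{C11} and \ref{C39} go through verbatim: any current in $\mathcal{M}^J_{X, I \le 1}$ consists of simple embedded components with multiplicity $1$ satisfying the ECH partition condition, and each underlying simple curve has nonnegative Fredholm index. The compactness statements of Lemmas \ref{C25}, \ref{C4} and \ref{C5} then carry over directly, and the gluing argument of Lemma \ref{C14} furnishes the chain-map identity, so $CP(X, \omega_X, J, \Lambda_X)$ defined as in (\ref{e21}) is well-defined and descends to the desired $HP(X, \omega_X, J, \Lambda_X)$ on homology.

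The main obstacle I expect is the control of closed fiber-like components. Because $J$ is merely $\Omega$-compatible rather than tame with respect to $\pi_X$, one loses the maximum-principle argument that pinned every closed $J$-holomorphic curve to a fiber of $\pi_X$; consequently it becomes necessary to verify by hand that the ECH index of every class $m[\Sigma] + m_1[\Sigma_1] + m_2[\Sigma_2]$ lies outside $\{0, 1\}$. This is a somewhat delicate adjunction-plus-monotonicity computation and depends on the tacit relative-minimality assumption $g(\Sigma_i) \ge 1$, much as in the preceding $(\Gamma_X, \omega_X)$-monotone lemma; without relative minimality, exceptional spheres would introduce contributions of ambiguous sign and the whole bookkeeping would have to be redone using the blow-up property stated in Theorem \ref{Thm1}.
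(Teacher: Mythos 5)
Your proposal is essentially the same as the paper's proof: both set $\mathcal{V}_{comp} = \mathcal{V}^{L_0}_{comp}$ with $L_0$ large enough to invoke Lemma \ref{C24} and Lemma \ref{C36}, use $I(m[\Sigma]+m_1[\Sigma_1]+m_2[\Sigma_2])<0$ (from adjunction and relative minimality) to rule out fiber-like closed components for generic $J$, and then rerun the tame-case machinery of Lemmas \ref{C3}, \ref{C2}, \ref{C11}, \ref{C25}, \ref{C4}, \ref{C5} and \ref{C14}. One small correction: the inequality $I(m[\Sigma]+m_1[\Sigma_1]+m_2[\Sigma_2])<0$ is an adjunction computation using only $g(\Sigma_i)\ge 1$ (relative minimality), not the monotonicity identity $[\omega_X]=\tau c_1(T\overline{X})$; monotonicity enters only through the energy bounds underlying Lemmas \ref{C24} and \ref{C36}.
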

\begin{proof}
%For  $J \in \mathcal{V}_{comp}(X, \pi_X, \omega_X, J_{\pm})^{reg}$, by the definition of $\mathcal{V}^{L_*}_{comp}(X, \pi_X, \omega_X, J_{\pm})$, if $C$ is a closed holomorphic curve such that   $[C] \ne m[\Sigma]$, then   $\int_C \omega_X > L_0$.
%%$$I(m[\Sigma_i]) =m(2-2g(\Sigma_i)) + m(m+1) \Sigma_i \cdot \Sigma_i<0.$$
%%As a result, for generic $J$, any simple closed holomorphic curve with $[C] \ne m[\Sigma]$ in $\overline{X}$ must have $\int_C \omega_X > L_0$.
%According to Lemma \ref{C24} and  \ref{C36},  (\ref{e31}) is true for any simple holomorphic curve except the curve with homology class $m[\Sigma]$.
Replace Lemma \ref{C40} by Lemma \ref{C48}, then we    repeat the same argument in Lemmas \ref{C3}, \ref{C2}, \ref{C11}, \ref{C25}, \ref{C4}, \ref{C5}  and \ref{C14} to define the cobordism maps $HP(X, \Omega_X, J, \Lambda_X)$.
\end{proof}

\begin{comment}
\begin{remark} \label{r4}
The set  of almost complex structures $ \mathcal{J}_{comp}(X, \pi_X, \omega_X)^{reg}$ in Lemma \ref{C27} can be replaced by $ \mathcal{V}_{comp}(X, \pi_X, \omega_X)^{reg}$ for the following reasons.   Using Taubes' Gromov compactness (Cf. Lemma 6.8 of \cite{HT}) and  limit argument  in  Lemma \ref{C19}, $J' \in  \mathcal{V}_{comp}(X, \pi_X, \omega'_X)$ provided that $\delta>0$ small enough and $J \in  \mathcal{V}_{comp}(X, \pi_X, \omega_X)$.
\end{remark}
\end{comment}

\begin{comment}
When $X$ is relatively minimal, then $g(\Sigma_i) \ge 1$.  By adjunction formula, it is easy check that
$$I(m[\Sigma]+m_1[\Sigma_1]+ m_2[\Sigma_2])<0.$$
%$$I(m[\Sigma_i]) =m(2-2g(\Sigma_i)) + m(m+1) \Sigma_i \cdot \Sigma_i<0.$$
Therefore, there is no simple closed holomorphic curve with homology class $m[\Sigma]+m_1[\Sigma_1]+ m_2[\Sigma_2]$ for generic $J \in  \mathcal{V}^{L_0}_{comp}(X, \pi_X, \omega_X, J_{\pm})$. According to Lemma \ref{C24} and  \ref{C36},  (\ref{e31}) is true for any simple holomorphic curve except the curve with homology class $m[\Sigma]$. Then one can  repeat the argument in Lemma \ref{C3}, \ref{C2}, \ref{C11}, \ref{C25}, \ref{C4}, \ref{C5} and \ref{C14} to define the cobordism map $HP(X, \omega_X, J, \Lambda_X)$ when $d > g(\Sigma)-1$.
\end{comment}
\subsection{$(X, \pi_X)$ is not relatively minimal } \label{section4}
 This section concerns the case that $(X, \pi_X)$ is not  relatively minimal.  Without loss of generality,   assume that $\Sigma_2$ is  an exceptional sphere. Its homology class is  denoted by    $E$. The discussion here follows \cite{T2}.  To define the cobordism maps, we need further constrains on the holomorphic currents.
\begin{definition}
 A holomorphic current $\mathcal{C}=\{(C_a, d_a)\}$ is called admissible if $d_a=1$ whenever $C_a$ is a holomorphic sphere with $C_a \cdot C_a=-1$.
 Given orbit  sets $\alpha_{+}$ and $\alpha_-$, and a relative homology class $Z$, let $\mathcal{M}_{X}^{J, ad}(\alpha_+, \alpha_-, Z)$ denote the moduli space of holomorphic curves whose underlying holomorphic currents  are admissible.
\end{definition}

Under certain assumptions, we claim that  the  cobordism maps are still well defined.  The results are summarized in the following lemma.
%and then induce a well-defined homomorphism in homology level.
%  The only difference is that  we need to replace the moduli space $\mathcal{M}_{X, I=0}^J(\alpha_+, \alpha_-, Z)$ by $\mathcal{M}_{X, I=0}^{J, ad}(\alpha_+, \alpha_-, Z)$.
\begin{lemma} \label{C52}
Let $\Gamma_X \in H_2(X, \partial X, \mathbb{Z})$,  we consider any one of the following cases:
\begin{enumerate} [label=\textbf{C.\arabic*}]
\item \label{case1}
$(X, \omega_X)$ or $(\Gamma_X, \omega_X)$ is  monotone.  Take  $J \in \mathcal{V}_{comp}(X, \pi_X, \omega_X)^{reg}$.
\item \label{case2}
The relative class satisfies $\Gamma_X \cdot E \le 0$.   Take $J \in \mathcal{J}_{tame}(X, \pi_X, \omega_X)^{reg}$.
\end{enumerate} Replace the moduli space $\mathcal{M}_{X, I=0}^J(\alpha_+, \alpha_-, Z)$ by $\mathcal{M}_{X, I=0}^{J, ad}(\alpha_+, \alpha_-, Z)$  in equation (\ref{e21}), then  it is still meaningful. Moreover, the map defined by  equation (\ref{e21}) is a chain map,  and it induces a well-defined homomorphism
$HP(X, \Omega_X,  \Gamma_X, J,\Lambda_X)$.
\end{lemma}

%The same argument in Lemmas \ref{C2},
% \ref{C11} and \ref{C25} deduces that the element in $\mathcal{M}_{X, I=i}^{J, ad}(\alpha_+, \alpha_-, Z)$ is embedded, where $0 \le i \le 1$.  The following lemma tells us that the multiply covered exceptional sphere cannot  appear when we take Grmov compactness.

It is worth noting that $I(m[\Sigma]+ m_1[\Sigma_1] + m_e E) \le 0 $ and equality holds if and only if $m=m_1=0$ and $m_e=1$ when $g(\Sigma) \ge 2$.
%Let $C$ be a  simple closed holomorphic curve in $\overline{X}$ with homology class $m[\Sigma]+ m_1[\Sigma_1] + m_e E$.
In the monotone case \ref{case1}, by the  observation  above and Lemma \ref{C48},  any holomorphic current can be written as $\mathcal{C}+ m_eE$, where $\mathcal{C}$ doesn't contain any $E$ component and $I(\mathcal{C}) \ge 0$. When $g(\Sigma)=0$, let $C$ be a exceptional sphere with $[C]= m[\Sigma]+ m_1[\Sigma_1] + m_2 [\Sigma]$. Without loss of generality, assume that $m_2 \ge m_1$, hence we can write $[C] =m [\Sigma] + m_2 [\Sigma_2]$. By adjunction formula, it is easy to check that $C$ is an exceptional sphere if  and only if $m=0$ and $m_2 =1$.  Therefore, we still have  the decomposition $\mathcal{C}+ m_eE$. For the case $g(\Sigma)=1$, the homology class of exceptional sphere is not necessarily  to be $E$,  but the discussion that follows still works, we omit this case here.

In the case \ref{case2}, for any holomorphic current $\mathcal{C}$ with relative class $\Gamma_X$,  we can write $\mathcal{C} =  \mathcal{C}_0 + m[\Sigma] + m_1[\Sigma_1] + m_e E$, where  $\mathcal{C}_0 $ doesn't contain any closed component.   Then $\mathcal{C} \cdot E =\Gamma_X \cdot E \le 0$ implies that $m_e \ge m_1$.  Since $E + [\Sigma_1]=[\Sigma]$, we write $\mathcal{C} = \mathcal{C}' + (m_e-m_1) E$, where $\mathcal{C}' =\mathcal{C}_0 + (m +m_1)[\Sigma]$.  By  Corollary \ref{C3} and Lemma  \ref{C25},  we have $I(\mathcal{C}') \ge 0$.

  %Without loss
%of generality, we may assume that  $\mathcal{C}=\mathcal{C}' + m_e E $.

%Note  that $I(C) \ge 0$ if and only if $[C]=E$, i.e $C$ is a holomorphic sphere with self intersection $-1$.
%By this observation and Lemmas \ref{C24} and \ref{C36}, in the cases that $(X, \omega_X)$ or $(\Gamma_X, \omega_X)$ is monotone,

\begin{lemma} \label{C37}
%Suppose that $(X, \omega_X)$ or $(\Gamma_X, \omega_X)$ is monotone.
For generic $J$, let $\{\mathcal{C}_n\}_{n=1}^{\infty}$ be a sequence of admissible holomorphic currents in  $\widetilde{\mathcal{M}}^{J, L}_{X, I=i}(\alpha_+, \alpha_-)$,   where $0\le i \le 1$. Then $\{\mathcal{C}_n\}_{n=1}^{\infty}$ converges to a broken holomorphic currents $\mathcal{C}_{\infty}$ in the current sense. Decompose $\mathcal{C}_{\infty}=\mathcal{C}+ m_e E$,   where $\mathcal{C}$ doesn't contain any $e$ component. Assume that $I(\mathcal{C}) \ge 0$,  then  $m_e=1$. %$\mathcal{C}_{\infty}$ is still admissible.
\end{lemma}
\begin{proof}
By Taubes' Gromov compactness,  $\{\mathcal{C}_n\}_{n=1}^{\infty}$  converges to $\mathcal{C}_{\infty}$ (possibly broken) in the sense of Lemma 9.9 of \cite{H1}.
%Suppose that $\mathcal{C}_{\infty}=\mathcal{C}+ m E$, where $\mathcal{C}$ doesn't contain any $e$ component. Then
By definition,
\begin{equation*}
I(\mathcal{C}_{\infty})=I(\mathcal{C}+ m_e E)=I(\mathcal{C})+ I(m_eE)+ 2 m_e \mathcal{C} \cdot E = I(\mathcal{C})+ m_e-m_e^2 + 2 m_e \mathcal{C} \cdot E.
\end{equation*}
On the other hand, $\mathcal{C}_{\infty} \cdot E =\mathcal{C} \cdot E -m_e =\mathcal{C}_{n} \cdot E\ge -1$.
As a result, $I(\mathcal{C}_{\infty}) \ge m_e(m_e-1)$. By our assumption, $m_e=1$. %In other words, $\mathcal{C}_{\infty}$ is admissible.
\end{proof}

\begin{comment}
For our goal, we need to ensure that the  compactness of  $\mathcal{M}_{X, I=0}^{J, ad}(\alpha_+, \alpha_-, Z)$. Let $\{\mathcal{C}_n \}_{n=1}^{\infty}\subset \widetilde{\mathcal{M}}^{J, L}_X(\alpha_+, \alpha_-)$ be a sequence of admissible holomorphic currents with ECH index $0$ or $1$. By intersection positivity and our assumption that $\mathcal{C}_n$ is admissible, we have $\mathcal{C}_n \cdot e \ge -1$. By Taubes' Gromov compactness,  $\mathcal{C}_n$ converges to $\mathcal{C}_{\infty}$(possibly broken) in the sense of section 9 of \cite{H1}, then $\mathcal{C}_{\infty} $ is still admissible for the following reason.  Suppose that $\mathcal{C}_{\infty}=\mathcal{C}+ m E$, where $\mathcal{C}$ doesn't contain any $e$ component. Then
\begin{eqnarray*}
I(\mathcal{C}_{\infty})=I(\mathcal{C}+ m e)=I(\mathcal{C})+ I(mE)+ 2 m \mathcal{C} \cdot E = I(\mathcal{C})+ m-m^2 + 2 m \mathcal{C} \cdot E.
\end{eqnarray*}
On the other hand, $\mathcal{C}_{\infty} \cdot e =\mathcal{C} \cdot e -m \ge -1$. Thus, $I(\mathcal{C}_{\infty}) \ge m(m-1)$. Hence, $I(\mathcal{C}_{\infty}) \le 1$ implies that $m=1$, i.e. $\mathcal{C}_{\infty}$ is admissible.
\end{comment}

\begin{proof}[Proof of Lemma \ref{C52}]
Let us consider case \ref{case2}. Write $\mathcal{C} =\mathcal{C}' + m_e E $ as before.  Observe that  if    $\mathcal{C}$ is admissible with $0 \le I(\mathcal{C})\le 1$, then by Lemma \ref{C25},   $\mathcal{C}'$ doesn't contain any closed component and $m_e=0$ or $1$. % We can use Lemma \ref{C25} to rule the closed holomorphic curve except the exceptional sphere.
 Note that the exceptional sphere is Fredholm regular due to  the automatic transversality  theorem.(Cf. Page 51 of \cite{H3}).   Take a generic   $J \in \mathcal{J}_{tame}(X, \pi_X, \omega_X) $,  replace the   moduli space $\mathcal{M}_{X, I}^J(\alpha_+, \alpha_-, Z)$  by $\mathcal{M}_{X, I}^{J, ad}(\alpha_+, \alpha_-, Z)$.
  By Lemma \ref{C37}, the compactness results in Lemmas \ref{C4} and \ref{C5} still hold, because, there is no holomorphic curve with negative ECH index created after taking Gromov compactness.   Moreover, all the holomorphic curves which contribute to the cobordism maps are embedded. By using the same argument as before, we define  cobordism maps $HP(X,\Omega_X, \Gamma_X, J, \Lambda_X)$, even though $X$ is not relatively  minimal. This leads to  the last bullet in Remark \ref{r2}.

Similarly, by the same reasons as above, the cobordism maps can be defined by counting  embedded holomorphic curves  in monotone case \ref{case1}.
\end{proof}

\subsection{Properties of $HP(X, \Omega_X, J, \Lambda_X)$}
\subsubsection{Composition rule} \label{section2}
Let $(X_+, \pi_{X_+}, \omega_{X_+})$ be a fierwise symplectic  cobordism from $(Y_+, \pi_+, \omega_+)$ to $(Y_0, \pi_0, \omega_0)$ and $(X_-, \pi_{X_-}, \omega_{X_-})$ be a fiberwise  symplectic   cobordism from $(Y_0, \pi_0, \omega_0)$ to  $(Y_-, \pi_-, \omega_-)$. Given $J_{X_{\pm}} \in \mathcal{J}_{tame}(X_{\pm}, \pi_{X_{\pm}}, \omega_{X_{\pm}})$, define $(X_R, \pi_{X_R}, \omega_R)$ and $J_R$ as Section \ref{section22}.
%Let $(X, \pi_X, \omega_X)$ be the composition of $(X_{+}, \pi_{X_+}, \omega_{X_+})$ and $(X_{-}, \pi_{X_-}, \omega_{X_-})$,  and $J_X = J_{X_+} \circ J_{X_-}$, then we have the following composition rule.

\begin{lemma}
Assume that  $(X_{+}, \pi_{X_+}, \omega_{X_+})$ and $(X_-, \pi_{X_-}, \omega_{X_-})$ satisfy the  assumptions $(\spadesuit)$  and they   contain no  separating singular fiber.  Introduce   $(X_R, \pi_{X_R}, \omega_R)$ and $J_R$ as Section \ref{section22}.  Suppose that  $J_{X_{\pm}} $ is generic, and there exists $R_0 \ge0$ such that  $J_R$ is generic  for any $R \ge R_0$.  Then \label{C20}
\begin{equation*}
\begin{split}
&\sum_{\Gamma_{X_{R_0}} \vert_{X_{\pm} }= \Gamma_{X_{\pm}}, \Gamma_{X_{R_0}} \in H_2(X_{R_0}, \partial X_{R_0}, \mathbb{Z}) }HP(X_{R_0},  \Omega_{X_{R_0}}, \Gamma_{X_{R_0}},  J_{R_0}, \Lambda_{X_{R_0}})\\
&= HP(X_-,\Omega_{X_-}, \Gamma_{X_-},  J_{X_-},\Lambda_{X_-}) \circ HP(X_+,\Omega_{X_+},   \Gamma_{X_+}, J_{X_+}, \Lambda_{X_+}),
\end{split}
\end{equation*}
where $\Lambda_{X_R}=\Lambda_{X+} \circ \Lambda_{X-}$.
\end{lemma}
\begin{proof}  To simplify the notation, we assume that $R_0=0$.  Let $\alpha_-$ and $\alpha_+$ be admissible orbit  sets. Fix a relative homology class $Z \in H_2(X_0, \alpha_+, \alpha_-)$ with $[Z] = \Gamma_{X_0}$. %Define $\mathcal{M}= \bigsqcup_{R \ge 0} \bigsqcup_{ A_{\vert_{X_{\pm}}}=A_{\pm}} \mathcal{M}^{J_R}_{X_{R}, I=0}(\alpha, A) $.
 Let  $\{R_n\}_{n=1}^{\infty}$ be an increasing unbounded sequence and  $u_n \in \mathcal{M}^{J_{R_n}}_{X_{R_n},I=0} (\alpha_+ ,  \alpha_-, Z)$. Arguing as Lemma \ref{C4},  $\{u_n\}_{n=1}^{\infty}$ converges to a broken holomorphic curve $u_{\infty}= \{v^-_m, \dots u^- , \tau_1 \dots \tau_k, u^+, \dots v^+_l\}$ in the sense of SFT \cite{FYHKE} with $I(u_{\infty})=0$,   where
%$u^+ \in \mathcal{M}^{J_{X_+}}_{X_+} (\beta_+, \alpha_0, Z^+)$, $u^- \in \mathcal{M}^{J_{X_-}}_{X_- } (\beta_0, \alpha_-, Z^-)$,
$u^+$, $u^-$, $\tau_i$, $v_i^+$ and  $v_j^-$ are respectively holomorphic curves in $\overline{X_+}$,  $\overline{X_-}$, $\mathbb{R} \times Y_0$, $\mathbb{R} \times Y_+$ and $\mathbb{R} \times Y_-$.% $Z=Z^++Z^-$ and $[Z^{\pm}]=\Gamma_{X_{\pm}}$.
%$\{\tau_i\} \in \mathcal{M}^{J_0}_{Y_0}(\alpha_0, \beta_0) $,  $\{v^+_i\} \in \mathcal{M}^{J_+}_{Y_+}(\alpha_+, \beta_+) $,  $\{v^+_i\} \in \mathcal{M}^{J_-}_{Y_-}(\beta_-, \alpha_-) $,

Corollary \ref{C3} forces $I(u^+)=I(u^-)=I(\tau_i)=I(v^+_j)=I(v^-_j)=0$.  Furthermore, by the argument similar to Lemmas \ref{C4} and \ref{C5},  $\{\tau_i\}_j$, $\{v^+_j\}_j$ and $\{v^-_j\}_j$ are connectors.  Since $u_{\infty}$ is admissible, $\{v^+_j\}_j$  and  $\{v^-_j\}_j$ are trivial connectors and they  are ruled out by the holomorphic building. Thus negative ends of $u^-$ are asymptotic to $\alpha_-$ and   positive  ends of $u^+$ are asymptotic to $\alpha_+$.   By Lemma \ref{C2},   both of $u^{+}$ and $u^{-}$ are embedded.

 %Keep track the proof of Lemma \ref{C2}, one can find that the admissible conditions on $\alpha_{\pm}$ is not necessarily in  cases ($\spadesuit$\Romannum{1}) and  ($\spadesuit$\Romannum{2}).  %In case ($\spadesuit$ \Romannum{3}), if we only assume that one of $\alpha_+$ and $\alpha_-$ is admissible, the conclusion of  Lemma \ref{C2} still hold.

% Note that here  $\deg(\alpha)= \deg(\beta)$, in particular, $\beta$ is nondegenerate.

Let $\mathcal{M}= \bigsqcup_{R\ge0} \mathcal{M}^{J_R}_{X_R,I=0} (\alpha_+, \alpha_-, Z) $, then $\mathcal{M}$ is a $1$-dimensional manifold, because, $J_R$ is generic for $R \ge 0$.  The  boundary of $\mathcal{M}$ has two components $\partial_1 \mathcal{M}$ and $\partial_2 \mathcal{M}$, where  $\partial_1 \mathcal{M}= \mathcal{M}^{J_X}_{X, I=0}(\alpha_+, \alpha_- Z)$, and $\partial_2 \mathcal{M}$  is contributed by the broken holomorphic curves described in the previous paragraph.
%According the discussion in previous paragraph, $\partial_2 \mathcal{M}$ is
%$$ \bigsqcup_{\beta} \bigsqcup_{ Z=Z^++Z^-}\mathcal{M}^{J_{X^+}}_{X^+, I=0} (\alpha_+, \beta, Z^+) \times \{\mbox{connectors}\} \times \mathcal{M}^{J_{X^-}}_{X^-, I=0} (\beta, Z^-).$$
By the gluing arguments in \cite{HT1} and \cite{HT2}, we have
\begin{equation*}
\begin{split}
& \ \ \ \ \ \ \#_2 \mathcal{M}^{J_X}_{X_0, I=0}(\alpha_+, \alpha_-, Z) \\
&= \sum\limits_{\alpha_0 \in \mathcal{P}(Y_0, \omega_0, \Gamma_0)} \sum\limits_{ Z=Z^++Z^-} \#_2 \mathcal{M}^{J_{X_+}}_{X_+, I=0} (\alpha_+, \alpha_0, Z^+) \#_2 \mathcal{M}^{J_{X_-}}_{X_-, I=0} (\alpha_0, \alpha_-,  Z^-).
\end{split}
\end{equation*}
This formula implies the conclusion of the lemma.
\end{proof}

\begin{comment}
\begin{remark}
Given symplectic fibred cobordisms $(X_{\pm }, \pi_{X_{\pm}}, \omega_{X_{\pm}})$  and almost complex structures $J_{X_{\pm}}$ and $J_X$ as above. Assume that $(X_+, \pi_{X_+}, \omega_{X_{+}})$ and $(X_-, \pi_{X_-}, \omega_{X_-})$ are monotone and  assume that both of
$J_{X_+} \in \mathcal{V}_{comp}(X_+, \pi_{X_+}, \omega_{X_+}, J_+, J_0)$, $J_{X_-} \in \mathcal{V}_{comp}(X_-, \pi_{X_-}, \omega_{X_-}, J_0, J_-)$ and $J_{X} \in \mathcal{V}_{comp}(X, \pi_{X}, \omega_{X}, J_{\pm})$ are generic. Then we obtain the same  conclusions in Lemma \ref{C20} by using the same argument.
\end{remark}
\end{comment}

\subsubsection{Blowup formula}
In this subsection, we prove a simple version of blowup formula.  The proof is  the same as Theorem 4.2 in \cite{LL}.

 Take a generic almost complex structure $J \in \mathcal{J}_{tame}(X, \pi_X, \omega_X)$  which is integral in a small neighborhood $U$ of $x \in X$, where $x$ lies in a regular fiber.     Then we perform a complex blowing up at $x$, the result is denoted by $(\overline{X'}, J')$, also, $J'=J$ outside $U$.
Then $\pi_{X'} =\pi_X \circ pr: \overline{X'} \to  \overline{B} $  still is a Lefschetz fibration and $\pi_{X'}$ is complex linear with respect to $J'$ and $j_{B}$, where $pr: \overline{X'} \to \overline{X}$ is the blowing down map.
After blowing up, there is one additional critical point and its critical value is $\pi_X(x)$. One of the irreducible components of $\pi_{X'}^{-1}(\pi_X(x))$ is an exceptional sphere with self interesting $-1$. Let $E $ denote  its homology class. There is a standard way to find a symplectic form $\Omega_{X'}$ on $X'$ such that $\Omega_{X'}$ tames $J'$ and $\Omega_{X'}= pr^* \Omega_X$ outside a neighborhood  of $pr^{-1}(x)$.  (Cf. \cite{MS} ) Then we can define an admissible $2$-form by $\omega_{X'}=\Omega_{X'}-\omega_B$ and clearly we have $J' \in \mathcal{J}_{tame}(X', \pi_{X'}, \omega_{X'})$. According to Lemma \ref{C52},  $HP(X', \Omega_{X'}, J'', \Gamma_X, \Lambda_{X'})$ is well defined for any  generic $J''$, provide that $\Gamma_X \cdot E \le 0$.

%the exceptional class arising from blow up.

\begin{lemma} \label{C38}
Suppose that $(\Gamma_X, \omega_X)$ is monotone.   Introduce $(\overline{X'}, \omega_{X'},  J')$ and $E$ as above. Regard  $\Gamma_X$ as an element in $ H_2(X',  \partial X', \mathbb{Z})$ and assume that  $\Gamma_X \cdot E=0$, then $HP(X', \Omega_{X'}, J', \Gamma_X, \Lambda_{X'})$ is well defined, where $\Lambda_{X'}$ is a $X'$-morphism such that $\Lambda_X(Z)=\Lambda_{X'}(Z)$ for any $Z$ with relative class $\Gamma_X$.        Furthermore,  $$HP(X, \Omega_X, J, \Gamma_X, \Lambda_X)=HP(X', \Omega_{X'}, J', \Gamma_X, \Lambda_{X'}). $$

%then $\#\mathcal{M}_{X, I=0}^J(\alpha_+, \alpha_-, Z) = \#\mathcal{M}_{X', I=0}^{J'}(\alpha_+, \alpha_-, Z)   $.
\end{lemma}
\begin{proof}
By  the  monotonicity assumption, the energy of curves in $ \mathcal{M}_{X, I=0}^{J}(\alpha_+, \alpha_-, Z)$ is fixed, denoted by $L$,  where $Z$ is a  relative homology class with $[Z]=\Gamma_X$.  By Corollary \ref{C22}, $\mathcal{M}_{X, I=0}^{J, L}(\alpha_+, \alpha_-)$ is a finite set. So we can  take a sufficiently small neighborhood $U$ of $x$ such that any  holomorphic curve in $\mathcal{M}_{X, I=0}^{J, L}(\alpha_+, \alpha_-)$  does not interest $U$.

Let $e=pr^{-1}(x)$ be the exceptional sphere representing  the class $E$. $e$ is a $J'$ holomorphic curve in $\overline{X'}$. For every $u \in \mathcal{M}_{X', I=0}^{J'}(\alpha_+, \alpha_-, Z)$ and $[Z]=\Gamma_X$, $u$ does not intersect $e$ due to the assumption $\Gamma_X\cdot E=0$. As a result, $I(pr \circ u)=0$.  Since $pr$ is biholomorphic outside $pr^{-1}(x)$, $pr \circ u$ is a holomorphic curve in $\overline{X} $.  That is $pr \circ u \in \mathcal{M}_{X, I=0}^J(\alpha_+, \alpha_-, Z)$.  Moreover,  $u$ is  Fredholm regular.   As a consequence, there is a bijection between $\mathcal{M}_{X', I=0}^{J'}(\alpha_+, \alpha_-, Z)$ and $\mathcal{M}_{X, I=0}^{J}(\alpha_+, \alpha_-, Z)$ and this lead to the conclusion.
\end{proof}

\subsubsection{Commute with $U$-map} \label{section5}
The argument here is similar to Section 2.5 of  \cite{HT3}.  We sketch the main points as follows.

Fix a point $x \in X$, we   define the  cobordism  maps with mark point $x$ as follows: Let $\mathcal{M}^J_{X, I=2}(\alpha_+, \alpha_-, x, Z)$ be the moduli space of holomorphic curves whose image intersect $x$.  Given $ u\in \mathcal{M}^J_{X, I=2}(\alpha_+, \alpha_-, x, Z)$ and let $\mathcal{C}$ be  the associated holomorphic current. Note that the components of   $\mathcal{C}$ which interest $x$   have ${\rm ind}\ge 2$.

%Let $\mathcal{M}^{J, simple}_{X, 1}(\alpha_+, \alpha_-, Z)$  be the moduli space of simple holomorphic curves with  one mark point.  The dimension near $u \in \mathcal{M}^{J, simple}_{X, 1}(\alpha_+, \alpha_-, Z)$ is ${\rm ind}u+2$. Define evaluate map $ev:  \mathcal{M}^{J, simple}_{X, 1}(\alpha_+, \alpha_-, Z) \to \overline{X}$, then the dimension of $ev^{-1}(x)$ is ${\rm ind}-2$. So  if $ev^{-1}(x)$ is nonempty, we must have ${\rm ind}\ge 2$.

Based on the above understanding,  we have $I(u) \ge 2$ for $u \in \mathcal{M}^J_{X}(\alpha_+, \alpha_-, x, Z)$. Repeat the argument in Lemma \ref{C4} to show that   $\mathcal{M}^J_{X, I=2}(\alpha_+, \alpha_-, x, Z)$  is a compact zero  dimensional manifold. We  define the cobordism  maps with one mark point by
\begin{equation*}
CP(X, \Omega_X, J, \Lambda_X)_x= \sum_{\alpha_{\pm}} \sum_{Z\in H_2(X, \alpha_+, \alpha_-)} \#_2\mathcal{M}^J_{X, I=0}(\alpha_+, \alpha_-, x, Z)\Lambda_X(Z).
\end{equation*}
It is a chain map due to the same argument in Lemma \ref{C14}. So $HP(X, \omega_X, J,   \Lambda_X)_x$  is well defined.

Let $y_{\pm} \in Y_{\pm}=\partial_{\pm} X$, and $\gamma_{\pm}$ be a path in $X$ such that $\gamma_{\pm}(0)=x$ and $\gamma_{\pm}(1)= y_{\pm}$,  and $\gamma_{\pm}(t)$ lies in the interior of $X$ for $0\le t<1$.
Let  $\mathcal{M}^J_{X, I=i}(\alpha_+, \alpha_-, \gamma_+, Z)$ be  the moduli space holomorphic curves whose image intersect $\gamma_{\pm}$. When $i=1$, using the same argument as in Lemma \ref{C4} to show that it is a $0$ dimensional compact manifold. As a result,
\begin{equation*}
K_{+}=\sum_{\alpha_{\pm} \in \mathcal{P}(Y_{\pm}, \omega_{\pm}, \Gamma_{\pm})}\sum_{Z \in H_{2}(X, \alpha_+, \alpha_-)}  \#\mathcal{M}^J_{X, I=1}(\alpha_+, \alpha_-, \gamma_+, Z)\Lambda_Z
\end{equation*}
is well defined.

When $i=2$, $ \mathcal{M}^J_{X, I=2}(\alpha_+, \alpha_-, \gamma_{\pm}, Z) $ is a manifold of dimension $1$. The broken holomorphic curve  $u_{\infty}=\{u^{-N_-}, \dots, u^0, \dots, u^{N_+} \}$  which comes from the limits of curves in  $ \mathcal{M}^J_{X, I=2}(\alpha_+, \alpha_-, \gamma_{\pm}, Z) $ consists of  the following possibilities:
\begin{enumerate}
\item
$u^{N_+}$ is a holomorphic curve with $I=1$ and $u^0 \in  \mathcal{M}^J_{X, I=1}(\beta, \alpha_-, \gamma_+, Z')$, and the other levels are connectors. Moreover, there is no negative level.
\item
 $u^{N_+}$ is a holomorphic curve with $I=2$ and it interest $y_+$.   $u^0 \in  \mathcal{M}^J_{X, I=0}(\beta, \alpha_-,  Z')$, and the other levels are connectors. Moreover, there is no negative level.
\item
$u^{-N_-}$ is a holomorphic curve with $I=1$ and $u^0 \in  \mathcal{M}^J_{X, I=1}(\beta, \alpha_-, \gamma_+, Z')$, and the other levels are connectors. Moreover, there is no  positive level.

\item
$u_{\infty}=u^0 \in  \mathcal{M}^J_{X, I=2}(\alpha_+, \alpha_-, x,  Z)$.
\end{enumerate}
The gluing analysis in \cite{HT1} and \cite{HT2}  shows  that
\begin{equation*}
CP(X, \Omega_X, J, \Lambda_X)_x- U_{+,y} \circ CP(X, \Omega_X, J, \Lambda_X)=\partial_+\circ K_+ - K_+ \circ \partial_-.
\end{equation*}
Therefore, $HP(X, \Omega_X, J,  \Lambda_X)_x=U_+\circ HP(X, \Omega_X, J,  \Lambda_X)$. Similarly, we show that  $HP(X, \Omega_X, J,   \Lambda_X)_x= HP(X, \Omega_X, J,  \Lambda_X) \circ U_-$. As a consequence, $$ U_+\circ HP(X, \Omega_X, J,  \Lambda_X)=HP(X, \Omega_X, J, \Lambda_X) \circ U_-.$$

%%%%%%%%%%%%%%%%%%%%%%%%%%%%%%%%%%%%%%%%%%%%%%%%%%%%%%%%%%%%%%%%%%%%%%%%%%%%%%%%%%%%%%%%%%%%%%%%%%%%%%%
\subsection{$\mathbb{Z}$-coefficient} \label{section19}
So far, the cobordism maps  $HP(X, \Omega_X, J, \Lambda_X)$  are defined over the local coefficient of $\mathbb{Z}_2$-module.   In fact, they also can be defined over the local coefficient of $\mathbb{Z}$-module  by endowing a   coherent orientation on the moduli space $ \mathcal{M}_{X, I=0}^J(\alpha_+, \alpha_-, Z)$.

Given an admissible orbit set $\alpha_{\pm} \in \mathcal{P}(Y_{\pm}, \pi_{\pm}, \Gamma_{\pm} )$,  we can define the determinant line bundle $\det D\to \mathcal{M}^J(\alpha_+, \alpha_-) $ with fiber $\det D_u= \Lambda^{max} \ker D_u \otimes \Lambda^{max}  {\rm coker} D_u$, where $D_u$ is the linearization  of $\bar{\partial}_J u$.
 %Note that for each $u \in \mathcal{M}_{X, I=0}^J(\alpha_+, \alpha_-, Z) $ consisting of pairwise disjoint embedded holomorphic curves $\{C_a\}$ and $\mathcal{D}_{C_a}$ has trivial cokernel, thus $\det \mathcal{D}_u= \oplus_a \ker \mathcal{D}_{C_a} $.
%Now let $\mathcal{M}_*^J(\alpha, \beta)$ to be $\mathcal{M}_{Y, I=1}^J(\alpha, \beta)$ or $\mathcal{M}_{X, I=0}^J(\alpha, \beta)$,  then one can assign a $2$-element set $\Lambda(\alpha, \beta)$ to $\mathcal{M}_*^J(\alpha, \beta)$ to be the orientation of  $\det \mathcal{D} $.
Introduce a $2$-elements set $\Lambda(\alpha_+, \alpha_-)$ to be the  orientation sheaf of $\det D$. The gluing principle in Section 3.b of \cite{Te3} tells us that the collection of $\{\Lambda(\alpha_+, \alpha_-)\}_{\alpha_{\pm}}$ is a  coherent system of orientations in the following sense: Given orbit sets  $\alpha_{+}$ and $\alpha_-$, there is  a canonically associated $\mathbb{Z}_2$ module, which is denoted by $\Lambda(\alpha_{\pm})$. Moreover, there is a canonical isomorphism from $\Lambda(\alpha_+, \alpha_-)$ to $\Lambda(\alpha_+)\Lambda(\alpha_-)$.

With the above understanding, a set of  orientations $\{\mathfrak{o}_{PFH}(\alpha_+ , \beta_+) \in \Lambda(\alpha_+, \alpha_-) \}$ is called coherent if there exists $\{\mathfrak{o}_{PFH}(\alpha_{\pm}) \in \Lambda(\alpha_{\pm}) \}_{\alpha_{\pm} }$ such that $\mathfrak{o}_{PFH}(\alpha_+ , \alpha_-)=\mathfrak{o}_{PFH}(\alpha_+)\mathfrak{o}_{PFH}(\alpha_-)$.

Fix a choice of coherent orientation, the moduli space $\mathcal{M}_{X, I=0}^J(\alpha_+, \alpha_-, Z)$  is a finite set with signs which are determined by the orientations.   Granted this  understood, we can define  the cobordism maps $HP(X, \Omega_X, J, \Lambda_X)$ over the local coefficient of   $\mathbb{Z}$-module  through replacing the mod two count $"\#_2"$ by  integer count $"\#"$.

\subsection{Proof of  Theorem \ref{Thm1}}
\begin{proof} %[Proof of  Theorem \ref{Thm1}]
Suppose that $(X, \pi_X, \omega_X)$ is a fiberwise symplectic  cobordism  from $(Y_+, \pi_+, \omega_+) $ to $(Y_-, \pi_-, \omega_-) $ satisfying assumptions $(\spadesuit)$.  Also, assume that  $ \Gamma_{\pm} \cdot [\Sigma]> g(\Sigma)$. Up to now, we have shown that the cobordism maps $HP(X, \Omega_X, J, \Lambda_X)$  are  well defined    for generic $J \in \mathcal{J}_{tame}(X, \pi_X, \omega_X)$  whenever $(X, \pi_X)$   contains  no separating  singular fiber,  and it is well defined for generic  $J \in \mathcal{V}_{comp}(X, \pi_X, \omega_X)$  whenever $(X, \omega_X)$ is monotone.  The  promised properties of $HP(X, \Omega_X, J, \Lambda_X)$ follow  from Lemmas \ref{C20}, \ref{C38} and the discussion in Section \ref{section5}.

Up to now, we have completed the proof of  Theorem \ref{Thm1}.
\end{proof}

%Now we consider the case that $d=\Gamma_+ \cdot [\Sigma]=\Gamma_- \cdot [\Sigma]=0$.
%It is natural to define $$CP(X ,\omega_X, J, \Lambda_X) \Lambda_{\emptyset} = \sum\limits_{A \in H_2(X, \mathbb{Z})} \#_2\mathcal{M}_{X, I=0}^{J}(\emptyset, \emptyset, A) \Lambda_X(A) \circ \Lambda_{\emptyset}.$$
%
%When $Y_+ \ne \emptyset$ or $Y_- \ne \emptyset$, we assume  that $X$ is relative minimal.  For $J \in \mathcal{J}_{tame}(X, \pi_X, \omega_X)$ (not necessary generic), the closed holomorphic curves in $\overline{X}$ have homology class $m[\Sigma_i]$ or $m [\Sigma]$ for some $m \ge 1$. In either case, $I<0$. The empty curve is the only one element of $\mathcal{M}_{X, I=0}^{J}(\emptyset, \emptyset)$.   Therefore, $HP(X, \omega_X, J, \Lambda_X)$ is well defined and  $HP(X, \omega_X, J, \Lambda_X)$ is a canonical isomorphism.
%
%
%When $Y_+=Y_-=\emptyset$, $\#\mathcal{M}_{X, I=0}^{J}(\emptyset, \emptyset, A) $ is well-defined and independent on $J$. (Cf. \cite{T2})  Assume that  $b_2^+(X)>1$, by  the simple type property of closed symplectic 4-manifold, we have
%\begin{equation*}
%\begin{split}
%HP(X, \omega_X, \Lambda_X)\Lambda_{\emptyset}&=\sum\limits_{A \in H_2(X, \mathbb{Z})} \#\mathcal{M}_{X, I=0}^{J}(\emptyset, \emptyset, A) \Lambda_X(A) \circ \Lambda_{\emptyset}\\
%&=\sum\limits_{A \in H_2(X, \mathbb{Z})}Gr(X, \Omega, A) \Lambda_X(A) \circ \Lambda_{\emptyset}.
%\end{split}
%\end{equation*}

\section{Embedded holomorphic curves and Seiberg-Witten equations}
In this section, we  aim to prove the following theorem.
\begin{theorem}
 Suppose that there exists $R_0 \ge 0$  such that  $\wp_4$ agrees with $\wp_{3 \pm}$ when $|s| \ge R_0$ and $J \in \mathcal{V}_{comp}(X, \pi_X, \omega_X)$ is generic. Also, assume that  $(\omega_X, J) \vert_{\mathbb{R}_{\pm} \times Y_{\pm}}$ is $Q$-$\delta$ flat. Given the same assumptions in Theorem \ref{Thm2} and any $L$, there exists $r_L>0$ such that for any $r \ge r_L$, there is a bijective map
\begin{eqnarray*}  \label{C31}
\Psi^r : \mathcal{M}_{X, I=0}^{J,L}(\alpha_+, \alpha_-) \to \mathfrak{M}_{X, {\rm ind}=0}^{J,L}(T^+_r (\alpha_+), T_r^-(\alpha_-)),
\end{eqnarray*}
where $ \mathfrak{M}_{X, {\rm ind}=0}^{J,L}(T^+_r (\alpha_+), T_r^-(\alpha_-))$ is the  moduli space of index zero solutions to (\ref{e4}) with  $\frac{i}{2\pi}\int_{\overline{X}} F_A \wedge \omega_X  \le L$. Moreover, the image of $\Psi^r $ is nondegenerate.  \label{A10}
\end{theorem}

The proof of Theorem \ref{A10} relies heavily on \cite{Te2}, \cite{Te3}, \cite{Te4}  and   \cite{LT}  with only minor changes. In these papers,  they prove   a similar result that there   is a $1$-$1$ correspondence between instantons and  $I=1$ holomorphic curves  when  $\overline{X}$ is  symplectization of a contact three manifold or a mapping torus. Note that Theorem \ref{A10} differs from the analogs in    \cite{Te2}, \cite{Te3}, \cite{Te4}  and   \cite{LT} in the following aspects: The manifold $\mathbb{R} \times Y$,   symplectization admissible almost complex structure and symplectic form are respectively replaced by $\overline{X}$, cobordism admissible almost complex structure and $\Omega_X$ here.  Besides,
the  $I=1$ holomorphic curves are replaced by $I=0$ holomorphic curves in our case, because, $\overline{X}$ does not admit global $\mathbb{R}$ action.  Note that our holomorphic curves  are embedded with $I=0$,  they are    counterparts  of the nontrivial  embedded $I=1$ holomorphic curves  in   symplectization case.
%Compare to symplectization cases in  \cite{Te2}, \cite{Te3}, \cite{Te4} or \cite{LT}, our symplectic form and almost complex structure agree with  symplectization case on the ends of $\overline{X}$, i.e. $\Omega_X=\omega_{\pm} + ds \wedge \pi_{\pm}^*dt$  and $J \in \mathcal{J}_{comp}(Y_{\pm,}, \pi_{\pm}, \omega_{\pm})$.

Except for  the convergence results in Sections 4 and 5 of \cite{Te4},  the analysis in   \cite{Te2}, \cite{Te3}, \cite{Te4} or \cite{LT} mainly takes place either near a  holomorphic curve  $\mathcal{C}$ or its ends.  Since our symplectic form  and  cobordism admissible almost complex structure agree with  symplectization case on the ends of $\overline{X}$, i.e., $\Omega_X \vert_{\mathbb{R}_{\pm} \times Y_{\pm}}=\omega_{\pm} + ds \wedge \pi_{\pm}^*dt$  and $J \vert_{\mathbb{R}_{\pm} \times Y_{\pm}} \in \mathcal{J}_{comp}(Y_{\pm,}, \pi_{\pm}, \omega_{\pm})$, the relevant analysis on the ends of $\mathcal{C}$ can be applied to our situation without modification. For the   relevant analysis  on the tubular  neighborhood  of $\mathcal{C}$, the form of  $\Omega_X$ plays no role there,   so their arguments also apply equally well in the current situation.

Let us briefly outline the proof, it can be divided into the following three parts:
\begin{enumerate}
\item
Firstly, we construct the map $\Psi^r$.  The construction  is sketched as follows, it  is corresponding to Sections 4-7 of \cite{Te2}. Given an embedded holomorphic current $\mathcal{C}$,  we  star by following Section 5 of \cite{Te2} to construct a line bundle $E \to \overline{X}$ and a pair $(A^*, \psi^*)$ which  is close to solving   (\ref{e4}). Then using the perturbation analysis to deform $(A^*, \psi^*)$  to be a true solution $(A_{\mathcal{C}}, \psi_{\mathcal{C}} )$ and define $\Psi^r(\mathcal{C})=[(A_{\mathcal{C}}, \psi_{\mathcal{C}} )]$.  We will review the ideal of the construction later.    We point out that the analysis of this part is local, and whether $\Omega_X$  is exact or not plays no role to the analysis,  the arguments in \cite{Te2} also can be applied to our setting without essential change. When $\mathcal{C} =\emptyset$, the manifold $\mathbb{R} \times Y$ has natural $\mathbb{R}$ action, so \cite{Te2}  can take $ (A_{\emptyset}, \psi_{\emptyset} )$ simply by the $\mathbb{R}$ invariant solution corresponding to empty orbits.  Since our $(X, \pi_X)$ is not  $\mathbb{R}$ invariant, we will    construct  $ (A_{\emptyset}, \psi_{\emptyset} )$  in Section \ref{section6}  in detail.

\item
Secondly, to verify that  the map $\Psi^r$ is well defined, we need to check that  $ (A_{\mathcal{C}}, \psi_{\mathcal{C}} )$ is  regular with zero Fredholm index.  This corresponding to Section 3.a of \cite{Te3}. The main points of this part have been summarized in Section 5.1 of \cite{21}.   Thus we skip this part here.

\item
From the construction, it is not difficult to check that $\Psi^r$ is injective. To show that $\Psi^r$ is  surjective,  the proof also  is  divided into three parts.  \cite{LT} calls them  \textbf{estimation}, \textbf{convergence}, and  \textbf{perturbation} respectively. They are corresponding to Section 3, Sections 4-5 and Sections 6-7  of \cite{Te4} respectively.   We will clarify their meaning latter.  We point out that there is no essential difference in  \textbf{estimation}  and  \textbf{perturbation} parts.  For the  \textbf{convergence}  part,   we already established it in Section \ref{section7}.
\end{enumerate}
For simplicity, we assume that $Y_- =\emptyset$  in this section.

\subsection{Construct a trivial Seiberg Witten solution.} \label{section6}
We begin to construct  $\Psi^r$ in Theorem \ref{A10} by considering a special case that $\mathcal{C} =\emptyset$.  We  follow the argument in  \cite{Te2} to construct a solution $(A_{\emptyset}, \psi_{\emptyset})$ to Seiberg Witten equations (\ref{e4}) which  corresponds to   the emptyset of $\overline{X}$.
Also, in Section \ref{section8}, we show that this solution is small.

\textbf{Deformation operator.} Given a configuration $(A, \psi)$,
%connection $1-$form $A$ and a section $\psi$ of $S_+$,
the deformation operator $\mathfrak{D}: \Gamma(iT^*\overline{X} \oplus S_+) \to \Gamma( i(\Omega^{2+}(\overline{X}) \oplus i\mathbb{R})\oplus S_-)  $ at $(A, \psi)$ is  defined by
\begin{equation} \label{e75}
  \begin{cases}
d^+b- 2^{-\frac{3}{2}} r^{\frac{1}{2}}(q(\psi, \eta) + q(\eta, \psi)) \\
D_A \eta + (2r)^{\frac{1}{2}} \mathfrak{cl}(b) \psi \\
*d*b - 2^{-\frac{1}{2}} r^{\frac{1}{2}} (\eta^{\dagger} \psi - \psi^{\dagger} \eta),
  \end{cases}
\end{equation}
where $(b, \eta ) \in  \Gamma(iT^*\overline{X} \oplus S_+)$. The first two lines come from the linearization of Seiberg Witten equations and the last line is gauge fixing condition.

Take $E$ to be the trivial line bundle, denoted by $I_{\mathbb{C}}$,  then $S_+=I_{\mathbb{C}} \oplus K_X^{-1}$ and $S_-=\wedge^{0,1}T^{*}\overline{X} $.
Let $(A_I, \psi_I)$ be the trivial approximation solution, i.e., $A_I$ is the trivial connection on $I_{\mathbb{C}}$ and $\psi_I=(1_{\mathbb{C}},0)$.  Using $\mathfrak{D}_I$ to denote the deformation operator  at $(A_I,\psi_I)$. Note that $\mathfrak{D}_I$  is an elliptic operator from $\Gamma (i\Omega^1 \oplus S_+) $ to $\Gamma ((i\Omega^0 \oplus \Omega^{2+} )\oplus S_-) $.

Let $\mathfrak{h} \in \Gamma (i\Omega^1 \oplus S_+) $, we define the $H$-norm as follows,
\begin{eqnarray*}
|\mathfrak{h}|^2_H = \int_{\overline{X}} |\nabla \mathfrak{h}|^2 + \frac{r}{4}|\mathfrak{h}|^2,
\end{eqnarray*}
where the covariant derivative acts on section of $iT^*\overline{X}$ as the Levi-Civita  covariant derivative and on section of $S^+$ as  covariant derivative that is defined by  Levi-Civita  covariant and $2A_I + A_{K^{-1}}$. One can define the $H$-norm for section of $\Gamma ((i\Omega^0 \oplus \Omega^{2+} )\oplus S_-) $ similarly. Keep in mind that  $|\mathfrak{h}|_{L^4} \le c_0|\mathfrak{h}|_H $ because of  the Sobolev inequality.

\textbf{ $Weizenb\ddot{o}ck$ formula:}
\begin{equation} \label{e20}
\begin{split}
&\mathfrak{D}_I \mathfrak{D}_I ^*= \nabla^* \nabla +2r +\mathcal{R}_0 + \sqrt{r}  \mathcal{R}_1 \\
&\mathfrak{D}_I ^* \mathfrak{D}_I = \nabla^* \nabla +2r +\tilde{\mathcal{R}_0} + \sqrt{r} \tilde{ \mathcal{R}_1}.
\end{split}
\end{equation}
Here $ \mathcal{R}_i$ and  $\tilde{ \mathcal{R}_i}$ $(i= 0, 1)$ are bounded endomorphism of $i\mathbb{R} \oplus \Omega^{2+} \oplus S_-$ and $iT\overline{X}^* \oplus S_+$ respectively.

\begin{lemma}
Let $\mathfrak{h} \in \Gamma (i\Omega^1 \oplus S_+) $ and $\mathfrak{f} \in \Gamma ((i\Omega^0 \oplus \Omega^{2+} )\oplus S_-) $ with compact supports, then there exists $c_0>0 $ such that for any $r \ge c_0$, $\mathfrak{D}_I $ and $\mathfrak{D}_I ^*$ satisfy the following estimates:
\begin{center}
$\frac{1}{c_0}|\mathfrak{h}|^2_H  \le |\mathfrak{D}_I \mathfrak{h}|^2_{L^2} \le c_0 |\mathfrak{h}|_H^2,$
\end{center}

\begin{center}
$\frac{1}{c_0}|\mathfrak{f}|^2_H  \le |\mathfrak{D}_I^*\mathfrak{f}|^2_{L^2} \le c_0 |\mathfrak{f}|_H^2,$
\end{center}   \label{A1}
\end{lemma}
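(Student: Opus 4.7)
The argument is the standard $L^2$ spectral estimate derived from the Weitzenb\"ock formulas (\ref{e20}), specialized to the case where the background configuration is the trivial approximate solution. I would carry it out in three short stages, treating the four inequalities in parallel.

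First I would establish the two upper bounds by inspecting the formula for $\mathfrak{D}_I$ term-by-term. Each of the three components of $\mathfrak{D}_I\mathfrak{h}$ consists of a first-order differential operator applied to $\mathfrak{h}$ (namely $d^+b$, $D_{A_I}\eta$, and $*d*b$) plus a zeroth-order term which is linear in $\mathfrak{h}$ and multiplied by $\sqrt{r}$ (coming from the Clifford multiplication by $\psi_I$, whose pointwise norm is bounded uniformly on $\overline{X}$ since $\psi_I = (1_{\mathbb{C}},0)$). Hence pointwise
\begin{equation*}
|\mathfrak{D}_I\mathfrak{h}|^2 \;\le\; c_0\bigl(|\nabla\mathfrak{h}|^2 + r|\mathfrak{h}|^2\bigr) \;\le\; 4c_0\Bigl(|\nabla\mathfrak{h}|^2 + \tfrac{r}{4}|\mathfrak{h}|^2\Bigr),
\end{equation*}
and integrating yields $|\mathfrak{D}_I\mathfrak{h}|^2_{L^2}\le c_0|\mathfrak{h}|^2_H$. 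The analogous pointwise inspection of $\mathfrak{D}_I^*$ gives $|\mathfrak{D}_I^*\mathfrak{f}|^2_{L^2}\le c_0|\mathfrak{f}|^2_H$.

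Next I would use the Weitzenb\"ock identities of (\ref{e20}) to obtain the lower bounds. Since $\mathfrak{h}$ has compact support, integration by parts on $\overline{X}$ is valid and gives
\begin{equation*}
|\mathfrak{D}_I\mathfrak{h}|^2_{L^2} \;=\; \langle \mathfrak{D}_I^*\mathfrak{D}_I\mathfrak{h},\mathfrak{h}\rangle_{L^2} \;=\; \int_{\overline{X}} \Bigl(|\nabla\mathfrak{h}|^2 + 2r|\mathfrak{h}|^2 + \langle \tilde{\mathcal{R}}_0 \mathfrak{h},\mathfrak{h}\rangle + \sqrt{r}\langle \tilde{\mathcal{R}}_1 \mathfrak{h},\mathfrak{h}\rangle\Bigr).
\end{equation*}
Because $\tilde{\mathcal{R}}_0$ and $\tilde{\mathcal{R}}_1$ are bounded endomorphisms with $r$-independent pointwise bounds (they are constructed from the Riemannian curvature, from $F_{A_{K^{-1}}}$, and from $\psi_I$, all of which are uniformly bounded on $\overline{X}$; on the cylindrical ends these are pulled back from $Y_\pm$ and hence translation-invariant), we get the pointwise estimate $|\langle \tilde{\mathcal{R}}_0\mathfrak{h},\mathfrak{h}\rangle + \sqrt{r}\langle\tilde{\mathcal{R}}_1\mathfrak{h},\mathfrak{h}\rangle|\le (c_0 + c_0\sqrt{r})|\mathfrak{h}|^2$. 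Choosing $r$ sufficiently large so that $2r - c_0\sqrt{r} - c_0\ge r$ yields
\begin{equation*}
|\mathfrak{D}_I\mathfrak{h}|^2_{L^2} \;\ge\; \int_{\overline{X}} \bigl(|\nabla\mathfrak{h}|^2 + r|\mathfrak{h}|^2\bigr) \;\ge\; |\mathfrak{h}|^2_H.
\end{equation*}
The companion inequality for $\mathfrak{D}_I^*$ follows identically using the other line of (\ref{e20}).

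The only non-routine ingredient is verifying the uniform pointwise bound on $\tilde{\mathcal{R}}_0,\tilde{\mathcal{R}}_1$ (and on $\mathcal{R}_0,\mathcal{R}_1$) over the non-compact manifold $\overline{X}$. I would deal with this by working separately on the compact core $X$ and on each cylindrical end $[0,\infty)\times Y_+$ and $(-\infty,0]\times Y_-$: on the core the bound is immediate from compactness, while on the ends the metric is a product and the connection $A_{K^{-1}}$ together with $\Omega$ are $s$-invariant, so the curvature data entering the Weitzenb\"ock remainders are pulled back from $Y_\pm$ and hence bounded. This is the step that truly uses the cylindrical structure of $\overline{X}$ set up in Section~2.3; once it is in hand the estimates above close up with $r_L$-independent constants, and the lemma follows.
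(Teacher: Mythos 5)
Your proof is correct and takes the same approach as the paper's one-line proof, which simply invokes the Weitzenb\"ock formula (\ref{e20}) and integration by parts; you have merely spelled out the details — the pointwise inspection for the upper bound, the absorption of the $\tilde{\mathcal{R}}_0 + \sqrt{r}\tilde{\mathcal{R}}_1$ remainders into the $2r$ term for $r$ large, and the observation that these remainders are uniformly bounded on $\overline{X}$ because the geometric data are $s$-invariant on the cylindrical ends. Nothing to change.
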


\begin{proof} The lemma follows from  the $Weizenb\ddot{o}ck$ formula (\ref{e20}) and integration by part.
\end{proof}

Let $\mathbb{H}$  and $\mathbb{L}$  be the completion of smooth sections in  $\Gamma (i\Omega^1 \oplus S_+) $ and $\Gamma ((i\Omega^0 \oplus \Omega^{2+} )\oplus S_-) $ with compact supports   with respect to $H$ norm and $L^2$ norm respectively.

\begin{lemma}
There exists $\kappa>0 $  and $c_0>0$ such that for  $r \ge c_0$, $\mathfrak{D}_I : \mathbb{H} \to \mathbb{L}$ is invertiable. Moreover, $|\mathfrak{D}_I ^{-1}| \le \kappa$. \label{A2}
\end{lemma}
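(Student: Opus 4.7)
The plan is to derive Lemma \ref{A2} directly from the two-sided estimates in Lemma \ref{A1} combined with the $Weizenb\ddot{o}ck$ formula (\ref{e20}). First I would establish injectivity and closed range: Lemma \ref{A1} gives $|\mathfrak{h}|_H \le c_0^{1/2} |\mathfrak{D}_I \mathfrak{h}|_{L^2}$ for every smooth compactly supported $\mathfrak{h}$, so by density this inequality persists for all $\mathfrak{h}\in \mathbb{H}$. Since $\mathbb{H}$ is complete with respect to the $H$-norm, any Cauchy sequence in the image with respect to $\mathbb{L}$ pulls back to a Cauchy sequence in $\mathbb{H}$, whose limit in $\mathbb{H}$ is mapped to the prescribed limit. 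Hence $\mathfrak{D}_I$ is injective with closed range, and the operator norm of the inverse, restricted to the range, is bounded by $\kappa:=c_0^{1/2}$ (taking into account that the $H$-norm dominates the $L^2$-norm up to a factor of $r^{-1/2}$, so the bound on $\mathfrak{D}_I^{-1}$ as a map $\mathbb{L} \to \mathbb{H}$ is uniform in $r$ once $r$ is large).

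Next I would show surjectivity by arguing $\mathrm{Range}(\mathfrak{D}_I)^{\perp} = \{0\}$ in $\mathbb{L}$. If $\mathfrak{f}\in \mathbb{L}$ is $L^2$-orthogonal to every $\mathfrak{D}_I\mathfrak{h}$ with $\mathfrak{h}$ smooth and compactly supported, then $\mathfrak{D}_I^*\mathfrak{f}=0$ in the distributional sense. Elliptic regularity (valid since $\mathfrak{D}_I$ is a first-order elliptic operator with smooth coefficients on $\overline{X}$) produces a smooth representative. Applying the $Weizenb\ddot{o}ck$ formula from (\ref{e20}), $\mathfrak{D}_I \mathfrak{D}_I^* \mathfrak{f}=0$ yields $(\nabla^*\nabla+2r+\mathcal R_0+\sqrt r\,\mathcal R_1)\mathfrak{f}=0$; the zeroth-order operator $\mathcal R_0+\sqrt r\,\mathcal R_1$ is absorbed by $\tfrac12(2r)$ once $r\ge c_0$, giving the pointwise inequality $\nabla^*\nabla\mathfrak{f}+r\,|\mathfrak{f}|\cdot\mathfrak{f}\le 0$ in a quadratic sense, which upgrades to exponential decay of $\mathfrak{f}$ at the cylindrical ends by a standard maximum principle / Agmon-type argument. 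With this decay, integration by parts is legitimate and the pairing $\langle \mathfrak{D}_I\mathfrak{D}_I^*\mathfrak{f},\mathfrak{f}\rangle=0$ rearranges to $\int |\nabla \mathfrak{f}|^2+r|\mathfrak{f}|^2 \le 0$, forcing $\mathfrak{f}\equiv 0$.

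Combining the two halves, $\mathfrak{D}_I:\mathbb{H}\to\mathbb{L}$ is a bijective bounded operator for $r\ge c_0$, and the norm bound $\|\mathfrak{D}_I^{-1}\|\le\kappa$ follows from the coercivity estimate of Lemma \ref{A1}, with $\kappa$ independent of $r$. The main technical subtlety I expect is justifying decay of an $L^2$-element of $\ker\mathfrak{D}_I^*$ on the noncompact manifold $\overline{X}$ so that the integration-by-parts step is rigorous; this is standard in the Taubes SW literature but must be done carefully in our cobordism setting where the ends are products of the form $\mathbb{R}_{\pm}\times Y_{\pm}$ rather than symplectizations of contact manifolds. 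Once the decay is in hand, everything else is bookkeeping from Lemma \ref{A1}.
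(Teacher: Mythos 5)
Your proof is correct in outline but takes a genuinely different route from the paper. The paper shows surjectivity by the direct method in the calculus of variations: given $g\in\mathbb L$, it minimizes the functional $\mathcal F_g(\mathfrak f)=\int|\mathfrak D_I^*\mathfrak f|^2-2\langle g,\mathfrak f\rangle$ over $\mathbb L\cap L^2_1$, using the coercivity from Lemma \ref{A1} to get a bounded minimizing sequence, weak compactness plus convexity to obtain a unique minimizer, and the Euler--Lagrange equation $\mathfrak D_I\mathfrak D_I^*\mathfrak f=g$; the element $\mathfrak h=\mathfrak D_I^*\mathfrak f$ then inherits membership in $\mathbb H$ from Lemma \ref{A1}. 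You instead appeal to the closed-range characterization: coercivity gives injectivity and a closed range, so surjectivity reduces to showing $\mathrm{Range}(\mathfrak D_I)^\perp=0$, which you prove via $\mathfrak D_I^*\mathfrak f=0$, elliptic regularity, the Weitzenb\"ock formula (\ref{e20}), and integration by parts. The advantage of the paper's variational route is precisely what you flag as the subtlety in yours: the minimizer automatically lives in the correct weighted Sobolev class, so no separate decay or cutoff argument is needed to justify integration by parts at the ends; the advantage of your route is that it is conceptually closer to the Fredholm alternative and isolates exactly which kernel-vanishing statement one needs. For your argument to be fully rigorous, the ``Agmon-type'' step should be carried out — in fact a standard cutoff argument suffices here: pair the Weitzenb\"ock identity against $\chi_R^2\mathfrak f$ with $|\nabla\chi_R|\lesssim R^{-1}$, absorb the cross term by Cauchy--Schwarz, and let $R\to\infty$ using $\mathfrak f\in L^2$; one then obtains $r\int|\mathfrak f|^2\le 0$ directly, without invoking exponential decay. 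Also note that the Weitzenb\"ock formula you need is the $\mathfrak D_I\mathfrak D_I^*$ version (acting on $i(\Omega^0\oplus\Omega^{2+})\oplus S_-$), not the $\mathfrak D_I^*\mathfrak D_I$ version, since $\mathfrak f$ lives in $\mathbb L$. With those two clarifications, your argument is sound and the operator bound $|\mathfrak D_I^{-1}|\le\kappa$ follows exactly as you say from the lower bound in Lemma \ref{A1}.
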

\begin{proof} By Lemma \ref{A1}, $\mathfrak{D}_I$ is injective.

To show that $\mathfrak{D}_I$ is surjective, given  $g \in  \mathbb{L}$,  we define a functional $\mathcal{F}_g: \mathbb{L} \cap L_1^2((i\Omega^0 \oplus \Omega^{2+} )\oplus S_-) \to \mathbb{R} $ by
\begin{equation*}
 \mathcal{F}_g(\mathfrak{f}) = \int_{\overline{X}} |\mathfrak{D}_I ^*\mathfrak{f}|^2-2<g,\mathfrak{f}> ,
\end{equation*}
where $ L_1^2((i\Omega^0 \oplus \Omega^{2+} )\oplus S_-)$ is the  completion of smooth sections with compact support with respect to $L_1^2$-norm.
By $H\ddot{o}lder$ inequality and Lemma \ref{A1}, we have
\begin{eqnarray*}
\mathcal{F}_g(\mathfrak{f})=\int_{\overline{X}} |\mathfrak{D}_I ^*\mathfrak{f}|^2-2<g,\mathfrak{f}>
\ge \frac{1}{c_0} |\mathfrak{f}|_H^2 - \frac{c_0}{r} |g|^2_{L^2}
\ge - \frac{c_0}{r} |g|^2_{L^2} > -\infty.
\end{eqnarray*}
Consequently, $m=\inf\limits_{\mathfrak{f} \in \mathbb{L} \cap L_1^2} \mathcal{F}_g$ is well-defined. We claim that there exists a minimizer of $\mathcal{F}_g$. The reasons  are given as follows:

Firstly, there exists a sequence $ \{\mathfrak{f}_i\} \subset \mathbb{L} \cap L_1^2 ((i\Omega^0 \oplus \Omega^{2+} )\oplus S_-)$ such that
%\begin{eqnarray*}
$\lim\limits_{i \to \infty}\mathcal{F}_g(\mathfrak{f}_i)=m.$
%\end{eqnarray*}
The  limit and  Lemma \ref{A1} imply that $\{\mathfrak{f}_i\}$ have a uniform  bound on $\mathbb{H}$  norm. After passing a subsequence, $\{\mathfrak{f}_i\}$ converges weakly to $\mathfrak{f}$ in the sense of $\mathbb{H}$ norm. By Lemma \ref{A1}, $\mathfrak{D}_I^*$ is a bounded linear operator, thus  $\mathfrak{D}_I^* \mathfrak{f}_i \to \mathfrak{D}_I^*\mathfrak{f}$   converges weakly in the sense of $L^2$ norm.  Therefore, $\varliminf \limits_{i \to \infty} |\mathfrak{D}_I^*\mathfrak{f}_i|_{L^2} \ge |\mathfrak{D}_I^*\mathfrak{f}|_{L^2} $. As a result,
\begin{equation*}
m=\varliminf \limits_{i \to \infty} \mathcal{F}_g(\mathfrak{f}_i)
\ge \varliminf \limits_{i \to \infty} |\mathfrak{D}_I^*\mathfrak{f}_i|_{L^2} -2\int_{\overline{X}} <\mathfrak{f},g>
\ge\mathcal{F}_g(\mathfrak{f}).
\end{equation*}
Hence,  $\mathfrak{f}$ is a minimizer of $\mathcal{F}_g$.
Furthermore, it is easy to check that $\mathcal{F}_g$ is convex. Therefore, the  minimum of $\mathcal{F}_g$ is unique.

Let $\mathfrak{f} \in \mathbb{L} \cap L_1^2 ((i\Omega^0 \oplus \Omega^{2+} )\oplus S_-)$  be  the minimizer above. The standard variational argument  show that $\mathfrak{D}_I ^*\mathfrak{f}$ is  a weak solution  to  $\mathfrak{D}_I \mathfrak{D}_I ^*\mathfrak{f}=g$. Elliptic regularity (Cf. \cite{GT}) implies that $\mathfrak{h}=\mathfrak{D}_I^* \mathfrak{f}$ is $L_{1, loc}^2$.   Then $\mathfrak{h}=\mathfrak{D}_I^* \mathfrak{f} \in \mathbb{H}$ follows from Lemma \ref{A1}. Hence, $\mathfrak{D}_I $ is surjective.

Finally,  the   bound  on $|\mathfrak{D}_I^{-1}|$ follows from Lemma \ref{A1}.
\end{proof}

\subsubsection{Contraction mapping argument} \label{section16}
In this section,   let $\kappa$ denote the bound of $|\mathfrak{D}_I ^{-1}|$ in Lemma \ref{A2}. Also, we use $\mu$ to denote $-\frac{1}{2}\wp_4^+ + \frac{i}{2} F^+_{A_{K^{-1}}}$.

Let $\mathfrak{b}_0=(b_0, \eta_0)$ denote the small solution to (3-34) in \cite{Te2} such that $(A_{\emptyset +}, \psi_{\emptyset +}) =(A_I + (2r)^{\frac{1}{2}}b_0, \psi_I+ \eta_0)$ satisfies equations (\ref{e1}) with an additional gauge fixed condition. We regard $\mathfrak{b}_0$ as   an  $\mathbb{R}$ invariant section on $\mathbb{R} \times Y_+$. By Lemma 3.10 in \cite{Te2}, $|\mathfrak{b}_0| \le c_0 r^{-1}$ and   $|\nabla \mathfrak{b}_0| \le c_0 r^{-1}$.

 Let $\chi$ be a cut-off function such that $\chi=1$ when $s \ge R_0$, $\chi=0 $ on $s \le R_0 -\kappa^{-4}$ and $|\nabla \chi| \le c_0 \kappa^4.$ Define $(A_{\emptyset}, \psi_{\emptyset}) =(A_I,\psi_I)+ ((2r)^{\frac{1}{2}}( b_h+ \chi b_0), (\chi \alpha_0+ \alpha_h, \chi \beta_0+\beta_h))$. We want to find $\mathfrak{h}_{\emptyset}=(b_h, \eta_h) \in \mathbb{H} $ such that $(A_{\emptyset}, \psi_{\emptyset}) $ satisfies the Seiberg Witten equations (\ref{e4}) together with the gauge fixing condition in the last line of (\ref{e75}).  It is equivalent to solve the following equation:
\begin{eqnarray} \label{e59}
\mathfrak{D}_I(\chi \mathfrak{b}_0+ \mathfrak{h}_{\emptyset}) +  r^{\frac{1}{2}} (\chi \mathfrak{b}_0+\mathfrak{h}_{\emptyset})*(\chi \mathfrak{b}_0+\mathfrak{h}_{\emptyset})= \mathfrak{v},  \label{E1}
\end{eqnarray}
where $\mathfrak{v}=(2r)^{-\frac{1}{2}} i\mu$. Equation (\ref{e59}) is equivalent to
 \begin{eqnarray*}
\mathfrak{v}  =\mathfrak{D}_I \mathfrak{h}_{\emptyset}+ r^{\frac{1}{2}}\mathfrak{h}_{\emptyset}*\mathfrak{h}_{\emptyset} +\chi(\mathfrak{D}_I\mathfrak{b}_0 + r^{\frac{1}{2}}\mathfrak{b}_0*\mathfrak{b}_0) + r^{\frac{1}{2}}\chi(\chi-1)\mathfrak{b}_0*\mathfrak{b}_0 + 2r^{\frac{1}{2}}\mathfrak{h}_{\emptyset} * \chi \mathfrak{b}_0 + \nabla{\chi} *\mathfrak{b}_0.
\end{eqnarray*}

%
%\begin{eqnarray*}
%\mathfrak{v}&=& \mathfrak{D}_I(\chi \mathfrak{b}_0+\mathfrak{h}) +  r^{\frac{1}{2}} (\chi \mathfrak{b}_0+\mathfrak{h})*(\chi \mathfrak{b}_0+\mathfrak{h})\\
%&=&\mathfrak{D}_I \mathfrak{h}+ r^{\frac{1}{2}}\mathfrak{h}*\mathfrak{h} +\chi(\mathfrak{D}_I\mathfrak{b}_0 + r^{\frac{1}{2}}\mathfrak{b}_0*\mathfrak{b}_0) + r^{\frac{1}{2}}\chi(\chi-1)\mathfrak{b}_0*\mathfrak{b}_0 + 2r^{\frac{1}{2}}\mathfrak{h} * \chi \mathfrak{b}_0 + \nabla{\chi} *\mathfrak{b}_0.
%\end{eqnarray*}

Note that $\mathfrak{D}_I\mathfrak{b}_0 + r^{\frac{1}{2}}\mathfrak{b}_0*\mathfrak{b}_0 = \mathfrak{v}_+$ and $\mathfrak{v}_+=\mathfrak{v}$ when $s \ge R_0$ by assumption. By Lemma \ref{A2}, we can rewrite   equation  (\ref{e59}) as follows:
\begin{eqnarray*}
\mathfrak{h}_{\emptyset}=\mathfrak{D}_I^{-1}((\mathfrak{v}-\chi\mathfrak{v}_+)-r^{\frac{1}{2}}\mathfrak{h}_{\emptyset}*\mathfrak{h}_{\emptyset} +  r^{\frac{1}{2}}\chi(1-\chi)\mathfrak{b}_0*\mathfrak{b}_0  -  2r^{\frac{1}{2}}\mathfrak{h}_{\emptyset} * \chi \mathfrak{b}_0 - \nabla{\chi} *\mathfrak{b}_0).
\end{eqnarray*}
Define a map $\mathcal{T} : \mathbb{H} \to \mathbb{H} $ by
\begin{eqnarray*}
\mathcal{T}(\mathfrak{h}_{\emptyset})=\mathfrak{D}_I^{-1}((\mathfrak{v}-\chi\mathfrak{v}_+)-r^{\frac{1}{2}}\mathfrak{h}_{\emptyset}*\mathfrak{h}_{\emptyset} +  r^{\frac{1}{2}}\chi(1-\chi)\mathfrak{b}_0*\mathfrak{b}_0  -  2r^{\frac{1}{2}}\mathfrak{h}_{\emptyset}* \chi \mathfrak{b}_0 - \nabla{\chi} *\mathfrak{b}_0).
\end{eqnarray*}

\begin{lemma}
Let $\kappa \ge 1$  be the constant in  Lemma \ref{A2}. Suppose that $|(\mathfrak{o}-\chi\mathfrak{o}_+)|_{L^2} \le \frac{1}{\kappa^4} r^{-\frac{1}{2}}$ and $r$ is sufficiently large. Let $\mathcal{B}$ be a ball in $\mathbb{H}$  centered at the origin with radius  $\frac{1}{\kappa^2} r^{-\frac{1}{2}}$. Then $\mathcal{T}: \mathcal{B} \to \mathcal{B}$ is a contraction mapping. \label{A3} In particular, equation (\ref{e59}) has a unique solution.
\end{lemma}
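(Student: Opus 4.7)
\textbf{Proof proposal for Lemma \ref{A3}.} The plan is to verify the two conditions of the Banach fixed point theorem: that $\mathcal{T}$ sends $\mathcal{B}$ into itself, and that it is Lipschitz with constant strictly less than $1$. The only analytic inputs required are the operator bound $|\mathfrak{D}_I^{-1}| \le \kappa$ from Lemma \ref{A2}, the pointwise estimates $|\mathfrak{b}_0| + r^{-1/2}|\nabla \mathfrak{b}_0| \le c_0 r^{-1}$ of Taubes, the cut-off bounds $|\nabla \chi|_{\infty} \le c_0\kappa^4$ with $\operatorname{supp}(\nabla \chi)$ of $s$-width $\kappa^{-4}$, and the Sobolev-type inequality $|\mathfrak{h}|_{L^4} \le c_0 |\mathfrak{h}|_H$ together with the weighted bound $|\mathfrak{h}|_{L^2} \le 2 r^{-1/2}|\mathfrak{h}|_H$ implicit in the definition of the $H$-norm.

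First, I would bound $|\mathcal{T}(\mathfrak{h})|_H$ term by term. The constant piece $\mathfrak{D}_I^{-1}(\mathfrak{v}-\chi \mathfrak{v}_+)$ is controlled by $\kappa \cdot \kappa^{-4} r^{-1/2} = \kappa^{-3} r^{-1/2}$ by hypothesis. The quadratic term uses Sobolev: $r^{1/2}|\mathfrak{h}\ast \mathfrak{h}|_{L^2} \le c_0 r^{1/2}|\mathfrak{h}|_H^2 \le c_0 r^{1/2}\cdot \kappa^{-4} r^{-1} = c_0\kappa^{-4} r^{-1/2}$. The term $r^{1/2}\chi(1-\chi)\mathfrak{b}_0 \ast \mathfrak{b}_0$ is supported where $\chi(1-\chi) \ne 0$, which is a slab of $s$-width $\kappa^{-4}$ where $|\mathfrak{b}_0|^2 \le c_0 r^{-2}$, so its $L^2$ norm is $\le c_0 \kappa^{-2} r^{-2}$ and the contribution is $\le c_0 \kappa^{-2} r^{-3/2}$. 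For the cross term, $r^{1/2}|\mathfrak{h}\ast\chi\mathfrak{b}_0|_{L^2} \le r^{1/2}|\mathfrak{b}_0|_{\infty}|\mathfrak{h}|_{L^2} \le c_0 r^{-1} |\mathfrak{h}|_H \le c_0\kappa^{-2} r^{-3/2}$. Finally $|\nabla \chi \ast \mathfrak{b}_0|_{L^2}^2 \le c_0 \kappa^{8}\cdot \kappa^{-4}\cdot r^{-2}$, giving $c_0 \kappa^2 r^{-1}$. Summing these and multiplying by $\kappa$, for $r$ large enough (so that all $r^{-1/2}$ factors beat the $\kappa$-powers), the total is bounded by the ball radius, so $\mathcal{T}(\mathcal{B})\subset \mathcal{B}$.

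Next I would verify the contraction property. Only the quadratic term and the cross term with $\chi\mathfrak{b}_0$ depend on $\mathfrak{h}$, and both are multilinear, so
\begin{equation*}
\mathcal{T}(\mathfrak{h}_1) - \mathcal{T}(\mathfrak{h}_2) = -\mathfrak{D}_I^{-1}\bigl(r^{1/2}(\mathfrak{h}_1+\mathfrak{h}_2)\ast(\mathfrak{h}_1 - \mathfrak{h}_2) + 2r^{1/2}\chi \mathfrak{b}_0 \ast(\mathfrak{h}_1-\mathfrak{h}_2)\bigr).
\end{equation*}
The first piece is estimated by $c_0 r^{1/2}(|\mathfrak{h}_1|_H+|\mathfrak{h}_2|_H)|\mathfrak{h}_1-\mathfrak{h}_2|_H \le 2c_0\rho r^{1/2}|\mathfrak{h}_1-\mathfrak{h}_2|_H$, where $\rho$ is the ball radius, and the second by $c_0 r^{1/2}|\mathfrak{b}_0|_{\infty}|\mathfrak{h}_1-\mathfrak{h}_2|_{L^2} \le c_0 r^{-1}|\mathfrak{h}_1-\mathfrak{h}_2|_H$. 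Applying $\kappa$ yields a Lipschitz constant of the form $2c_0 \kappa \rho r^{1/2}+c_0\kappa r^{-1}$. Since $\rho < \kappa^{-2}r^{-1/2}$, the first factor is at most $2c_0 \kappa^{-1}$; shrinking $\rho$ slightly (allowed by the strict inequality in the hypothesis) to $\rho \le (4c_0\kappa)^{-1} r^{-1/2}$, and taking $r$ large so that the second term is $\le 1/4$, we obtain a Lipschitz constant $\le 3/4 < 1$.

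The only real obstacle is bookkeeping the $\kappa$-dependence of the various Sobolev and cross-term constants so that the final $\kappa\cdot c_0\cdot \rho\cdot r^{1/2}$ contribution is less than $1$; the strict inequality $\rho < \kappa^{-2}r^{-1/2}$ in the hypothesis provides precisely the freedom needed to absorb the universal Sobolev constant into a genuine contraction factor, and the explicit $r$-threshold is determined by requiring all the individual contributions computed above to fit inside $\rho$ and to produce a Lipschitz constant below $1$.
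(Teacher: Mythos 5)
Your proposal is correct and follows essentially the same route as the paper's own proof: apply $|\mathfrak{D}_I^{-1}|\le\kappa$ termwise, control the quadratic term with the $L^4$--$H$ Sobolev inequality, control the $\mathfrak{b}_0$-cross terms and cut-off terms using $|\mathfrak{b}_0|_\infty\le c_0 r^{-1}$ and the support of $\nabla\chi$, and then check the Lipschitz constant on the ball. One small remark: the ``shrink $\rho$ slightly'' step is not really extra freedom, since you must simultaneously keep the constant piece $\kappa\cdot|\mathfrak{D}_I^{-1}(\mathfrak{v}-\chi\mathfrak{v}_+)|\approx\kappa^{-3}r^{-1/2}$ inside the shrunken ball; reconciling both forces $\kappa^2\gtrsim c_0$ anyway, i.e.\ you are implicitly assuming $\kappa$ exceeds a universal constant, which is exactly what the paper does (and makes explicit in the subsequent Lemma~\ref{A4} via the $\kappa\ge\kappa_0$ hypothesis). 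So your bookkeeping is sound but does not genuinely relax the largeness requirement on $\kappa$.
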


\begin{proof} Let $\mathfrak{h} \in \mathcal{B}$. Then we have
\begin{eqnarray*}
&&|\mathcal{T}(\mathfrak{h})|_H \\
&\le& \kappa \left( |(\mathfrak{v}-\chi\mathfrak{v}_+)|_{L^2}+ r^{\frac{1}{2}}|\mathfrak{h}*\mathfrak{h}|_{L^2}+r^{\frac{1}{2}}|\chi(1-\chi)\mathfrak{b}_0*\mathfrak{b}_0|_{L^2} + 2r^{\frac{1}{2}}|\mathfrak{h}*\chi\mathfrak{b}_0|_{L^2} + |\nabla{\chi} * \mathfrak{b}_0 |_{L^2} \right)\\
&\le& \frac{1}{\kappa^3} r^{-\frac{1}{2}}+ c_0\kappa r^{\frac{1}{2}} |\mathfrak{h}|^2_H+ c_0\kappa r^{-\frac{3}{2}} +  c_0\kappa r^{-\frac{1}{2}} |\mathfrak{h}|_H + c_0\kappa r^{-1}
\le \frac{1}{\kappa^2} r^{-\frac{1}{2}}.
\end{eqnarray*}
Therefore, $\mathcal{T}$ maps $\mathcal{B}$ to itself.

For $\mathfrak{h}_1, \mathfrak{h}_2 \in \mathcal{B}$,
\begin{eqnarray*}
&&|\mathcal{T}(\mathfrak{h}_1)- \mathcal{T}(\mathfrak{h}_2)|_H \\
&=&|\mathfrak{D}_I^{-1}(r^{\frac{1}{2}}\mathfrak{h}_1*\mathfrak{h}_1- r^{\frac{1}{2}}\mathfrak{h}_2*\mathfrak{h}_2 + 2r^{\frac{1}{2}}(\mathfrak{h}_1-\mathfrak{h}_2)*\chi\mathfrak{b}_0)|_H\\
&\le& \kappa  \left(r^{\frac{1}{2}}|\mathfrak{h}_1|_H + r^{\frac{1}{2}}|\mathfrak{h}_2|_H + |\mathfrak{b}_0|_{\infty} \right)|\mathfrak{h}_1-\mathfrak{h}_2|_H
\le\delta |h_1-h_2|_H,
\end{eqnarray*}
where $0<\delta<1$. Therefore,  $\mathcal{T}: \mathcal{B} \to \mathcal{B}$ is a contraction mapping.
\end{proof}

\subsubsection{Sup-norm estimate} \label{section8}
We show that the sup-norm of $\mathfrak{h}_{\emptyset}$ is small in the following sense:
\begin{lemma}
 Suppose that $|(\mathfrak{v}-\chi\mathfrak{v}_+)|_{L^2} \le \frac{1}{\kappa^4} r^{-\frac{1}{2}}$ and $r$ is sufficiently large enough. Let $\mathfrak{h}_{\emptyset}$ be  the solution in Lemma \ref{A3}  and $\kappa$ be the constant in Lemma \ref{A2},  then there exists $\kappa_0>1$ such that if $\kappa \ge \kappa_0$, we have   $|\mathfrak{h}_{\emptyset}|_{\infty} \le c_0 r^{-1}$. \label{A4}
\end{lemma}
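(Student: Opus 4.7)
The plan is to bootstrap the $H$-norm bound $|\mathfrak{h}|_H \le \kappa^{-2} r^{-1/2}$ from Lemma \ref{A3} up to the pointwise bound $|\mathfrak{h}|_\infty \le c_0 r^{-1}$, via the standard Weitzenböck plus Green's-function strategy that pervades \cite{Te2, Te3, Te4}. Recall from the contraction-mapping construction that $\mathfrak{h}$ satisfies the equation $\mathfrak{D}_I \mathfrak{h} = \mathfrak{w}$, where
\begin{equation*}
\mathfrak{w} := (\mathfrak{v} - \chi \mathfrak{v}_+) - r^{1/2} \mathfrak{h}*\mathfrak{h} + r^{1/2}\chi(1-\chi)\mathfrak{b}_0 * \mathfrak{b}_0 - 2r^{1/2} \chi\mathfrak{b}_0*\mathfrak{h}  - \nabla \chi * \mathfrak{b}_0.
\end{equation*}
Applying $\mathfrak{D}_I^*$ and invoking the Weitzenböck identity (\ref{e20}) yields the pointwise elliptic equation $(\nabla^*\nabla + 2r + \tilde{\mathcal{R}}_0 + \sqrt{r}\tilde{\mathcal{R}}_1)\mathfrak{h} = \mathfrak{D}_I^*\mathfrak{w}$.

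Pairing this with $\mathfrak{h}$ and using the Kato inequality $|\nabla |\mathfrak{h}|| \le |\nabla \mathfrak{h}|$ yields the pointwise differential inequality
\begin{equation*}
\tfrac{1}{2} d^*d |\mathfrak{h}|^2 + 2r |\mathfrak{h}|^2 \le c_0 \sqrt{r}\, |\mathfrak{h}|^2 + |\mathfrak{h}|\cdot |\mathfrak{D}_I^* \mathfrak{w}| \quad \text{on } \overline{X}.
\end{equation*}
For $r$ large enough, the term $c_0 \sqrt{r}|\mathfrak{h}|^2$ can be absorbed on the left, leaving $\tfrac{1}{2} d^*d|\mathfrak{h}|^2 + r|\mathfrak{h}|^2 \le |\mathfrak{h}|\cdot |\mathfrak{D}_I^*\mathfrak{w}|$. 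Let $G_r$ be the Green's kernel of $d^*d + 2r$ on $\overline{X}$; since $\overline{X}$ has bounded geometry and cylindrical ends, standard resolvent estimates (as in the appendix of \cite{Te2}) give $G_r(x,y) \le c_0\, d(x,y)^{-2} \exp(-\sqrt{r}\,d(x,y)/c_0)$, whence $\|G_r(x, \cdot)\|_{L^1} \le c_0 r^{-1}$ and $\|G_r(x,\cdot)\|_{L^2} \le c_0 r^{-1/2}$. Convolution then gives the pointwise bound $|\mathfrak{h}(x)|^2 \le \int_{\overline{X}} G_r(x,y)\, |\mathfrak{h}(y)|\, |\mathfrak{D}_I^*\mathfrak{w}(y)|\, dy$.

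It remains to estimate $\mathfrak{D}_I^* \mathfrak{w}$. The linear pieces $\mathfrak{v}-\chi\mathfrak{v}_+$, $r^{1/2}\chi(1-\chi)\mathfrak{b}_0*\mathfrak{b}_0$ and $\nabla\chi*\mathfrak{b}_0$ are supported in the compact collar $\{R_0 - \kappa^{-4} \le s \le R_0\}$ (apart from the globally bounded $\mu$-contribution in $\mathfrak{v}$), and Lemma 3.10 of \cite{Te2} yields $|\mathfrak{b}_0|, |\nabla \mathfrak{b}_0| \le c_0 r^{-1}$; hence each of these contributes $O(r^{-1})$ in sup-norm after applying the first-order operator $\mathfrak{D}_I^*$. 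The cross term $2 r^{1/2}\chi\mathfrak{b}_0 *\mathfrak{h}$ produces $O(r^{-1/2})|\mathfrak{h}| + O(r^{-1/2})|\nabla\mathfrak{h}|$, which is absorbed on the left once we apply $G_r$ and use the $H$-bound on $\mathfrak{h}$. The quadratic self-interaction $r^{1/2}\mathfrak{h}*\mathfrak{h}$ gives $|\mathfrak{D}_I^*(r^{1/2} \mathfrak{h}*\mathfrak{h})| \le c_0 r^{1/2} |\mathfrak{h}||\nabla \mathfrak{h}|$, whose $L^2$-norm is bounded by $c_0 r^{1/2} |\mathfrak{h}|_\infty |\nabla \mathfrak{h}|_{L^2} \le c_0 \kappa^{-2} |\mathfrak{h}|_\infty$ by Lemma \ref{A3}; convolving with $G_r$ in $L^2$ then yields a contribution bounded by $c_0 \kappa^{-2} r^{-1/2} |\mathfrak{h}|_\infty$ to $|\mathfrak{h}|^2$ pointwise.

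Assembling these bounds and running a short bootstrap — passing first from the $H$-bound to an $L^4$-bound via Sobolev embedding, and then from $L^4$ to $L^\infty$ via the Green's-function convolution — gives $|\mathfrak{h}|_\infty^2 \le c_0 r^{-2} + c_0 \kappa^{-2} r^{-1/2} |\mathfrak{h}|_\infty$. Choosing $\kappa_0$ so that $c_0 \kappa^{-2} r^{-1/2} \le \tfrac{1}{2}|\mathfrak{h}|_\infty$ when $\kappa \ge \kappa_0$ delivers the advertised estimate $|\mathfrak{h}|_\infty \le c_0 r^{-1}$. The main obstacle is controlling the quadratic self-interaction $r^{1/2}\mathfrak{h}*\mathfrak{h}$, whose coupling constant $r^{1/2}$ naively competes with the smallness of $\mathfrak{h}$; the sharp $r$-scaling of $G_r$ and the precise power $r^{-1/2}$ in the $H$-bound from Lemma \ref{A3} are exactly what is needed to make this absorption step close.
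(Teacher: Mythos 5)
Your overall strategy (Weitzenb\"ock identity, pairing with $\mathfrak{h}$, and convolving a differential inequality against the Green's kernel of $d^*d+r$) is exactly the paper's. However, two steps in your handling of the quadratic self-interaction are wrong, and the error propagates into a final ``absorption'' that does not close.

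First, you estimate $|\mathfrak{D}_I^*(r^{1/2}\mathfrak{h}*\mathfrak{h})|\le c_0\,r^{1/2}|\mathfrak{h}||\nabla\mathfrak{h}|$. But $\mathfrak{D}_I^*$ is of the schematic form $\nabla+r^{1/2}\cdot(\text{multiplication})$, so applying it to $r^{1/2}\mathfrak{h}*\mathfrak{h}$ also produces a zeroth-order contribution of size $r|\mathfrak{h}|^2$; pairing with $\mathfrak{h}$ yields a cubic term $\sim c_0\,r|\mathfrak{h}|^3$ that you have dropped. This is precisely the dangerous term: if one only knows $|\mathfrak{h}|$ is bounded, it has the same $r$-scaling as the good term $r|\mathfrak{h}|^2$ coming from the Weitzenb\"ock identity, so it cannot be absorbed outright. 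The paper gets around this by a two-pass bootstrap: the first pass uses the Green's-kernel estimate together with H\"older interpolation ($\int g\,|\mathfrak{h}|\le c_0 r^{2/3}\|\mathfrak{h}\|_{L^4}^{2/3}\|\mathfrak{h}\|_{L^2}^{1/3}$ and the $H$-norm bound) only to obtain the weak preliminary bound $|\mathfrak{h}|_\infty\le c_0\kappa^{-2}$; crucially, the a priori bound $|\psi_\emptyset|\le 1+c_0r^{-1}$ (from the maximum principle for the Seiberg--Witten equation) is invoked to keep $|\eta_h|$ bounded uniformly so the cubic term can be downgraded to a quadratic one. Once $|\mathfrak{h}|_\infty\le c_0\kappa^{-2}$ is known, the second pass replaces $r|\mathfrak{h}|^3$ and $r|\mathfrak{h}|^4$ by $c_0(|\mathfrak{h}|_\infty+|\mathfrak{h}|_\infty^2)\,r|\mathfrak{h}|^2$, and choosing $\kappa\ge\kappa_0$ so that $c_0(\kappa^{-2}+\kappa^{-4})\le 1/2$ lets one absorb this into the left side, giving $|\mathfrak{h}|_\infty\le c_0 r^{-1}$.

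Second, your final absorption does not prove the claim. From $|\mathfrak{h}|_\infty^2\le c_0r^{-2}+c_0\kappa^{-2}r^{-1/2}|\mathfrak{h}|_\infty$, solving the quadratic inequality in $|\mathfrak{h}|_\infty$ only yields $|\mathfrak{h}|_\infty\le c_0\kappa^{-2}r^{-1/2}$, which is weaker than the target $r^{-1}$ for large $r$; the alternative reading, ``choose $\kappa_0$ so that $c_0\kappa^{-2}r^{-1/2}\le\frac12|\mathfrak{h}|_\infty$'', is circular (if $|\mathfrak{h}|_\infty\sim r^{-1}$ this forces $\kappa\gtrsim r^{1/4}$, but $\kappa$ is fixed). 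The correct role of the threshold $\kappa_0$ is to make the coefficient $c_0(|\mathfrak{h}|_\infty+|\mathfrak{h}|_\infty^2)$, already known to be $O(\kappa^{-2})$ from the first pass, strictly less than one so that the \emph{full} $r|\mathfrak{h}|^2$ term can be moved to the left; with that absorption done, what remains on the right is $O(|\mathfrak{h}|)+O(r^{-1})$, and the Green's kernel then delivers $|\mathfrak{h}|_\infty\le c_0r^{-1}$ cleanly. To repair your argument, restore the $r|\mathfrak{h}|^2$ piece of $\mathfrak{D}_I^*(r^{1/2}\mathfrak{h}*\mathfrak{h})$, invoke the maximum-principle bound on $|\psi_\emptyset|$ to control $|\eta_h|$, and run the two-stage bootstrap.
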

\begin{proof} Recall that $\mathfrak{h}$ satisfies the equation:
\begin{equation}
\mathfrak{D}_I \mathfrak{h}_{\emptyset} + r^{\frac{1}{2}} \mathfrak{h}_{\emptyset}*\mathfrak{h}_{\emptyset} + r^{\frac{1}{2}}\mathfrak{h}_{\emptyset}*\mathfrak{A} = \mathfrak{B}, \label{A25}
\end{equation}
where $\mathfrak{A}= 2\chi \mathfrak{b}_0$ and  $\mathfrak{B}= r^{\frac{1}{2}}\chi(1-\chi)\mathfrak{b}_0* \mathfrak{b}_0 -\nabla{\chi} \mathfrak{b}_0 + (\mathfrak{v} - \chi\mathfrak{v}_+)$.  By Lemma 3.10 in \cite{Te2}, it is easy to check that $\mathfrak{A}$ and $\mathfrak{B}$ satisfy:
\begin{equation*}
\begin{split}
&|\mathfrak{A}| \le c_0 r^{-1}, \ \  |\nabla{\mathfrak{A}}| \le c_0, \\
&|\mathfrak{B}| \le c_0 r^{-\frac{1}{2}}, \ \ |\nabla{\mathfrak{B}}| \le c_0.
\end{split}
\end{equation*}
Keep in mind that the operators $\mathfrak{D}_I \mathfrak{h}$ and $\mathfrak{D}^*_I \mathfrak{h}$  are both of the  form $\nabla\mathfrak{h} + r^{\frac{1}{2}} O(1) *\mathfrak{h}$.

By  the $Weizenb\ddot{o}ck$ formula (\ref{e20}), we obtain
\begin{eqnarray*}
&&\nabla^*\nabla{\mathfrak{h}_{\emptyset}}+ 2r\mathfrak{h}_{\emptyset}  + \tilde{\mathcal{R}_0} \mathfrak{h}_{\emptyset} + \sqrt{r} \tilde{ \mathcal{R}_1}\mathfrak{h}_{\emptyset}\\
&=&\mathfrak{D}_I^*\mathfrak{B} - r^{\frac{1}{2}}\mathfrak{D}^*_I(\mathfrak{h}_{\emptyset}*\mathfrak{h}_{\emptyset}) -  r^{\frac{1}{2}}\mathfrak{D}^*_I(\mathfrak{h}_{\emptyset} *\mathfrak{A})\\
&=&I+ II+ III.
\end{eqnarray*}
 Take inner product of above equation with $\mathfrak{h}_{\emptyset}$, then  we have
\begin{eqnarray*}
|<III, \mathfrak{h}_{\emptyset}>|&=&|r^{\frac{1}{2}}<\mathfrak{D}^*_I(\mathfrak{h}_{\emptyset}  *\mathfrak{A}), \mathfrak{h}_{\emptyset} >| \le c_0
( r^{\frac{1}{2}}|\mathfrak{A}||\nabla{\mathfrak{h}_{\emptyset} }||\mathfrak{h}_{\emptyset} | +r|\mathfrak{A}| |\mathfrak{h}_{\emptyset} |^2 + r^{\frac{1}{2}}|\nabla{\mathfrak{A}}| |\mathfrak{h}_{\emptyset} |^2) \\
 &\le& c_0 r^{-\frac{1}{2}}|\mathfrak{h}_{\emptyset}||\nabla{\mathfrak{h}_{\emptyset} }| + c_0r^{\frac{1}{2}} |\mathfrak{h}_{\emptyset} |^2.
\end{eqnarray*}
\begin{equation*}
|<I, \mathfrak{h}_{\emptyset}>|=| < \mathfrak{D}^*_I\mathfrak{B},\mathfrak{h}_{\emptyset} >|
\le c_0|\nabla \mathfrak{B}||\mathfrak{h}_{\emptyset}|+ c_0r^{\frac{1}{2}} | \mathfrak{B}||\mathfrak{h}_{\emptyset}|
\le c_0|\mathfrak{h}_{\emptyset}|.
\end{equation*}
By definition,
\begin{equation}
r^{\frac{1}{2}}\mathfrak{h}_{\emptyset}*\mathfrak{h}_{\emptyset}=
  \begin{cases}
   -2^{-\frac{3}{2}}(r)^{\frac{1}{2}}q(\eta_h, \eta_h) \\
(2r)^{\frac{1}{2}}\mathfrak{cl}(b_h)\eta_h\\
0\\
  \end{cases}
\end{equation}
then
\begin{eqnarray*}
&&II=r^{\frac{1}{2}}\mathfrak{D}_I^*(\mathfrak{h}_{\emptyset}*\mathfrak{h}_{\emptyset})\\
&=&\left(-2d^*(r^{\frac{1}{2}}q(\eta_h,\eta_h)) +2(2)^{\frac{1}{2}}irIm(\psi_I^*(\mathfrak{cl}( b_h)\eta_h) ), D_{A_{I}}((2r)^{\frac{1}{2}}\mathfrak{cl}(  b_h) \eta_h)+ (2)^{\frac{1}{2}}r \mathfrak{cl}(q( \eta_h,\eta_h)) \psi_I\right)\\
&=&\left( A, B \right).
\end{eqnarray*}
Hence,
\begin{eqnarray*}
|A| &\le& c_0 r^{\frac{1}{2}}  | \nabla \eta_h| |\eta_h | + c_0 r | b_h||\eta_h|,\\
|B| &\le&  c_0 r^{\frac{1}{2}}|\nabla  b_h||\eta_h| + c_0 r^{\frac{1}{2}} |b_h | |\nabla \eta_h| + c_0r | \eta_h||\eta_h|.
\end{eqnarray*}
By our construction, $(A_{\emptyset}, \psi_{\emptyset}) =(A_I,\psi_I)+ ((2r)^{\frac{1}{2}}( b_h+ \chi b_0), (\chi \alpha_0+ \alpha_h, \chi \beta_0+\beta_h))$ satisfies the Seiberg Witten equations. By Lemma \ref{C50}, we have  $|\psi_{\emptyset}| \le 1+ \frac{c_0}{r}$.    In particular, $|\eta_h|\le c_0$.  Therefore,
\begin{eqnarray*}
|<II, \mathfrak{h}_{\emptyset}>| &\le &  c_0|A|| b_h|+ c_0|B| | \eta_h| \\
&\le&  c_0 r^{\frac{1}{2}} | \nabla  \eta_h||\eta_h| | b_h|  +  c_0 r | b_h|^2|\eta_h|  \\
&+& c_0 r^{\frac{1}{2}}|\nabla  b_h||\eta_h|| \eta_h| +  c_0 r^{\frac{1}{2}} | b_h | |\nabla \eta_h| | \eta_h| + c_0r | \eta_h|^3\\
&\le&  c_0 r^{\frac{1}{2}}|\nabla \mathfrak{h}_{\emptyset}||\mathfrak{h}_{\emptyset}| + c_0r | \mathfrak{h}_{\emptyset}|^2.
\end{eqnarray*}
Here we have used the fact that $|\eta_h|$ is bounded. On the other hand,
\begin{eqnarray*}
&&<\nabla^*\nabla{\mathfrak{h}_{\emptyset} }+2r  \mathfrak{h}_{\emptyset} + \tilde{\mathcal{R}_0} \mathfrak{h}_{\emptyset} +   \sqrt{r} \tilde{ \mathcal{R}_1} \mathfrak{h}_{\emptyset},  \mathfrak{h}_{\emptyset}>\\
&\ge & \frac{1}{2} d^*d | \mathfrak{h}_{\emptyset}|^2 +2 r | \mathfrak{h}_{\emptyset}|^2  + |\nabla  \mathfrak{h}_{\emptyset}|^2 - c_0(1+ r^{\frac{1}{2}}) |\mathfrak{h}_{\emptyset}|^2.
\end{eqnarray*}
Hence, we obtain a differential inequality:
\begin{eqnarray*}
d^*d|\mathfrak{h}_{\emptyset}|^2+ r| \mathfrak{h}_{\emptyset} |^2 \le c_0| \mathfrak{h}_{\emptyset}| +c_0r| \mathfrak{h}_{\emptyset}|^2.
\end{eqnarray*}

Let $g(x,y)$ be the Green function of $d^*d+ r$ over $\overline{X}$.  It  satisfies the following properties:
 \begin{itemize}
\item
$0< g(x, y) \le c_0 dist(x,y)^{-2} e^{-\sqrt{r} dist(x, y)/c_0}$ and $| \nabla g(x, y)| \le c_0 dist(x,y)^{-1} e^{-\sqrt{r} dist(x, y)/c_0}$.
\item
$\int_{\overline{X}} g(x, y) vol_y \le c_0r^{-1}$.
\end{itemize}

 The standard elliptic regularity implies that $|\mathfrak{h}_{\emptyset}| \to 0$ as $s \to +\infty$, (Cf. (4-11) of \cite{T})  thus we may assume that $|\mathfrak{h}_{\emptyset}|(x)=|\mathfrak{h}_{\emptyset}|_{\infty} $ for some $x \in \overline{X}$. Then
\begin{equation}
 \begin{split}
 |\mathfrak{h}_{\emptyset}|_{\infty}=& |\mathfrak{h}_{\emptyset} |(x) \le c_0 r \int_{\overline{X}} | \mathfrak{h}_{\emptyset}|g(x,y)dy  + c_0r^{-1} \\
&\le c_0r\left( \int_{\overline{X}} | \mathfrak{h}_{\emptyset}|^3\right)^{\frac{1}{3}}\left( \int_{\overline{X}}   \label{E2} g^{\frac{3}{2}}\right)^{\frac{2}{3}}  +c_0r^{-1} \\
&\le c_0   r^{\frac{2}{3}} \left( \int_{\overline{X}} | \mathfrak{h}_{\emptyset}|^3\right)^{\frac{1}{3}}+ c_0r^{-1} \\
&\le c_0  r^{\frac{2}{3}}  | \mathfrak{h}_{\emptyset}|_{L^4}^{\frac{2}{3}}  |\mathfrak{h}_{\emptyset}|_{L^2}^{\frac{1}{3}}+  c_0r^{-1} \\
&\le c_0 r^{\frac{1}{2}}|\mathfrak{h}_{\emptyset}|_H   +  c_0r^{-1} \le c_0 \kappa^{-2}.
 \end{split}
\end{equation}
Hence,  $|\mathfrak{h}_{\emptyset}| \le c_0 \kappa^{-2}$ for sufficiently large $r$.

On the other hand,
\begin{eqnarray*}
|<II, \mathfrak{h}_{\emptyset}>| &\le&  c_0 r^{\frac{1}{2}} | \nabla  \eta_h||\eta_h| | b_h|  +  c_0 r | b_h|^2|\eta_h|  \\
&+& c_0 r^{\frac{1}{2}}|\nabla  b_h||\eta_h|| \eta_h| +  c_0 r^{\frac{1}{2}} | b_h | |\nabla \eta_h| | \eta_h| + c_0r | \eta_h|^3\\
&\le&  c_0 r^{\frac{1}{2}}|\nabla \mathfrak{h}_{\emptyset}||\mathfrak{h}_{\emptyset}|^2 + c_0r | \mathfrak{h}_{\emptyset}|^3.
\end{eqnarray*}
Then we obtain
\begin{eqnarray}
d^*d|\mathfrak{h}_{\emptyset} |^2+ r| \mathfrak{h}_{\emptyset}|^2 \le c_0|\mathfrak{h}_{\emptyset}| + c_0 r| \mathfrak{h}_{\emptyset}|^4  + c_0r | \mathfrak{h}_{\emptyset}|^3. \label{A11}
\end{eqnarray}
Again using the Green function, we have
\begin{eqnarray*}
| \mathfrak{h}_{\emptyset}|_{\infty} &\le&   c_0 \left(  \int_{\overline{X}} g(x,y)r | \mathfrak{h}_{\emptyset}|^2 dy + \int_{\overline{X}} g(x,y)r |\mathfrak{h}_{\emptyset}|^3 dy + r^{-1} \right)\\
&\le& c_0(|\mathfrak{h}_{\emptyset}|_{\infty}+ |\mathfrak{h}_{\emptyset}|_{\infty}^2)| \mathfrak{h}_{\emptyset}|_{\infty}+ c_0 r^{-1}.
\end{eqnarray*}
Therefore, $|\mathfrak{h}_{\emptyset}| \le  c_0  r^{-1}$ if $\kappa$ is  sufficiently  large.

\end{proof}

\subsection{Iteration }
This subsection is not necessary if we choose $\wp_3$ and $\wp_4$ in the following ways: Recall that
$[\wp_3]=2\pi c_1(\mathfrak{s}_{\Gamma})$. Since $\mathcal{C} =\emptyset$, the line bundle $E$ is trivial.  We can take $\wp_3=iF_{A_{K^{-1}}} + d\mu_+$.  Using   a suitable cut-off function $\chi$ which supports  in the ends, we can construct  $\mu=\chi\mu_+$ and take $\wp_4=iF_{A_{K^{-1}}} + d\mu$. With these choices, we have $|(\mathfrak{v}-\chi\mathfrak{v}_+)|_{L^2} \le \kappa^{-4} r^{-\frac{1}{2}}$ provided that $|\mu_+| \ll 1$ sufficiently small. Then we can apply Lemma \ref{A3} directly to construct a  solution to (\ref{e59}).

If we choose $\wp_3$ and $\wp_4$ in an arbitrary way,  then the assumption $|(\mathfrak{v}-\chi\mathfrak{v}_+)|_{L^2} \le \kappa^{-4} r^{-\frac{1}{2}}$ may not hold.  But we can use the following argument to  solve (\ref{E1}). Firstly, we reintroduce the coordinate function $s$ of $\overline{X}$. Let $f: X \to \mathbb{R}$ be a Morse function such that $f=4$ on $\partial X= Y_+$. Then we define a new coordinate function $\tilde{s}$ on $\overline{X}$ such that $\tilde{s} = f$ on $X$ and $\tilde{s}=s+4$ on $\mathbb{R}_+ \times Y_+$. In general $\tilde{s}$ is not smooth, but we can use mollifier to make it smoothly.

Let $\chi_n = \chi(\tilde{s}+ n\kappa^{-4}-4)$ and $\mu_n= \chi_n \mu$. Note that $|\chi_{n+1}-\chi_n| \le c_0 \kappa^{-4}$.
First of all, we solve the following equations:
\begin{equation}
  \begin{cases}
   D_{A_0} \psi_0 =0 \\
F_{A_0}^{+}= \frac{r}{2}(q(\psi_0,\psi_0)- i\Omega_X)  +  i\mu_0 \\
*d*b_0 -2^{-\frac{1}{2}}r^{\frac{1}{2}}(\eta_0^* \psi_I-\psi_I^* \eta_0)=0,
  \end{cases}
\end{equation}
where $A_0=A_I + (2r)^{\frac{1}{2}}b_h^0 + \chi(2r)^{\frac{1}{2}} b_0$, $\psi_0= \psi_I + \eta_h^0 +\chi \eta_0$, $b_0= b_h^0 + \chi b_0$ and $\eta_0= \eta_h^0 +\chi \eta_0 $.  Since $|(\mathfrak{v}-\chi\mathfrak{v}_+)|_{L^2}= (2r)^{-\frac{1}{2}}|\chi(\mu- \mu)|_{L^2} \le c_0 \kappa^{-4}r^{-\frac{1}{2}}$, by Lemma \ref{A3}, we can find  a unique solution $\mathfrak{h}_0=(b^0_{h}, \eta_h^0) \in \mathbb{H}$. Moreover,  $|\mathfrak{h}_0|_H \le \kappa^{-2}r^{-\frac{1}{2}}$ and $|\mathfrak{h}_0|_{\infty} \le c_0r^{-1}$.

Next, we solve the equations
\begin{equation}
  \begin{cases}
   D_{A_1} \psi_1 =0 \\
F_{A_1}^{+}= \frac{r}{2}(q(\psi_1,\psi_1)- i\Omega_X)  +  i\mu_1\\
*d*b_1 -2^{-\frac{1}{2}}r^{\frac{1}{2}}(\eta_1^* \psi_I-\psi_I^* \eta_1)=0,
  \end{cases}
\end{equation}
where $A_1=A_I + (2r)^{\frac{1}{2}}b_h^0 +  (2r)^{\frac{1}{2}}\chi b_0 +(2r)^{\frac{1}{2}} b_h^1$,  $\psi_1= \psi_I + \eta_h^0 +\chi \eta_0 + \eta_h^1$, $b_1=  (2r)^{-\frac{1}{2}}(A_1-A_I)$, $\eta_1=\psi_1-\psi_I$ and $\mathfrak{h}_1=(b_h^1, \eta_h^1) \in \mathbb{H}$.  Then we need to solve the following equation
\begin{eqnarray*}
\mathfrak{D}_0 \mathfrak{h}_1 +  r^{\frac{1}{2}} \mathfrak{h}_1*\mathfrak{h}_1= \mathfrak{v}_1,
\end{eqnarray*}
where $\mathfrak{D}_0 \mathfrak{h}_1= \mathfrak{D}_I \mathfrak{h}_1 + r^{\frac{1}{2}}(\chi \mathfrak{b}_0 +\mathfrak{h}_0 ) *\mathfrak{h}_1 $ and $\mathfrak{v}_1= (2r)^{-\frac{1}{2}}( \chi_1- \chi_0) \mu $. Note that $|\mathfrak{v}_1|_{L^2} \le c_0 \kappa^{-4}r^{-\frac{1}{2}} $.

As before, define $\mathcal{T}(\mathfrak{h}_1) = -\mathfrak{D}_I^{-1} (r^{\frac{1}{2}}(\chi \mathfrak{b}_0 +\mathfrak{h}_0 ) *\mathfrak{h}_1 + r^{\frac{1}{2}} \mathfrak{h}_1*\mathfrak{h}_1- \mathfrak{v}_1  )$.   Let $\mathcal{B}$ be a ball in $\mathbb{H}$ with center at origin and radius less than $\frac{1}{\kappa^2} r^{-\frac{1}{2}}$ and $\mathfrak{h}_1 \in \mathcal{B}$. Then
\begin{eqnarray*}
|\mathcal{T}(\mathfrak{h}_1)|_H &\le& \kappa( r^{\frac{1}{2}} |(\chi \mathfrak{b}_0 +\mathfrak{h}_0 ) *\mathfrak{h}_1 |_{L^2} +  r^{\frac{1}{2}} |\mathfrak{h}_1*\mathfrak{h}_1|_{L^2} + |\mathfrak{v}_1|_{L^2}) \\
&\le&  \kappa(  |\mathfrak{h}_0|_{\infty} |\mathfrak{h}_1 |_{H} + |\mathfrak{b}_0 |_{\infty}|\mathfrak{h}_1|_H + c_0r^{\frac{1}{2}}|\mathfrak{h}_1 |^2_{H} + c_0 \kappa^{-4} r^{-\frac{1}{2}}) \\
&\le& c_0 \kappa^{-3} r^{-\frac{1}{2}}  +  c_0 \kappa^{-4} r^{-\frac{1}{2}}  \le   \kappa^{-2}r^{-\frac{1}{2}}.
\end{eqnarray*}
Also, it is not hard to check that $|\mathcal{T}(\mathfrak{h}_1)- \mathcal{T}(\mathfrak{h}'_1)|_H \le \delta | \mathfrak{h}_1 - \mathfrak{h}_1'|_H$ and $0< \delta<1$. By the contraction mapping theorem, we can find the solution $(A_1, \psi_1)$.   One can replace $\mathfrak{h}$  by  $\mathfrak{h}= \mathfrak{h}_0 + \mathfrak{h}_1$  and    repeat the argument  in Lemma \ref{A4} to show that $|\mathfrak{h}|_{\infty} \le c_0r^{-1}$. Note that in (\ref{E2}) we need to use the bound $|\mathfrak{h}|_H \le \kappa^{-2} r^{-\frac{1}{2}}$, but now $\mathfrak{h}= \mathfrak{h}_0+ \mathfrak{h}_1$. Fortunately,   we can still show that $|\mathfrak{h}|_{\infty} \le c_0\kappa^{-2}$ by the following inequalities:
\begin{equation} \label{e30}
 \begin{split}
 |\mathfrak{h}|_{\infty}=& | \mathfrak{h} |(x) \le c_0 r \int_{\overline{X}} | \mathfrak{h}|g(x,y)dy  + c_0r^{-1} \\
 &\le c_0 r \int_{\overline{X}} | \mathfrak{h}_1|g(x,y)dy  + |\mathfrak{h}_0|_{\infty} +  c_0r^{-1}\\
 & \le c_0r \int_{\overline{X}} | \mathfrak{h}_1|g(x,y)dy  + c_0r^{-1}\\
 & \le c_0r^{\frac{1}{2}} |\mathfrak{h}_1|_H + c_0 r^{-1} \le c_0\kappa^{-2}.
 \end{split}
\end{equation}
Therefore, with the above sightly modification,  we   still have  $|\mathfrak{h}|_{\infty} \le c_0r^{-1}$.

By induction, suppose that we have solved the $n$-th equation and the solution satisfies $|\mathfrak{h}_0 + \dots + \mathfrak{h}_n | \le c_0 r^{-1}$. To solve the $(n+1)$-th equation, let $A_{n+1}=A_n +(2r)^{\frac{1}{2}} b_h^{n+1}$, $\psi_{n+1}= \psi_n +\eta_{h}^{n+1}$ and  $\mathfrak{h}_{n+1} \in \mathbb{H}$. As before, it equivalent to find the fixed point of
\begin{eqnarray*}
\mathcal{T}(\mathfrak{h}_{n+1}) = -\mathfrak{D}_I^{-1} (r^{\frac{1}{2}}(\chi \mathfrak{b}_0 +\mathfrak{h}_0 + \dots + \mathfrak{h}_n ) *\mathfrak{h}_{n+1} + r^{\frac{1}{2}} \mathfrak{h}_{n+1} *\mathfrak{h}_{n+1}- \mathfrak{v}_{n+1}),
\end{eqnarray*}
where $\mathfrak{v}_{n+1}= i(2r)^{-\frac{1}{2}}(\mu_{n+1}-\mu_n)$.  Now the contribution of $(\chi \mathfrak{b}_0 +\mathfrak{h}_0 + \dots + \mathfrak{h}_n ) *\mathfrak{h}_{n+1}$ is
\begin{eqnarray*}
&&\kappa r^{\frac{1}{2}} |(\mathfrak{b}_0 +\mathfrak{h}_0 + \dots  + \mathfrak{h}_n ) *\mathfrak{h}_{n+1}|_{L^2} \\
&\le & \kappa (|\mathfrak{b}_0|_{\infty} + | \mathfrak{h}_0 \dots  +  \mathfrak{h}_n|_{\infty} ) |\mathfrak{h}_{n+1}|_H   \le c_0  \kappa^{-1}r^{-\frac{3}{2}}.
\end{eqnarray*}
 As before, we can show that $\mathcal{T}$ is contraction mapping. Moreover, repeat the argument in Lemma \ref{A4} with  the sightly modification as (\ref{e30}),  we still have $$|\mathfrak{h}_0+ \dots +\mathfrak{h}_{n+1}|_{\infty} \le c_0r^{-1}. $$

Hence,  there exists a positive integer   $N =O(\kappa^4)$ such that $\mathfrak{h}_{\emptyset}=(b_h, \eta_h)= \mathfrak{h}_0 + \dots \mathfrak{h}_N$ and  $A_{\emptyset}=A_I +  (2r)^{\frac{1}{2}} b_h + \chi (2r)^{\frac{1}{2}} b_0$, $\psi_{\emptyset}= \psi_I + \eta_h + \chi \eta_0$, $b= b_h+\chi b_0 $, $\eta=\eta_h+ \chi\eta_0$ satisfies the  following equations
\begin{equation}
  \begin{cases}
   D_{A} \psi =0 \\
F_{A}^{+}= \frac{r}{2}(q(\psi,\psi)- i\Omega)  +  i\mu\\
*d*b -2^{-\frac{1}{2}}r^{\frac{1}{2}}(\eta^* \psi_I-\psi_I^* \eta)=0.
  \end{cases}
\end{equation}
Moreover, according to our construction, $(A_{\emptyset}, \psi_{\emptyset})$ converges to $(A_{\emptyset + }, \psi_{\emptyset +})$ as $s\to \pm\infty$.

\subsection{General case that $\mathcal{C} \ne \emptyset$}
\begin{prop} [Cf. Prop 7.1 of \cite{Te2}]
Let $\mathcal{C}=\{(C_a,1)\}$ be an embedded holomorphic curve  in $\overline{X}$.  There exists $c_0>1$  and a finite dimensional normed vector space $V_0$. Let $\mathcal{B}$  be a radius $c_0^{-1}$ ball in $\mathcal{K}$, where  $\mathcal{K}$ is the Banach space defined in Section 5.b of \cite{Te2}. Then for any $r \ge c_0$, there exists  a linear map $q:\mathcal{K} \to V_0$ such that \label{A9}
\begin{enumerate}
\item
$|q(\xi)| \le c_0|\xi|_{L^2}$ and $q$ is surjective.
\item
For any $v \in V_0$, there exists a unique  $\xi_{v} \in q^{-1}(v)\cap\mathcal{B}$ such that $\mathfrak{b}(\xi_{v})= \mathfrak{q}(\xi_{v})+ \mathfrak{h}(\xi_{v})$ satisfies the following equations:
\begin{equation} \label{e32}
  \begin{cases}
 \mathfrak{D}\mathfrak{b}+r^{\frac{1}{2}} \mathfrak{b}*\mathfrak{b}=\mathfrak{v}\\
 \lim\limits_{s \to \infty} \mathfrak{b}=\mathfrak{b}_{\pm}.\\
  \end{cases}
\end{equation}
Moreover, $|\xi_{v}|_{\mathcal{K}} \le (r^{-\frac{1}{2}+16\sigma} + |v|)$.

\item
$dim(V_0)=I(\mathcal{C})$.

\item
Let $\mathfrak{b}(\xi)=(b, \eta)$ be the solution to (\ref{e32}), then $(A_r,\psi_r)=(A^{\xi}+(2r)^{\frac{1}{2}}b, \psi^{\xi} + \eta)$ is a solution to Seiberg Witten equations  (\ref{e7}).  Moreover, $(A_r, \psi_r)$ is non-degenerate with Fredholm index $I(\mathcal{C})$.
\item
$\lim\limits_{r\to \infty} \frac{i}{2\pi}\int_{\overline{X}} F_{A_r} \wedge \omega_X = \int_{\mathcal{C}} \omega_X.$

\end{enumerate}
\end{prop}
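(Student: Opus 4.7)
The plan is to follow the strategy outlined in the review (steps (1)--(6) of Taubes' construction) with the modifications needed in the cobordism setting, using the trivial case $(A_\emptyset,\psi_\emptyset)$ already constructed as the ``background'' data at infinity. First I would fix the triple of vortex sections $\mathfrak{c}=\{\mathfrak{c}_C,\mathfrak{c}_{\gamma^{\pm}}\}$ associated to $\mathcal{C}$ and the limit sets $\Xi_{\mathcal{C}^{\pm}}$, and assemble the approximation solution $(A^{\xi},\psi^{\xi})$ as recalled in section~6.2 for $\xi$ in a small ball of a suitable weighted Sobolev space $\mathcal{K}$. Since on $\mathbb{R}_{\pm}\times Y_{\pm}$ our $(\omega_X,J)$ is $Q$-$\delta$ flat and $\mathcal{C}$ is asymptotic at admissible orbit sets, the local geometry near the ends is identical to the symplectization case of \cite{Te2}, so the approximation solution has the same local structure there; in the interior of $\overline{X}$ it differs from Taubes' model by the background data $(b_0,\eta_0)$ produced by the empty-current construction of Section~6.3, which accounts for the non-exactness of $\wp_4$.

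The next step is the splitting $\mathfrak{b}=\mathfrak{q}+\mathfrak{h}$ and the $\Pi_{\xi}/(1-\Pi_{\xi})$ decomposition of equation (\ref{e8}). I would construct $\mathfrak{h}(\xi)$ by the same patching/iteration scheme used for $(A_\emptyset,\psi_\emptyset)$: split $\overline{X}$ via the open cover $\{U_C,U_{\gamma^{\pm}},U_0\}$, on each piece copy the obstruction-absorbing model for $\mathfrak{h}$ from section~6 of \cite{Te2}, and then patch with cut-offs. The invertibility statement for $(1-\Pi_\xi)\mathfrak{D}$ on $(1-\Pi_{\xi})\mathbb{H}$ with operator-norm bound $O(1)$ follows from the local identification (\ref{e36}) together with the vortex a priori estimates and the uniform invertibility of $\mathfrak{D}_I$ established in Lemma \ref{A2}; the cobordism case only requires additionally that $\wp_4$ agrees with $\wp_{3\pm}$ outside a compact set, which is in the hypothesis. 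Once $\mathfrak{h}(\xi)$ is in hand, solving the $(1-\Pi_{\xi})$-equation (\ref{e9}) for $\mathfrak{q}(\xi)$ is a Banach fixed-point argument entirely analogous to Lemma \ref{A3}, giving $|\mathfrak{q}(\xi)|_{\mathcal{K}}\le r^{-1/2+16\sigma}+|\xi|_{\mathcal{K}}^2$ and $|\mathfrak{q}(\xi_1)-\mathfrak{q}(\xi_2)|_{\mathcal{K}}\le c_0(r^{-1/2+16\sigma}+|\xi|_{\mathcal{K}})|\xi_1-\xi_2|_{\mathcal{K}}$.

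The remaining equation is the finite-dimensional obstruction $\mathcal{T}(\xi)=0$ of step~(6) in the review, and here the operator $\mathcal{T}_1$ factors through the linearized operator $\mathcal{D}_{\mathcal{C}}=\oplus_a \mathcal{D}_{C_a}$ on normal deformations of $\mathcal{C}$, which is Fredholm of index $I(\mathcal{C})$ by the relation between ECH index and the normal deformation operator (Proposition 7.2/7.4 of \cite{Te2}); generic choice of $J\in\mathcal{V}_{comp}(X,\pi_X,\omega_X)^{reg}$ guarantees surjectivity of $\mathcal{D}_\mathcal{C}$, so $V_0:=\mathrm{coker}(\mathcal{T}_1)$ has $\dim V_0=I(\mathcal{C})$, and the map $q$ is the $L^2$-orthogonal projection. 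Writing $\mathcal{T}(\xi)=\mathcal{T}_0+\mathcal{T}_1(\xi)+\mathcal{T}_2(\xi)$ and using the bound $|\mathcal{T}_0|\le c_0 r^{-1/2+16\sigma}$ from the interior/end asymptotics, for each $v\in V_0$ with $|v|\le c_0^{-1}$ I would set up a contraction mapping on $q^{-1}(v)\cap\mathcal{B}$, producing the unique $\xi_v$ with the stated norm bound. This gives the Seiberg--Witten solution $(A_r,\psi_r)$; non-degeneracy and the Fredholm-index equality $\mathrm{ind}(A_r,\psi_r)=I(\mathcal{C})$ are then read off from the construction exactly as in Proposition 7.1 of \cite{Te2}, and property~(5) follows from a Stokes computation using the vortex identity $\tfrac{i}{2\pi}\int_{\mathrm{fib}}F_A=1$ per transverse intersection with $\mathcal{C}$ together with the exponential decay of $\mathfrak{b}$ at the ends.

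The main obstacle in transplanting Taubes' argument is the interaction, in the interior of $\overline{X}$, between the vortex background over $\mathcal{C}$ and the non-vanishing perturbation data $(A_{\emptyset},\psi_{\emptyset})$ needed to accommodate $\wp_4$ and the failure of $\Omega$ to be cylindrical away from the ends. Concretely, the error term $\mathfrak{v}$ of (\ref{e16}) is no longer small at the scale $r^{-1/2+16\sigma}$ if one uses Taubes' approximation verbatim, so one must absorb $(b_h,\eta_h)$ from Section~6.3 into the approximate solution before running the contraction mapping, and verify that the $Weizenb\ddot{o}ck$ and vortex estimates are uniform in the new background; this is essentially the iteration over the cut-offs $\chi_n$ leading to (\ref{A11}), now applied at each step of the $\mathfrak{h}(\xi)$-construction with the vortex-twisted operator in place of $\mathfrak{D}_I$. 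Everything else---surjectivity of $q$, the bound $|q(\xi)|\le c_0|\xi|_{L^2}$, and the asymptotic flux identity---then follows by the same arguments as in \cite{Te2}, \cite{Te3}.
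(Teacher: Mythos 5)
Your plan follows the same route as the paper: invoke the local estimates of sections 4--7 of \cite{Te2}, replacing the background $\mathfrak{b}_0$ in Lemma 6.3 of \cite{Te2} by $\chi\mathfrak{b}_0 + \mathfrak{h}$ from Lemma \ref{A3}, where the uniform bound $|\mathfrak{h}| \le c_0 r^{-1}$ from Lemma \ref{A4} (and the iteration over the cut-offs $\chi_n$) keeps all the required estimates intact; non-degeneracy and the Fredholm index come from section 3.a of \cite{Te3}, and the flux identity (5) comes from reducing to the approximate solution $A^*$ and computing via the vortex curvature, exactly as in Lemma \ref{A26}. One small slip: when $\mathcal{D}_{\mathcal{C}}=\oplus_a \mathcal{D}_{C_a}$ is surjective (which is what genericity of $J$ buys), the space $V_0$ of dimension $I(\mathcal{C})$ is the kernel of that operator --- not $\mathrm{coker}(\mathcal{T}_1)$, which would then be zero --- so $V_0$ parametrizes the family of nearby solutions and degenerates to $\{0\}$ precisely when $I(\mathcal{C})=0$.
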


\begin{proof}
%%%%%%%%%%%%%%%%%%%%%%%%%%%%%%%%%%%%%%%%%%%%%%%%%%%%%%%%%%%%%%
The proof of the first and second bullets of the proposition closely tracks Taubes' argument in Sections 4-7 of \cite{Te2}.  We summarize the main points in the following five steps.

%Since they will give us a picture about the argument in \cite{Te2} and  also clarify why his argument holds in our case.
 %We assume that $(\overline{X}, \Omega)$ is the completion of symplectic fibered cobordism and $\mathcal{C}=\{(C_a, 1)\} \in \widetilde{\mathcal{M}}^{J, L}_{X, I=0}(\alpha_+, \alpha_-)$, where $\alpha_{+}$ and $\alpha_-$ are admissible orbit sets. The construction in \cite{Te2} is also available in our setting.
% In what follows, we use notation and terminology of  \cite{Te2}.

\textbf{Step 1:} The first step is to construct a line bundle $E \to \overline{X}$.  Let $\Xi_{\mathcal{C} }$ be a collection of the limit of the $s$-constant slice of $\mathcal{C}$ as $s \to \infty$.     One can define an open cover  $\{U_C\}_{C \in \mathcal{C}}$, $\{U_{\gamma}\}_{\gamma \in \Xi_{\mathcal{C} } }$, $U_0$  of $\overline{X}$ as in \cite{Te2}, where $U_C$ is a  tubular neighborhood of $C$, $U_{\gamma }$ is  a tubular neighborhood of $\mathbb{R}_{ \ge R_0} \times \gamma$, and $U_0$ is the rest.
Let $C \in \mathcal{C}$ and $\pi: N_C \to C $ be the normal bundle.  $E$ is defined by gluing $\pi^*N_C \vert_{U_C}$, and the trivial line bundles over $\{U_{\gamma}\}_{\gamma \in \Xi_{\mathcal{C} } }$ and $U_0$.  The Spin-c bundle is $S_+= E \oplus (E  \otimes K_X^{-1})$.

\textbf{Step 2:} The next step is to construct  an approximation solution $(A^*, \psi^*)$  that  is close to solving   (\ref{e4}). We need to introduce  vortices firstly.  The Vortex equations is  an   two dimensional analogy of Seiberg Witten equations. A pair  $(A, \alpha)$ is called $n$-vortices  if they satisfy the following equations
\begin{equation*}
  \begin{cases}
 \bar{\partial}_A \alpha =0\\
 *F_A= -i(1-|\alpha|^2)\\
 \frac{1}{2\pi}\int_{\mathbb{C}} (1-|\alpha|^2)=n,
  \end{cases}
\end{equation*}
where $A \in \Omega^1(\mathbb{C}, i\mathbb{R})$ and $\alpha$ is a complex function over $\mathbb{C}$.  The moduli space of $n$-vortices is denoted by $\mathfrak{C}_n$.  It  is diffeomorphic  to symmetric product   $Sym^{n} (\mathbb{C})$. For  more details about  the vortices,  please refer  to Section 2 of \cite{Te2}.

The fiber of $N_C$ is a complex plane, so one can consider the vortex equations over there. Assign each fiber of  $N_C$ a vortex ,  a collection of these vortices give us a pairs  $(A^{C, r}, \psi^{C, r})$ defined near $C$, where $A^{C, r}$ and $\psi^{C, r}$ are  connection and section of $E$ respectively. Since our $\mathcal{C}$ is embedded, there is a canonical choice of vortices over each fiber which is called symmetric vortices. This vortex  is corresponding to the origin under isomorphism     $\mathfrak{C}_n \simeq Sym^{n} (\mathbb{C})$

Let $(\gamma, m_{\gamma}) \in \Xi_{\mathcal{C}}$, i.e., $\mathcal{C}$ is asymptotic to $\gamma$ with total multiplicities $m_{\gamma}$.  Thus the ends of $\mathcal{C}$ lie inside $U_{\gamma}$. Again, we   assign vortices on each fiber of the normal bundle of $\mathbb{R}_{ \ge R_0} \times \gamma$. The choice of these vortices  is determined by the asymptotic behavior (\ref{e35}). A collection of them gives us a pair $(A^{\gamma, r}, \psi^{\gamma, r})$ is defined over $U_{\gamma}$.

Let $(A_0, \psi_0=(1_{\mathbb{C}}, 0))$ be the trivial solution over $U_0$, where $A_0$ is trivial connection. The approximation solution $(A^*, \psi^*)$ is defined by gluing $\{(A^{C, r}, \psi^{C, r})\}_{C \in \mathcal{C}}$, $\{(A^{\gamma, r}, \psi^{\gamma, r})_{\gamma \in \Xi_{\mathcal{C}}}\}$ and $(A_0, \psi_0)$ together. It is worth noting that  the constructions  of   $\{(A^{C, r}, \psi^{C, r})\}_{C \in \mathcal{C}}$ and $\{(A^{\gamma, r}, \psi^{\gamma, r})_{\gamma \in \Xi_{\mathcal{C}}}\}$ take place near $\mathcal{C}$ and the choice of $(A_0, \psi_0)$ is canonical,   that is why the constructions of \cite{Te2} can be applied to our case without essential change.

\textbf{Step 3:} Given   vortices in $\mathfrak{C}_n$, one can perturb the vortices by using  sections     of  $T_{1,0} \mathfrak{C}_n$ and exponential map. Since our $(A^*, \psi^*)$ comes from vortices over each fiber of the normal bundle, we can deform $(A^*, \psi^*)$ in a similar way. Follow Section 5.b of \cite{Te2}, we can  define a Banach space  $\mathcal{K}$ which is an  analogy of  $T_{1,0} \mathfrak{C}_n$.  Given $\xi \in \mathcal{K}$, we can deform $(A^*, \psi^*)$ by using   $\xi$,  the result is denoted by $(A^{\xi}, \psi^{\xi})$.

 Suppose that $\mathcal{C}$ is asymptotic to an orbit set $\alpha_+$, according to  Section $3$ of \cite{Te2} (also see \cite{LT}), we can construct a  configuration $(A_{+}, \psi_{+})$ from $\alpha_{+}$ satisfying equations (\ref{e1}) together with an additional gauge fixed condition.  Let $\mathfrak{b}=(b,\eta) \in \Gamma(i\Omega^1 \oplus S_+)$, we want to find  $\mathfrak{b}$  such that $(A, \psi)=(A^{\xi}+ (2r)^{\frac{1}{2}}b, \psi^{\xi}+\eta)$ satisfies the following equations
\begin{equation} \label{e7}
  \begin{cases}
   D_{A} \psi_ =0 \\
F_{A}^{+}= \frac{r}{2}(q(\psi,\psi)- i\Omega_X)  - \frac{1}{2} F^+_{A_{K^{-1}}} - \frac{i}{2} \wp_4^+\\
*d*b -2^{-\frac{1}{2}}r^{\frac{1}{2}}(\eta^* \psi^{\xi}-{\psi^{\xi}}^* \eta)=0,
  \end{cases}
\end{equation}
with asymptotic behavior $\lim\limits_{s \to \pm \infty}(A, \psi)=(A_{\pm}, \psi_{\pm})$. In particular, $[(A, \psi)] \in \mathfrak{M}^{J, r }_{X}(\mathfrak{c}_+, \mathfrak{c}_-)$.
 The equations  (\ref{e7}) can be  schematically
as
\begin{equation}\label{e8}
  \begin{cases}
 \mathfrak{D}\mathfrak{b}+r^{\frac{1}{2}} \mathfrak{b}*\mathfrak{b}=\mathfrak{v}\\
 \lim\limits_{s \to \pm \infty} \mathfrak{b}=\mathfrak{b}_{\pm},\\
  \end{cases}
\end{equation}
where $\mathfrak{D}$ is the deformation operator of Seiberg Witten equations at $(A^{\xi}, \psi^{\xi})$.

 Observed by Taubes, the bundle $iT^*X \oplus S_+$ over $U_C$ can be identified as $\mathbb{V}_{C0} \oplus \mathbb{V}_{C1}$, where $\mathbb{V}_{C0}=\pi^*N \oplus \pi^*N$ and $\mathbb{V}_{C1}=\pi^*T^{0,1}C \oplus (\pi^*N^2 \otimes \pi^*T^{0,1}C)$. Granted this identification, the deformation operator $\mathfrak{D}$  can be rewritten as
\begin{equation}  \label{e36} \mathfrak{D}=
\left(
  \begin{array}{ccc}
    \bar{\partial}_{\theta}^H &  \vartheta^*_{C_{\xi}, r} \\
    \vartheta_{C_{\xi}, r} &  \partial_{\theta}^H\
  \end{array}
\right)   + \mathfrak{r},
\end{equation}
where $\mathfrak{r}$ is a small error term, $  \vartheta_{C_{\xi}, r}$ is the deformation operator of the vortex equations and $\vartheta^*_{C_{\xi}, r}$ is its $L^2$ adjoint.   The  similar identification also holds on $U_0$ and $U_{\gamma_{\pm}}$. The operator  $\vartheta^*_{C_{\xi}, r}$ is not surjective, so does $\mathfrak{D}$. One cannot expect to use   contraction mapping principle directly.

 As \cite{Te2}, we can define a $L^2$ projection $\Pi_{\xi}: L^2((i\Omega^0 \oplus \Omega^{2+} )\oplus S_-) \to L^2((i\Omega^0 \oplus \Omega^{2+} )\oplus S_-)$.     Roughly speaking, the $\Pi_{\xi}$ maps elements in $  \mathbb{V}_{C0} \oplus \mathbb{V}_{C1}$  to the $coker  \vartheta^*_{C_{\xi}, r}$.  Using the properties of vortices, identification (\ref{e36}) and the same argument in Lemma 6.1 of \cite{Te2}, we know that $(1-\Pi_{\xi})\mathfrak{D}$
is surjective  in suitable Sobolev space.  Hence, we  want to use the contraction mapping principle to solve the following equation firstly,
\begin{eqnarray} \label{e14}
(1-\Pi_{\xi}) (\mathfrak{D} \mathfrak{b} + r^{\frac{1}{2}}\mathfrak{b}* \mathfrak{b})=(1-\Pi_{\xi}) \mathfrak{v}.
\end{eqnarray}

\textbf{Step 4: }
To solve (\ref{e14}), however, the error term $(1-\Pi_{\xi}) \mathfrak{v}$ is not small enough  to use  the contraction mapping principle.  Write  $\mathfrak{b}= \mathfrak{q}+\mathfrak{h}$,  equation (\ref{e14}) is equivalent to
\begin{eqnarray} \label{e9}
(1-\Pi_{\xi}) (\mathfrak{D} \mathfrak{q} + r^{\frac{1}{2}}\mathfrak{q}* \mathfrak{q} +2 r^{\frac{1}{2}}\mathfrak{q}* \mathfrak{h})=(1-\Pi_{\xi}) (\mathfrak{v} - \mathfrak{v}_{\mathfrak{h}}),
\end{eqnarray}
where $\mathfrak{v}_{\mathfrak{h}} = \mathfrak{D} \mathfrak{h} + r^{\frac{1}{2}}\mathfrak{h}* \mathfrak{h}$ and $\Pi_{\xi} \mathfrak{q}=0$.

Section 6.d of \cite{Te2} constructs a  $\mathfrak{h}=\mathfrak{h}(\xi)$ so that the error term  $(1-\Pi_{\xi}) (\mathfrak{v} - \mathfrak{v}_{\mathfrak{h}})$ is small enough.  The construction of  $\mathfrak{h}$ in our case is same as Lemma 6.3 of \cite{Te2} with the following adjustment:  Replace $\mathfrak{b}_0$  by  $\chi \mathfrak{b}_0+ \mathfrak{h}_{\emptyset}$ in proof of Lemma 6.3, where $\chi$ and $\mathfrak{h}_{\emptyset}$ is defined  in Section \ref{section16}. Note that Lemma \ref{A4}  guarantees that  the   estimates  (6-31)  in Lemma 6.3 of \cite{Te2} hold, and the remaining  part of proof takes place near the holomorphic curves, hence the construction in \cite{Te2} can be applied to cobordism case with only notation changes.

Given $\mathfrak{h}(\xi)$, for any sufficiently small $\xi$,  then we can find $\mathfrak{q}=\mathfrak{q}(\xi)$ satisfying equation (\ref{e9}) by using contraction mapping principle.

\textbf{Step 5: } To deal the remaining  part  of
 (\ref{e8}), we need to consider equation
\begin{eqnarray} \label{e10}
\Pi_{\xi}(\mathfrak{D} \mathfrak{q} + r^{\frac{1}{2}}\mathfrak{q}* \mathfrak{q} +2 r^{\frac{1}{2}}\mathfrak{q}* \mathfrak{h} -\mathfrak{v} + \mathfrak{v}_{\mathfrak{h}})=0.
\end{eqnarray}
The argument in Step 4 works for any small $\xi$, thus one hope to vary $\xi$ such that  (\ref{e10}) holds.
Let $\mathcal{T}(\xi)$ to be $r^{\frac{1}{2}}$ times the left hand side of (\ref{e10}).
We can write
\begin{eqnarray*}
\mathcal{T}(\xi)= \mathcal{T}_0 + \mathcal{T}_1(\xi) + \mathcal{T}_2(\xi),
\end{eqnarray*}
where
$\mathcal{T}_0$ is the $\xi=0$ version of $\mathcal{T}$, $\mathcal{T}_1$ is the linerization of $\mathcal{T}$, and $\mathcal{T}_2$ is a quadric error term. The argument in \cite{Te2} can show that  $\mathcal{T}_0$ is closed to zero. Under the  identification in Section 7.d of \cite{Te2},  $\mathcal{T}_1$ is  surjective provided that  $\{\mathcal{D}_C\}_{C \in \mathcal{C}}$ are surjective, where $\mathcal{D}_C$ is the linearization of the holomorphic curve.
Therefore, we can use the contraction mapping argument to find solutions  $\xi$ to equation (\ref{e10}).

%%%%%%%%%%%%%%%%%%%%%%%%%%%%%%%%%%%%%%%%%%%%%%%%%%%%%%%%%%%%%%%%%%

%Because the estimates in section $4$-$7$ of \cite{Te2} are locally and $\Omega$, $g$ and $\wp_4$ are $\mathbb{R}$-invariant on the ends,  the proof of the first and second bullets of the proposition follow the  corresponding arguments in section $4$-$7$ of  \cite{Te2} with only notational changes and the following adjustment: Replacing $\mathfrak{b}_0$ in Lemma 6.3 of \cite{Te2} by $\chi \mathfrak{b}_0+ \mathfrak{h}$ in our Lemma \ref{A3}, note that $|\mathfrak{h}| \le c_0 r^{-1}$ guarantees  that all estimates that are required in Lemma 6.3 \cite{Te2} still hold.  In addition, the vector bundle $iTM \oplus i \mathbb{R} \oplus S_+$ is to be replaced by $iT\overline{X} \oplus S_+$ and $i(\mathbb{R} \oplus \Omega^{2+} ) \oplus S_-$ in corresponding situations.
%
%The analysis in section 3.a of \cite{Te3} are also locally,  their arguments can be borrowed almost verbatim to prove that $(A_r, \psi_r)$ is non-degenerate with index $I(\mathcal{C})$ whenever $r$  is sufficiently large.

To  prove  the last point of the proposition,  rewrite  $ (A^{\xi}, \psi^{\xi})= (A^{*}, \psi^{*}) + \mathfrak{t}_{\xi}$, then
\begin{eqnarray*}
(A_r, \psi_r)= (A^{\xi}, \psi^{\xi})+((2r)^{\frac{1}{2}}b_r,\eta_r)
= (A^{*}, \psi^{*})+ ((2r)^{\frac{1}{2}}b_{\xi}, \eta_{\xi}),
\end{eqnarray*}
where $( (2r)^{\frac{1}{2}}b_{\xi}, \eta_{\xi}) = ( (2r)^{\frac{1}{2}}b_r,\eta_r) +   \mathfrak{t}_{\xi}$.
Note that the pair  $(b_{\xi}, \eta_{\xi}) $  is asymptotic to $(b_{\xi_{\alpha_+}},  \eta_{\xi_{\alpha_+}}) $, where   $(b_{\xi_{\alpha_+}},  \eta_{\xi_{\alpha_+}}) $ is used to construct $(A_+, \psi_+)$ from orbit
 set $\alpha_+$, corresponding to the solution to equation (3-6) in \cite{Te2}.  By Lemma 3.10 of \cite{Te2}, $d\omega_X =0$ and Stokes' formula, we have
\begin{eqnarray*}
&&|\int_{\overline{X}} (2r)^{\frac{1}{2}} db_{\xi} \wedge \omega_X |= |\int_{Y_+} (2r)^{\frac{1}{2}} b_{\xi_{\alpha_+}} \wedge \omega_+| \le c_0 r^{\frac{1}{2}}|b_{\xi_{\alpha_+}}|_{L^2} \le c_0 r^{-\frac{1}{2}}.
\end{eqnarray*}
Hence, it suffices to prove the statement  for  $A^*$. Then the last point of the proposition follows from the following Lemma \ref{A27}.

\end{proof}

\begin{lemma} \label{A27}
Let $(A^*, \psi^*)$ be the approximation solution which is constructed  in Proposition \ref{A9}, then \label{A26}
\begin{eqnarray*}
\lim_{r\to \infty} \frac{i}{2\pi}\int_{\overline{X}} F_{A^*} \wedge \omega_X = \int_{\mathcal{C}} \omega_X.
\end{eqnarray*}
\end{lemma}
\begin{proof}
The proof of this lemma relies heavily on the construction of $(A^*, \psi^*)$ in  Section 5 of \cite{Te2} and the properties of vortices.  In what follows, we use the notation and terminology of  \cite{Te2}.

 Recall that when we construct  $(A^* ,\psi^*)$, we need to  assign vortices to each fiber of normal bundle of $\mathcal{C}$. In fact, the collection of these vortices form a tuple of vortices sections $(\{\mathfrak{c}_C\}_{C \in \mathcal{C}}, \{ \mathfrak{c}_{\gamma_{+} } \}_{\gamma_+ \in \Xi_{\mathcal{C}}})$ of certain vector bundles, called vortices bundle. For the  details,  please refer to  Section 2.e of \cite{Te2}. In particular, $\mathfrak{c_{\gamma_+} }$ here is a map from $ [R, +\infty) \times S^1$ to $ \mathfrak{C}_{m_{\gamma_+}}$.

Given $C \in \mathcal{C}$,    by the construction and properties of vortices,
$F_{A^*}= d^VA^{C,r} = r(1-|\alpha^{C,r}|^2)  \frac{1}{2} \nabla_{\theta}\mathfrak{s} \wedge \nabla_{\theta}\bar{\mathfrak{s}}$ on the region  where $\chi_C=1$.  Also,   $ |F_{A^*}| \le c_0e^{-r^{\sigma}}$ on the part where $\chi_C \neq 1$. Thus
\begin{eqnarray} \label{e68}
&&\int_{{\overline{X}}\bigcap U_{C}}  \frac{i}{2\pi} F_{A^*} \wedge \omega_X \nonumber \\
&=&\int_{{\overline{X}}\bigcap U_{C}}  \frac{1}{2\pi} r(1-|\alpha^{C,r}|^2)   \frac{i}{2}\nabla_{\theta}\mathfrak{s} \wedge \nabla_{\theta}\bar{\mathfrak{s}} \wedge \omega_X  +  O(e^{-r^{\sigma}}),   \\
&=&\int_{\mathcal{C} \bigcap U_{C}} \omega_X + O(e^{-r^{\sigma}}).  \nonumber
\end{eqnarray}
The last step follows from the fact that   $\int_{N_z} \frac{1}{2\pi}(1-|\alpha^{C,r}|^2) \frac{i}{2}\nabla_{\theta}\mathfrak{s} \wedge \nabla_{\theta}\bar{\mathfrak{s}}=1$ along each fiber of $N_C$.

Given $\gamma_+ \in \Xi_{\mathcal{C}+}$, define $\mathfrak{c_{\gamma_+} }_0: [R, +\infty) \times S^1 \to \mathfrak{C}_{m_{\gamma_+}}$  be a smooth vortex section so that $\mathfrak{c_{\gamma_+}}_0 = \mathfrak{c_{\gamma_+}}$ on $s \le R_* -R$ and $\mathfrak{c_{\gamma_+}}_0$ is   vortices when $s \ge R_*$, where $R_*$ is the constant defined in (4-9) of \cite{Te2}.   Using  $\mathfrak{c}_0=\{(\mathfrak{c}_C)_{C \in \mathcal{C}}, (\mathfrak{c_{\gamma_+}}_0)_{\gamma \in \Xi_{\mathcal{C}+}}\}$, we can construct an approximation solution $(A_0^*, \psi_0^*)$ as before. Note  that $\int_{\overline{X}} F_{A^*} \wedge \omega_X = \int_{\overline{X}} F_{A_0^*} \wedge \omega_X$ because of Stokes' theorem.

On $U_{\gamma}$, we can divide it into three parts $X_1$, $X_2$ and $X_3$. For  the definition of $X_i$,  please refer to Section $6$ of \cite{Te2}.  By the observation in the  last paragraph, we only need to prove the statement for $F_{A_0^*}$. Since $F_{A_0^*} \wedge \omega_+=0$ on $s \ge R_*$,  there is no contribution from $X_1$.   Now we consider the case on $ X_2$. Suppose $\chi_{\gamma}=1$, then $A^*=\theta_0+ A_0^{\gamma, r}+ A^{\gamma,r}$ and $F_{A^*}= d^VA^{\gamma,r} + x_{\gamma,r} - \overline{x_{\gamma,r}} $, where   $x_{\gamma,r}$ is defined in (2-37)  of \cite{Te2}. Roughly speaking, it is  derivative of  $A^*$ in the horizontal direction.  As before, on the part where $\chi_{\gamma} \neq 1$, the contribution is  $O(e^{-r^{\sigma}})$ and
\begin{eqnarray} \label{e69}
&&\int_{{\overline{X}}\bigcap (U_{\gamma} \cap  X_2)}  \frac{i}{2\pi} F_{A^*} \wedge \omega_X  \nonumber\\
&=&\int_{{\overline{X}}\bigcap (U_{\gamma} \cap  X_2)}  \frac{1}{2\pi} r(1-|\alpha^{\gamma,r}|^2) \frac{i}{2} dz \wedge d\bar{z}  \wedge \omega_X  +  \int_{{\overline{X}}\bigcap (U_{\gamma} \cap  X_2)}  ( x_{\gamma} - \overline{x_{\gamma,r}} )\wedge \omega_X       + O(e^{-r^{\sigma}}),   \nonumber\\
&=&m_{\gamma}\int_{\mathbb{R} \times \gamma \bigcap U_{\gamma} \cap  X_2} \omega_X + O(e^{-r^{\sigma}}) + \int_{{\overline{X}}\bigcap (U_{\gamma} \cap  X_2)}  ( x_{\gamma} - \overline{x_{\gamma,r}} )\wedge \omega_X \\
&=& \int_{{\overline{X}}\bigcap (U_{\gamma} \cap  X_2)}  ( x_{\gamma,r} - \overline{x_{\gamma,r}} )\wedge \omega_X     + O(e^{-r^{\sigma}}).  \nonumber
\end{eqnarray}
 By (2-2) of \cite{Te2}, $|x_{\gamma, r}| \le c_0r^{\frac{1}{2}} \sum_{z' \in \mathfrak{Z}_{\gamma}(w, t)}e^{- r^{\frac{1}{2}} |z-z'|}$, where $\mathfrak{Z}_{\gamma}(w, t) $ is defined in Section 5.d of \cite{Te2}. Thus we have
$|\int_{{\overline{X}}\bigcap (U_{\gamma} \cap X_2)} x_{\gamma,r}\wedge \omega_X | \le c_0 r^{-\frac{1}{2} +8\sigma}. $

On the intersection of $U_{\gamma} \cap X_3$ and near an end $\mathpzc{E}$ of $\mathcal{C}$,  recall that
\begin{equation*}
A^*= \chi_{\mathpzc{E}}( (1-\chi_{\gamma})(\theta_0 - u_{\gamma}^{-1}du_{\gamma})  + \chi_{\gamma}(\theta_0 + A_0^{\gamma, r}+ A^{\gamma, r})    ) + (1-\chi_{\mathpzc{E}})\mathbb{A}^{\mathpzc{E}},
\end{equation*}
where $\mathbb{A}^{\mathpzc{E}}$, $\chi_{\mathpzc{E}}$ and $\chi_{\gamma}$ are defined in page 2648 of \cite{Te2}.
Then
\begin{eqnarray*}
F_{A^*}&=&d\chi_{\mathpzc{E}}(1-\chi_{\gamma}) \left( (\theta_0 - u_{\gamma}^{-1}du_{\gamma}) -  \mathbb{A}^{\gamma, r}\right) + \chi_{\mathpzc{E}} d\chi_{\gamma}  \left( \mathbb{A}^{\gamma, r} -  (\theta_0 - u_{\gamma}^{-1}du_{\gamma})\right) \\
 &+&  d\chi_{\mathpzc{E}} ( \mathbb{A}^{\gamma, r}-\mathbb{A}^{\mathpzc{E}})+ (1-\chi_{\mathpzc{E}}) d \mathbb{A}^{\mathpzc{E}} + \chi_{\mathpzc{E}} \chi_{\gamma} d\mathbb{A}^{\gamma, r}\\
 &=&(I) +(II) +(III) +(IV) +(V).
\end{eqnarray*}
By the exponential decay of vortices and $(4$-$9)$ of \cite{Te2}, the contribution of $(I)$ and $(II)$ are  $c_0 e^{-r^{\sigma}}$.

By $(6$-$45)$, $(6$-$46)$ and $(6$-$48)$ of \cite{Te2},
\begin{eqnarray*}
| \mathbb{A}^{\gamma, r}-\mathbb{A}^{\mathpzc{E}}| \le c_0 r^{\frac{1}{2}}|z|^2 e^{-\frac{\sqrt{r}|\eta|}{c_0}}+c_0e^{-\frac{\sqrt{r}|\eta|}{c_0}}.
\end{eqnarray*}
Hence, the contribution of $(III)$ is $c_0r^{-1+8\sigma}$.

Arguing as the cases in (\ref{e68}) and (\ref{e69}), also using $(6$-$45)$, $(6$-$46)$ and $(6$-$48)$ of \cite{Te2},  the contributions of  $(IV)+(V)$ are $\int_{C \cap X_3}{ \omega_X} + O(r^{-\frac{1}{2}+8\sigma})$.
\end{proof}

\subsection{Proof of Theorem \ref{A10}}

\begin{proof}

%[Proof of Theorem \ref{A10}]
For each $\mathcal{C}=\{C_a\} \in \mathcal{M}_{X, I=0}^{J,L}(\alpha_+ , \alpha_-) $,   Proposition \ref{A9} gives us a   unique solution $(A_r,\psi_r)$ to  Seiberg Witten equations (\ref{e4}). Moreover, $(A_r, \psi_r)$ is nondegenerate with zero Fredholm index  and $\lim\limits_{r \to \infty} \frac{i}{2\pi} \int_{\overline{X}} F_{A_r} \wedge \omega_X = \int_{\mathcal{C}} \omega_X$.

So we can define  the following meaningful map
\begin{eqnarray*}
\Psi^r : \mathcal{M}_{X, I=0}^{J,L}(\alpha_+ , \alpha_-) \to \mathfrak{M}_{X, {\rm ind}=0}^{J,L}(T^+_r(\alpha_+), T_r^-(\alpha_-)),
\end{eqnarray*}
by sending $\mathcal{C} \in \mathcal{M}_{X, I=0}^{J, L}(\alpha_+,  \alpha_-)$ to $[(A_r, \psi_r)] $,
where $\mathfrak{M}_{X, {\rm ind}=0}^{J,L}(T^+_r(\alpha_+), T_r^-(\alpha_-))$ is the moduli space of solutions to Seiberg Witten equations (\ref{e4}) with zero index.

Note that $\Psi^r$ is injective for the following reasons. Let $(A_r, \psi_r) = \Psi^r(\mathcal{C})$, then $F_{A_r}$ satisfies the following properties:
\begin{enumerate}
\item
The $L^2$-norm of $F_{A_r}$ over the $|s| \le R$ portion $U_0$ is bounded by $c_0$.
\item
For each $C_a \in \mathcal{C}$, for every $|s_0| \le R-1$, the $L^2$-norm of $F_{A_r}$ over the $[s_0-1, s_0+1]$ portion $U_C$ is greater than $c_0^{-1} r^{\frac{1}{2}}$.
\end{enumerate}
The  two properties above  can be checked directly from the construction. Let $\mathcal{C}, \mathcal{C}'  \in \mathcal{M}_{X, I=0}^{J, L}(\alpha_+,  \alpha_-)$ and $\mathcal{C} \ne \mathcal{C}'$. There exists $c_0' \ge c_0^{10}$ such that  for $r \ge c_0'$, then there exists a
component $C_a' $ of $\mathcal{C}'$ and $|s_0| \le R-1 $ such that the $[s_0-1, s_0+1]$ part of $U_{C_a'}$ lies inside the $\mathcal{C}$ version's $U_0$ part. Above two properties imply that $\Psi^r(\mathcal{C}) \ne \Psi^r(\mathcal{C}')$.

To show that  $\Psi^r$ is onto for sufficiently large $r$, as summarized in  \cite{LT}, we can divide the proof into three parts, which are called \textbf{estimation}, \textbf{convergence} and  \textbf{perturbation}.
\begin{enumerate}
\item
The estimation part: As remark  on  page 38, the  relevant estimates in \cite{LT} can be  carried
over almost verbatim to our setting. % More detail about this part please refer to \cite{Lee}.
\item
The convergence part:   Suppose that $\Psi^r$ is not onto for any $r$, then there exists a sequence $\{(A_n ,\psi_n)\}_{n=1}^{\infty} $ such that   each $(A_n ,\psi_n) \in \mathfrak{M}_{X, ind=0}^{J,L}(T^+_{r_n}(\alpha_+) , T^-_{r_n}(\alpha_-)   ) $ is  not image of $\Psi^{r_n}$ and $r_n \to \infty$. By Proposition \ref{C32} and Theorem 5.1 of \cite{21},  after passing a subsequence,  $\{(A_n ,\psi_n)\}_{n=1}^{\infty} $  converges to a broken holomorphic current $\mathcal{   C}$ with $I(\mathcal{C})=0$.
%By the same argument  in the proof of Theorem 5.1 of \cite{21}, $I(\mathcal{C})=0$. Note the the set up in Theorem 5.1 of \cite{21} is symplectic cobordism between contact manifold, but his proof can be adapted  with only notation changes to prove that $I(\mathcal{C})=0$.

According to  our assumptions, properties of $\mathcal{V}_{comp}(X, \pi_X, \omega_X)$ and  the counterparts of  Lemmas \ref{C3}, \ref{C2} and \ref{C25}, $\mathcal{C}$ is not broken and  it is embedded.

\item
The perturbation part: Let $\{(A_n, \psi_n)\}_{n=1}^{\infty}$ and $\mathcal{C}$ as above. We have the following refinement of the convergence (Cf. Lemma 6.2 of \cite{Te4}): Given any $\delta>0$,  then there exists a subsequence of $\{(A_n, \psi_n)\}_{n=1}^{\infty}$  such that
\begin{equation} \label{e33}
\sup_{z \in C \in \mathcal{C}} dist(z, \alpha_n^{-1}(0)) + \sup_{z \in \alpha_n^{-1}(0)} dist \left(z,  \bigcup_{C  \in \mathcal{C} } C \right) < \delta r_n^{-\frac{1}{2}}
\end{equation}
for sufficiently large $n$.
To prove (\ref{e33}), the argument is the same as Section 7 of \cite{Te4}. We define the special sections $\mathfrak{o}_k\vert_p$ as (7-9) of  \cite{Te4}, where $p$ lies in $ C \in \mathcal{C}$ or half $\mathbb{R}$-invariant cylinders.   Note that the proof of Lemmas 4.10,  7.1, 7.2, 7.3 and 7.4 in \cite{Te4} are local, they are still true in our case. Copy the argument in Lemmas 7.5 and 7.7 in \cite{Te4}, we can derive (\ref{e33}).

With (\ref{e33}), we follow Section 6.b  of \cite{Te4} to construct a gauge transformation $u_n$ such that $u_n\cdot (A_n, \psi_n)$ is close to the approximation solution $(A^*, \psi^*)$.  The construction relies on  Lemma 6.4 of \cite{Te2}. The proof only concerns the behavior of $(A_n, \psi_n)$ near $\mathcal{C}$, so it is still true in our case.  Write $$u_n \cdot (A_n, \psi_n) =(A^{\xi}, \psi^{\xi}) + ((2r_n)^{\frac{1}{2}}b^{\xi}, \eta^{\xi}),$$ where $(A^{\xi}, \psi^{\xi}) $ is defined in Step 3 of Proposition \ref{A9}. By contraction mapping principle  and argument in 6.c of \cite{Te4},  we find a new gauge transformation, still denoted by $u_n$, such that $\mathfrak{b}(\xi)=(b^{\xi}, \eta^{\xi})$ satisfies (\ref{e8}).  Let $\mathfrak{q}(\xi)=\mathfrak{b}(\xi)-\mathfrak{h}(\xi)$, where $\mathfrak{h}(\xi)$  is defined in step 4 of Proposition \ref{A9}.

If $\Pi_{\xi}\mathfrak{q}(\xi)=0$ and $\xi \in \mathcal{B}$, where $\mathcal{B} \subset \mathcal{K}$ is provided by Proposition \ref{A9}, then the uniqueness of equations (\ref{e9}), (\ref{e10}) implies that   $u_n^*(A_n, \psi_n)=\Psi^{r_n}(\mathcal{C})$, this lead to our goal.  The tasks of Sections 6.d and 6.e  in \cite{Te4} are to find such  $\xi \in \mathcal{B}$ such that $\Pi_{\xi}\mathfrak{q}(\xi)=0$. The analysis in  6.d and 6.e of \cite{Te4} are again local, they can be applied to our situation  without modification.
% copy the same argument in section 6 of \cite{Te4}  to find a gauge transformation $u_n$ such that $u_n^*(A_n, \psi_n)=\Psi^{r_n}(\mathcal{C})$. Then we obtain contradiction.
\end{enumerate}

\end{proof}

%%%%%%%%%%%%%%%%%%%%%%%%%%%%%%%%%%%%%%%%%%%%%%%%%%%%%%%%%%%%%%%%%%%%%

\section{Proof of Theorem \ref{Thm2}}
%In this section, we assume that  $(X, \pi_X)$ is minimal and $d=\Gamma_{\pm} \cdot [\Sigma ]> g(\Sigma)-1 $.\subsection{$Q$-$\delta$ flat approximation}

%Let $U$ be a tubular neighborhood of $Y$ so that $(U, \Omega_X)$ is symplecticmorphism to $([0, -\epsilon) \times Y, \omega+ds\wedge dt)$, where $\omega=\omega_X \vert_Y$. Moreover, $J_X=J$ on $U$.

Before we prove Theorem \ref{Thm2}, we want to reduce the proof to the case that   $(\omega_X, J ) \vert_{ \mathbb{R}_{\pm} \times Y_{\pm}}$ is $Q$-$\delta$ flat firstly.  Assume that $(X, \omega_X)$ or $( \Gamma_X, \omega_X)$ is monotone, then the existence  of nonempty open subset  $\mathcal{V}_{comp}(X, \pi_X, \omega_X)$ has been proved in Section \ref{section21}.  Take an almost complex structure $J \in \mathcal{V}_{comp}(X, \pi_X, \omega_X)^{reg}$,  by Lemma \ref{C27}, we can always perturb $(\omega_X, J)$, the result is denoted by  $(\omega'_X, J')$,  such   that  $(\omega'_X, J' ) \vert_{ \mathbb{R}_{\pm} \times Y_{\pm}}$ is $Q$-$\delta$ flat. In what follows, we show that the cobordism  maps are unchanged under the perturbation.
To simplify the notation, assume that $Y_- =\emptyset$. Given any $L>0$, our goal is to find a pair $(\omega_X', J')$  satisfying  the    conclusions  in  Lemma \ref{C27} and there exists a bijection  between   $\mathcal{M}^{J,L}_{X, I=0} (\alpha_+) $ and  $\mathcal{M}^{J', L}_{X, I=0} (\alpha_+)$. We follow the  strategy in Appendix of \cite{Te1} to construct such a  pair  $(\omega_X', J')$.
%The strategy to compare the moduli space $\mathcal{M}^{J,L}_{X, I=0} (\alpha_+)$ and  $\mathcal{M}^{J', L}_{X, I=0} (\alpha_+)$ is as follows: %we follow Appendix of \cite{Te2} to insert a sequence of $\{J_k\}_{k=0}^N$ between $J$ and $J'$, and show that
 For every periodic orbit  $\gamma$ with degree less than $Q$,  reintroduce  the coordinate $\varphi_{\gamma}: S_{\tau}^1 \times D_z \to Y_+$, $(\nu, \mu)$  in (\ref{e77}), and  $(\nu_{\gamma}, \mu_{\gamma})$ is defined in  Lemma \ref{C41}. We fix  a homotopy $\{(\nu_{\tau}, \mu_{\tau})\}_{\tau \in [0, 1]}$ starting   from  $(\nu, \mu)$ to $(\nu_{\gamma}, \mu_{\gamma})$ which is used to define the $Q$-$\delta$ approximation. Furthermore,    there is a uniform  lower bound   to  the absolute value of  the eigenvalue of  $(\nu_{\tau}, \mu_{\tau})$-version of  (\ref{e60}).

 Take a $N \gg 1$,  using the data $\{ (\nu_{\frac{k}{N}}, \mu_{\frac{k}{N}})\}_{k=0}^N$,  we  want to  construct a sequence of pair  $\{( \omega_{X,k}, J_k) \}_{k=0}^N$  inductively, where $\omega_{X, k}$ is an admissible $2$-form and  $J_k $ is a cobordism admissible almost complex structure with respective to $\omega_{X,k}$,
  such that
\begin{enumerate} [label=\textbf{F.\arabic*}]
\item \label{f1}
 $(\omega_{X, 0}, J_0)= (\omega_X, J)$  and $(\omega_{X, N}, J_N)=(\omega_X',J')$. Also,   $(\omega_X',J') \vert_{[0, \infty) \times Y_+}$ is $Q$-$\delta$ approximation of $(\omega_+, J_+)$.
\item \label{f2}
The difference $(\omega_{X, k+1} -\omega_{X, k}, J_{k+1} -J_k)$ is supported in  $[ -\frac{3}{2}\epsilon, \infty) \times U_{\gamma, \rho_{k+1}}$, where $U_{\gamma, \rho_{k+1}} $ is  image via the map  $\varphi_{\gamma}$  from  of the set in $S^1 \times D$  where  $ |z| \le \rho_{k+1} $.

%image of $ \{(\tau, z) \in S^1 \times D \vert  |z| \le \rho_{k+1} \}$.
\item \label{f3}
The half cylinder $[ -2\epsilon, \infty) \times \gamma$ is  $J_k$ holomorphic for any $k$ and any  periodic  orbit $\gamma$ with degree less than  $Q$.

\item  \label{f4}
We have estimates $|\omega_{X, k} -\omega_{X, k+1}| \le c_0 \rho_{k+1} N^{-1}$ , $|J_k -J_{k+1}| \le c_0 \rho_{k+1} N^{-1}$ and $| \nabla(J_k -J_{k+1})| \le c_0 N^{-1}$.

\item \label{f5}
There exists a bijective map between  $\mathcal{M}^{J_k,L}_{X, I=0} (\alpha_+)$ and $\mathcal{M}^{J_{k+1},L}_{X, I=0} (\alpha_+)$.
\item \label{f6}
For any $k$,  $\mathcal{M}^{J_{k},L}_{X, I<0} (\alpha_+) =\emptyset$ and  the curves  in  $\mathcal{M}^{J_{k},L}_{X, I=0} (\alpha_+)$ are embedded and  Fredholm regular.
%\item
%Let $|\lambda| $ be the smallest absolute eigenvalue of $(\mu_k, \nu_k)$ version of $L$, then $|\lambda|$ has a lower bounded which is independent of $k$, $\delta$ and $N$.
\end{enumerate}
%These sequence $\{J_k\}_{k=0}^N$ can be defined as follows.
The sequence   $0<\rho_N  <  \rho_{N-1} < \cdots< \rho_0 \ll \delta $ will be determined during the construction.   Appendix of \cite{Te1} (Also see \cite{LT}.) constructs such sequence of pairs  inductively for symplectization case. In other words,  given $(\omega_+, J_+)$,  we can  define a sequence $ \{(\omega_{+,k}, J_{+, k})\}_{k=0}^N $  with  properties \ref{f1},  \ref{f2}, \ref{f3}, \ref{f4}, \ref{f5}, and \ref{f6}.  For cobordism case,  we just need to interpolate this sequence  with $(\omega_X, J)$ over a collar neighborhood  of $\partial X$. Assume that $\{J_0, \cdots, J_k\}$ have been defined. We can define  $J_{k+1}$ as follows: Define $(\omega_{X, k+1}, J_{k+1}) =(\omega_X, J) $  over $X -[ \frac{3}{2}\epsilon, 0] \times Y_+ $ and $(\omega_{X, k+1}, J_{k+1}) =(\omega_{k+1, +}, J_{k+1, +})$ over $[0, \infty)\times Y_+$. Over the region $[-\frac{3}{2}\epsilon, 0] \times Y_+$, define $(\omega_{X, k+1}, J_{k+1})$ to be the interpolation between  $(\omega_{k+1, +}, J_{k+1, +})$ and $(\omega_{+}, J_{+})$, which is similar to the proof of Lemma \ref{C27}.   Obviously,  we can  arrange the interpolation such that $J_{k+1}$ satisfies the conditions  \ref{f1},  \ref{f2} and \ref{f3}.  Since $J_{+,k} -J_{+,k+1}$  satisfies the condition  \ref{f4}, we can further arrange that $J_{k+1} -J_k$ satisfies this condition.

%To prove \ref{f5} and \ref{f6}, we follows the argument in Appendix  of \cite{Te1}.
To prove \ref{f5} and \ref{f6}, firstly observe that we have the following facts.
Let $  \mathcal{C} \in \mathcal{M}^{J_k,L}_{X, I=0} (\alpha_+)$ and $C $ be an irreducible component of $\mathcal{C}$, then  %By  \ref{f1}, \ref{f2}, \ref{f3} and \ref{f4},  we have the following consequences.
\begin{enumerate}
\item
$J_{k+1}$ is  $\frac{1}{N}$-close to $J_{k}$ in $C^1$ sense. Also, the factor  $\frac{1}{N}$ is very small.
\item
If $C  \cap [ -\frac{3}{2}\epsilon, \infty)  \times Y_+$ equals to union of  $[ -\frac{3}{2}\epsilon, \infty) \times \gamma$, then $C$ is  a $J_{k+1}$     holomorphic by  \ref{f2} and \ref{f3}.  Otherwise,  $C$ is  $J_{k+1} $ holomorphic except for where it intersects the region  $[ -\frac{3}{2}\epsilon, \infty) \times U_{\gamma, \rho_{k+1}}$. The intersection of $C$ with  $[ -\frac{3}{2}\epsilon, \infty) \times U_{\gamma, \rho_{k+1}}$ can only occur in the following two ways.  (Cf. (A-14) of \cite{Te1})
\begin{enumerate}
\item
Intersection occurs in a disk of radius $R^{-1}$ in $C$ centered around each intersection points  $C \cap ( -\frac{3}{2}\epsilon, \infty) \times \gamma$. (By slightly increasing  $\epsilon$, we may assume that $C$ doesn't intersect  $\{-\frac{3}{2}\epsilon\} \times \gamma$.)  %Also, the number of intersections  of  $C $ over  $[ -\frac{3}{2}\epsilon, \infty) \times \gamma$ is independent of $C$ and $k$.
\item
The ends of $C $  which are  asymptotic to covered of $\gamma$.
\end{enumerate}

\item
The condition \ref{f4}  and  the last point  implies that $C$ is nearly $J_{k+1}$ holomorphic in $L^2$ sense.
\end{enumerate}

According to  the  above observations, we can follow the Appendix of \cite{Te1} to perturb $C$ to be $J_{k+1}$ holomorphic. The main points are sketched as follows.  Let $N_C$ be the normal bundle of $C$ and $\eta$ be a section of $N_C$. The deformation of $C$ along $\eta$ is $J_{k+1}$ holomorphic if and only  if $\eta$ satisfies the following equation:
\begin{equation} \label{e78}
{D}_{C }\eta + \mathfrak{p}_1 \eta + (\mathfrak{R}_1 + \mathfrak{p}_2 ) \nabla \eta +  \mathfrak{R}_0(\eta) + \mathfrak{p}_0=0,
\end{equation}
where $D_{C }$ is the deformation operator of $C$ with respect to $J_k$. Here $\{\mathfrak{p}_i\}_{i=0}^3$ and  $\{\mathfrak{R}_i\}_{i=0}^1$ are certain  operators.  They satisfy the same estimates (A-17) in \cite{Te1}.   Lemma A.2 of  \cite{Te1}   shows that   ${D}_{C }\eta + \mathfrak{p}_1 \eta +  + \mathfrak{p}_2  \nabla \eta$ is invertible as a map between suitable Banach spaces.  To see how this works.   Write $\eta= \eta_1 + \eta_2$, where $\eta_1$ has support far out on the ends of $C$ and $\eta_2$ is supported in the rest of $C$.   Since  we   have a relevant version of (A-14) \cite{Te1} and the same estimates on $\{\mathfrak{p}_i\}_{i=1}^2$, these can be used to make sure  that the contributions of  $\{\mathfrak{p}_i\}_{i=1}^2$ terms are small. Recall  that there is a uniform  lower bound   to the absolute value of  the eigenvalue of  $(\nu_{\tau}, \mu_{\tau})$-version of  (\ref{e60}). Over the ends of $C$,  this property   implies that   $||{D}_{C }\eta_1 + \mathfrak{p}_1 \eta_1   + \mathfrak{p}_2  \nabla \eta_1 ||_1 \ge c_0^{-1} ||\eta_1||_2$, where $c_0$ is independent of $k$ and $||\cdot||_{i}$, $ i=1,2$, are  suitable norms.   By induction, we have  lower bound  $||{D}_{C }\eta_2 ||_1 \ge c_k^{-1} ||\eta_2||_2$.  Therefore,    ${D}_{C }\eta + \mathfrak{p}_1 \eta   + \mathfrak{p}_2  \nabla \eta$ is invertible. Combine these with the relevant version  (A-17) in \cite{Te1} and   the  relevant estimates  on $\{\mathfrak{p}_i\}_{i=0}^3$, by contraction mapping principle, there exists a  unique solution to (\ref{e78}).  With the above understanding, we have an injective map
\begin{equation*}
 \mathcal{F}: \mathcal{M}^{J_k,L}_{X, I=0} (\alpha_+) \to \mathcal{M}^{J_{k+1}, L}_{X, I=0} (\alpha_+).
\end{equation*}
To see why $ \mathcal{F}$ is   onto  for  sufficiently small $\rho_{k+1}$,  by Taubes'Gromov compactness, as $\rho_{k+1} \to 0$, the holomorphic curves     $\mathcal{C}_{k+1} \in \mathcal{M}^{J_{k+1}, L}_{X, I=0} (\alpha_+)$   converge   to an unbroken curve in $ \mathcal{M}^{J_k,L}_{X, I=0} (\alpha_+)$. Since the curves in  $\mathcal{M}^{J_{k}, L}_{X, I=0} (\alpha_+)$ is regular, thus   $\mathcal{C}_{k+1}$ is in the image of $\mathcal{F}$ for sufficiently small $\rho_{k+1}$.  In sum, $J_{k+1}$ satisfies all the properties  \ref{f1}, \ref{f2}, \ref{f3}, \ref{f4}, \ref{f5}   and \ref{f6}. Follow Lemma A.4 in \cite{Te1} to compare the deformation operator $D_{\mathcal{F}(C)}$ and ${D}_{C }  + \mathfrak{p}_1   +   \mathfrak{p}_2  \nabla $, their difference is controlled by $\rho_{k+1}$.  Therefore,  $D_{\mathcal{F}(C)}$  is also invertible.

 For $(\omega_{X, N}, J_N)$, we can further perform a small perturbation such that  $J_N$ is generic,  Moreover,  the moduli space   $ \mathcal{M}^{J_{N}, L}_{X, I=0} (\alpha_+)$ is unchanged, because, the curve in  $ \mathcal{M}^{J_{N}, L}_{X, I=0} (\alpha_+)$ is Fredholm regular.

We summarize the   above discussion into the following lemma.
\begin{lemma} \label{C30}
Given $L>0$, there exists a $Q$-$\delta$ flat  pair $(\omega_X', J')$ satisfying the conclusions in  Lemma \ref{C27}. Moreover, there is a bijection between $ \mathcal{M}^{J,L}_{X, I=0} (\alpha_+, \alpha_-)$  and $ \mathcal{M}^{J', L}_{X, I=0} (\alpha_+, \alpha_-)$.
\end{lemma}

%%%%%%%%%%%%%%%%%%%%%%%%%%%%%%%%%%%%%%%%%%%%%%%%%%%%%%%%%%%%%%%%%%%%%%%%%%%%%%%%%%%%%%%%%%
\begin{proof} [Proof of Main Theorem \ref{Thm2}]
 Let $\omega_X$ be a $Q$-admissible $2$-form.  Let $J$ be a generic almost complex structure in $\mathcal{V}_{comp}(X, \pi_X, \omega_X)^{reg}$.  Taking an unbounded increasing sequence $\{L_n\}_{n=1}^{\infty}$ and take $\delta=\frac{1}{n}$ in Lemma \ref{C30}, we can find a sequence of pairs $\{(\omega_{Xn}, J_n)\}_{n=1}^{\infty}$ such that $(\omega_{Xn}, J_n) \vert_{\mathbb{R}_{\pm} \times Y_{\pm}}$ is $Q$-$\frac{1}{n}$ flat approximation of $(\omega_X , J )\vert_{\mathbb{R}_{\pm} \times Y_{\pm}}$. By Lemma \ref{C30}, we have
\begin{equation} \label{e22}
CP(X, \Omega_X, J, \Lambda_X)=CP(X, \Omega_{Xn}, J_n, \Lambda_X)+o(L_n),
\end{equation}
where $\lim\limits_{n \to \infty}o(L_n)=0$.

%Taking a sequence $\{r_n\}_{n=1}^{\infty}$ such that $r_n > r_{L_n}$, according to Theorem \ref{C31} in next section, we have
%\begin{equation} \label{e23}
%CP(X, \omega_{Xn}, J_n, \Lambda_X)= ({T}_{r_n}^-)^{-1} \circ CM(X, \omega_{Xn}, J_n, r_n, \Lambda_X) \circ {T}_{r_n}^++o(L_n).
%\end{equation}
For each $n$,  according to Theorem \ref{A10}, there exists a constant $r_{L_n}>0$ such that
\begin{equation} \label{e23}
CP(X, \Omega_{Xn}, J_n, \Lambda_X)= ({T}_{r}^-)^{-1} \circ CM(X, \Omega_{Xn}, J_n, r, \Lambda_X) \circ {T}_{r}^++o(L_n).
\end{equation}
for any $r > r_{L_n}$.  Let  $r \to \infty$, by (\ref{e29}), we have
\begin{equation} \label{e24}
\begin{split}
CP(X, \Omega_{Xn}, J_n, \Lambda_X)= CP_{sw}(X, \Omega_{Xn}, J_n, \Lambda_X) + o(L_n).
\end{split}
\end{equation}
%Fix a large $n_0$, let $\rho^n_{\pm}$ be  a homotopy from $(\omega_{\pm, n_0}, J_{\pm, n_0})$ to $(\omega_{\pm, n}, J_{\pm, n})$ provided by Lemma \ref{C34}.  By Lemmas  \ref{C18} and \ref{C35}, up to chain homotopy, we have
%\begin{equation} \label{e24}
%\begin{split}
% &  \ \ \ \ \ \ \ ({T}_{r_n}^-)^{-1} \circ CM(X, \omega_{Xn}, J_n, r_n, \Lambda_X) \circ {T}_{r_n}^+\\
%& =({T}_{r_n}^-)^{-1} \circ CM(\rho^n_-, r_n)^{-1} \circ CM(X, \omega_{Xn_0}, J_{n_0}, r_n, \Lambda_X) \circ CM(\rho^n_+, r_n) \circ {T}_{r_n}^+.
%\end{split}
%\end{equation}
Combine above equalities (\ref{e22}) and (\ref{e24}), (\ref{e79})  and  by taking $n \to \infty$,  then  we obtain
$$CP(X, \Omega_{X}, J, \Lambda_X)= {CP}_{sw}(X, \Omega_X, J, \Lambda_X).  $$
\end{proof}
%%%%%%%%%%%%%%%%%%%%%%%%%%%%%%%%%%%%%%%%%%%%%%%%%%%%%%%%%%%%%%%%%%%%%%%%%%%%%%%%%%%%%%%%%%%%%%%%%%%%%%%%%%%%%

%%%%%%%%%%%%%%%%%%%%%%%%%%%%%%%%%%%%%%%%%%%%%%%%%%%%%%%%%%%%%%%%%%%%%

(Guanheng Chen) Department of Mathematics, The Chinese University of Hong
Kong.

\verb| E-mail adress: ghchen@math.cuhk.eduh.hk|

\end{document}